\newcommand{\LST}{\mathscr{L}}
\newcommand{\N}{\mathbb{N}}
\newcommand{\NZ}{{\mathbb{N}_0}}
\newcommand{\Span}{{\rm span}}
\newcommand{\ad}{{\rm ad}}
\newcommand{\0}{\mathbf{0}}
\newcommand{\m}{\mathbf{m}}
\newcommand{\Z}{\mathbb{Z}}
\newcommand{\sign}{\rm sign}
\newcommand{\ie}{{\em i.e.,} }
\newcommand{\eg}{{\em e.g.,} }
\newtheorem{thm}{Theorem}[section]
\newtheorem{lem}[thm]{Lemma}
\theoremstyle{definition}
\newtheorem{defn}[thm]{Definition}
\newtheorem{rem}[thm]{Remark}
\numberwithin{equation}{section}
\def\Blem {\begin{lem}}
\def\Elem {\end{lem}}
\def\be {\begin{equation}}
\def\ee {\end{equation}}
\def\ba {\begin{eqnarray}}
\def\ea {\end{eqnarray}}
\def\bes {\begin{equation*}}
\def\ees {\end{equation*}}
\def\bas {\begin{eqnarray*}}
\def\eas {\end{eqnarray*}}
\def\bpr {\begin{proof}}
\def\epr {\end{proof}}
\begin{document}
\baselineskip=18pt

\renewcommand {\thefootnote}{ }

\pagestyle{empty}

\begin{center}
\leftline{}
\vspace{-0.500 in}
{\Large \bf Bifurcation controller designs for the generalized cusp plants of Bogdanov--Takens singularity with an application to ship control } \\ [0.3in]

{\large Majid Gazor\(^{\dag}\)\footnote{$^\dag$Corresponding author. Phone: (98-31) 33913634; Fax: (98-31) 33912602; Email: mgazor@cc.iut.ac.ir; Email: n.sadri@math.iut.ac.ir.} and Nasrin Sadri}

\vspace{0.105in} {\small {\em Department of Mathematical Sciences, Isfahan University of Technology
\\[-0.5ex] Isfahan 84156-83111, Iran }}



\noindent
\end{center}

\vspace{-0.20in}

\baselineskip=16pt

\:\:\:\:\ \ \rule{5.88in}{0.012in}

\begin{abstract}
Nonlinear controlled plants with Bogdanov-Takens singularity may experience surprising changes in their number of equilibria, limit cycles and/or their stability types when the controllers slightly vary in the vicinity of critical parameter varieties. Each such a change is called a local bifurcation.
We derive novel results with regards to truncated parametric normal form classification of the generalized cusp plants. Then, we suggest effective nonlinear bifurcation control law designs for precisely locating and accurately controlling many different types of bifurcations for two measurable plants from this family. The first is a general quadratic plant with a possible multi-input linear controller while the second is a \(\mathbb{Z}_2\)-equivariant general plant with possible multi-input linear (\(\mathbb{Z}_2\)-symmetry preserving) and quadratic (symmetry-breaking) controllers. The bifurcations include from primary to quinary bifurcations of either of the following types: saddle-node, transcritical and pitchfork of equilibria, \(\mathbb{Z}_2\)-equivariant bifurcations of multiple limit cycles through Hopf, homoclinic, heteroclinic, saddle-node, and saddle-connection, and finally their one-parameter symmetry breaking bifurcations. Using our parametric normal form analysis, we propose a new approach for efficient treatment of tracking and regulating engineering problems with smooth manoeuvering possiblities. Due to the nonlinearity of a ship maneuvering characteristic, there is a need for a controller design in a ship steering system so that the ship follows a desired sea route. The results in bifurcation control analysis are applied to two nonlinear ship course models for such controller designs. Symbolic implementations in {\sc Maple} and numerical simulations in {\sc MATLAB} confirm our theoretical results and accurate predictions.
\end{abstract}

\vspace{0.05in}

\noindent {\it Keywords:} \ Parametric normal form; Multiple limit cycle bifurcations; Bifurcation control;
Tracking control problem; Ship course tracking problem.

\vspace{0.05in} \noindent {\it 2010 Mathematics Subject Classification}:\, Primary: 34H20, 34K18, 34C20; Secondary: 58E25.

\vspace{0.05in}

\section{Introduction }

A system is called {\it singular} when there exists a small perturbation of its governing equations so that the {\it qualitative} dynamics of the perturbed and unperturbed systems are different. Each change in the qualitative dynamics of a singular system in the vicinity of an equilibrium is called a {\it local bifurcation}. The most common local bifurcations in nonlinear planar systems are the changes in the number of equilibria, limit cycles, homoclinic and heteroclinic cycles, and changes in their stabilities and/or the existence of bi-stabilities; \eg see \cite{GuckenheimerDangelmayr,DumortierGeneric3,DumortierRoussarie,LangfordHuygens,Slotine,GazorSadri}. An important challenging problem in engineering control is to predict and locate possible trajectories of these types in an engineering singular plant. For the case of a control system, these also include the local linear controllability, stabilizablity, observability, and accessibility of the equilibria; see \cite{ControlBifurcation,Kang04,KangIEEE}. We recall that a {\it typical} control system is composed of four parts: a {\it plant} to be controlled, a possible {\it sensor for measurement} used for {\it feedback control}, a {\it controller} usually determined by a computer programming, and an {\it actuator} for control action; see \cite[page 169]{Slotine}. In this paper we refer to a differential system as a plant when it stands to be controlled through a controller design approach.

The classical bifurcation theory treats an unfolding obtained from the normal form of the folded system. Folding here means excluding the original parameters by setting them to zero. This analysis provides a prediction of possibilities in the system's qualitative dynamics, but it does not represent an actual qualitative and {\it quantitative} dynamics in terms of the original input parameters of the system. Hence, the only useful normal form approach for their possible engineering applications is through parametric normal forms and this has been recently addressed for a few cases: Hopf, single zero, the generalized saddle-node case of Bogdanov-Takens, and a case of Hopf-zero singularity; see \cite{ChenBifuControl,GazorMoazeni,GazorSadri,GazorYuSpec,YuLeung03}.
In this paper we treat the generalized cusp case of Bogdanov-Takens singular systems. We further apply our approach described in \cite[section 6]{GazorSadri} and suggest effective controllers for the families considered in this paper for bifurcation control. Our prime goal in this paper is to give an algorithmically computable approach via our normal form results for both {\it quantitative} and {\it qualitative} analysis of the actual bifurcations occurring in a real life
problem. This provides a comprehensive quantitative understanding about a parametric system's dynamics in terms of its original parameters. This is what the classical bifurcation theory fail to accomplish. We put this contribution into the context of a controller design, since we can choose the numerical values for input parameters of the system according to their associated dynamics. Also, an argument for the necessity of controlling a singular system is as follows. Uncontrolled bifurcations often lead to substantial {\it quantitative} changes into the solutions. Furthermore, there are always small errors (perturbations) in all real world mathematical models due to unavoidable modeling imperfections, data measurements, and computations. Hence, the singular nature of a singular real world model and the small errors are the actual causes for bifurcations. Thereby, the model's solutions do not represent the actual problem in the singular cases, since the errors radically influence bifurcations and solutions. 

\pagestyle{myheadings} \markright{{\footnotesize {\it M. Gazor and N. Sadri \hspace{3.4in} {\it Bogdanov-Takens bifurcation control }}}}

Numerous nonlinear methods have been developed for designing controllers in nonlinear control theory; see \cite{Slotine}. The most well-known methods are feedback linearization and back-stepping methods. Feedback linearization method aims at {\it tracking} control law problem, that is,
introducing a feedback controller so that the solutions of the controlled system asymptotically approach a desired solution. The back-stepping method is an alternative approach usually designed for {\it regulating} (stabilizing) an equilibrium, that is, using a control law so that the equilibrium becomes asymptotically stable. We supply a new feedback controller design approach for solving a family of tracking and regulating problems. This approach is an application of our tools in parametric normal forms and bifurcation control analysis and is an alternative to the existing methods in modern nonlinear control theory.

In this paper we are concerned with a nonlinear parametric system
\be\label{Eq1}
\dot{x}:= F(x, y, \mu), \quad \dot{y}:=-x+ G(x, y, \mu), \quad
\ee for \((x,y)\in \mathbb{R}^2, \mu\in \mathbb{R}^p,\) \(F(0, 0, \0)= G(0, 0, \0)=0.\) This is considered either as a small perturbation or as a controlled system for the Bogdanov-Takens singular plant
\be\label{Eq2}
\dot{x}:= f(x, y), \quad \dot{y}:=-x+ g(x, y),
\ee where \(f(x, y):= F(x, y, \0),\) \(g(x, y):= G(x, y, \0)\) do not have linear and constant terms.
The singular controlled plant \eqref{Eq1} may experience bifurcations in their dynamics by small static variation of the controller parameters \(\mu\) in the vicinity of the origin. This is called {\it control bifurcations}; see \cite{ControlBifurcation,Kang04,KangIEEE,Kang98,AbedFu86}. Thus, this paper investigates the use of a recently introduced notion of truncated universal asymptotic unfolding normal form \cite{GazorSadri} for bifurcation control of a family of Bogdanov-Takens singularity. (A truncated universal asymptotic unfolding normal form here refers to a suitable truncation of the simplest parametric normal form.) In this direction, we develop a systematic approach for normal form computation of this family of systems that they are novel in symbolic implementation in terms of both symbolic constant coefficients and small parametric coefficients. Note that we refer to small size unfolding and control parameters as either parameters or controller inputs, while symbolic coefficients refer to unknown constants and can generally be arbitrarily distanced from zero. Our results are implemented in {\sc Maple} and is being integrated with our {\sc Maple} library,
{\tt Singularity} \cite{GazorKazemiUser,GazorKazemi}, available for bifurcation analysis of smooth maps and differential systems.

The system \eqref{Eq2} can be transformed to a first level normal form
\be\label{ClsNF}
\dot{x}:= \sum^\infty_{k=1} a_ky^{k+1}+\sum^\infty_{k=1} b_kxy^k, \quad \dot{y}:=-x+ \sum^\infty_{k=1} b_ky^{k+1},
\ee
for example see \cite[Lemma 2.2]{GazorMoazeni} and \cite{baidersanders,ZoladekNF2015,wlhj}. Define
\bas
r:=\min \{a_i : a_i \neq 0\} \quad \hbox{ and } \quad s:=\min \{b_i : b_i \neq 0\}.
\eas
Throughout this paper we assume that
\be \label{rs}
r<2s.
\ee This case is called the generalized cusp case of Bogdanov-Takens singularity. Complete normal form and orbital normal form classifications of Bogdanov-Takens singularity have been recently obtained in a series of research results by Stroyzyna and Zoladek \cite{ZoladekNF2015,Zoladek03,Stroyzyna,Zoladek02}; also see \cite{GazorMoazeni,ChenWang,Algaba,kw,BaidSand91,chendora,baidersanders,baiderchurch,yy2001b,yy2001,GazorMokhtari3D}. We prove that the \((s+1)\)-degree truncated simplest parametric normal form of \eqref{Eq1} is given by
\bas
\dot{x}&=&a_r y^{r+1}+b_s x y^s+\sum_{1\leq i \leq r} \nu_i y^{i-1}+\sum_{0\leq i\leq s-1, \,\,i\neq-1\pmod{r+2}} \omega_i x y^{i},\\\nonumber
\dot{y}&=&-x+b_s y^{s+1}+\sum_{0\leq i\leq s, \,\,i\neq-1\pmod{r+2}} \omega_i  y^{i+1},
\eas where \(\nu_i\) and \(\omega_i\) are polynomial functions in terms of \(\mu\in \mathbb{R}^p.\) When \(\nu_i\) and \(\omega_i\) are treated as unfolding parameters, this system is indeed a \((s+1)\)-degree truncated universal asymptotic unfolding normal form for the generalized cusp plants of Bogdanov-Takens singularities.

We show that our parametric normal form approach is sufficient for bifurcation control of equilibria, limit cycles, homoclinic and heteroclinic cycles, saddle-connections, and saddle-node bifurcations of limit cycles for the two most generic generalized cusp cases, \ie \(r=s=1\) and \(r=s=2\). The qualitative dynamics associated with these cases of Bogdanov-Takens singularity have already been studied by several authors; see \cite{Perko94,DumortierGeneric3,DumortierRoussarie,GuckenheimerDangelmayr,PerkoBook,Perko95,Takens74}. Yet we briefly present various normal form {\it bifurcation varieties} with the following targeted goals:
\begin{enumerate}
\item Our derived formulas are computed in terms of normal forms in Hamiltonian-Eulerian decomposition. This is an important normal form style and is different from the styles used in \cite{Perko94,DumortierGeneric3,DumortierRoussarie,GuckenheimerDangelmayr,PerkoBook,Perko95,Takens74}.
\item The bifurcation varieties are derived in terms of symbolic unknown coefficients and input parameters. This is useful in a systematic symbolic study of many applications in real life problems.
\item The difficulties of bifurcation control rise in {\it secondary, tertiary, quaternary,} {\em etc.,} types of bifurcations. Further, a too small size neighborhood validity of the controller inputs (parameters) significantly reduces their possible engineering applications. Hence,
    some bifurcation varieties are derived for a higher degree truncated parametric normal forms than their analogue in the literature. Furthermore, some estimated bifurcation varieties are given in higher orders than those in the literature. Either of these are particularly aimed at enlarging the neighborhood validity associated with the original controller parameters.
\end{enumerate} We remark that a {\it (local) variety} here refers to (a neighborhood subset of) the zero set of a polynomial system and a {\it bifurcation variety} refers to a variety in the parameter space where the singularity occurs. The bifurcation control here aims at designing polynomial controllers with small size coefficient parameters in order to control the local bifurcations and prevent {\it control bifurcations} caused by modeling imperfections. Indeed, the control law is assumed to be a small polynomial perturbation in state variables of the plant so that the small controller parameters can play the role of asymptotic unfolding parameters and thus, the control system is not dominated by the modeling imperfections; see \cite{GazorSadri}. The bifurcation analysis provides a possible list of qualitative asymptotic dynamics for two most generic cases of such systems. These consist of all qualitatively different dynamics that they are persistent to small perturbations of parameters. Further for each desired choice from the persistent dynamics' list, there exist appropriate choices for small controller inputs so that the controlled system follows the chosen asymptotic dynamics. Our approach provides a systematic method for finding these appropriate choices of the controller inputs. Due to the small size nature of the controller inputs, our approach has the potential for designing a flexible and non-expensive control law for some engineering, bio and economical applications. An instance of this is illustrated in subsection \ref{ShipCourse}; \eg see Remark \ref{Rem7.1}. An important characteristic of our approach is that the control polynomial law can be adopted based on the physics of the problem. In fact we can choose the multiple-input polynomial controller from a list of monomial controllers appearing in a parametric normal form of a truncated-jet system. Then we are able to distinguish the suitable control parameters and choose appropriate values from parameters of the original differential singular system, where it is modeled based on physics of the problem; \eg see Remark \ref{Rem7.1}. These highlight our claimed contribution to the controller designs in the real life problems.

The rest of this paper is organized as follows. Section \ref{sec2} derives the necessary formulas for time rescaling and orbital normal form hypernormalization steps. A truncated simplest parametric normal form for the generalized cusp plants of Bogdanov-Takens singularity is computed in section \ref{sec3}. A list of estimated bifurcation varieties for two most generic cases (\(r=s=1\) and \(r=s=2\)) are presented in sections \ref{sec5} and \ref{sec6}, respectively. In section \ref{sec7}, we apply our suggested approach to a general quadratic-jet plant with a multi-input linear controller and a \(\mathbb{Z}_2\)-equivariant general cubic-jet plant with two types of quadratic controllers; one is \(\mathbb{Z}_2\)-preserving and the other is a one-parameter \(\mathbb{Z}_2\)-symmetry violating controller. We use our recently proposed approach to find the parameters of a parametric system that they can play the role of distinguished parameters. Next we demonstrate how our approach helps to find appropriate choices of parameters to precisely control and locate any possibly desired dynamics. Subsection \ref{BifCont} introduces an application of bifurcation control for regulating and tracking control engineering problems. Then in subsection \ref{ShipCourse}, our bifurcation control results are applied to solve a ship course tracking control problem; that is to design a controller so that the controlled ship course follows a desired path on a sea.
We consider two ship course models: one only takes the nonlinearity between ship's yaw and rudder angles while the second adds the steering gear's dynamics.

\section{ Orbital normal forms }\label{sec2}

The classical normal forms are not the simplest normal forms and hyper-normalization are generally possible by using the available near identity change of state variables and also by using time rescaling transformations. This section and the next section are devoted to the orbital and parametric hypernormalizations beyond the classical normal form theory; see \cite{MurdockFinal,GazorMoazeni,Zoladek03,Zoladek02,ZoladekNF2015,BaidSand91,GazorYuSpec,MurdBook,GazorMokhtariEul,Murd04} and the references therein. We follow \cite{GazorMoazeni,baidersanders} and recall the following notations
\ba\nonumber
A_{k}^{l}&:=&\frac{k-l+1}{k+2}x^{l+1}y^{k-l}\frac{\partial}{\partial
x}-\frac {l+1}{k+2}x^ly^{k-l+1}\frac{\partial}{\partial y} \hspace{1.3cm} (-1\leq l\leq k+1),\\\label{ABTerms}
B_{k}^{l}&:=& x^{l+1}y^{k-l}\frac{\partial}{\partial x}+x^ly^{k-l+1}\frac{\partial}{\partial y}\hspace{4cm} (0\leq l\leq k),
\\\nonumber
Z^l_k&:=& x^{k-l}y^l  \hspace{7.5cm} (0\leq l\leq k).
\ea The monomial \(h^l_k:= \frac{1}{k+2}x^{l+1}y^{k-l+1}\) is the Hamiltonian for the Hamiltonian vector field \(A^l_k.\) We follow \cite{GazorMoazeni} to denote \(\LST\) for the Lie algebra generated by formal vector fields of type \eqref{ClsNF}. Thus, the Lie algebra \(\LST\) can be expressed as formal sums in combinations of terms \(A_{k}^{l}, B_{k}^{l}\) for \(k\geq 1\) and \(A^1_0.\) The ring of formal power series generated by \(Z^l_k\) for \(k\geq1\) is denoted by \(\mathcal{R}.\) Any \(T\in \mathcal{R}\) corresponds to a time rescaling generator, that is, a near identity time rescaling \(\tau:=(1+T)t\) that sends old time variable \(t\) to new time variable \(\tau\) and a vector field \(v\) into \(v+Tv\). Hence, \(\LST\) holds a \(\mathcal{R}\)-module structure. The following lemma recalls the Lie algebra and \(\mathcal{R}\)-module structure constants from \cite[Theorem 3.7]{baidersanders} and \cite[Lemma 2.1]{GazorMoazeni}, respectively.

\begin{lem}\label{StrucCons}
The \(\mathcal{R}\)-module and the Lie algebra structure constants of \(\LST\) are given by
\begin{eqnarray}\nonumber
Z^m_n B^l_k&=& B^{m+l}_{n+k}, \qquad\qquad\;\quad\qquad\quad Z^m_n A^l_k= A^{m+l}_{n+k}+ \frac{(k+2)m-n(l+1)}{(k+2)(k+n+2)}B^{m+l}_{n+k},\\
{\left[B^m_n, B^l_k\right]}&=&(k-n)B^{m+l}_{n+k},\qquad \quad\qquad {\left[A^m_n, A^l_k\right]}=(n+k+2)\left(\frac{l+1}{k+2}-\frac{m+1}{n+2}\right)A^{m+l}_{n+k},\\\nonumber
{\left[A^m_n, B^l_k\right]}&=&\frac{(k+2)}{(k+n+2)}\left(l-\frac{k(m+1)}{n+2}\right)
B^{m+l}_{n+k}-nA^{l+l}_{n+k}.
\end{eqnarray}
\end{lem}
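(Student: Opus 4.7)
The plan is to verify each of the five identities by direct computation in the basis of monomial vector fields, exploiting the fact that, at the relevant level $n+k$, the result always lies in the two-dimensional subspace of homogeneous vector fields spanned by $x^{m+l+1}y^{n+k-m-l}\partial_x$ and $x^{m+l}y^{n+k-m-l+1}\partial_y$, which coincides with the plane spanned by $A^{m+l}_{n+k}$ and $B^{m+l}_{n+k}$.

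First I would handle the $\mathcal{R}$-module formulas. For $Z^m_n B^l_k$, pointwise multiplication of the scalar monomial $Z^m_n$ against each component of $B^l_k$ produces exactly the two monomial components of $B^{m+l}_{n+k}$, so no basis rearrangement is required. For $Z^m_n A^l_k$ the product again lands in the two-dimensional plane described above. Writing $Z^m_n A^l_k = \alpha A^{m+l}_{n+k} + \beta B^{m+l}_{n+k}$ yields a $2\times 2$ linear system whose solution is $\alpha = 1$ and $\beta=\frac{(k+2)m-n(l+1)}{(k+2)(k+n+2)}$, the latter scalar emerging after combining the two monomial coefficients over a common denominator $(k+2)(n+k+2)$.

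Next I would treat the Lie bracket identities. For $[A^m_n, A^l_k]$ I would exploit the Hamiltonian representation stated after \eqref{ABTerms}: since $A^l_k$ is the Hamiltonian vector field of $h^l_k=\frac{1}{k+2}x^{l+1}y^{k-l+1}$, the Lie bracket $[A^m_n, A^l_k]$ is, up to sign convention, the Hamiltonian vector field of the Poisson bracket $\{h^m_n, h^l_k\}$. A direct computation of $\partial_x h^m_n\,\partial_y h^l_k-\partial_y h^m_n\,\partial_x h^l_k$ gives a scalar multiple of the monomial $x^{m+l+1}y^{n+k-m-l+1}$; matching this against $h^{m+l}_{n+k}=\tfrac{1}{n+k+2}x^{m+l+1}y^{n+k-m-l+1}$ and simplifying the prefactor over $(k+2)(n+2)$ produces precisely $(n+k+2)\bigl(\frac{l+1}{k+2}-\frac{m+1}{n+2}\bigr)$. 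For $[B^m_n, B^l_k]$ and $[A^m_n, B^l_k]$ I would apply the Leibniz rule $[f\partial_i,g\partial_j]=f(\partial_i g)\partial_j-g(\partial_j f)\partial_i$ to the two-monomial definitions, collect terms of the same bidegree, and re-express the result in the $\{A,B\}$ basis at level $n+k$ via the same linear-algebra step used for $Z^m_n A^l_k$.

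The only real obstacle is bookkeeping: tracking index shifts, the denominators $(k+2)$ and $(k+n+2)$ inherited from the Hamiltonian normalization of $A^l_k$, and the signs arising from $[X,Y]=XY-YX$ when the $\partial_x,\partial_y$ Leibniz rule is carried through. No conceptual step beyond basic linear algebra and the Poisson-to-Lie-bracket correspondence is required, which is why \cite{baidersanders, GazorMoazeni} present these identities as preparatory computations to be cited rather than re-derived in detail.
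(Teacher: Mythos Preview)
Your proposal is correct. The paper itself does not prove this lemma at all: it simply recalls the identities from \cite[Theorem 3.7]{baidersanders} and \cite[Lemma 2.1]{GazorMoazeni}, so your direct-computation outline is an actual proof where the paper gives none. The Poisson-bracket shortcut for $[A^m_n,A^l_k]$ is a clean way to bypass the full four-term Leibniz expansion, and the remaining identities reduce, as you say, to a $2\times 2$ change of basis in the $\{A^{m+l}_{n+k},B^{m+l}_{n+k}\}$ plane; the only care needed is the sign convention $[X_f,X_g]=-X_{\{f,g\}}$ for $X_f=f_y\partial_x-f_x\partial_y$ and $\{f,g\}=f_xg_y-f_yg_x$, which you already flagged.
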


Normal form theory uses the concept of a graded Lie algebra, that is, \(\LST:=\sum_k \LST_k\) where \(\LST_k\) denotes the vector space of homogenous vector fields of grade \(k\) and \([\LST_k, \LST_l]\subseteq \LST_{k+l}.\) Further we use graded ring structure for \(\mathcal{R}:=\sum_k\mathcal{R}_k,\) where \(\mathcal{R}_l\LST_k\subseteq \LST_{l+k}\) and \(\mathcal{R}_l\mathcal{R}_k\subseteq \mathcal{R}_{l+k}.\) Given an normalizing vector field \(v:=\sum^\infty_{k=0} v_k, v_k\in \LST_k,\) we define \(d^{k, 1}(T_k, S_k):= T_kv_0+[S_k, v_0]\) for \((T_k, S_k)\in \mathcal{R}_k\times \LST_k.\) Now for natural numbers \(r\geq 2\) and \(k\) we inductively define the map
\(d^{k, r}: \ker d^{k-1, r-1}\times \mathcal{R}_k\times \LST_k\rightarrow \LST_k\) by
\ba\label{d}
&d^{k, r}(T^{r-1}_{k-r+1}, \cdots, T^{r-1}_{k-1}, T_k,
S^{r-1}_{k-r+1}, \cdots, S^{r-1}_{k-1}, S_k):= \sum^{r-1}_{i=1} \left(T^{r-1}_{k-i}v_i+ [S^{r-1}_{k-i}, v_i]\right) + T_kv_0+[S_k, v_0],&
\ea where \((T^{r-1}_{k-r+1}, \cdots, T^{r-1}_{k-1}, S^{r-1}_{k-r+1}, \cdots, S^{r-1}_{k-1})\in \ker d^{{k-1}, r-1}.\) Here, some rearrangements of grade-homogenous components \(T^{r-1}_{i}\) and \(S^{r-1}_{i}\) from \(\mathcal{R}_i\) and \(\LST_i\) are made for our convenience. By \cite[Theorem 4.3 and Lemma 4.2]{GazorYuSpec} we can transform the vector field \(v\) into a \(r\)-th level partially extended orbital normal form \(v^{r}:= \sum v_k^{r}\), where \(v^r_k\in C^r_k\) and \(C^r_k\) is a complement space to \({\rm im}\, d^{k, r}\) for any \(k.\) A normal form style refers to a rule on how to choose the complement space \(C^r_k\) for each \(k.\) We here give the priority of elimination to lower grade-homogenous \(A\) and \(B\) terms than those of higher grades. Further for terms of the same grade, the priority of elimination is with \(B\)-terms rather than \(A\)-terms. The Pochhammer notation \((a)_b^n:= a(a+b)(a+2b)\cdots (a+(n-1)b)\) is used in this paper.

Given the formulas in equations \eqref{ABTerms}, the classical normal form \eqref{Eq2} can be written as
\be\label{clsAB}
v^{(1)}= A^1_0+\sum^\infty_{k=r} a_kA^{-1}_k+ \sum^\infty_{k=s} b_kB^{0}_k.
\ee We follow Baider and Sanders \cite[Page 219]{BaidSand91} and define \(\Gamma_r\) by
\(\ad_{A^{1}_{0}}\circ\ad_{\Bbb{A}_{r}},\) where \(\mathbb{A}_r:=A^1_0+a_r A^{-1}_{r}\).
Then,
\begin{equation}
\ker(\Gamma_r)=\Span \Big\{\mathcal{A}^{k+1}_k, \mathcal{B}^{k}_k : k=0,1,2,\cdots\Big\}.
\end{equation}
Here
\begin{equation}
\mathcal{A}^{k+1}_{k}:=\sum^{\lfloor\frac{k}2\rfloor}_{l=-1}\dfrac{{a_r}^{l+1}\big(k+r(l+1)+2\big)\big(k\big)^l_{-2}}{\big(r+2\big)^{l+1}_{r+2}}
A^{k-(2l+1)}_{k+r(l+1)},
\end{equation}
and \(\mathcal{B}^{k}_{k}\) is given by
\begin{eqnarray}
\sum^{\lfloor\frac{k-2}2\rfloor}_{l=-1}\dfrac{{a_r}^{l+1}(k+2)\big(k)^{l+1}_{-2}}{\big(k+lr+r+2\big)^{l+1}_{r}\big(r+2\big)^{l+1}_{r+2}}B^{k-2(l+1)}_{k-r(l+1)}
-\sum^{\lfloor\frac{k-1}2\rfloor}_{l=0}\dfrac{{a_r}^{l+1}r(k+lr+r+2)\big(k-1\big)^{l}_{-2}}{(k+r+2)\big(r+3\big)^{l+1}_{r+2}}A^{k-2(l+1)}_{k+r(l+1)}&&\\\nonumber
-\sum^{\lfloor\frac{k-2}2\rfloor}_{q=0}\dfrac{{a_r}^{q+2}r (k+2)\big(k\big)^{q+1}_{-2}}{\big(r+2\big)^{q+1}_{r+2}\big(k+r+2+qr\big)}\sum^{\lfloor\frac{k-2q-3}2\rfloor}_{l=0}\frac{{a_r}^l \big(k+r(q+l+2)+2\big)\big(k-2q-4\big)^{l}_{-2}}{\big(k+r(q+2)+2\big)\big(2q+r(q+2)+5\big)^{l+1}_{r+2}}A^{k-2(q+l+2)}_{k+r(q+l+2)}.&&
\end{eqnarray}

Define
\begin{equation}
\mathfrak{A}^m_n:=\sum^{\lfloor\frac{m}2\rfloor}_{l=0}\dfrac{-{a_r}^{l} (m)^{l}_{-2}(n+lr+2)}{(n+2)\big(n-m+2\big)^{l+1}_{r+2}}A^{m-(2l+1)}_{n+lr}
\end{equation}
and
\begin{eqnarray}\nonumber
\mathfrak{B}^m_n&:=&
\sum^{\left\lfloor\frac{m-1}2\right\rfloor}_{l=0}\dfrac{-{a_r}^l\big(n+2\big)^l_r\big(m-1\big)^l_{-2}}{\big(n-m+1\big)^{l+1}_{r+2}\big(n+r+2\big)^l_r}
B^{m-(2l+1)}_{n+lr}+\sum^{\left\lfloor\frac{m-1}2\right\rfloor}_{q=0}\dfrac{{a_r}^{q+l+1}r\big(n+2\big)^q_r\big(m-1\big)^q_{-2}}{\big(n-m+1\big)^{q+1}_{r+2}
\big(n+r+2\big)^{q+1}_r}\\
&&\sum^{\left\lfloor\frac{m-2q-3}2\right\rfloor}_{l=0}\dfrac{\big(n+r(l+q+1)+2\big)\big(m-2(q+1)\big)^l_{-2}}{\big(n-m+r(q+1)+2q+4\big)^{l+1}_{r+2}}
A^{m-2(q+l)-3}_{n+r(q+l+1)}.
\end{eqnarray}

\begin{lem}\label{AReduce} For nonnegative integers \(n, l\) and \(n>0\), we have
\begin{equation}
A^{2l}_n+\left[\mathfrak{A}^{2l}_n,\mathbb{A}_r\right]=0, \qquad
A^{2l-1}_n+\left[\mathfrak{A}^{2l-1}_n,\mathbb{A}_r\right]=\dfrac{{a_r}^{l}(2l-1)^{l-1}_{-2}\big(n+2+rl\big)}{(n+2)\big(n-2(l-1)\big)^{l}_{r+2}}A^{-1}_{n+lr},
\end{equation}
while
\begin{equation}
B^{2l}_n+\left[\mathfrak{B}^{2l}_n, \mathbb{A}_r\right]=\dfrac{{a_r}^{l}\big(n+r(l-1)+2\big)\big(n+2\big)^{l-1}_{r}\big(2l-1\big)^{l-1}_{-2}}{(n+lr+2)\big(n-2l+1\big)^{l}_{r+2}
\big(n+r+2\big)^{l-1}_{r}}B^{0}_{n+lr},
\end{equation}
and for \(l>0\) the following holds:
\begin{eqnarray*}
B^{2l-1}_n+\left[\mathfrak{B}^{2l-1}_n,\mathbb{A}_r\right]&=&\sum^{l-1}_{i=0}\frac{-{a_r}^l r\big(n+lr+2\big)\big(n+2\big)^{i}_{r}\big(2l-2\big)^{i}_{-2}(3)^{l-i-2}_2}{\big(n+2-2l\big)^{i+1}_{r+2}\big(n+r(i+1)+2\big)^{i-1}_{r}
\big(n-m+1+r(l-1)\big)^{l-i-1}_{-(r+2)}}A^{-1}_{n+lr}.\quad
\end{eqnarray*}
\end{lem}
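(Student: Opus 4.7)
The plan is to compute $[\mathfrak{A}^m_n,\mathbb{A}_r]$ and $[\mathfrak{B}^m_n,\mathbb{A}_r]$ term-by-term using the bracket identities of Lemma \ref{StrucCons}, and then to exhibit the telescoping cancellations that the Pochhammer coefficients in the definitions of $\mathfrak{A}^m_n$ and $\mathfrak{B}^m_n$ are engineered to produce.

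First I specialise Lemma \ref{StrucCons} to brackets with the two summands of $\mathbb{A}_r=A^1_0+a_rA^{-1}_r$. This yields the single-term actions
\[ [A^m_n, A^1_0]=(n-m+1)A^{m+1}_n,\qquad [A^m_n, A^{-1}_r]=-\frac{(n+r+2)(m+1)}{n+2}A^{m-1}_{n+r}, \]
together with the analogous formulas for $[B^m_n,A^1_0]$ and $[B^m_n,A^{-1}_r]$; note that the second of these produces not only a $B$-term but also an auxiliary $A^{m-1}_{n+r}$ contribution, a point which will dictate the structure of the $\mathfrak{B}$-calculation. Substituting $\mathfrak{A}^m_n=\sum_l c_lA^{m-(2l+1)}_{n+lr}$ into $[\mathfrak{A}^m_n,\mathbb{A}_r]$, the two pieces contribute to the common index $A^{m-2l}_{n+lr}$: the $A^1_0$-bracket through the coefficient $c_l$, and (after the shift $l\mapsto l+1$ in the summation) the $a_rA^{-1}_r$-bracket through $c_{l-1}$. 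The ratio $c_l/c_{l-1}$ built from the Pochhammer factors $(m)^l_{-2}$, $(n+lr+2)$, and $(n-m+2)^{l+1}_{r+2}$ is chosen to equal minus the ratio of the two corresponding structure constants, so every interior index cancels. What survives are the boundary contributions: the $l=0$ end yields $-A^m_n$, which combines with the $A^m_n$ on the left to prove the first identity for even $m$; and when $m=2l-1$ is odd the top of the sum produces $A^{-1}_{n+lr}$ with a coefficient that has no partner left in the sum, giving exactly the residue asserted in the second identity.

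For the $\mathfrak{B}$-statements the same telescoping strategy applies to the $B$-part of $\mathfrak{B}^m_n$, causing cancellations in every $B^{m-2l}_{n+lr}$ except the boundary $B^0_{n+lr}$ that appears for even $m$. The additional complication is the $A$-tail generated by $[B^{\bullet}_{\bullet},A^{-1}_r]$; the inner $A$-series appended to the definition of $\mathfrak{B}^m_n$ is constructed so that its own bracket with $\mathbb{A}_r$ telescopes these rogue $A$-terms away, yielding $0$ in the even case and, in the odd case, a single $A^{-1}_{n+lr}$ residue whose coefficient carries precisely the sum $\sum_{i=0}^{l-1}$ visible on the right-hand side of the third displayed formula. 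The $\mathfrak{A}$-identities established in the previous paragraph are invoked here as a subroutine to handle the internal $A$-telescoping.

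The principal obstacle is the Pochhammer bookkeeping. Concretely, one must certify, for each interior index, the coefficient identity
\[ c_l\,(n+lr-m+2l+2)+a_rc_{l-1}\left(-\frac{(n+lr+2)(m-2l+2)}{n+(l-1)r+2}\right)=0, \]
together with its $B$-analogue and the mixed identity coming from the auxiliary $A$-tail, and then verify that the surviving boundary terms carry exactly the asserted Pochhammer decorations. Each of these reduces, after cross-multiplication, to a single rational-function equality in $m$, $n$, $l$, $r$ whose proof is elementary but dense; a symbolic check in Maple using the \texttt{Singularity} package \cite{GazorKazemiUser,GazorKazemi} provides an independent confirmation and underpins the closed forms.
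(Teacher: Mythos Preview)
Your proposal is correct and is essentially the same approach as the paper's own proof, which consists of the single sentence ``A direct computation verifies our derived formulas.'' You have simply unpacked what that direct computation entails: specialising the structure constants of Lemma~\ref{StrucCons} to brackets with $A^1_0$ and $a_rA^{-1}_r$, recognising that the Pochhammer coefficients in $\mathfrak{A}^m_n$ and $\mathfrak{B}^m_n$ are chosen precisely to force a telescoping cancellation at every interior index, and isolating the surviving boundary contributions.
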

\bpr
A direct computation verifies our derived formulas.
\epr
The formulas given in Lemma \ref{AReduce} provide a method for deriving the transformation generators that they can eliminate \(A^i_k\) and \(B^j_k\) for \(i\neq -1\) and \(j\neq 0\) in the hypernormalization steps.

\begin{lem}\label{ABReduce}
For each nonnegative integer \(l\), we have
\begin{equation}
\left[\mathcal{A}^{2l+1}_{2l}, \mathbb{A}_r\right]=0, \qquad
\left[\mathcal{A}^{2l}_{2l-1}, \mathbb{A}_r\right]=-\dfrac{{a_r}^{l+1}\big((r+2)(l+1)-1\big)}{\big(r+2\big)^{l}_{r+2}}A^{-1}_{l(r+2)+r-1} \neq 0,
\end{equation}
while
\begin{equation}
\left[\mathcal{B}^{2l-1}_{2l-1}, \mathbb{A}_r\right]=-\dfrac{{a_r}^{l}\big(l(r+2)-r+1\big)\big(2l+1\big)^{l-1}_{r}\big(2l-1\big)^{l-1}_{-2}}{\big(l(r+2)+1\big)\big(r+2\big)^{l-1}_{r+2}
\big(2l+r+1\big)^{l-1}_{r}}B^0_{l(r+2)-1} \neq 0
\end{equation}
and
\begin{equation}
\left[\mathcal{B}^{2l}_{2l}, \mathbb{A}_r\right]=\sum^{l-1}_{i=-1}\dfrac{2 {a_r}^{l+1}r (l+1)^2 \big(2l\big)^{i+1}_{-2}(3)^{l-2-i}_2}{\big(2l+r(i+1)+2\big)^2_{r}\big(r+2\big)^{i}_{2r+4}\big(r(i+2)+2i+5\big)^{l-i-1}_{r+2}}A^{-1}_{l(r+2)+r} \neq 0.
\end{equation}
\end{lem}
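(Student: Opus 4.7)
The plan is to verify the four identities by direct expansion, with pairwise cancellation between consecutive summands being the essential combinatorial mechanism. Writing $\mathbb{A}_r=A^1_0+a_r A^{-1}_r$, each bracket splits into two parts, and Lemma \ref{StrucCons} gives $[A^m_n,A^1_0]=(n-m+1)A^{m+1}_n$ and $[A^m_n,A^{-1}_r]=-\frac{(n+r+2)(m+1)}{n+2}A^{m-1}_{n+r}$. Expanding $[\mathcal{A}^{k+1}_k,\mathbb{A}_r]$ through the explicit formula for $\mathcal{A}^{k+1}_k$, both contributions produce $A^\bullet_\bullet$ terms with matching labels after a single shift of the summation index, so the two sums can be collected index-by-index.

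For $[\mathcal{A}^{2l+1}_{2l},\mathbb{A}_r]$ and $[\mathcal{A}^{2l}_{2l-1},\mathbb{A}_r]$, I would combine the two shifted sums and apply the Pochhammer identities $(a)^j_{-2}=(a)^{j-1}_{-2}(a-2(j-1))$ and $(r+2)^{j+1}_{r+2}=(j+1)(r+2)\cdot(r+2)^j_{r+2}$. The combined coefficient at every interior index then reduces to a difference of two identical expressions and vanishes, so only boundary indices survive. The lower boundary ($j=-1$) is always annihilated by the prefactor $(r+2)(j+1)$ produced by the $A^1_0$ bracket. The upper boundary depends on the parity of $k$: for $k=2l$ the two summation ranges align exactly and the upper boundary also vanishes, giving $[\mathcal{A}^{2l+1}_{2l},\mathbb{A}_r]=0$; for $k=2l-1$ the $A^{-1}_r$ contribution has one extra index $j=l-1$ without a partner in the $A^1_0$ contribution, which produces a single $A^{-1}_{l(r+2)+r-1}$ term with the stated coefficient after simplification.

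For the two $\mathcal{B}$ identities the same method applies, but the bookkeeping is longer since $\mathcal{B}^k_k$ contains both $B^\bullet_\bullet$ and $A^\bullet_\bullet$ summands, and $[A^m_n,B^l_k]$ produces both $B$- and $A$-type outputs. I would split the calculation into a $B$-sector and an $A$-sector. In $[\mathcal{B}^{2l-1}_{2l-1},\mathbb{A}_r]$ the $B$-sector telescopes exactly as in the $\mathcal{A}$ case, leaving a single boundary $B^0_{l(r+2)-1}$ term, while the $A$-sector vanishes identically by the parity alignment argument. In $[\mathcal{B}^{2l}_{2l},\mathbb{A}_r]$ the $B$-sector collapses completely (mirroring the vanishing case for $\mathcal{A}$), but the cross-brackets between the $A$-summands of $\mathcal{B}^{2l}_{2l}$ and $a_r A^{-1}_r$, together with the $A$-contributions of $[A^m_n,B^l_k]$, generate the internal sum over $i$ appearing in the asserted formula. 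Non-vanishing of each surviving coefficient is then immediate from the positivity of its numerator for $l\geq 0$ and $r\geq 1$.

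The main obstacle is purely combinatorial: keeping the summation ranges, reindexings, and Pochhammer cancellations aligned throughout, particularly in $\mathcal{B}^{2l}_{2l}$ where the interaction between the $B$- and $A$-sectors requires careful tracking of which brackets feed which outputs. No analytic input beyond the structure constants is needed; the entire proof reduces to an inductive verification of Pochhammer-symbol identities, which is what the brief statement ``a direct computation verifies our derived formulas'' is pointing to.
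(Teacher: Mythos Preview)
Your proposal is correct and takes essentially the same approach as the paper: the paper's proof consists of the single sentence ``The proof follows direct computations by using the structure constants in Lemma~\ref{StrucCons},'' and your plan is precisely an articulation of how that direct computation is organized---splitting $\mathbb{A}_r=A^1_0+a_rA^{-1}_r$, using the bracket formulas, reindexing so that interior terms telescope, and reading off the surviving boundary contributions. Your identification of the parity-dependent boundary behavior (aligned ranges for $k=2l$ versus one extra index for $k=2l-1$) and of the $B$-sector/$A$-sector split for the $\mathcal{B}^k_k$ computations is exactly the mechanism behind the stated outcomes.
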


\bpr The proof follows direct computations by using the structure constants in Lemma \ref{StrucCons}.
\epr Define
\begin{eqnarray}\label{ztrans}
\mathcal{Z}^{m}_{n}:=\dfrac{1}{(m-n-1)}A^m_n+\frac{1}{n+2}B^m_n- \sum^{\left\lfloor\frac{m-1}2\right\rfloor}_{l=0}\dfrac{ {a_r}^{l+1}\big(n+r(l+1)+2\big)\big(m-1\big)^{l}_{-2}}{\big(n-m+1\big)^{l+2}_{r+2}} A^{m-2(l+1)}_{n+r(l+1)}.
\end{eqnarray}

\begin{lem}\label{ZReduce}
For any natural number \(l,\)
\begin{equation}
Z^{2l-1}_n \mathbb{A}_r+\big[\mathcal{Z}^{2l-1}_{n}, \mathbb{A}_r\big]=0,
\end{equation}
\begin{equation}\label{Z2ln}
Z^{2l}_n \mathbb{A}_r+\big[\mathcal{Z}^{2l}_{n}, \mathbb{A}_r\big]=\frac{{a_r}^{l+1}\big(n+r(l+1)+2\big)\big(2l-1\big)^{l-1}_{-2}}{\big(n-2l+1\big)^{l+1}_{r+2}}A^{-1}_{n+r(l+1)} \neq 0.
\end{equation}
\end{lem}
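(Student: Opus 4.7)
The plan is to verify both identities by direct symbolic computation using the structure constants of Lemma \ref{StrucCons}, the definition of $\mathcal{Z}^m_n$ in \eqref{ztrans}, and the decomposition $\mathbb{A}_r = A^1_0 + a_r A^{-1}_r$. First, I would expand
\[
Z^m_n\mathbb{A}_r = A^{m+1}_n + \tfrac{m-n}{n+2}B^{m+1}_n + a_r A^{m-1}_{n+r} + \tfrac{a_r m}{n+r+2}B^{m-1}_{n+r}
\]
via the $\mathcal{R}$-module action. Next, I would apply $[\,\cdot\,,\mathbb{A}_r]$ termwise to each summand of $\mathcal{Z}^m_n$ using the compact consequences
\[
[A^M_N,\mathbb{A}_r] = (N+1-M)A^{M+1}_N - \tfrac{a_r(N+r+2)(M+1)}{N+2}A^{M-1}_{N+r},
\]
\[
[B^M_N,\mathbb{A}_r] = (N-M)B^{M+1}_N + a_r r\,A^{M-1}_{N+r} - \tfrac{a_r(N+2)M}{N+r+2}B^{M-1}_{N+r}
\]
of the Lie bracket formulas.

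I would then observe three immediate cancellations: the $A^{m+1}_n$, $B^{m+1}_n$, and $B^{m-1}_{n+r}$ contributions from $\tfrac{1}{m-n-1}[A^m_n,\mathbb{A}_r]+\tfrac{1}{n+2}[B^m_n,\mathbb{A}_r]$ cancel their counterparts in $Z^m_n\mathbb{A}_r$; the residual $A^{m-1}_{n+r}$ coefficient simplifies, after the polynomial identity $(m-n-1)(n+2) - (n+r+2)(m+1) + r(m-n-1) = -(n+2)(n+r+2)$, to $\tfrac{a_r(n+r+2)}{n-m+1}A^{m-1}_{n+r}$, and this is in turn annihilated by the $l=0$ summand of the series in $\mathcal{Z}^m_n$ (using that $n+r+3-m$ is the second Pochhammer factor of $(n-m+1)^2_{r+2}$). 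For the generic interior summand at index $l$, the first bracket output carries the coefficient $N+1-M = n-m+r(l+1)+2l+3$, which is exactly the last factor of $(n-m+1)^{l+2}_{r+2}$ and hence cancels into the denominator; the second bracket output at index $l$ then pairs with the first bracket output at index $l+1$, and the Pochhammer identity $(m-1)^{l+1}_{-2} = (m-1)^l_{-2}(m-2l-1)$ makes their sum vanish. All interior terms therefore telescope away.

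Only the boundary term — the second bracket output at the top index $L = \lfloor(m-1)/2\rfloor$ — survives. Its coefficient carries the critical factor $m-2L-1$: when $m=2l-1$ this equals $0$ and the surviving term vanishes, yielding the first identity; when $m=2l$ this equals $1$, the surviving monomial reduces to $A^{-1}_{n+r(l+1)}$, and the accumulated coefficient is exactly $\tfrac{a_r^{l+1}(n+r(l+1)+2)(2l-1)^{l-1}_{-2}}{(n-2l+1)^{l+1}_{r+2}}$, which is nonzero since every explicit Pochhammer factor is nonzero in the relevant range. The main obstacle I anticipate is not conceptual but purely symbolic: keeping the nested Pochhammer bookkeeping consistent through the reindexing and verifying that the interior cancellations align term by term. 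No new ingredient beyond the bracket and telescoping techniques used in Lemmas \ref{AReduce} and \ref{ABReduce} is required.
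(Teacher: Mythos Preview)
Your proposal is correct and carries out in full detail precisely the direct computation that the paper leaves implicit: the paper provides no explicit proof of this lemma (the proofs of the companion Lemmas~\ref{AReduce} and~\ref{ABReduce} each consist of a single line invoking direct computation via the structure constants, and Lemma~\ref{ZReduce} is stated with no proof at all). Your expansion of $Z^m_n\mathbb{A}_r$, the bracket formulas for $[A^M_N,\mathbb{A}_r]$ and $[B^M_N,\mathbb{A}_r]$, the three immediate cancellations, the polynomial identity yielding the residual $\tfrac{a_r(n+r+2)}{n-m+1}A^{m-1}_{n+r}$, and the telescoping of the interior series terms are all verified correctly; the boundary analysis for $m=2l-1$ versus $m=2l$ is exactly right (note $(2l-1)^{l}_{-2}=(2l-1)^{l-1}_{-2}\cdot 1$, reconciling your final coefficient with the paper's).
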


\begin{thm} The \((r+1)\)-th level partially extended orbital normal form of \(v^{(1)}\) is
\begin{equation}\label{rlevel}
v^{(r+1)}:=A^1_0+ a_{r}A^{-1}_r +\sum_{k=s}^{\infty} b_{k}B^{0}_{k},
\end{equation}
where \(b_{k}=0\) for \(k= -1\pmod{r+2}.\)
\end{thm}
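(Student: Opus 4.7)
The plan is to show that, starting from the first-level normal form $v^{(1)}$ in \eqref{clsAB}, the hypernormalization operator $d^{k,r+1}$ from \eqref{d} reduces every grade-$k$ component (for $k>r$) into the target form \eqref{rlevel}. At each such grade it suffices to verify three things: every $A^{-1}_k$ with $k>r$ lies in $\mathrm{im}\, d^{k,r+1}$; every $B^0_k$ with $k\geq s$ and $k\equiv -1\pmod{r+2}$ lies in that image; and the remaining $B^0_k$ do not lie in the image, so that they persist in the complement $C^{r+1}_k$ under the chosen style that gives elimination priority first to lower grades and then to $B$-terms over $A$-terms within a grade.

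The three lemmas already developed supply exactly the generators needed. Lemma \ref{AReduce} provides the purely orbital generators $\mathfrak{A}^m_n$ and $\mathfrak{B}^m_n$ that annihilate every $A^m_n$ with $m\neq -1$ and every $B^m_n$ with $m\neq 0$ that arises during normalization, disposing of everything outside $\ker\ad_{\mathbb{A}_r}$. Within the kernel, Lemma \ref{ABReduce} exhibits three nontrivial brackets whose scalar multiples exhaust the residual unwanted monomials: $[\mathcal{A}^{2l}_{2l-1},\mathbb{A}_r]$ and $[\mathcal{B}^{2l}_{2l},\mathbb{A}_r]$ are nonzero multiples of $A^{-1}_{l(r+2)+r-1}$ and $A^{-1}_{l(r+2)+r}$, which knock out families of $A^{-1}_k$ with $k>r$, while $[\mathcal{B}^{2l-1}_{2l-1},\mathbb{A}_r]$ is a nonzero multiple of $B^0_{l(r+2)-1}$, removing precisely the $B^0_k$ with $k\equiv -1\pmod{r+2}$. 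Lemma \ref{ZReduce} contributes the coupled time-rescale/coordinate-change pair $(Z^{2l}_n,\mathcal{Z}^{2l}_n)$ whose image is a nonzero multiple of $A^{-1}_{n+r(l+1)}$, and sweeping $l$ together with admissible $n$ closes every remaining gap in the grades of $A^{-1}$ not already covered by Lemma \ref{ABReduce}. The persistence condition follows because the only kernel generators with zero bracket against $\mathbb{A}_r$ are the $\mathcal{A}^{2l+1}_{2l}$, which cannot produce any $B^0_k$ at all.

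The chief challenge is not any single algebraic identity but the inductive bookkeeping. Each generator from Lemmas \ref{AReduce}--\ref{ZReduce} re-enters higher grades through brackets with the nonlinear data $a_k A^{-1}_k$ and $b_k B^0_k$ of $v^{(1)}$, producing feedback that must be absorbed at later stages of the induction on $k$ without reviving already-killed monomials. This works because $d^{k,r+1}$ is upper-triangular with respect to grade, so elimination at grade $k$ depends only on data at grades $\leq k$, and the explicit nonvanishing coefficients catalogued in the three lemmas preclude any accidental cancellation. After completing the induction, the surviving monomials are exactly $A^1_0$, $a_r A^{-1}_r$, and the $B^0_k$ with $k\geq s$ and $k\not\equiv -1\pmod{r+2}$, which is precisely \eqref{rlevel}.
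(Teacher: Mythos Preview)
Your proposal is correct and follows essentially the same approach as the paper, whose proof consists of a single sentence stating that the result ``readily follows Lemmas \ref{AReduce}, \ref{ABReduce} and \ref{ZReduce}.'' Your account is a faithful unpacking of that citation: Lemma \ref{AReduce} reduces arbitrary $A^m_n$, $B^m_n$ to $A^{-1}$ and $B^0$ monomials; Lemma \ref{ABReduce} exploits the kernel generators $\mathcal{A}^{2l}_{2l-1}$, $\mathcal{B}^{2l-1}_{2l-1}$, $\mathcal{B}^{2l}_{2l}$ to kill the indicated $A^{-1}$ and $B^0$ terms; and Lemma \ref{ZReduce} supplies the time-rescaling pairs $(Z^{2l}_n,\mathcal{Z}^{2l}_n)$ that close the remaining $A^{-1}$ gaps.
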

\begin{proof}
The proof readily follows Lemmas \ref{AReduce}, \ref{ABReduce} and \ref{ZReduce}.
\end{proof}

\begin{rem}
\begin{itemize}
\item[a.] The number \(s\) must be updated in the \((r+1)\)-th level hypernormalization, \ie \(s:=\min \{k: b_k \neq 0\}\), where \(b_k\) denotes the coefficients given in equation \eqref{rlevel}.
\item[b.]\label{rem(b)} Let \(a\) and \(b\) be arbitrary real numbers. By using the change of coordinates and time rescaling
\begin{eqnarray}\nonumber
X&:=&\left(\frac{a_r}{a}\,\sign\left(\frac{a_r}{a}\right)^{r+1}\right)^{\dfrac{s+1}{2s-r}} \left(\frac{b}{b_s}\,\sign\left(\frac{a_r}{a}\right)^{rs}\,\sign\left(\frac{b}{b_s}\right)\right)^{\dfrac{r+2}{2s-r}} x,\\
Y&:=&\left(\frac{a_r b^2}{a {b_s}^2}\,\sign\left(\frac{a_r}{a}\right)^{r+1}\right)^{\dfrac{1}{2s-r}}y,\\\nonumber
\tau&:=&\left(\frac{a}{a_r}\,\sign\left(\frac{a_r}{a}\right)^{r+1}\right)^{\dfrac{s}{2s-r}} \left(\frac{b_s}{b}\,\sign\left(\frac{a_r}{a}\right)^{rs}\,\sign\left(\frac{b}{b_s}\right)\right)^{\dfrac{r}{2s-r}} t,
\end{eqnarray}
we can change the coefficient \(a_r\) to \(a \,\sign(a a_r)^{r+1}\) and \(b_s\) to \(b \,\sign(b b_s)\,\sign(a a_r)^{rs}\).
\end{itemize}
\end{rem}
Similar to \cite{baidersanders,BaidSand91,GazorMoazeni,GazorSadri} we define
\(\Phi_k:=\ad_{A^{-1}_{0}}\circ d^{{k+r}, {r+1}},\) where \(d^{{k+r}, {r+1}}\) is given by equation \eqref{d}. Hence, \(\ker \Phi_k\) is given by
\begin{equation}
\Span \bigg\{(\mathcal{A}^{k+1}_k, \0),(\mathcal{B}^{k}_k, \0),\left( \mathcal{B}^{2k}_{2k}+\frac{1}{k(r+2)+2}B^0_{k(r+2)}-\frac{1}{k(r+2)+1}A^0_{k(r+2)}, Z^0_{k(r+2)}\right): k \in \mathbb{N}\bigg\}.
\end{equation}
\begin{lem}
In the \((2s+1)\)-th level orbital normal form, term \(B^0_k\) is eliminated for \(k=s \pmod{r+2}\) when \(k>r+s+2\). The term \(B^0_{r+s+2}\) is eliminated in this level when \(3r+4 \neq 2s\).
\end{lem}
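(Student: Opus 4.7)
The plan is to pass from the $(r+1)$-th level orbital normal form \eqref{rlevel} to level $2s+1$ by exploiting the explicit description of $\ker\Phi_k$ stated just before the lemma. A pair in $\ker\Phi_k$ annihilates the leading part $A^1_0 + a_r A^{-1}_r$ at its own grade; but once the full normal form $v^{(r+1)} = A^1_0 + a_r A^{-1}_r + \sum_{j\geq s} b_j B^0_j$ is plugged into the homological equation at higher grades, these kernel elements produce nontrivial contributions through their interaction with the surviving term $b_s B^0_s$. Exactly as in the hypernormalization steps of Lemmas \ref{AReduce}--\ref{ZReduce}, solvability of the homological equation at the targeted grade reduces to showing that a certain numerical coefficient built from the structure constants is nonzero.

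First I would focus on the third family of kernel generators, the only one carrying a nontrivial time rescaling part $Z^0_{n(r+2)}$. By the $\mathcal{R}$-module formula $Z^0_{n(r+2)} B^0_s = B^0_{s+n(r+2)}$ from Lemma \ref{StrucCons}, the time rescaling acts on $b_s B^0_s$ to produce a contribution $b_s B^0_{s+n(r+2)}$. Since $s + n(r+2) \equiv s \pmod{r+2}$, this hits precisely the residue class claimed in the lemma.

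Next I would compute, via the structure constants, the bracket $[\mathcal{B}^{2n}_{2n}+\tfrac{1}{n(r+2)+2}B^0_{n(r+2)}-\tfrac{1}{n(r+2)+1}A^0_{n(r+2)},\; b_s B^0_s]$ and combine it with the module contribution above. For $n\geq 2$, i.e.\ when targeting $B^0_k$ with $k = s + n(r+2) > r+s+2$, the combined coefficient of $B^0_{s+n(r+2)}$ collapses to a nonzero numerical multiple of $b_s$; any residual $A$- and higher-grade $B$-terms produced as by-products are absorbed using Lemma \ref{AReduce}. This makes the homological equation solvable and eliminates $B^0_{s+n(r+2)}$ at level $2s+1$.

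For the borderline case $n = 1$, which corresponds to $k = r+s+2$, the same computation applies, but now the bracket $[\mathcal{B}^2_2, b_s B^0_s]$ contributes an additional term of the same shape as the module term, and the two partially cancel. Working out the numerical prefactors from Lemma \ref{StrucCons} and from the closed-form expression for $\mathcal{B}^2_2$, the total coefficient of $B^0_{r+s+2}$ factors, up to a nonzero numerical constant, as $(2s-3r-4)\, b_s$. Solvability therefore becomes equivalent to $3r+4 \neq 2s$. The main obstacle is precisely this $n=1$ bookkeeping: several lower-grade brackets collide at the single target index $r+s+2$, and correctly extracting the algebraic factor $(3r+4-2s)$ requires careful control of the rational prefactors arising in the formulas for $\mathcal{B}^k_k$ and for the $\mathcal{R}$-module action.
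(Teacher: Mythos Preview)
Your strategy matches the paper's: use the kernel element carrying the time-rescaling part $Z^0_{l(r+2)}$ together with $\mathcal{B}^{2l}_{2l}$, act on $b_s B^0_s$, and extract the coefficient of $B^0_{l(r+2)+s}$ after reducing the side-terms via Lemma~\ref{AReduce}. Where the argument breaks is your assertion that for $n\ge 2$ this coefficient ``collapses to a nonzero numerical multiple of $b_s$''. The generator $\mathcal{B}^{2l}_{2l}$ is itself a polynomial of degree $l+1$ in $a_r$, and the subsequent reductions via $\mathfrak{A},\mathfrak{B}$ bring in further powers of $a_r$; the net coefficient of $B^0_{l(r+2)+s}$ is therefore a polynomial $P(a_r)$ of degree $l+1$, not a bare constant. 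Showing $P\not\equiv 0$ still requires work---the paper does it by exhibiting the coefficient of ${a_r}^{2}$ explicitly---but even that does not give $P(a_r)\neq 0$ for the particular $a_r$ at hand, since $a_r$ could accidentally be one of the finitely many roots of $P$. The paper closes this gap with an ingredient you have not invoked: by the rescaling in Remark~\ref{rem(b)}(b) one may, without loss of generality, arrange $a_r$ to be a non-algebraic number, and then $P(a_r)\neq 0$ automatically. Without this step your argument does not conclude.

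The same issue affects your $n=1$ case. What you describe as ``the coefficient of $B^0_{r+s+2}$'' is again a degree-two polynomial in $a_r$; the factor $(3r+4-2s)$ that you extract is precisely the content of its ${a_r}^{2}$-coefficient (together with the always-positive factor $3r+2s+8$), not of the full expression. Thus the condition $3r+4\neq 2s$ only guarantees $P\not\equiv 0$, and one still needs the non-algebraic rescaling of $a_r$ to finish. Your $n=1$ bookkeeping is on the right track, but the missing global ingredient is the same as for $n\ge 2$: controlling the $a_r$-dependence via Remark~\ref{rem(b)}(b).
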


\bpr
Since
\begin{eqnarray}
\left[\mathcal{B}^{2l}_{2l}, \mathbb{A}_r\right]&=&\sum^{l-1}_{i=-1}\dfrac{2^{i+2} {a_r}^{l+1}r (l+1)^2 \big(l\big)^{i+1}_{-1}\big(2l-2i-3\big)^{l-2-i}_{-2}}{\big(r(i+1)+2(l+1)\big)^2_{r}\big(2r+4\big)^{i}_{r+2}\big((r+2)(i+2)+1\big)^{l-i-1}_{r+2}}A^{-1}_{l(r+2)+r}
\end{eqnarray}
and
\begin{eqnarray}
Z^0_{l(r+2)} \mathbb{A}_r&+&\left[\frac{1}{l(r+2)+2}B^0_{l(r+2)}-\frac{1}{l(r+2)+1}A^0_{l(r+2)}
, \mathbb{A}_r\right]=\dfrac{a_r (l+1)(r+2)}{l(r+2)+1}A^{-1}_{l(r+2)+r},
\end{eqnarray}
we may use a linear combination of \(\mathcal{B}^{2l}_{2l}\) and \( \bigg(\frac{-1}{l(r+2)+1}A^0_{l(r+2)}+\frac{1}{l(r+2)+2}B^0_{l(r+2)},Z^0_{l(r+2)}\bigg)\) in the \((s+1)\)- level for possible hypernormalization. On the other hand for any natural number \(l\),
\begin{eqnarray}\nonumber
Z^0_{l(r+2)} B^0_s&=& B^0_{l(r+2)+s},\\
\left[\frac{-1}{l(r+2)+1} A^{0}_{l(r+2)}, B^0_s\right]&=&\dfrac{-l(r+2)}{l(r+2)+1} A^{0}_{l(r+2)+s}+\dfrac{s(s+2)}{\big(l(r+2)+1\big)^2_1\big(l(r+2)+s+2\big)} B^{0}_{l(r+2)+s},\qquad \\\nonumber
\left[\frac{1}{l(r+2)+2} B^0_{l(r+2)}, B^0_s\right]&=&\dfrac{s-l(r+2)}{l(r+2)+2} B^{0}_{l(r+2)+s},
\end{eqnarray}
 while the impact \(\left[\mathcal{B}^{2l}_{2l},B^0_s\right]\) can be computed via
\begin{eqnarray}\nonumber
\left[A^{2l-2(i+1)}_{2l+r(i+1)},B^0_s\right]&=&{\frac {\left( s\right)^2_2  \left( 2i-2l+1 \right) }{ \left( 2l+ir+r+2 \right)^2_s }}B^{2l-2i
-2}_{2l+r \left( i+1 \right) +s} - \left( 2l+r \left( i+1 \right)  \right) A^{2l-2i-2}_{2l+r
 \left( i+1 \right) +s},\\
\left[B^{2l-2(i+1)}_{2l+r(i+1)},B^0_s\right]&=&\left( s-2l-r \left( i+1 \right)  \right) B^{2l-2i-2,}_{2l+r
 \left( i+1 \right) +s},\\\nonumber
\left[A^{2l-2(q+i+2)}_{2l+r(q+i+2)},B^0_s\right]&=&\frac {\left( s\right)^2_2  \left( 2i-2l+2q+3 \right) }{\left( 2l+r \left( q+i+2 \right) +2\right)^2_s }B^{2(l-q-i-2)}_{2l+r(q+i+2)+s}- \left( 2l+r \left( q+i+2 \right)\right)
 A^{2(l-q-i-2)}_{2l+r(q+i+2)+s}.\quad
\end{eqnarray}
By Lemmas \ref{AReduce} and \ref{ABReduce} there exists a \(S_l\in \LST_{2l(r+2)+2s-r}\) so that the effect of
\begin{equation}
\mathcal{B}^{2l}_{2l}+\frac{-1}{l(r+2)+1}A^0_{l(r+2)}+\frac{1}{l(r+2)+2}B^0_{l(r+2)}, S_l, Z^0_{l(r+2)},
\end{equation} in the \((2s+1)\)-th level generates \(P(a_r) B^0_{l(r+2)+s}\in {\rm im}\, d^{{2l(r+2+s)},{2s+1}}\)
 where \(P(a_r)\) is a polynomial expression of degree \((l+1)\) in \(a_r\). Indeed for \(l\geq 2\), the smallest power of \({a_r}\) in \(P(a_r)\) is \(2\) and its coefficient  is given by
\bas
\dfrac{4 r s l (s+2) (l+1)^2 \big(lr+r+4\big)^2_{-r} \big(2l-4\big)^{l-2}_{-2}}{(lr+2l+1)\big(2l+r+2\big)^2_r \big(2r+5\big)^{l-1}_{r+2}}\neq 0.
\eas
When \(l=1\), the coefficient of \({a_r}^2\) in \(P(a_r)\) is
\bas
-{\frac {2\left(s+2\right) \left(3r+2s+8\right)  \left(3r-2s+4 \right)}{\left(s+1\right) \left(r+s+4\right)\left(r+3\right)  \left(r+2\right) }}.
\eas
The latter is non-zero when \(3r+4 \neq 2s\). Since without loss of generality we can choose \(a_r\) as a non-algebraic number by remark \ref{rem(b)}(b), \(P(a_r)\) is non-zero. This completes the proof.
\epr
\begin{thm}\label{Orb2s1}
The \((2s+1)\)-th level partially extended orbital normal form of the system  \eqref{Eq2} is given by
\begin{equation}
v^{(2s+1)}=A^1_0+ a_{r}A^{-1}_r +\sum_{k=s}^{\infty} b_{k}B^{0}_{k}.
\end{equation}
Here \(b_{k}=0\) when \(k=-1\pmod{r+2},\) and for \(k>s\) when \(k=s \pmod{r+2}.\)
\end{thm}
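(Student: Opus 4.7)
The plan is to combine the $(r+1)$-th level theorem (equation \eqref{rlevel}) with the immediately preceding lemma. By the $(r+1)$-th level theorem, after $r+1$ orbital hypernormalization steps every $A^i_k$-term with $i\neq -1$ and every $B^j_k$-term with $j\neq 0$ has already been eliminated, so the vector field is of the form
\[
v^{(r+1)} = A^1_0 + a_r A^{-1}_r + \sum_{k \geq s} b_k B^0_k,
\]
with $b_k=0$ whenever $k\equiv -1\pmod{r+2}$. Consequently, the only new content of Theorem \ref{Orb2s1} is the further vanishing of $b_k$ for $k>s$ with $k\equiv s\pmod{r+2}$.

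For such an index $k=l(r+2)+s$ with $l\geq 1$, the preceding lemma already constructs the required elimination generator at the $(2s+1)$-th level: a linear combination of $\mathcal{B}^{2l}_{2l}$, the pair $\bigl(\tfrac{-1}{l(r+2)+1}A^0_{l(r+2)}+\tfrac{1}{l(r+2)+2}B^0_{l(r+2)},\, Z^0_{l(r+2)}\bigr)$, and a compensating $S_l\in\LST_{2l(r+2)+2s-r}$ produced inductively by Lemmas \ref{AReduce} and \ref{ABReduce}. The image of this combination under $d^{2l(r+2)+s,\,2s+1}$ is $P(a_r)\,B^0_{l(r+2)+s}$ for a polynomial $P$ of degree $l+1$ in $a_r$ whose leading coefficient is an explicit nonzero rational function of $r,s,l$. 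Therefore $B^0_{l(r+2)+s}\in {\rm im}\, d^{2l(r+2)+s,\,2s+1}$ and is eliminable at the $(2s+1)$-th level; iterating over $l\geq 1$ yields the claimed form. The remaining $B^0_k$ with $k\not\equiv -1,s\pmod{r+2}$ survive for dimension-counting reasons already implicit in the $(r+1)$-th level theorem, so no additional work is needed for those.

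\textbf{Main obstacle.} The delicate point is the base case $l=1$, that is, the elimination of $B^0_{r+s+2}$. Here the coefficient of ${a_r}^{2}$ in $P(a_r)$ carries the factor $(3r+4-2s)$ and so vanishes exactly on the codimension-one locus $3r+4=2s$. The saving device is the freedom, recorded in the remark on rescaling $(a_r,b_s)$, to choose $a_r$ non-algebraic: provided $P$ is not the zero polynomial, this forces $P(a_r)\neq 0$ through its lower-order coefficients. Verifying that $P$ is genuinely nonzero in the exceptional locus $3r+4=2s$ (by exhibiting another nonvanishing coefficient) is the only nontrivial computation left; once that is dispatched, the rest of the proof is a routine assembly of the structure-constant identities in Lemma \ref{StrucCons} together with the reduction formulas in Lemmas \ref{AReduce}, \ref{ABReduce} and \ref{ZReduce}.
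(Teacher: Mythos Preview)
Your proposal correctly identifies that the preceding lemma supplies the eliminability of $B^0_k$ for $k\equiv s\pmod{r+2}$, $k>s$. But you have misidentified where the content of Theorem~\ref{Orb2s1} lies, and the part you dismiss is precisely what the paper proves.

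The assertion that the surviving $B^0_k$ with $k\not\equiv -1,s\pmod{r+2}$ ``survive for dimension-counting reasons already implicit in the $(r+1)$-th level theorem'' is not valid. Passing from level $r+1$ to level $2s+1$ enlarges the image of the normalizing map $d^{\cdot,2s+1}$: every element of $\ker d^{\cdot,r+1}$ becomes a potential new generator, acting now on $b_sB^0_s$. The preceding lemma exploits \emph{one} family of such kernel elements---the $\bigl(\mathcal{B}^{2l}_{2l}+\cdots,\,Z^0_{l(r+2)}\bigr)$---to kill the $B^0_{l(r+2)+s}$. But the kernel also contains the odd-indexed families $(\mathcal{A}^{2l+1}_{2l},0)$ and $(Z^{2l+1}_n,\mathcal{Z}^{2l+1}_n)$ (cf.\ Lemmas~\ref{ABReduce} and~\ref{ZReduce}). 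You must show these do \emph{not} eliminate any further $B^0$-term; otherwise the stated normal form could be strictly too large. The paper's proof is exactly this computation: it verifies that $[\mathcal{A}^{2l+1}_{2l},B^0_s]$ and $Z^{2l-1}_nB^0_s+[\mathcal{Z}^{2l-1}_n,\mathbb{A}_r]$ land in $\Span\{A^{-1}_\ast\}$, hence contribute nothing to the $B^0$-part. That is the missing half of your argument.

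Your ``main obstacle'' section also misplaces the difficulty. The $l=1$ exceptional case $3r+4=2s$ belongs to the preceding lemma, not to the theorem, and the paper simply records it as a hypothesis there rather than repairing it by exhibiting another coefficient of $P$. Whatever the status of that point, it is orthogonal to the actual gap above.
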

\bpr
By Lemmas \ref{ABReduce} and \ref{ZReduce} we have
\be\label{ZZ}
 \left(Z^{2l+1}_n,\mathcal{Z}^{2l+1}_{n}\right)\in \ker d^{2n+2lr+2r, {r+1}} \hbox{ and } \left(A^{2l+1}_{2l},0\right) \in \ker d ^{r+1, 2l+2lr+2r}.
\ee
On the other hand
\bes
\left[\mathcal{A}^{2l+1}_{2l}, {B}^0_s\right] \in \Span \{ A^{-1}_{2l+s+r(1+l)}\}.
\ees
Hence,
\bes
d^{2n+2s+2lr-r, 2s+1}\left(Z^{2l-1}_{n},\mathcal{Z}^{m}_{n}\right)\in \Span \{A^{-1}_{n+s+lr}\}  \quad \hbox{ and } \quad
Z^{2l-1}_n {B}^0_s+\big[\mathcal{Z}^{2l-1}_{n}, \mathbb{A}_r\big]\in \Span \{{ A^{-1}_{n+s+lr}}\}
\ees imply that the kernel terms in \eqref{ZZ} do not contribute to further hypernormalization in \((2s+1)\)-th level.
\epr

\section{ Parametric normal forms }\label{sec3}

This section is devoted to the computation of parametric normal forms for the generalized cusp case of Bogdanov--Takens singularity, \ie the differential system \eqref{Eq1}, where the system \eqref{Eq2} satisfies equation \eqref{rs}; also see \cite{Stroyzyna,Zoladek02}. We denote \(\m\) for \((m_1, \ldots, m_p)\in \NZ^p, \NZ:=\N\cup\{0\},\) \(\mu^\m\) for \({\mu_1}^{m_1}\cdots {\mu_p}^{m_p},\) and \(|\m|:= \sum^p_{i=1} m_i.\)

\begin{defn}
The grading function
\begin{equation}
\delta(\mu^\mathbf{m}A^l_k)= \delta(\mu^\mathbf{m} B^l_k)=lr+2k+(2r+1)|\mathbf{m}|
\end{equation}
extends the grading function introduced in \cite[equation 4.3]{baidersanders} into parametric cases; also see \cite[equation 3.5]{GazorMoazeni}.
\end{defn}

\begin{lem}
There exist polynomial maps \(v_i(\mu)\)  and \(w_i(\mu)\) so that equation (\ref{Eq1}) can be transformed into
\begin{eqnarray}
v^{(r+1)}&=&A^1_0+a_r A^{-1}_r +\sum_{1\leq i \leq r} v_{i}(\mu) A^{-1}_{i-2}+\sum_{0\leq i < s} w_{i}(\mu)B^{0}_{i} +\sum_{i=s}^{\infty} (b_{i}+w_{i}(\mu))B^{0}_{i},
\end{eqnarray} where \(b_{i}=0\) for \(i=-1\pmod{r+2}.\) Further, \(w_{i}(\0)=0\) and \(v_{i}(\0)=0\) for all \(i \geq 0\).
\end{lem}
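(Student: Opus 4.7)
The plan is to extend the orbital hypernormalization of Section \ref{sec2} to the parametric setting by treating the parameter monomials $\mu^{\mathbf{m}}$ as elements of a graded coefficient ring. First I would rewrite the system \eqref{Eq1} in the basis $\{\mu^{\mathbf{m}}A^l_k,\mu^{\mathbf{m}}B^l_k\}$. Since $F(0,0,\mathbf{0})=G(0,0,\mathbf{0})=0$ and the non-parametric part is exactly \eqref{Eq2}, the $|\mathbf{m}|=0$ piece agrees with the classical first-level normal form \eqref{clsAB}, while every $|\mathbf{m}|\ge 1$ contribution vanishes at $\mu=\mathbf{0}$.

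Next I would carry out the hypernormalization grade-by-grade with respect to the extended grading $\delta(\mu^{\mathbf{m}}A^l_k)=lr+2k+(2r+1)|\mathbf{m}|$. At each grade the homological equation is governed by the map $d^{k,r+1}$ of equation \eqref{d}. The crucial observation is that Lemmas \ref{StrucCons}, \ref{AReduce}, \ref{ABReduce}, and \ref{ZReduce} are algebraic identities in $\LST$ and therefore remain valid verbatim after scalar extension from $\mathbb{R}$ to $\mathbb{R}[\mu]$. Consequently, for every monomial $\mu^{\mathbf{m}}A^l_k$ or $\mu^{\mathbf{m}}B^l_k$ that lies outside the prescribed $(r+1)$-th level complement, one may build a transformation generator $\mathfrak{A}^m_n,\mathfrak{B}^m_n,\mathcal{A}^{k+1}_k,\mathcal{B}^k_k$ or $\mathcal{Z}^m_n$ weighted by $\mu^{\mathbf{m}}$ that removes it, the resulting coefficients being polynomial in $\mu$ by inspection of the formulas.

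After this elimination the residual terms are precisely of three types: (i) the classical part $A^1_0+a_r A^{-1}_r+\sum_{k\ge s}b_k B^0_k$ with $b_k=0$ for $k\equiv-1\pmod{r+2}$, inherited from the non-parametric theorem; (ii) parametric $A^{-1}$-terms $v_i(\mu)A^{-1}_{i-2}$ for $1\le i\le r$, which cannot be removed because $\mathbb{A}_r=A^1_0+a_r A^{-1}_r$ is of grade $r$ and the reduction formulas of Lemma \ref{AReduce} produce output of index at least $r$ in the $A^{-1}$-slot; and (iii) parametric $B^0$-terms $w_i(\mu)B^0_i$, since the $B$-reduction operators land exclusively in the $B^0$-direction. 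Polynomiality of $v_i$ and $w_i$ is automatic, since only finitely many extended grades contribute to each fixed index $i$ and the involved homological equations have polynomial coefficients in $\mu$. The vanishing $v_i(\mathbf{0})=w_i(\mathbf{0})=0$ follows because setting $\mu=\mathbf{0}$ collapses the procedure back to the non-parametric hypernormalization of Section \ref{sec2}, in which the terms $A^{-1}_{i-2}$ ($1\le i\le r$) and $B^0_i$ ($0\le i<s$) are absent.

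The main technical obstacle is the bookkeeping needed to guarantee that the generator chosen to eliminate a term of extended grade $d$ does not reintroduce a previously normalized monomial of lower grade. This is handled by the triangular nature of the grading, namely $[\LST_k,\LST_l]\subseteq\LST_{k+l}$ and $\mathcal{R}_l\LST_k\subseteq\LST_{l+k}$, together with the fact that $|\mathbf{m}|\ge 1$ adds at least $2r+1$ to the grade. Hence each elimination perturbs only coefficients of strictly higher extended grade, so the parametric contributions accumulate upward without corrupting already-normalized lower-grade terms, and the induction on $d$ terminates at each fixed order in $\mu$ with the form claimed by the lemma.
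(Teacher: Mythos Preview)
Your proposal has a genuine gap: the machinery of Section~\ref{sec2} that you invoke is not sufficient in the parametric setting, because the parametric system \eqref{Eq1} produces terms that simply do not occur in the non-parametric first-level normal form \eqref{clsAB}. Specifically, terms with subscript $k\le 0$ and parametric coefficient --- such as $\mu^{\mathbf m}A^0_{-1}$ (a constant in $\dot y$), $\mu^{\mathbf m}A^0_0$, $\mu^{\mathbf m}A^1_0$, $\mu^{\mathbf m}A^{-1}_r$, and $\mu^{\mathbf m}A^{-1}_{r-1}$ --- must all be eliminated to reach the claimed form, and none of them is covered by Lemmas \ref{AReduce}--\ref{ZReduce}. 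Lemma \ref{AReduce} is stated only for $n>0$; the kernel elements of Lemma \ref{ABReduce} and the time-rescaling identities of Lemma \ref{ZReduce} hit only $A^{-1}_k$ with $k\ge r$, never $k=r-1$; and nothing in Section~\ref{sec2} touches $A^0_{-1}$ at all. Your statement that ``for every monomial \ldots\ one may build a transformation generator $\mathfrak{A}^m_n,\mathfrak{B}^m_n,\mathcal{A}^{k+1}_k,\mathcal{B}^k_k$ or $\mathcal{Z}^m_n$'' is therefore false for these low-index terms.

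The paper closes this gap with tools that are \emph{outside} the Lie-algebra/time-rescaling framework of Section~\ref{sec2}: primary and secondary \emph{shifts of coordinates}, $x\mapsto x+c(\mu)$ and $y\mapsto y+c(\mu)$ with $c(\mathbf 0)=0$. The primary shift kills $\mu^{\mathbf m}A^0_{-1}$; the secondary shift, applied to $A^{-1}_r$, kills $\mu^{\mathbf m}A^{-1}_{r-1}$ (it is the only available mechanism reaching that slot). Separately, $\mu^{\mathbf m}A^{-1}_r$ is removed via the grade-zero combination $Z^0_0\mathbb A_r+[-A^0_0,\mathbb A_r]=\tfrac{a_r(r+4)}{2}A^{-1}_r$, and $\mu^{\mathbf m}A^0_0$, $\mu^{\mathbf m}A^1_0$ via the explicit brackets $[A^{-1}_0,A^1_0]=2A^0_0$, $[A^0_0,A^1_0]=A^1_0$. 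Because the secondary shift also perturbs $B$-terms and creates new $A^0_{-1}$-terms (see \eqref{B0kImp}), the paper must specify the order: at each grade, secondary shift first, then primary shift, then the standard first-level simplifications. Your triangular-grading bookkeeping does not substitute for this, since shifts are not elements of $\LST$ or $\mathcal R$ and hence are not governed by the inclusions $[\LST_k,\LST_l]\subseteq\LST_{k+l}$, $\mathcal R_l\LST_k\subseteq\LST_{k+l}$ you appeal to.
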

\begin{proof}
By\cite[Lemma 5.1]{GazorMoazeni}, all parametric terms \(A^i_k \mu^\mathbf{m}\) and \(B^j_k \mu^\mathbf{m}\) for \(i \neq -1\), \(j \neq 0, k>0\) can be eliminated in the first level parametric normal form. Let \(a_r= a_{r\0}\).
Parametric terms of the form \(A^{-1}_r \mu^\mathbf{m}, A^0_0 \mu^\mathbf{m},\) and \(A^1_0\mu^\mathbf{m}\) for any \(\mathbf{m}=(m_1, m_2, \ldots, m_k) \in \NZ,\) when \(\sum m_i >0,\) can be simplified from the system (\ref{Eq1}) due to

\begin{equation}
Z^0_0 \mathbb{A}_r+\big[-A^0_0, \mathbb{A}_r\big]=\dfrac{a_r (r+4)}2A^{-1}_{r}, \quad \big[A^{-1}_0, A^1_0\big]=2 A^0_0 \quad \hbox{ and } \quad
\big[A^0_0,A^1_0\big]=A^1_0.
\end{equation}
Consider an arbitrary parametric map \(c(\mu)\) with \(c(\0)=0\). By primary shift of coordinates (see \cite[page 373]{MurdBook}), replacing \(x\) by \(x+c(\mu)\), we can eliminate terms of the form \(A^0_{-1}c(\mu)\). On the other hand by applying secondary shift of coordinates
(see \cite[page 373]{MurdBook}), \ie replacing \(y\) by \(y+c(\mu)\), on the first level normal form system,  we can eliminate terms of the form \(A^{-1}_{r-1}\mu^{n}\). This is due to the equation
\begin{eqnarray}
A^{-1}_r\big(y+c(\mu)\big)&=&\sum^{r+1}_{i=0} {r+1 \choose i} c(\mu)^i A^{-1}_{r-i}.
\end{eqnarray}
The impact of the secondary shift of coordinates on B-terms are given by
\begin{eqnarray}\label{B0kImp}
B^{0}_k\big(y+c(\mu)\big)&=&\sum^{k}_{i=0} {k \choose i} c(\mu)^i B^{0}_{k-i}+ \sum^{k}_{i=1} {k \choose i-1} c(\mu)^i \left(\frac{k-i+1}{k-i+2}B^0_{k-i}-A^{0}_{k-i}\right)-c(\mu)^{k+1} A^{0}_{-1}. \qquad
\end{eqnarray} In each grade we first apply the secondary shift of coordinates to simplify \(\mu ^\mathbf{m}A^{-1}_{r-1}.\) Then, we use the primary shift of coordinates to simplify \(\mu ^\mathbf{m}A^{0}_{-1}.\) The remaining terms, that are created through the equation \eqref{B0kImp}, can be eliminated via the changes of state variables like in the first level normal form. The latter transformations do not influence the coefficient of \(A^{-1}_0.\)

Since
\begin{equation}
\big[B^0_0,A^1_0\big]=0, \quad \big[B^0_0,A^{-1}_r\big]=a_r A^{-1}_{r},\quad \hbox{ and } \quad
Z^0_0 A_r+\left[-A^0_0, \mathbb{A}_r\right]=\dfrac{a_r (r+4)}2A^{-1}_{r},
\end{equation}
we have

\begin{equation}\label{eq3.7}
\left(\dfrac{a_r (r+4)}2B^0_0, a_r r A^0_0, -a_rr Z^0_0\right)\mu^\m \in \ker d^{r+|\m|(2r+1), r+1}.
\end{equation} This implies that we can not simplify any more term in the \((r+1)\)-th level. Thus the constant and linear terms in state variables
of \(A^0_0,\) \(A^1_0,\) \(A^0_{-1},\) \(A^{-1}_r,\) and \(A^{-1}_{r-1}\) are simplified in the \((r+1)\)-th level parametric normalization step.
\end{proof}

Now we update \(s:=\min\{k: b_{k} \neq 0\}\), and similar to \cite{GazorMoazeni,GazorSadri} define
\bes
\{k_i\,|\,i\in \mathbb{N}\}:=\left \{k\,| \,0\leq k<s, \frac{k+1}{r+2}\not\in \mathbb{N}\right\}\cup\left\{k\,|\, s< k,  \frac{k+1}{r+2} \hbox{ and } \frac{k-s}{r+2} \not\in \mathbb{N}\right\},
\ees
and \(N:= s-\left\lfloor\frac{s}{r+2}\right\rfloor \).

\begin{thm}\label{Par2s1Thm}
There exist maps \(\nu_i(\mu)\) and \(\omega_i(\mu)\) such that the \((2s+1)\)-th level partially extended parametric normal form of \eqref{Eq1} is given by
\begin{eqnarray}\label{2s+1parametric}
v^{(2s+1)}&=&A^1_0+a_r A^{-1}_r +b_s B^0_s+ \sum_{1\leq i \leq r} \nu_i A^{-1}_{i-2}+\sum_{i=1}^N \omega_i B^{0}_{k_i} +\sum_{i=N+1}^{\infty}(b_{k_i}+\omega_i) B^{0}_{k_i}.
\end{eqnarray}
Each parametric Bogdanov-Takens singular system \eqref{Eq1} of the generalized cusp can be transformed into the \((s+1)\)-degree truncated simplest parametric normal form
\be\label{Par2s1}
\dot{x}=a_r y^{r+1}+b_s x y^s+\sum_{1\leq i \leq r} \nu_i y^{i-1}+\sum_{i=1}^N \omega_i x y^{k_i},\qquad
\dot{y}=-x+b_s y^{s+1}+\sum_{i=1}^N \omega_i  y^{k_i+1},
\ee where \(\nu_i(\mu)\) and \(\omega_i(\mu)\) are polynomials. Furthermore, the normal form coefficients \(a_r, b_s\) and polynomial functions \(\nu_i(\mu)\) and \(\omega_i(\mu)\) are uniquely determined by the differential system \eqref{Eq1}.
\end{thm}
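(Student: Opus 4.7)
The strategy is to layer the parametric hypernormalization on top of the orbital result Theorem~\ref{Orb2s1}, exploiting the fact that the grading $\delta$ assigns every unit of $|\m|$ a weight $2r+1$, so each occurrence of a parameter pushes a term into strictly higher grade. Starting from the $(r+1)$-th level parametric normal form provided by the preceding lemma, the plan is to iterate the elimination through levels $r+2, r+3, \ldots, 2s+1$ using precisely the parametric analogues of the generators constructed in Lemmas~\ref{AReduce}, \ref{ABReduce}, \ref{ZReduce}. Each such generator is multiplied by $\mu^{\m}$; the leading contribution to $d^{k,j}$ is still $[\,\cdot\,,\mathbb{A}_r]$ plus the time-rescaling action, while cross terms with the previously normalized $v_i$ for $i\ge 1$ move into strictly higher grade and get absorbed in subsequent iterations.

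With this setup, one mimics the elimination from the proof of Theorem~\ref{Orb2s1} term-by-term on the parametric expansion. The contributions $w_k(\mu) B^0_k$ with $k\equiv -1 \pmod{r+2}$ are already removed at level $r+1$. For $k > s$ with $k \equiv s \pmod{r+2}$, one uses a $\mu^{\m}$-weighted linear combination of $\mathcal{B}^{2\ell}_{2\ell}$, the auxiliary pair $A^0_{\ell(r+2)}$, $B^0_{\ell(r+2)}$, and the rescaling generator $Z^0_{\ell(r+2)}$; the non-vanishing polynomial factor $P(a_r)$ identified in the lemma preceding Theorem~\ref{Orb2s1} is insensitive to tensoring by $\mu^{\m}$, so the same elimination succeeds and the surviving $B$-indices are exactly $\{k_i\}$, producing the two $B$-sums in \eqref{2s+1parametric}. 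For the $A$-terms, primary and secondary shifts of coordinates together with equation~\eqref{eq3.7} remove every parametric $A^l_k\mu^{\m}$ with $l\ne -1$, every $A^{-1}_{r-1}\mu^{\m}$, every $A^{-1}_{r}\mu^{\m}$ with $|\m|\ge 1$, and every $A^{-1}_k\mu^{\m}$ for $k\ge r+1$ via the parametric version of Lemma~\ref{AReduce}; what remains is precisely $\sum_{1\le i\le r}\nu_i(\mu)A^{-1}_{i-2}$.

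Uniqueness follows because the fixed normal form style (priority to lower grade, and within a grade to $B$-terms over $A$-terms) pins down a canonical complement $C^{2s+1}_k$ to $\operatorname{im} d^{k,2s+1}$ for each $k$, so the coefficients $a_r, b_s$ and the polynomial maps $\nu_i, \omega_i$ are recovered as successive projections and cannot depend on the transformations used. Rewriting the resulting vector field in coordinates via $A^1_0=-x\,\partial_y$, $A^{-1}_{i-2}=y^{i-1}\partial_x$ and $B^0_k = xy^k\,\partial_x+ y^{k+1}\,\partial_y$ converts \eqref{2s+1parametric} into \eqref{Par2s1}. The main obstacle I expect is the cross-level bookkeeping at level $2s+1$: one must check that the parametric kernel of $d^{k,2s+1}$ is not strictly larger than its orbital counterpart, equivalently that the non-zero leading coefficients (such as $P(a_r)$ and the scalars appearing in Lemmas~\ref{AReduce}, \ref{ABReduce}, \ref{ZReduce}) remain non-zero when combined with parametric contributions of the already-normalized $v_i$, so that no unintended additional elimination destroys the prescribed shape of \eqref{2s+1parametric}. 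This is essentially a grade-by-grade verification and is the only place where a genuine computation, rather than direct transcription from the orbital case, is required.
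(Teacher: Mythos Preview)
Your approach is essentially the paper's: tensor the orbital generators by $\mu^{\m}$, run the same eliminations grade by grade, and check that no new kernel elements appear. One specific step is missing, however, and it is precisely the computation the paper singles out. Your mechanism for removing $B^0_k\mu^{\m}$ with $k\equiv s\pmod{r+2}$ (via $\mathcal{B}^{2\ell}_{2\ell}$, the auxiliary $A^0_{\ell(r+2)},B^0_{\ell(r+2)}$ and $Z^0_{\ell(r+2)}$, together with the $P(a_r)$ factor) only applies for $\ell\ge 1$, hence $k>s$. The case $k=s$ itself --- eliminating the parametric piece $B^0_s\mu^{\m}$ so that only the constant $b_sB^0_s$ survives --- is not covered, yet $s\notin\{k_i\}$ so it must be dealt with. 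The paper handles this separately: the kernel element \eqref{eq3.7}, which was inert at level $r+1$, becomes effective at level $2s+1$ once bracketed against $b_sB^0_s$. From $[B^0_0,B^0_s]=sB^0_s$, $[-A^0_0,B^0_s]=\tfrac{s}{2}B^0_s$ and $Z^0_0B^0_s=B^0_s$ one gets a nonzero multiple of $B^0_s$ while the action on $\mathbb{A}_r$ vanishes by construction, so $B^0_s\mu^{\m}$ is removed. This is not an instance of the $P(a_r)$ argument and should be stated explicitly.

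On your ``main obstacle'': the paper closes it with a single bracket rather than a general grade-by-grade sweep. After the above, the only remaining parametric kernel element is $(A^1_0+a_rA^{-1}_r)\mu^{\m}$, and $[A^1_0+a_rA^{-1}_r,\,b_sB^0_s]=-sb_sB^0_s-a_rb_sA^{-1}_{r+s}$ lands in directions already eliminated, so no further simplification is possible and the shape \eqref{2s+1parametric} is preserved. Uniqueness is then referred to \cite[Theorem~4.3 and Lemma~4.2]{GazorYuSpec}; your normal-form-style argument is the same content.
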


\begin{proof}
Since
\begin{equation}
\big[B^0_0,B^0_s\big]=s B^0_s, \qquad \big[-A^0_0,B^0_s\big]=\dfrac{s}2 B^0_s, \qquad Z^0_0 B^0_s= B^0_s,
\end{equation} and the equation \eqref{eq3.7} holds, we can eliminate \(B^0_s\mu^{n}\) for any \(\0\neq\mu\in {\NZ}^p\).

The parametric versions of the formulas in the proof of Theorem \ref{Orb2s1} imply that terms of the form \(B^0_k\mu^\m\) for \(k\neq s\pmod{r+2}\) can be eliminated in the \((s+1)\)-th level. The only remaining terms in the kernel of \(d^{r+|\m|(2r+1), r}\) is \((A^1_0+a_r A^{-1}_r)\mu^\m\). This does not omit any further term due to the equation
\begin{equation}
\left[A^1_0+a_r A^{-1}_r, b_s B^0_s\right]=-s b_s B^0_s - a_r b_s A^{-1}_{r+s}.
\end{equation} The proof is now complete by \cite[Theorem 4.3 and Lemma 4.2]{GazorYuSpec}.
\end{proof}

\section{Bifurcation analysis of truncated normal form when \(r= s=1\)}\label{sec5}

We consider the bifurcation analysis of truncated parametric normal forms for two most generic cases of the generalized cusp case of Bogdanov-Takens singularity, \ie \((r, s)= (1, 1),\) and \((2, 2).\) This section is devoted to the most generic case \(r= s=1\). The local qualitative dynamics of this case are well-known: it consists of a saddle-node, a Hopf and a homoclinic bifurcation.
We consider the 4-jet asymptotic unfolding normal form system
\be\label{r1s1NF}
\dot{x}=\nu_1+\nu_2 x+a_1 y^2+b_1 x y+b_3 x y^3, \quad\qquad \dot{y}=-x+\nu_2 y+ b_1 y^2+b_3 y^4.
\ee
A saddle-node bifurcation of equilibria occurs along the cusp variety
\begin{eqnarray*}
\nu_1=-\dfrac{2+b_1\nu_2\left(12+15{b_1}{\nu_2}-2{{b_1}^2\nu_2}^2\right)-2\sqrt{1+b_1 \nu_2 \left(12+51 b_1\nu_2\right)+{b_1}^3{\nu_2}^3\left(88+51 b_1 \nu_2\right)+{b_1}^6{\nu_2}^6}}{27{b_1}^4}.
\end{eqnarray*}
Assuming that \(\nu_1<0,\) \(a_1=1,\) and \(\nu_2= o(\sqrt{-\nu_1}),\) two local equilibria around the origin for the system \eqref{r1s1NF} are estimated by
\bas\nonumber
&x_\pm=-5{b_1}^9{\nu_1}^3-13{b_1}^6{\nu_1}^2\nu_2-12{b_1}^4b_3{\nu_1}^3\pm{\frac {21{b_1}^7}{8}} {\nu_1}^2\sqrt{-\nu_1}
+\frac{3}{2}{b_1}^5{\nu_1}^2-8{b_1}^3\nu_1{\nu_2}^2\pm2b_1\sqrt {-\nu_1}{\nu_2}^2+b_3{\nu_1}^2&\\\nonumber
&\;\qquad \pm{\frac {45{b_1}^4}{8}}
\nu_1\sqrt{-\nu_1}\nu_2-7b_1b_3{\nu_1}^2\nu_2+\frac{5{b_1}^2}{2}\nu_1\nu_2-{b_1}^3\nu_1 \sqrt{-\nu_1}\pm4{b_1}^2b_3 {\nu_1}^2\sqrt{-\nu_1}-\frac{1}{2}{\nu_2}^3-b_1\nu_1
\mp\sqrt {-\nu_1}\nu_2,&
\\\nonumber
&y_\pm={\nu_1}^2\nu_2
\mp{\frac {231{b_1}^8\nu_1^2\sqrt{-\nu_1}}{128}}\mp{\frac {35{b_1}^5\nu_1\sqrt{-\nu_1}\nu_2}{8}}-2{b_1}^3\nu_1\nu_2\pm\frac{5{b_1}^4\nu_1\sqrt{-\nu_1}}{8}-{\frac {{\nu_2}^4}{\nu_1}}+{\frac {{\nu_2}^5}{{\nu_1}^2}}-{\frac {{\nu_2}^6}{{\nu_1}^3}}\mp\frac{7{b_1}^3b_3\nu_1^2\sqrt{-\nu_1}}{2}+{\nu_2}^3&\\\nonumber
&\qquad -{b_1}^6{\nu_1}^2\pm b_1\sqrt{-\nu_1}\nu_2\mp\frac{9{b_1}^2\sqrt{-\nu_1}{\nu_2}^2}{4}-b_1b_3{\nu_1}^2+\frac{{b_1}^2\nu_1}{2}-{\nu_1}^3-\nu_1{\nu_2}^2\mp b_3\nu_1\sqrt{-\nu_1}\nu_2-\frac{{\nu_2}^2}{2}\mp\sqrt {-\nu_1}.&
\eas
The equilibrium \((x_+, y_+)\) is always a saddle point. The equilibrium \((x_-, y_-)\) is a stable/unstable focus when \(b_1\) is positive/negative before it undergoes a Hopf singularity. Indeed, \((x_-, y_-)\) changes its stability type when the parameters cross the transition set of Hopf bifurcation, that is approximated by:
\be\label{Hopfr1s1}
T_H:= \{(\nu_1, \nu_2): \eta=0\}, \;\; \hbox{ where }\;\; \eta:=\nu_2+\frac{3}{2}b_1\sqrt {-\nu_1}+\frac{3}{16}{b_1}^{3}\nu_1-{\frac {267}{64}}{b_1}^{5}\sqrt {-\nu_1}\nu_1-\frac{5}{2}b_3\sqrt{-\nu_1}\nu_1.
\ee Using a symbolic {\sc Maple} programming (see \cite[Theorem 5.1]{GazorYuFormal} and \cite{YuLeung03,GazorYuSpec}), a three-universal asymptotic unfolding normal form amplitude equation is derived as
\bes
\dot{\rho_-}=-\eta\nu_1{\rho_-}+\left(\frac{3}{16}b_3\nu_1+\frac{3}{128}b_1\right){\rho_-}^3.
\ees The bifurcated limit cycle from \((x_-, y_-)\) is destroyed through a homoclinic bifurcation.
For a sufficiently accurate estimation of the homoclinic transition set, we use the parametric normal form
\be\label{r1s1NF2}
\dot{x}= \tilde{\nu}_1+ \tilde{\nu}_2 x+ \tilde{a}_1 y^2+ \tilde{b}_1 x y+ \tilde{a}_2 y^3, \qquad \dot{y}= -x+\tilde{\nu_2} y+ \tilde{b}_1 y^2,
\ee where the time rescaling \(Z^0_1\) and state transformation generator \(\mathcal{Z}^0_1\) through equations \eqref{ztrans} and \eqref{Z2ln} are not used to simplify \(A^{-1}_2\)-term. This is to enlarge the neighborhood truncated normal form validity for the homoclinic bifurcation control in subsection \ref{sec6}. Next we use the rescaling transformations \(x:=\epsilon^3 x,\) \(y:=\epsilon^2y,\) \(t:=\epsilon^{-1}\tau,\) \(\tilde{\nu_1}:=-\epsilon^4,\) and \(\tilde{\nu_2}:=\epsilon^2(\gamma_0+\gamma_1 \epsilon)\) in order to transform the system \eqref{r1s1NF2} into
\be
\dot{x}=-1+\tilde{a}_1y^2+\epsilon(\gamma_0 x+ \epsilon \gamma_1 x+\tilde{b}_1 x y+\tilde{a}_2\epsilon y^3), \quad\qquad\dot{y}=-x +\epsilon(\gamma_0+ \epsilon \gamma_1 y+ y+\tilde{b}_1 y^2).
\ee Here, the polynomial \(H(x,y):=\frac{1}2x^2-y+\frac{\tilde{a}_1}3y^3\) is a first integral when \(\epsilon=0.\) The roots of the first and second order Melnikov integrals (along the curve \(H(x, y)= \frac{2}{2\sqrt{\tilde{a}_1}}\))
\be
\displaystyle\int^{\frac{2}{\sqrt{\tilde{a}_1}}}_{\frac{-1}{\sqrt{\tilde{a}_1}}} \left(\dfrac{y(1-y)(\gamma_0+\tilde{b}_1 y)}{\sqrt{-\frac23(y-2)}}-(y+1)(\gamma_0+\tilde{b}_1 y) \sqrt{-\frac23(y-2)} \right)dy=-\frac{24}{5}\sqrt2\gamma_0-\frac{36}{7}\sqrt 2\tilde{b}_1,
\ee
\be
\displaystyle\int^{\frac{2}{\sqrt{\tilde{a}_1}}}_{\frac{-1}{\sqrt{\tilde{a}_1}}} \left(\left(\gamma_0 x+\tilde{b}_1 x^2\right)\left(2 \gamma_0+3 \tilde{b}_1 x\right)+3 \tilde{a}_2 x^2+\frac{1}{3}\gamma_1 x\sqrt{-6 x(\tilde{a}_1 x^2-3)}-3 \gamma_1 \frac{(1-\tilde{a}_1 x^2)}{\sqrt{-6 x(\tilde{a}_1 x^2-3)}} x\right)dx
\ee give rise to \(\gamma_0=-\frac{15\tilde{b}_1}{14\sqrt{\tilde{a}_1}},\) \(\gamma_1= -\frac{225}{3136}\sqrt{2}\,\frac{18{\tilde{b_1}}^2+49 \tilde{a}_2}{\sqrt[4]{{\tilde{a}_1}^5}},\) \(\tilde{\nu_1}=-\frac{1}{{\gamma_0}^2} {\tilde{\nu_2}}^2,\) and finally the estimated homoclinic bifurcation set \(T_{HmC}\) in the parameter space \((\tilde{\nu_1}, \tilde{\nu_2})\) is given by
\be\label{HomocV}
\tilde{\nu_2}=\frac{{\sqrt{-2\tilde{a_1}\tilde{\nu_1}
\left(225792{\tilde{a_1}}^2{\tilde{b_1}}^2-3360\sqrt 2{\tilde{a_1}}^{\frac{5}{4}}\tilde{b_1}
 \left(49{\tilde{a}}_{2}+18{\tilde{b_1}}^2 \right) \sqrt [4]{-\tilde{\nu_1}}+25
\sqrt {\tilde{a_1}} \left( 49{\tilde{a}}_{2}+18{\tilde{b_1}}^2 \right)
^2\sqrt {-\tilde{\nu_1}} \right)}}}{-627.2{\tilde{a_1}}^2}.
\ee

\section{Truncated normal form analysis when \(r= s=2\)}\label{sec6}

In this section we discuss the bifurcation analysis for the three-asymptotic unfolding normal form system
\be\label{r2s2Eq1}
\dot{x}= \nu_1+\nu_2 y+\nu_3 x+\nu_4 x y+a_2 y^3 +b_2 x y^2,  \quad\qquad \dot{y}= -x+\nu_3 y+\nu_4 y^2 + b_2 y^3.
\ee The system \eqref{r2s2Eq1} is a \(\Z_2\)-equivariant system with respect to the reflection around the origin, when \(\nu_1:=0\) and \(\nu_4:=0.\) In this section we first consider the \(\Z_2\)-equivariant dynamics and then study a one-parameter dynamical symmetry breaking.

\subsection{\(\Z_2\)-equivariant bifurcation analysis }

We assume that \(\nu_1=\nu_4=0\) hold in equation \eqref{r2s2Eq1}. The origin is an equilibrium with eigenvalues \(\nu_3\pm \sqrt{-\nu_2}.\) On the bifurcation variety
\be\label{SNr2s2nu10} T_{P}:=\{(\nu_2, \nu_3)|\, \nu_2=-{\nu_3}^2\},\ee the origin changes its stability type from a stable/unstable node to a saddle point. The origin is a saddle for \(\nu_2<-{\nu_3}^2<0.\) When \(-{\nu_3}^2<\nu_2<0,\) the origin is an unstable/stable node for \(\nu_3>0\) and \(\nu_3<0,\) respectively. The transition variety \(T_{P}\) indeed corresponds to a pitchfork bifurcation under which two equilibria
\begin{eqnarray}\label{Epmnu40}
E_\pm: \qquad (x_\pm,y_\pm)=\left(\nu_3y_\pm+b_2{y_\pm}^3, \pm \dfrac{\sqrt{2}}{2 b_2}\sqrt{-a_2-2 b_2 \nu_3 + \sqrt{{a_2}^2+4 a_2 b_2 \nu_3 -4 b_2^2 \nu_2}}\right),
\end{eqnarray} bifurcate. These equilibria are always saddle when \(a_2<0.\)

On the variety
\be\label{Focus}
T_F:=\{(\nu_2, \nu_3) |\, \nu_2=0, \nu_3\neq0\},
\ee the stable/unstable node nature of the origin changes to a focus point.
For positive values of \(\nu_2\), the origin undergoes a Hopf bifurcation at
\be\label{HopfOrgn} T_H:=\{(\nu_2, \nu_3) |\, \nu_2>0, \nu_3=0\}\ee
whose a three-degree truncated asymptotic unfolding amplitude equation is
\begin{eqnarray}\label{NFHopf0}
\dot{\rho}&=&\nu_3\rho+\frac{b_2}{2}\rho^3.
\end{eqnarray} Hence for parameters crossing the Hopf bifurcation variety \eqref{HopfOrgn} when \(b_2\nu_3<0,\) one limit cycle bifurcates from and surrounds the origin; see Figures \ref{transnu10bn} and \ref{Fig6(b)}. When \(a_2>0,\) this limit cycle will vanish through a quinary saddle-node bifurcation of limit cycles. However for the case of \(a_2<0,\) this limit cycle will disappear through a heteroclinic cycle bifurcation.

\begin{figure}
\begin{center}
\subfigure[\(a_2= b_2=1\)\label{transnu10bna}]
{\includegraphics[width=.24\columnwidth,height=.2\columnwidth]{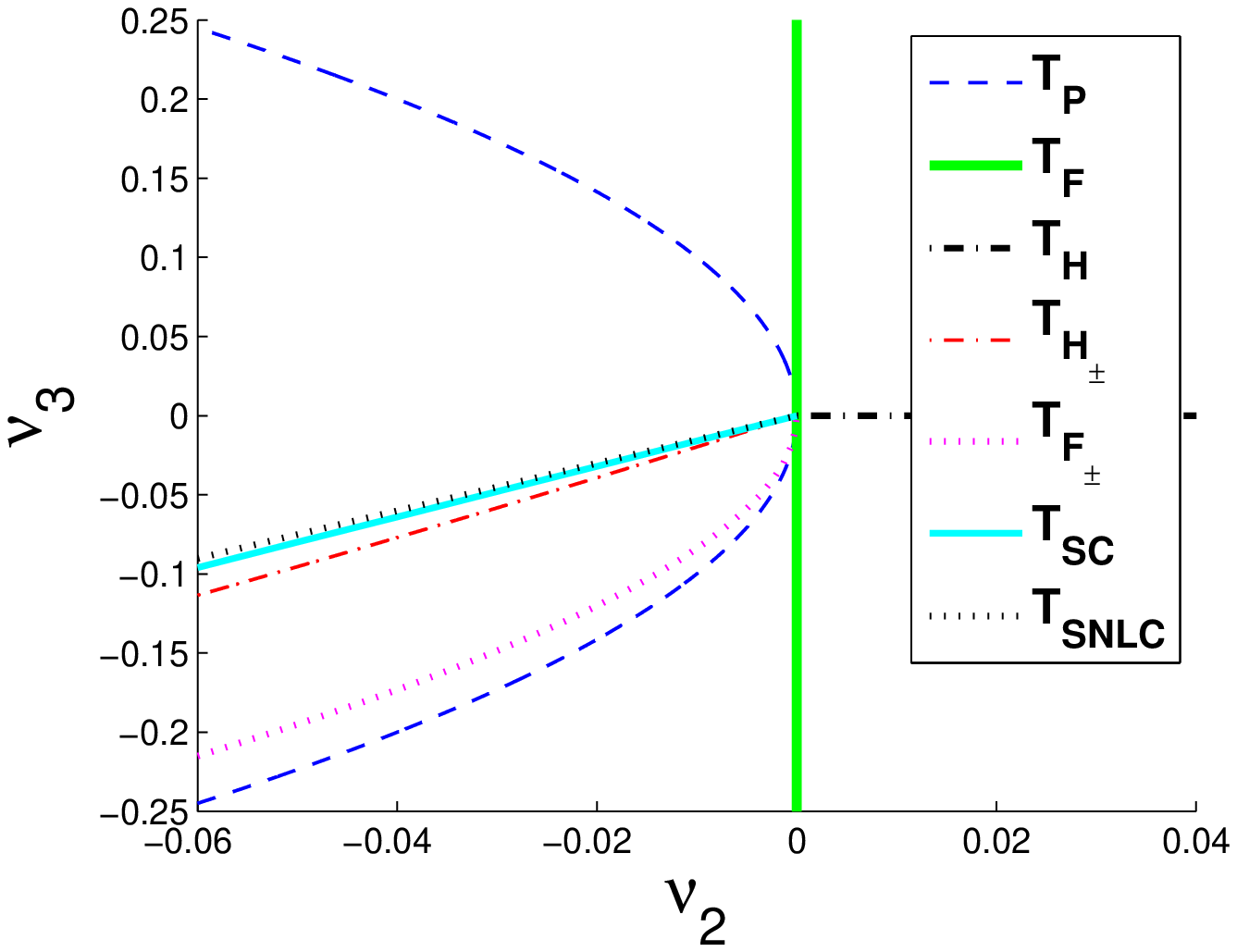}}
\subfigure[\(a_2=1, b_2=-1\)\label{transnu10bnb}]
{\includegraphics[width=.24\columnwidth,height=.2\columnwidth]{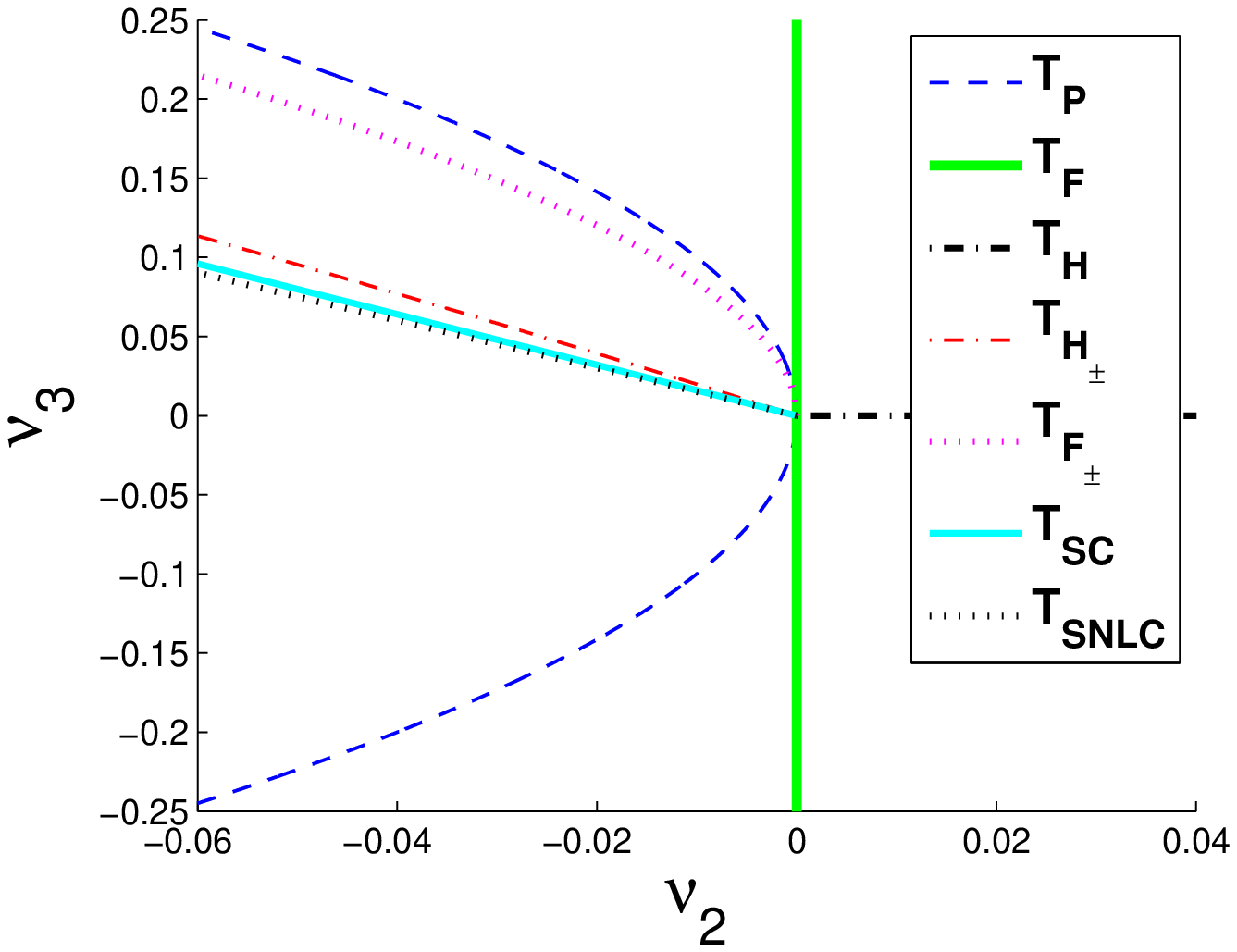}}
\subfigure[\(a_2=-1, b_2=1\)\label{transnu10bnc}]
{\includegraphics[width=.24\columnwidth,height=.2\columnwidth]{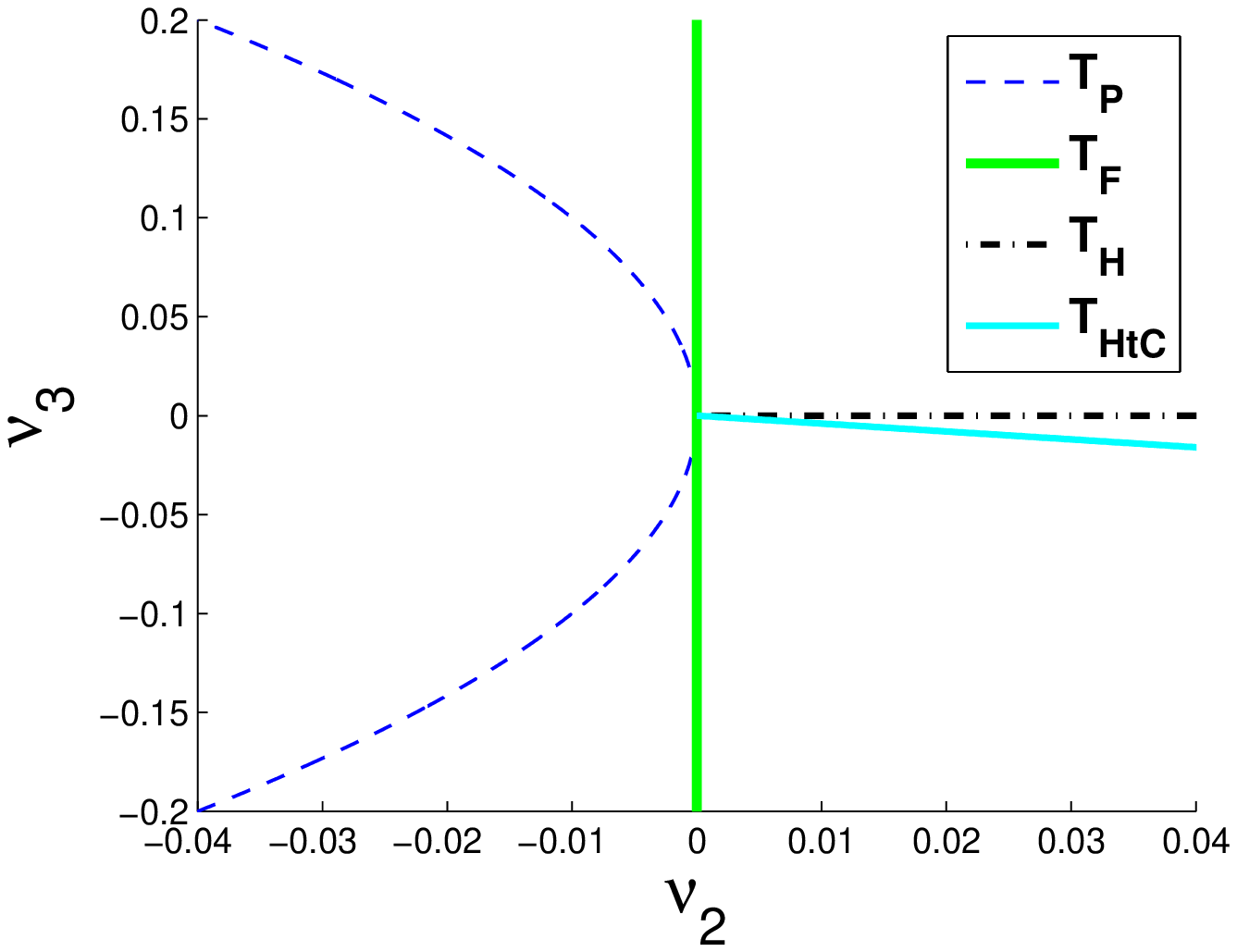}}
\subfigure[\(a_2=-1, b_2=-1\)\label{transnu10bnd}]
{\includegraphics[width=.24\columnwidth,height=.2\columnwidth]{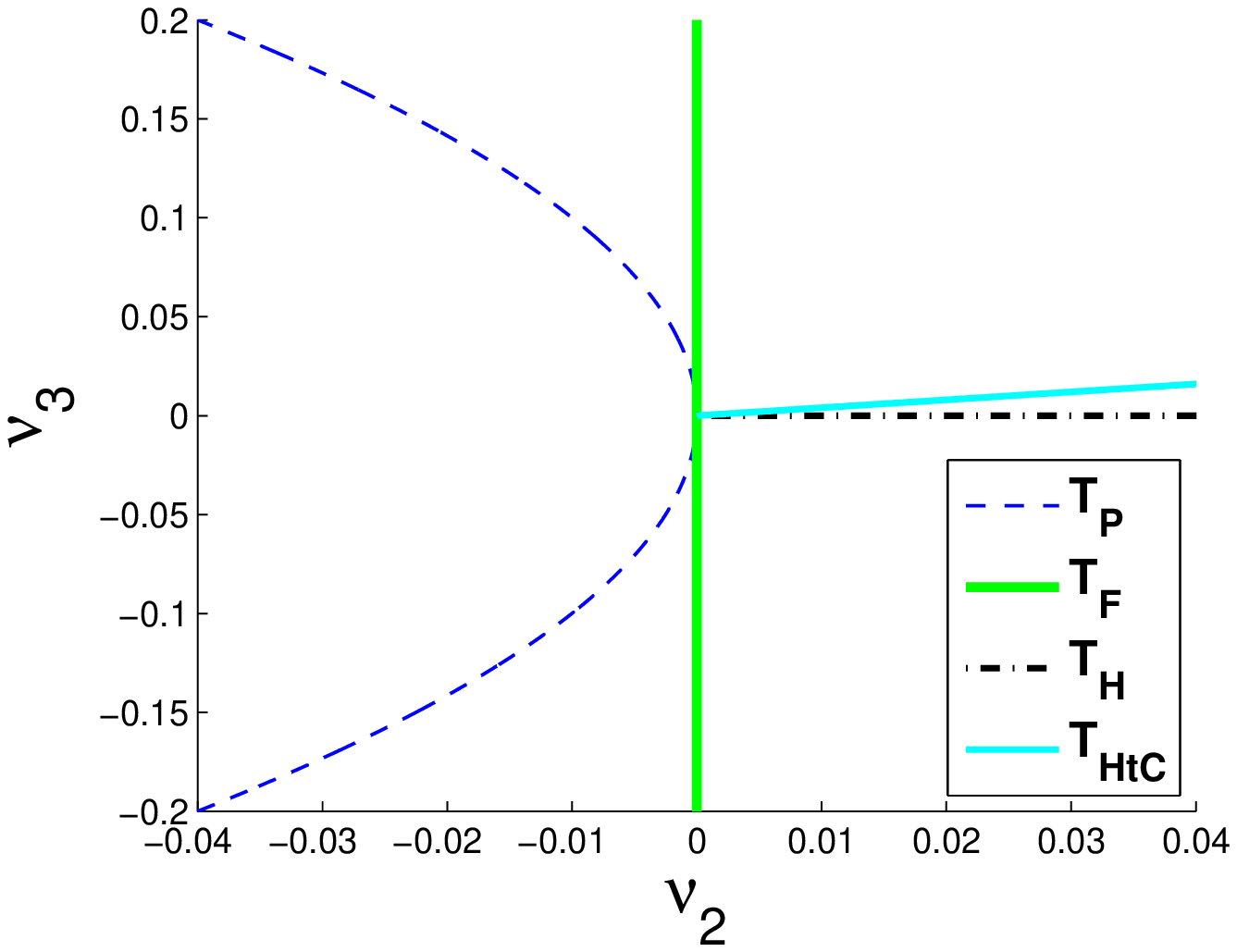}}
\caption{Estimated transition sets for the \(\mathbb{Z}_2\)-equivariant normal form system \eqref{r2s2Eq1}, the case \(r= s=2\)  for \(a_2=\pm1, b_2= \pm1\) and \(\nu_1:=\nu_4:=0.\)}\label{transnu10bn}
\end{center}
\end{figure}

For the case \(a_2>0,\) the equilibria \eqref{Epmnu40} change their stability type from a node to a focus through the curve
\bes
T_{F\pm}:=\left\{ \left(\nu_2, \nu_3\right)|\, 4 {a_2}^2\nu_2 +6 {a_2}^2{\nu_3}^2+4 a_2 b_2{\nu_3}^3  +{b_2}^2{\nu_3}^4=0\right\}.
\ees These equilibria are stable/unstable when \(\eta\) given by
\begin{eqnarray}\label{eta1}
\eta:=2 a_2 \nu_3- 4 b_2 \nu_2-b_2\nu_3^2
\end{eqnarray}
 is positive/negative. In fact each of the equilibria \(E_\pm\) holds a tertiary Hopf bifurcation at
\bes
T_{H\pm}:= \{(\nu_2, \nu_3)|\, \eta=0\}, \ \hbox{ where } \eta \hbox{ is given by equation \eqref{eta1}} \quad \hbox{ and }\quad a_2>0.
\ees
One small limit cycle bifurcates from each of the equilibria \(E_\pm\) on the corresponding variety \(T_{H\pm}\) when \(b_2\eta>0\) and \(a_2>0.\) Each of these bifurcating small limit cycles grows in size until when they get simultaneous {\it collisions} with the origin (a saddle). This results in a {\it saddle-connection} (double homoclinic cycle), that is, a quaternary saddle-connection bifurcation. Therefore, we use the transformations \(\nu_2=-\epsilon^2\), \(\nu_3=\epsilon^2 \gamma\), \(x=\epsilon^2 x\), \(y=\epsilon y\), \(t=\epsilon^{-1} \tau\) and the first and second order Melnikov integrals to estimate the saddle-connection bifurcation variety as
\be\label{SC2+}
T_{SC}:= \left\{(\nu_2, \nu_3)|\, \nu_3=\dfrac{8}{5} \frac{b_2}{a_2} \nu_2+ o\big({|\nu_2|}^{2.5}\big)\right\}\quad\quad\hbox{ and } \quad a_2>0.
\ee
The homoclinic cycles simultaneously break and give birth into a quaternary bifurcation of a limit cycle. This gives rise into two limit cycles surrounding the three equilibria. These two limit cycles disappear through a quinary saddle-node bifurcation of limit cycles at the estimated variety
\bes
T_{SNLC}:= \left\{(\nu_2, \nu_3)|\, \nu_2=\dfrac{125 a_2}{188 b_2} \nu_3\right\}.
\ees

When \(a_2<0,\) the limit cycle bifurcated from the origin grows in size and coalesces with the equilibria \(E_\pm.\) They construct a heteroclinic cycle. The estimated heteroclinic bifurcation variety
\be\label{Hetero}
T_{HtC}:= \left\{(\nu_2, \nu_3)|\, \nu_3=\dfrac{2b_2}{5a_2} \nu_2+ o\big({|\nu_2|}^{\frac{3}{2}}\big)\right\}\quad\hbox{ and } \quad a_2<0,
\ee is derived through the roots associated with the first and second order Melnikov integrals and the rescaling transformations \(\nu_2=\epsilon^2\), \(\nu_3=\epsilon^2 (\gamma_0+ \gamma_1\epsilon),\) \(x=\epsilon^2 x\), \(y=\epsilon y,\) \(t=\epsilon^{-1} \tau.\)

Figures \ref{transnu10bna}--\ref{transnu10bnd} demonstrate the estimated transition varieties for the \(\mathbb{Z}_2\)-equivariant system \eqref{r2s2Eq1} for \(\nu_1:=\nu_4:=0\). Figures \ref{transnu10bna} and  \ref{transnu10bnb} include a pitchfork variety \(T_P\) from which the equilibria \(E_\pm\) are bifurcated, Hopf varieties \(T_H\) and \(T_{H\pm}\) which are respectively associated with the origin and \(E_\pm\), saddle-connection \(T_{SC}\) given in \eqref{SC2+} and the saddle-node bifurcation of limit cycles \(T_{SNLC}.\) Figures \ref{transnu10bnc} and \ref{transnu10bnd} are associated with \(a_2=-b_2=-1\) and \(a_2= b_2=-1,\) respectively. These include a pitchfork bifurcation \(T_P,\) a Hopf variety \(T_H\) from which a limit cycle bifurcates from the origin and, finally a heteroclinic variety \(T_{HtC}\) given by \eqref{Hetero} through which the bifurcated limit cycle vanishes. The transition varieties \(T_F\) and \(T_{F\pm}\) in figures \ref{transnu10bn} represent the changes of stabilities from nodes to foci.

\subsection{One parameter symmetry breaking \(\nu_4\neq0\)}

A small non-zero variation of \(\nu_4\) leads to a \(\mathbb{Z}_2\)-symmetry breaking in the system \eqref{r2s2Eq1}. However, this does not change our estimated transition varieties for the primary bifurcations from the origin, \ie the pitchfork bifurcation of equilibria \(E_\pm\) through the variety \eqref{SNr2s2nu10} and Hopf bifurcation \eqref{HopfOrgn}. The parameter \(\nu_4\) in equation \eqref{r2s2Eq1} forces asymmetric qualitative dynamics and asymmetric formulas for {estimated} \(E_\pm\) and their associated follow-up bifurcation varieties including Hopf bifurcation, homoclinic bifurcation, and limit cycle saddle-node bifurcation varieties. Assuming that
\be\label{Restrct}\nu_2= o(||\nu_3,\nu_4||^2)\quad\hbox{ and }\quad\nu_1:=0,\ee a symbolic root approximation associated with equilibria of the system \eqref{r2s2Eq1} is given by
\begin{eqnarray}\label{Epm}
E_\pm: \qquad (x_\pm,y_\pm):=\left(\nu_3y_\pm+\nu_4{y_\pm}^2+b_2{y_\pm}^3, \mp\dfrac{\left(-2 a_2+2 b_2 \nu_3 +{\nu_4}^2\right)\sqrt{\nu_2-{\nu_3}^2}}{2 a_2 \sqrt{a_2}}\mp\frac{\nu_3 \nu_4}{a_2}\right).
\end{eqnarray}

\noindent Let \(a_2>0.\) Then, the tertiary Hopf bifurcation transition sets associated with \(E_\pm\) are approximated by
\bes
T_{H\pm}:= \{(\nu_2, \nu_3, \nu_4)|\, \eta_\pm=0\},
\ees
where
\bes
 \small{\eta_\pm:=2{a_2}^\frac{7}{2}\nu_3-3 {a_2}^\frac{5}{2} \nu_3 {\nu_4}^2-4b_2 {a_2}^{\frac{3}{2}} (a_2-2b_2 \nu_3-{\nu_4}^2)(\nu_2+{\nu_3}^2)\pm\frac{{a_2}^2}{2}(6 a_2-22 b_2\nu_3-3{\nu_4}^2)\nu_4\sqrt{-\nu_2-{\nu_3}^2}}.
\ees
An estimated five-asymptotic unfolding amplitude normal form equation is given by
\begin{eqnarray}
\dot{\rho}= \eta_\pm(-\nu_2-{\nu_3}^2)\rho-\left(b_2(-\nu_2-{\nu_3}^2)\pm\frac{9}{16}\nu_4 \sqrt{a_2} \sqrt{-\nu_2-{\nu_3}^2}\right)\rho^3-\dfrac{7 a_2 b_2}{128}\rho^5.
\end{eqnarray} Despite the degeneracy of Hopf singularity, the parameter restriction leads to bifurcation of at most one limit cycle from either of \(E_\pm.\) This implies that parameters \((\nu_2, \nu_3, \nu_4)\) with restrictions \eqref{Restrct} are not enough for fully unfolding a Bautin bifurcation around \(E_\pm.\) The system \eqref{r2s2Eq1} are naturally expected to undergo a bifurcation of two limit cycles through a Bautin bifurcation in the vicinity of these equilibria when the restrictions \eqref{Restrct} are removed. For the homoclinic bifurcation transition set, we use the rescaling transformations
\(\nu_2=-\epsilon^2\), \(\nu_3=\epsilon^2 \gamma_3\), \(\nu_4=\epsilon \gamma_4\), \(x=\epsilon^2 x\), \(y=\epsilon y\), and \(t=\epsilon^{-1} \tau.\) These give rise to
\bes
\dot{x}=-y+a_2 y^3+\epsilon(\gamma_4 x y+\gamma_3 x+b_2 x y^2), \qquad \dot{y}=-x+\epsilon(\gamma_4  y^2+\gamma_3 y+b_2 y^3),\nonumber
\ees where \(H(x,y)=\frac{1}{2}x^2-\frac{1}{2} y^2+\frac{1}{4} a_2 y^4\) is a first integral when \(\epsilon=0.\) Hence, the roots of the Melnikov  integrals
\bas
&\displaystyle\int_{\pm\sqrt{\frac{2}{a_2}}}^{0} \left(\dfrac{ 2(1-y^2)\left(\gamma_4 y^2 +\gamma_3 y+b_2 y^3\right)}{ \sqrt{4-2 a_2 y^2}} -\frac{1}{2} y \sqrt{4-2 a_2 y^2} \left(\gamma_4 y+\gamma_3+ b_2 y^2\right)\right) dy=\pm \dfrac{3}{8}\sqrt{2} \pi\gamma_4+\dfrac{32}{15} b_2+\dfrac{4}{3}&
\eas
lead to the quaternary homoclinic bifurcation sets surrounding \(E_{\pm}\). These are estimated by
\bes
T_{HmC_{{\pm}}}:= \left\{(\nu_2, \nu_3,\nu_4)|\, \nu_3=\dfrac{8b_2}{5 a_2}\nu_2\mp \frac{9 \sqrt{2}\pi}{32\sqrt{a_2}}\nu_4\sqrt{-\nu_2}\right\}\quad\hbox{ and } \quad a_2>0.
\ees There is a secondary homoclinic bifurcation on the transition set \(T_{HmC}\) through which the limit cycle bifurcated via the variety \eqref{HopfOrgn} disappear. Similar to the above Melnikov integral computations, an estimated transition set \(T_{HmC}\) in the 3D-parameter space \((\nu_2, \nu_3, \nu_4)\) follows the formula given in equation \eqref{SC2+}.

\begin{figure}
\begin{center}
\subfigure[\( a_2= b_2=1, \nu_4=0.1\)]
{\includegraphics[width=.23\columnwidth,height=.2\columnwidth]{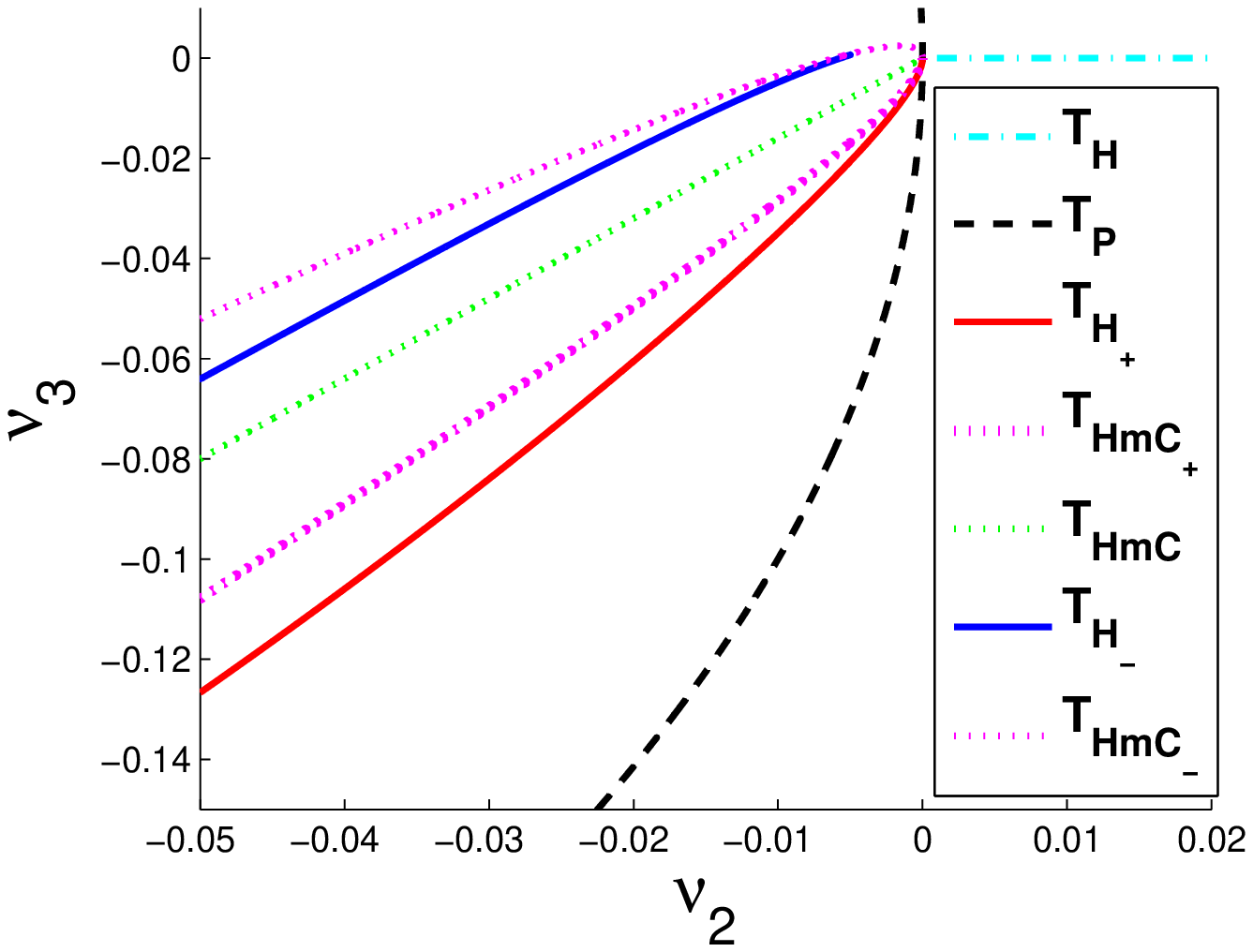}}
\subfigure[\( a_2=- b_2=1, \nu_4=0.1\)]
{\includegraphics[width=.23\columnwidth,height=.2\columnwidth]{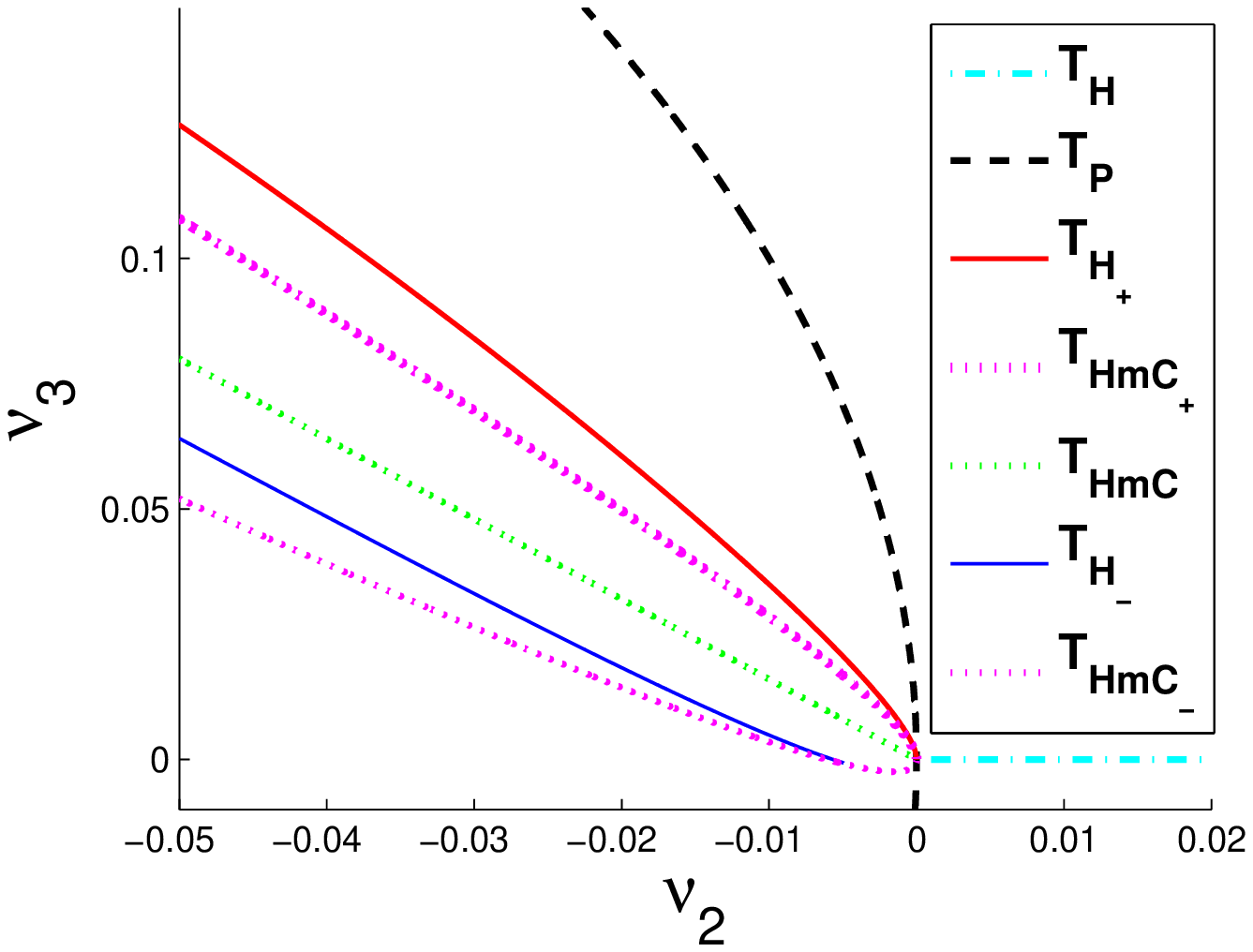}}
\subfigure[\( a_2= b_2=1, \nu_4=-0.1\)]
{\includegraphics[width=.23\columnwidth,height=.2\columnwidth]{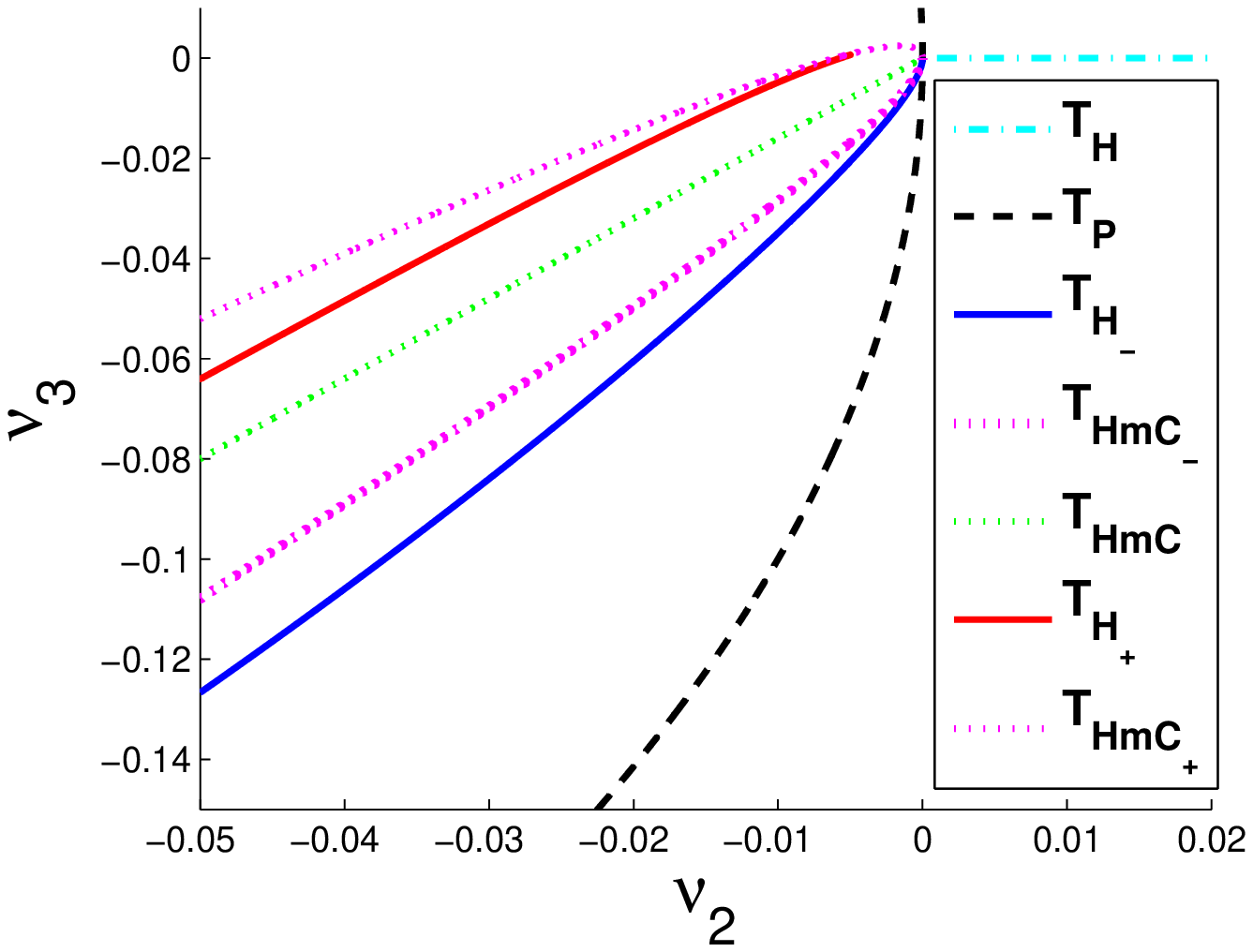}}
\subfigure[\(a_2= -b_2=1,\nu_4=-0.1\)]
{\includegraphics[width=.23\columnwidth,height=.2\columnwidth]{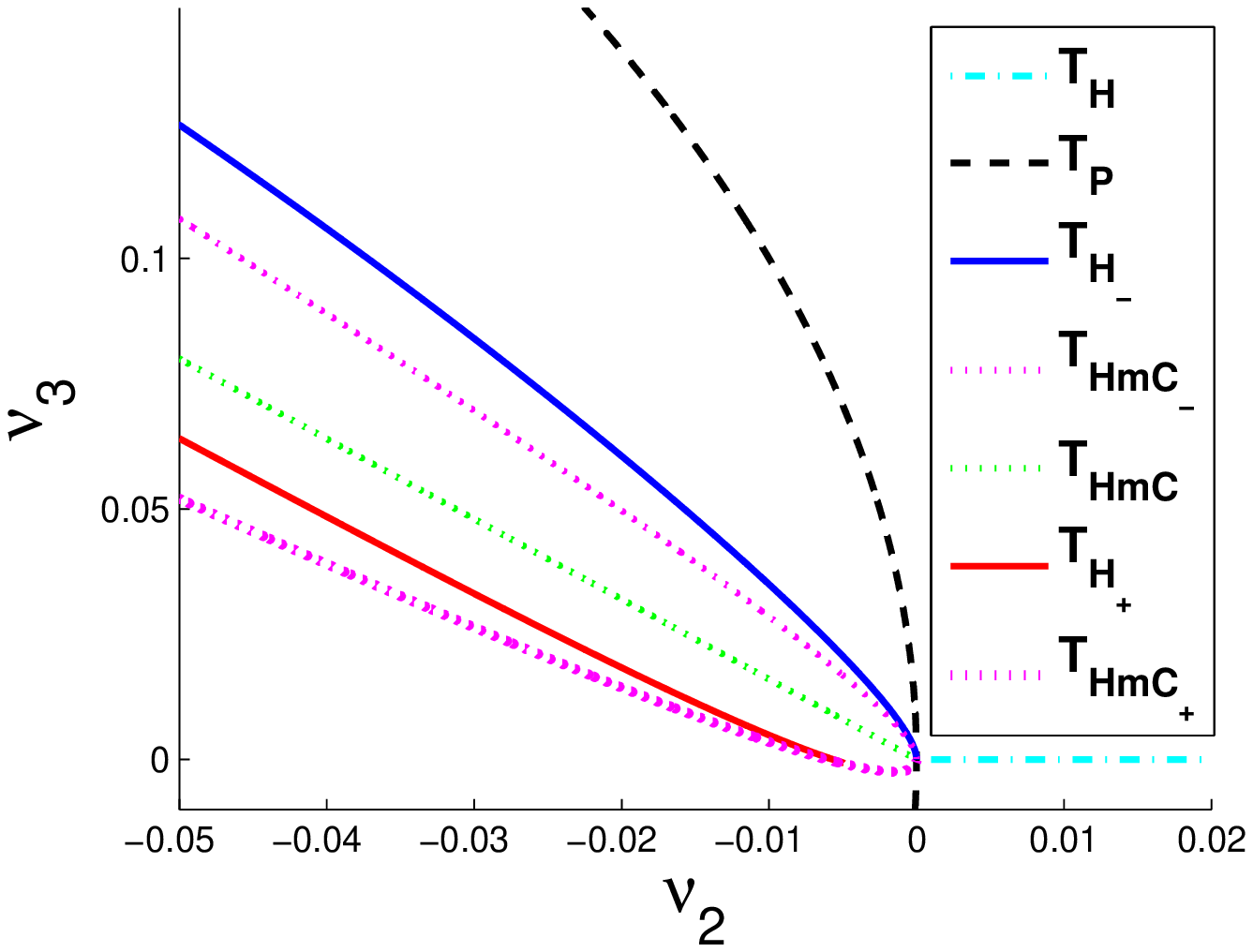}}
\caption{Estimated bifurcation varieties for the normal form system \eqref{r2s2Eq1}, the case \(r= s=2\) and \(\nu_1:=0.\)}\label{nu4pnu10}
\end{center}
\end{figure}

For the case of \(a_2<0,\) similar to the \(\mathbb{Z}_2\)-symmetric case and with its rescaling transformations along with
\(\nu_4=\epsilon \gamma_4,\)
the first and second order Melnikov integrals give rise to the approximated heteroclinic variety given in equation \eqref{Hetero}.

Figure \ref{nu4pnu10} demonstrate the estimated transition sets associated with the system \eqref{r2s2Eq1} when \(\nu_4:=\pm 0.1\) and the restrictions \eqref{Restrct} hold. These figures include a pitchfork bifurcation variety \(T_P\) from which two equilibria \(E_\pm\) are bifurcated from the origin. Hopf bifurcation varieties for the origin and the equilibria \(E_\pm\) are denoted by \(T_H,\) \(T_{H_+}\) and \(T_{H_-},\) respectively. Each of the bifurcated limit cycles from Hopf bifurcation varieties disappear when parameters pass through the homoclinic varieties \(T_{HmC},\) \(T_{HmC_+}\) and \(T_{HmC_-}.\)

\section{Nonlinear bifurcation control }\label{sec7}

This section demonstrates how our parametric normal forms help in the design of efficient control laws
for bifurcation control of two most generic generalized cusp plants of Bogdanov-Takens singularity. 

\subsection{Bifurcation controller design for the case \(r= s=1\)}\label{subsec7}


\begin{figure}
\begin{center}
\subfigure[There are a saddle point and a sink.\label{Fig5(a)}]
{\includegraphics[width=.18\columnwidth,height=.15\columnwidth]{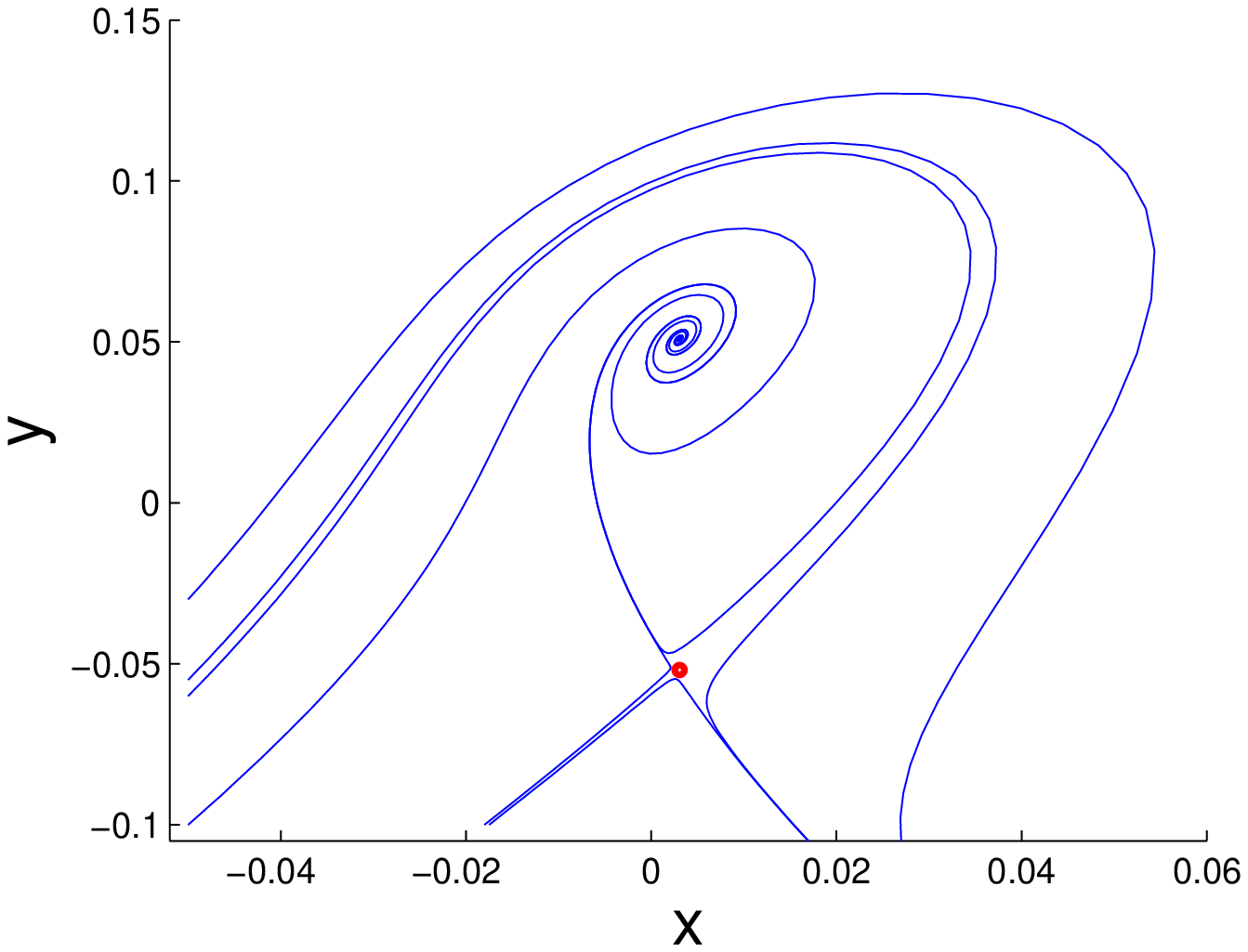}}
\subfigure[A spiral source, a saddle and a stable limit cycle.]
{\includegraphics[width=.18\columnwidth,height=.15\columnwidth]{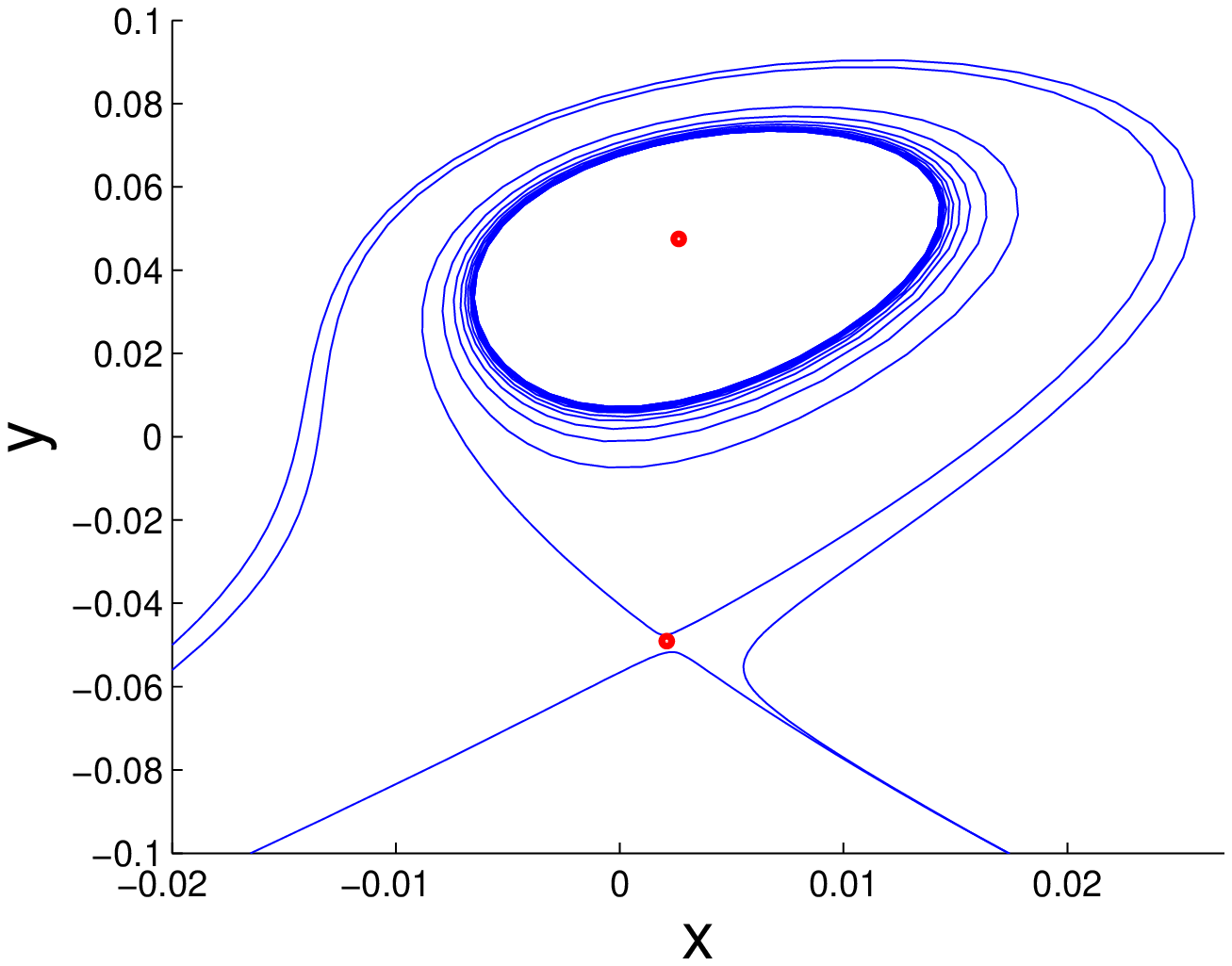}}
\subfigure[There are a saddle and a spiral source. ]
{\includegraphics[width=.18\columnwidth,height=.15\columnwidth]{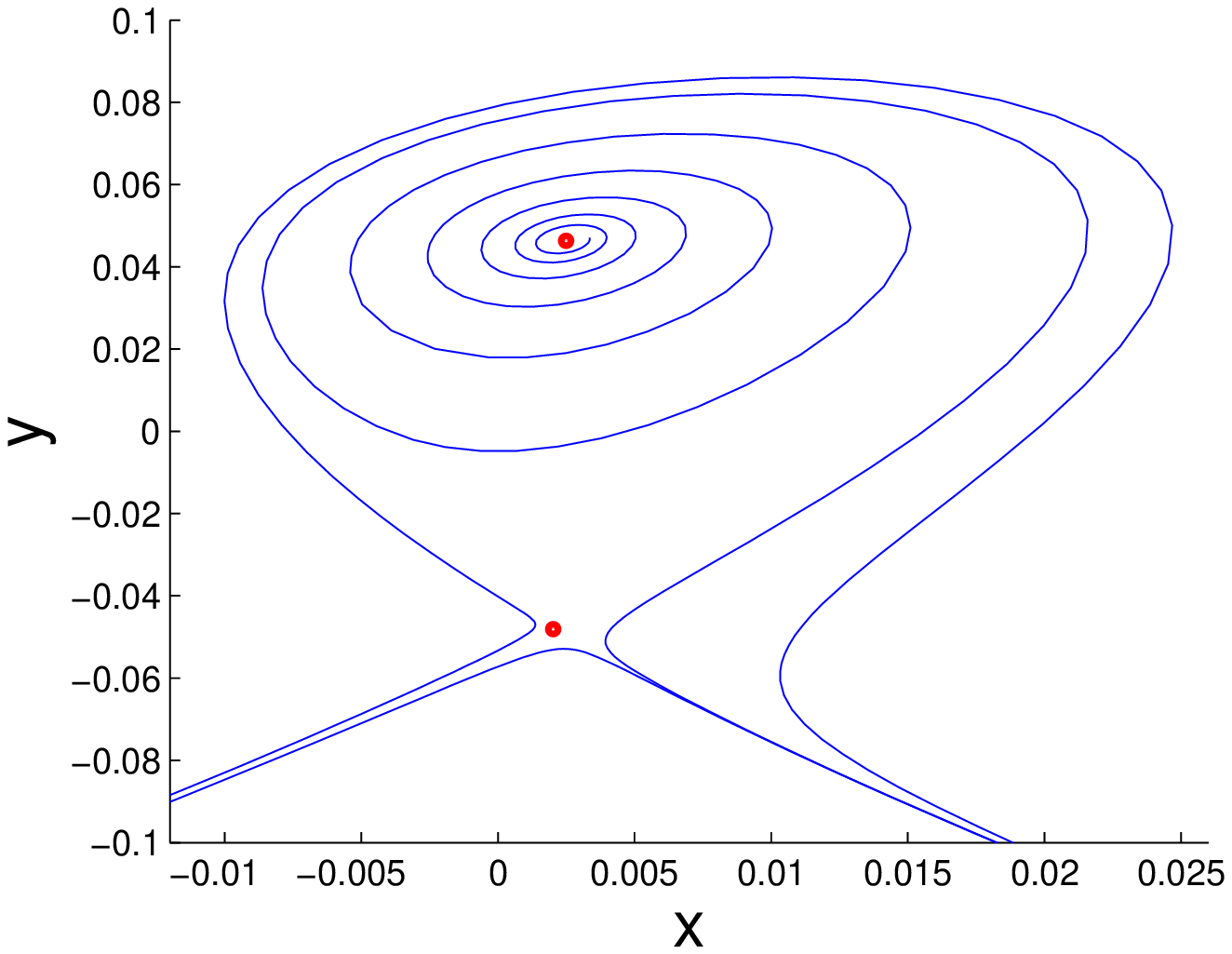}}
\subfigure[Six orbits starting from the line \(x=-0.04.\) \label{Fig5(d)}]
{\includegraphics[width=.18\columnwidth,height=.15\columnwidth]{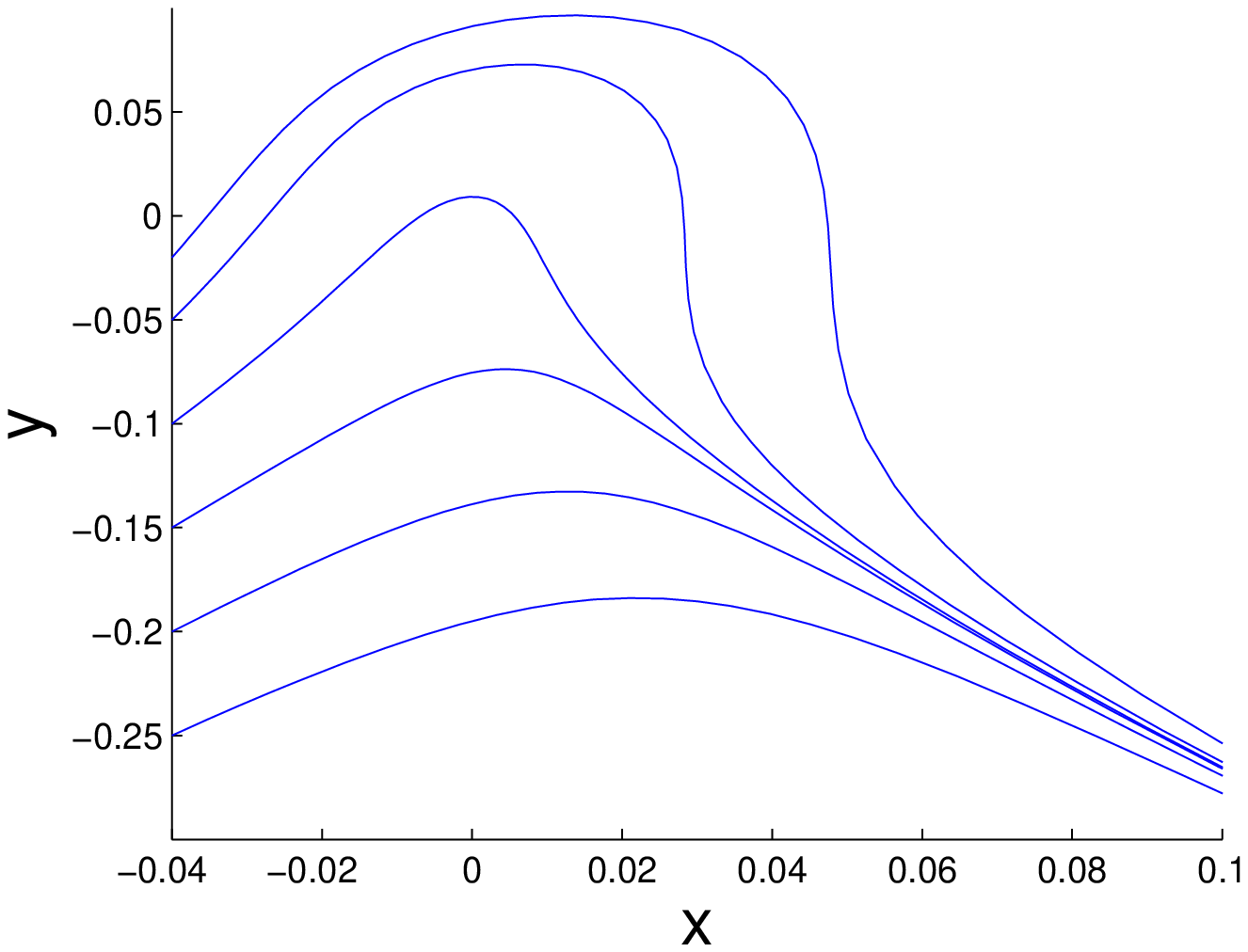}}
\subfigure[Highly accurate numerical controller transition sets.\label{transexample}]
{\includegraphics[width=.24\columnwidth,height=.15\columnwidth]{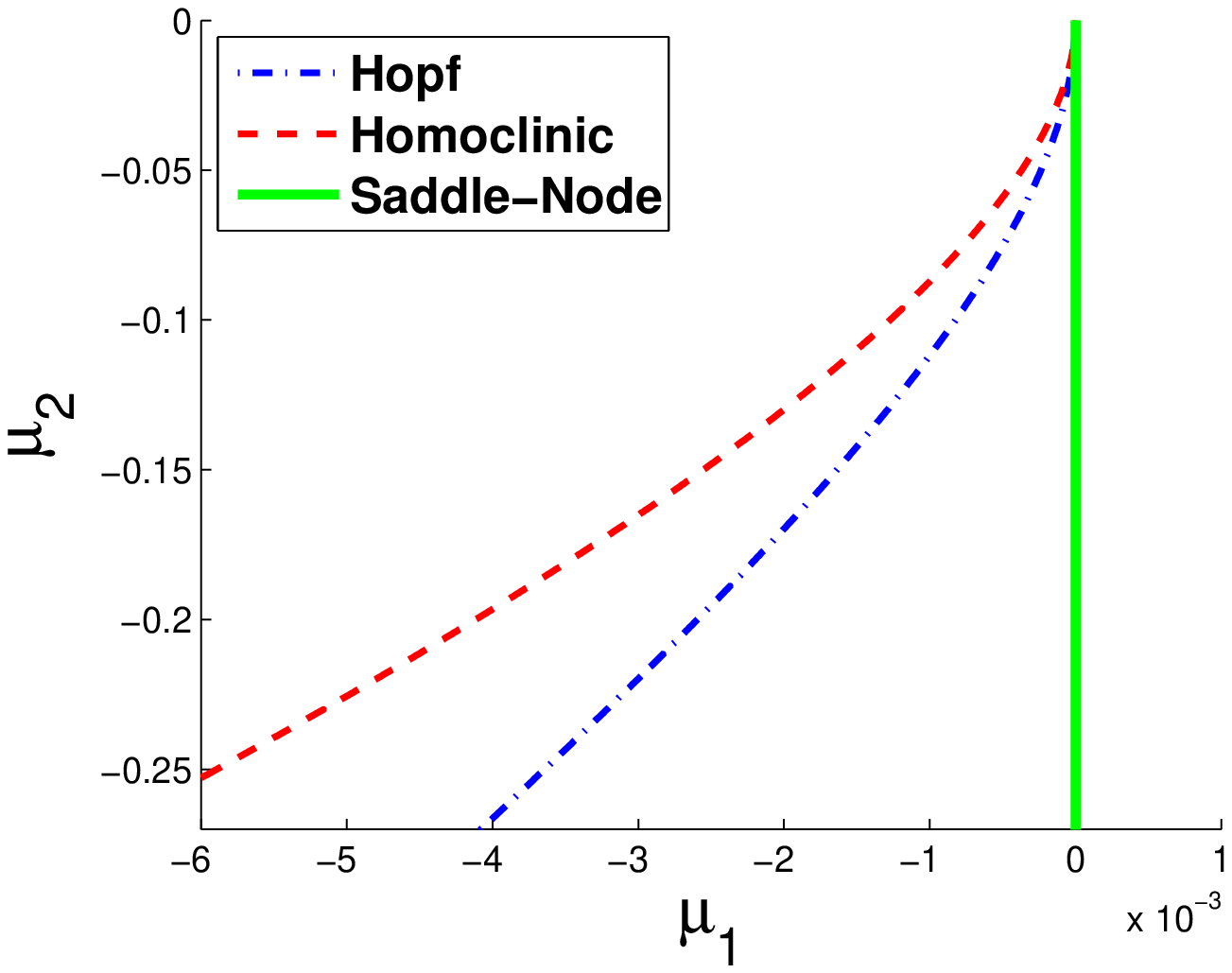}}
\caption{Controlled numerical transition sets and phase portraits of the system \eqref{BfControlr1s1} when \(d_1:= d_2:= d_4:= 1, d_3:=2, d_5:=3, d_6:=\frac{19}{28}\) and \(r=s=1.\) Figures \ref{Fig5(a)}-\ref{Fig5(d)} are associated with regions (a)-(d) in Figure \ref{transexample}, respectively.}
\label{BifContr1s1}
\end{center}
\end{figure}

Consider the quadratic-jet truncated controlled plant
\ba\label{BfControlr1s1}
\dot{x}&:=& d_1y^2+d_2xy+d_3x^2+ u_1, \qquad \dot{y}:=-x+d_4y^2+d_5xy+d_6x^2+ u_2,
\ea  with possible multi-input linear controllers
\bes
u_1:= \mu_1+\mu_2x+\mu_3y \qquad \hbox{ and } \qquad u_2:=\mu_4+\mu_5x+\mu_6y,
\ees where \(\mu_i\) for \(i=1, \ldots, 6\) are referred by controller inputs. Then, by \cite[Proposition 2]{GazorMoazeni} we obtain
\bes
r=s=1, \qquad \hbox{ when } \quad a_1= d_1\neq 0 \quad \hbox{ and } \quad b_1=\frac{1}{3}(d_2+2 d_4)\neq0.
\ees
\noindent Assuming these generic conditions, the parameter \(\mu_1\) and either of the parameters \(\mu_2, \mu_3, \mu_4,\) and \(\mu_6\) can play the role of distinguished parameters. For an instance we take \((\mu_1, \mu_2)\) as two distinguished parameters of the system \eqref{BfControlr1s1} and set \(\mu_3:= \mu_4:= \mu_5:= \mu_6:= 0.\) We remark that a different unfolding leads to slightly different dynamics than the case presented here; for example see the transition set in figure \ref{section72} and the bifurcation varieties given in figure \ref{Fig14(a)}. Yet our bifurcation control analysis is sufficient for a comprehensive study of all these cases; also see \cite[Proposition 6.2]{GazorSadri}.

For briefness in the derived formulas, we choose \(d_5:=3\) and  \(d_1:=d_2:=d_4:=1.\) Hence \(a_1=1\) and  \(b_1=1.\) Then, the four-degree truncated third level partially extended parametric normal form is given by equation \eqref{r1s1NF} where
\begin{eqnarray}\nonumber
\nu_1&=&\mu_1-\frac{477+366d_3- 84 d_6-80 d_3 d_6 +32 {d_3}^2}{30}{\mu_1}^2+2\mu_1\mu_2,\\
\nu_2&=&-{\frac {15}{4}}\mu_1-d_3\mu_1+\frac{1}{2}\mu_2-\frac{477+351 d_3+36 d_6+32 {d_3}^2-80 d_3 d_6}{240}\mu_1\mu_2+\frac{1}{4}{\mu_2}^2\\\nonumber
&&+\frac{239913+222744
d_3+70344 d_6+29238{d_3}^2+13680 d_3 d_6-1040{d_3}^3+3200 {{d_3}}^2d_6}{9600}{\mu_1}^2, \\\nonumber
b_3&=&{\frac {393}{200}}-{\frac {7}{25}}d_3-{\frac {51}{50}}d_6-\frac{4}{25}{d_3}^2+\frac{2}{5}d_3 d_6.
\end{eqnarray}
\noindent Thus, the associated bifurcation transition sets \(T_{SN}\) and \(T_{Hopf}\) are given by
\(T_{SN}=\{ (\mu_1, \mu_2)\,|\,\mu_1=0\},\) and
\bas\nonumber
&T_{Hopf}=\Big\{\big(\mu_1, \mu_2\big)\big |&
\frac{3}{2}\sqrt {-\mu_1}+\dfrac{1}{2}\mu_2-d_3\mu_1-{\frac {57}{16}}\mu_1+\frac{2907+816 d_{{6}}+320 d_6 d_3-224 d_3-128{d_3}^2}{320}{(-\mu_1)^\frac{3}{2}}\qquad\\\nonumber
&&+\frac{239913+70344d_{{6}}+
222744d_3+13680d_6 d_3+29238{d_3}^2+3200{d_3}^2d_6-1040{d_3}^3}{9600}{{\mu_1}^{2}}\\\nonumber
&&
-\frac{477+36 d_6+351 d_3-80 d_6 d_3+32{d_3}^2}{240}\mu_1\mu_2+\frac{1}{4}{\mu_2}^2=0 \Big\}.
\eas
In order to derive a sufficiently accurate transition set \(T_{HmC},\) we compute the parametric normal form \eqref{r1s1NF2}, where
\begin{eqnarray}
\tilde{a_1}&=&a_1=d_1, \qquad \tilde{b_1}=b_1=\frac{1}{3}(d_2+2 d_4), \qquad \tilde{a_2}=-\frac{1}{9}(9d_1d_5-5d_2d_4+{d_2}^2+4{d_4}^2), \\\nonumber
\tilde{\nu_1}&=&\mu_1+{\frac {1}{60}}{\mu_1}^2\left(459-492 d_3+168 d_6+160 d_3 d_6 -64 {d_3}^2\right),\\\nonumber
\tilde{\nu_2}&=&{\frac {1 }{9600}}{\mu_1}^2\left(35370-58320
d_3+1440 d_6+4716 {d_3}^2+6480 d_3 d_6-3440{d_3}^3+3200{{d_3}}^2d_6 \right) \\\nonumber
&&-{\frac {1}{240}}\mu_1\mu_2 \left( -567+156 d_3+36 d_6+32 {d_3}^2-80 d_3 d_6\right)+\frac{1}{4}{\mu_2}^2-{\frac {3}{4}}\mu_1-d_3\mu_1+\frac{1}{2}\mu_2.\nonumber
\end{eqnarray}
Let \(\mathcal{C}:= (64{d_3}^2-160d_3d_6+492d_3-168d_6-459){\mu_1}^2-60\mu_1.\) Hence, \(T_{HmC}\) in \((\mu_1, \mu_2)\)-space follows
\begin{small}
\begin{eqnarray*}&& \frac{\sqrt{5}}{6272}\sqrt{150528\mathcal{C}+19264\sqrt[4]{15^3}\mathcal{C}^\frac{5}{4}+9245\sqrt {15}\mathcal{C}^\frac{3}{2}}-\left(\Big({\frac {189}{80}}+\frac{1}{3}d_3d_6-{\frac {13}{20}}
d_3-\frac{2}{15}{d_3}^2-\frac 3{20}d_6\Big)\mu_1+\frac{1}{2}\right)\mu_2
\\&=&(\frac{3}{4}-d_3)\mu_1+\left( \frac{1}{3}{d_3}^2d_6-\frac {43}{120}{d_3}^3
 +\frac {27}{40} d_3d_6+\frac {393}{80}{d_3}^2
 -\frac {243}{40}d_3+\frac 3{20}d_6+\frac {1179}{320}
 \right){\mu_1}^2.
\end{eqnarray*}
\end{small}
For a numerical simulation, we choose
\be\label{di_s} d_1:=1, \quad d_2:=1, \quad d_3:=2, \quad d_4:=1, \quad d_5:=3, \quad d_6:=\frac{19}{28},\ee
and obtain the transition varieties depicted in Figure \ref{transexample}.
By choosing the input parameters \((\mu_1, \mu_2)\) as
\bes
(-0.002, -0.18), \; (-0.002, -0.15), \; (-0.002, -0.11), \; (0.001, -0.15)
\ees from regions (a)-(d) in Figure \ref{transexample}, we obtain the controlled phase portraits in Figures \ref{Fig5(a)}-\ref{Fig5(d)}, respectively.

\begin{figure}
\begin{center}
\subfigure[\(d_2:=d_3:=d_6:=1, a_2=b_2=1\)\label{Fig6(a)}]
{\includegraphics[width=.28\columnwidth,height=.22\columnwidth]{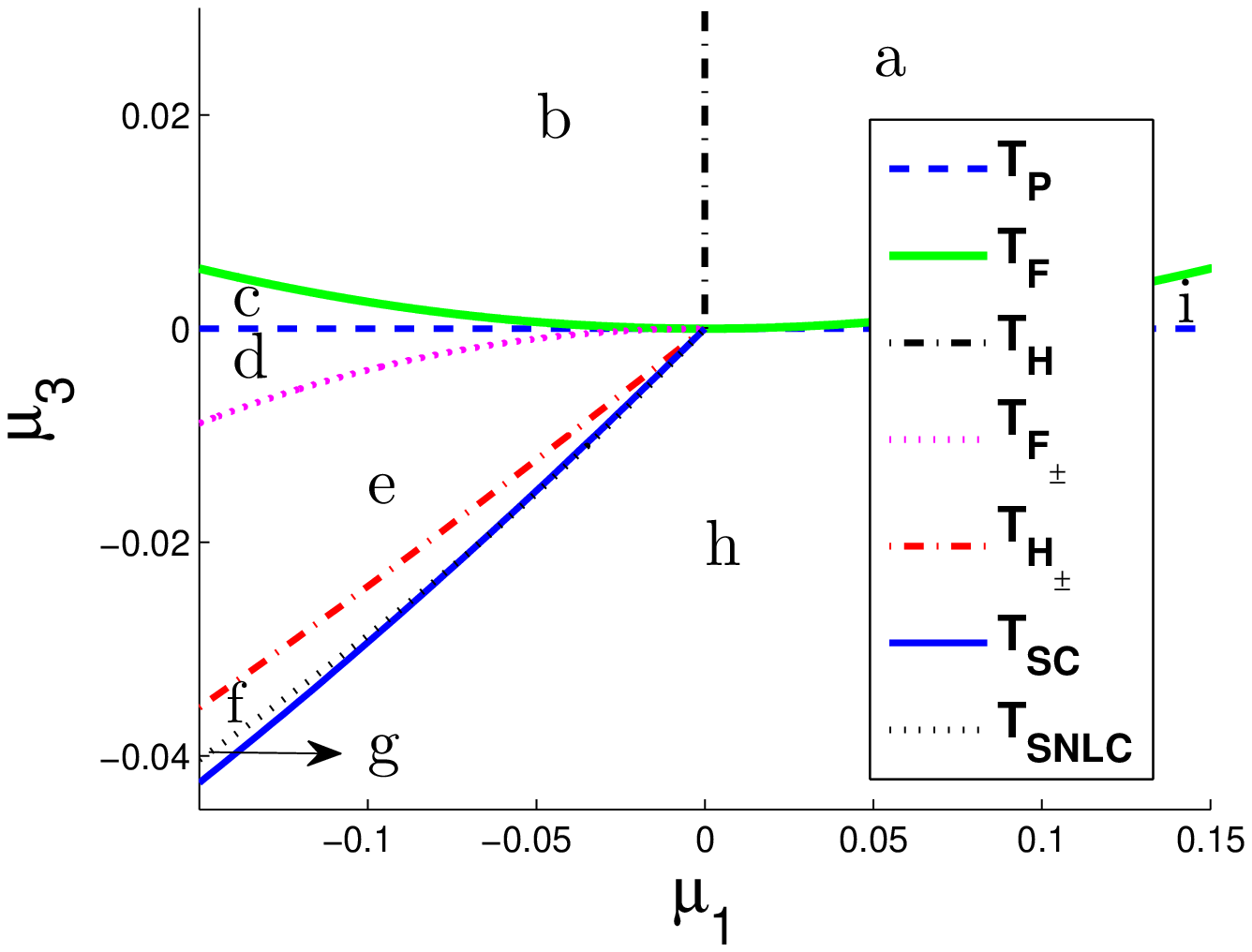}}
\subfigure[\( d_2:=1, d_3:=d_6:=-1, a_2=- b_2=1\)\label{Fig6(b)}]
{\includegraphics[width=.3\columnwidth,height=.22\columnwidth]{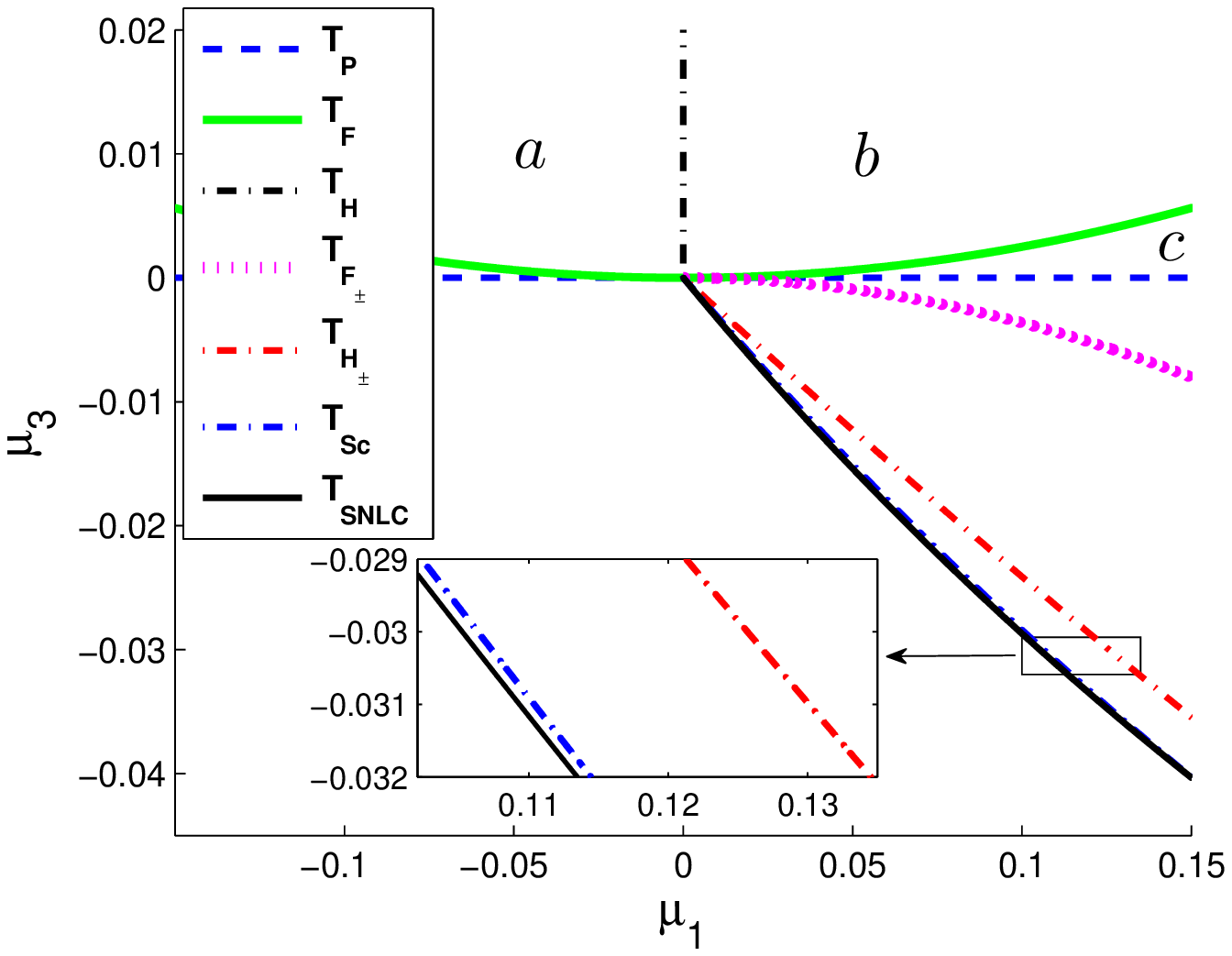}}
\subfigure[\(d_3:=d_6:=-d_2:=1, b_2=-a_2=1\) \label{Fig6(c)}]
{\includegraphics[width=.2\columnwidth,height=.22\columnwidth]{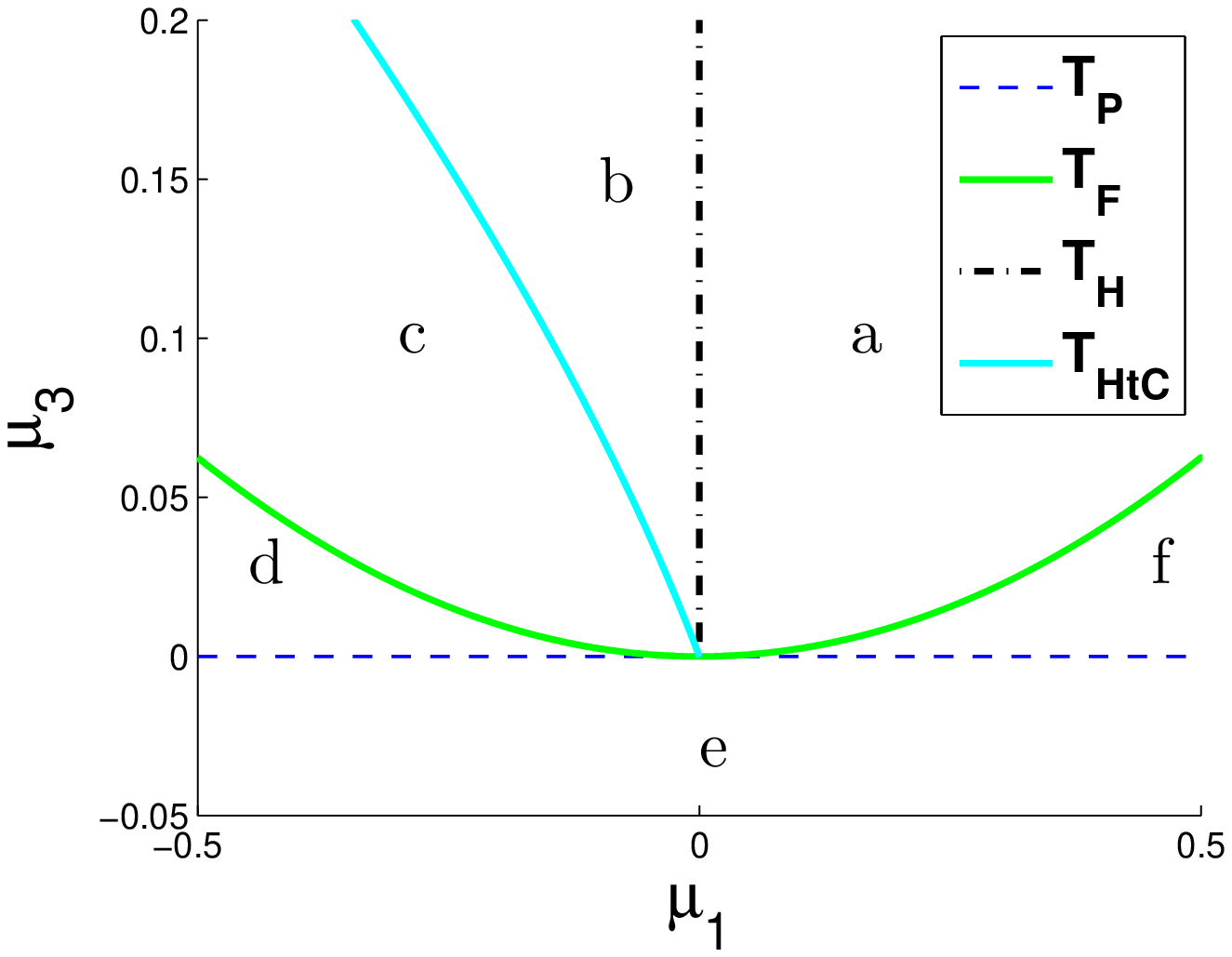}}
\subfigure[\(d_2:=d_3:=d_6:=-1, a_2= b_2=-1\)\label{Fig6(d)}]
{\includegraphics[width=.19\columnwidth,height=.22\columnwidth]{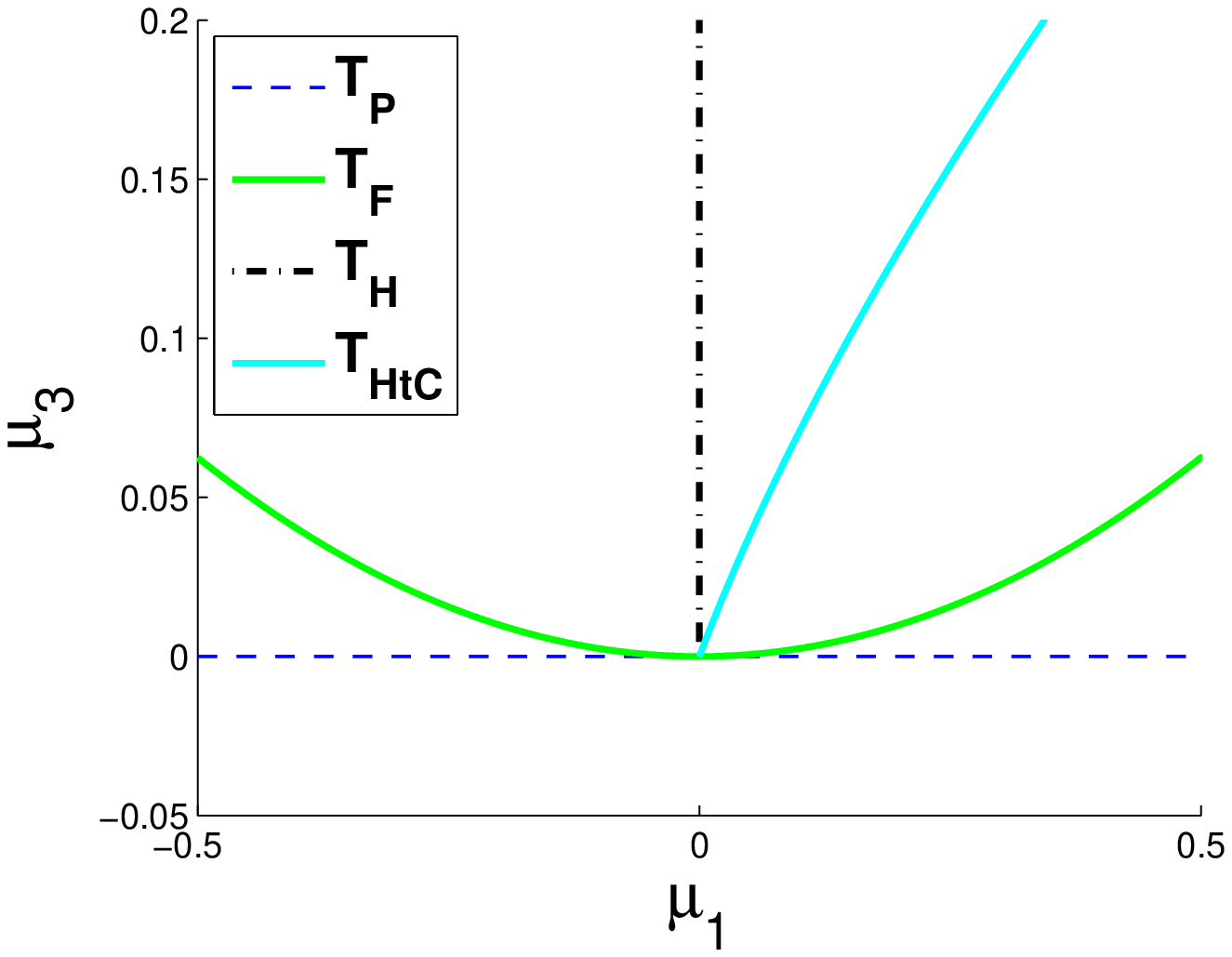}}
\caption{Numerical controller transition varieties for \(\mathbb{Z}_2\)-equivariant system \eqref{BfControlr2s2} when \(\mu_i:= 0\) for \(i\neq 1, 3,\) and \(r=s=2.\)} \label{ABCD}
\end{center}
\end{figure}

\subsection{Bifurcation controller design for the case \(r= s=2\)}

Although the generalized cusp case \((r=s=2)\) of Bogdanov-Takens singular system does not necessarily have to be a \(\mathbb{Z}_2\)-equivariant system, yet this case falls within the most generic case of \(\mathbb{Z}_2\)-equivariant systems with a Bogdanov-Takens singularity. Hence the \(\mathbb{Z}_2\)-equivariant system and its symmetry breaking are expected to occur more often in engineering problems than the occurrence of the case \((r=s=2)\) in non-\(\mathbb{Z}_2\)-equivariant engineering problems. Thus, in this subsection we only consider the cubic-jet of a  \(\mathbb{Z}_2\)-equivariant (controlled) plant
\be\label{BfControlr2s2}
\dot{x}:= d_1 x^3+d_2 y^3+d_3 x y^2+d_4 y x^2+u_1, \qquad\qquad \dot{y}:=-x+d_5 x^3+d_6 y^3+d_7 x y^2+d_8 y x^2+u_2,
\ee with possible multi-input quadratic controllers
\be\label{u1u2}
u_1:=\mu_1 x+\mu_3y+ \mu_5x^2+\mu_7xy+\mu_9y^2, \qquad u_2:=\mu_2 x+\mu_4 y+ \mu_6x^2+\mu_8xy+\mu_{10}y^2.
\ee This system is \(\mathbb{Z}_2\)-equivariant with respect to reflection around the origin when \(\mu_i=0\) for \(i\geq 5,\) while the controller parameters \(\mu_i\) for \(i\geq 5\) may contribute into the \(\mathbb{Z}_2\)-symmetry breaking. Our approach can be easily applied to non-\(\mathbb{Z}_2\)-equivariant plants. In particular we could readily include quadratic (in state variables) terms with non-zero constant coefficients in the plant associated with the system \eqref{BfControlr2s2}. For briefness, the latter is skipped in this paper.

\begin{figure}
\begin{center}
\subfigure[There are three orbits starting from the \(y\)-axis. \label{Fig7(a)}]
{\includegraphics[width=.19\columnwidth,height=.17\columnwidth]{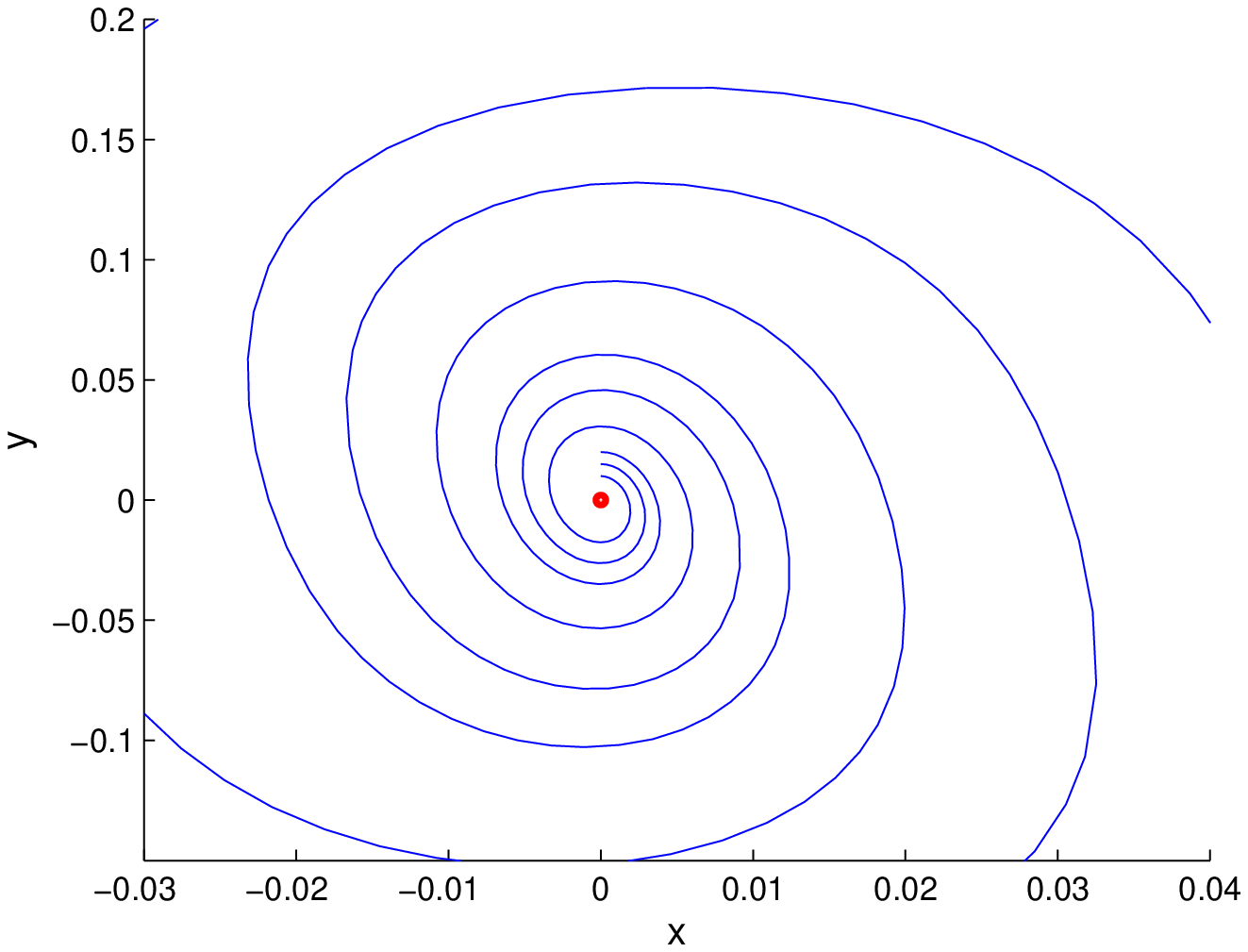}}
\subfigure[A spiral sink is surrounded by an unstable limit cycle.\label{Fig7(b)}]
{\includegraphics[width=.19\columnwidth,height=.17\columnwidth]{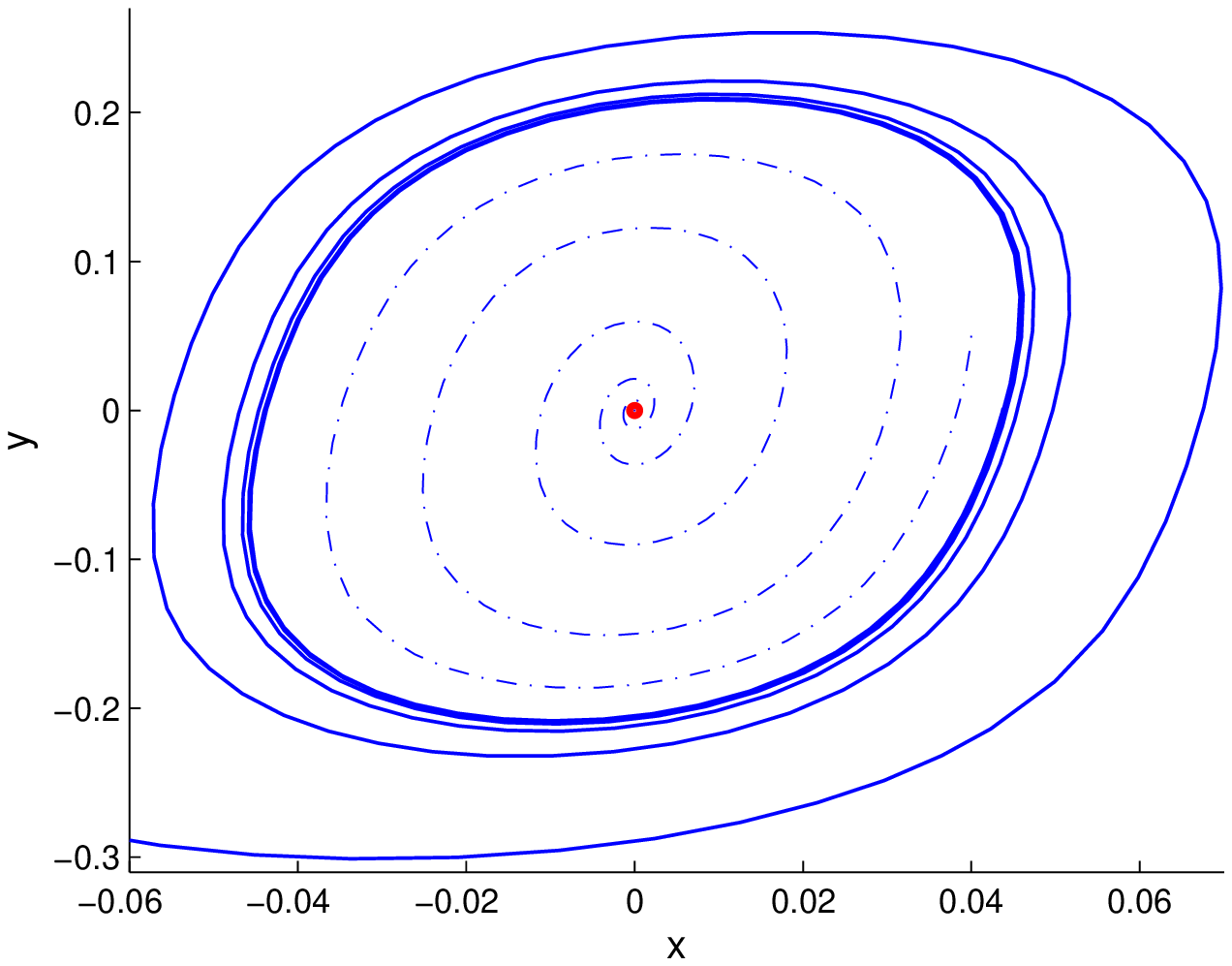}}
\subfigure[A nodal sink is surrounded by an unstable limit cycle.\label{Fig7(c)}]
{\includegraphics[width=.19\columnwidth,height=.17\columnwidth]{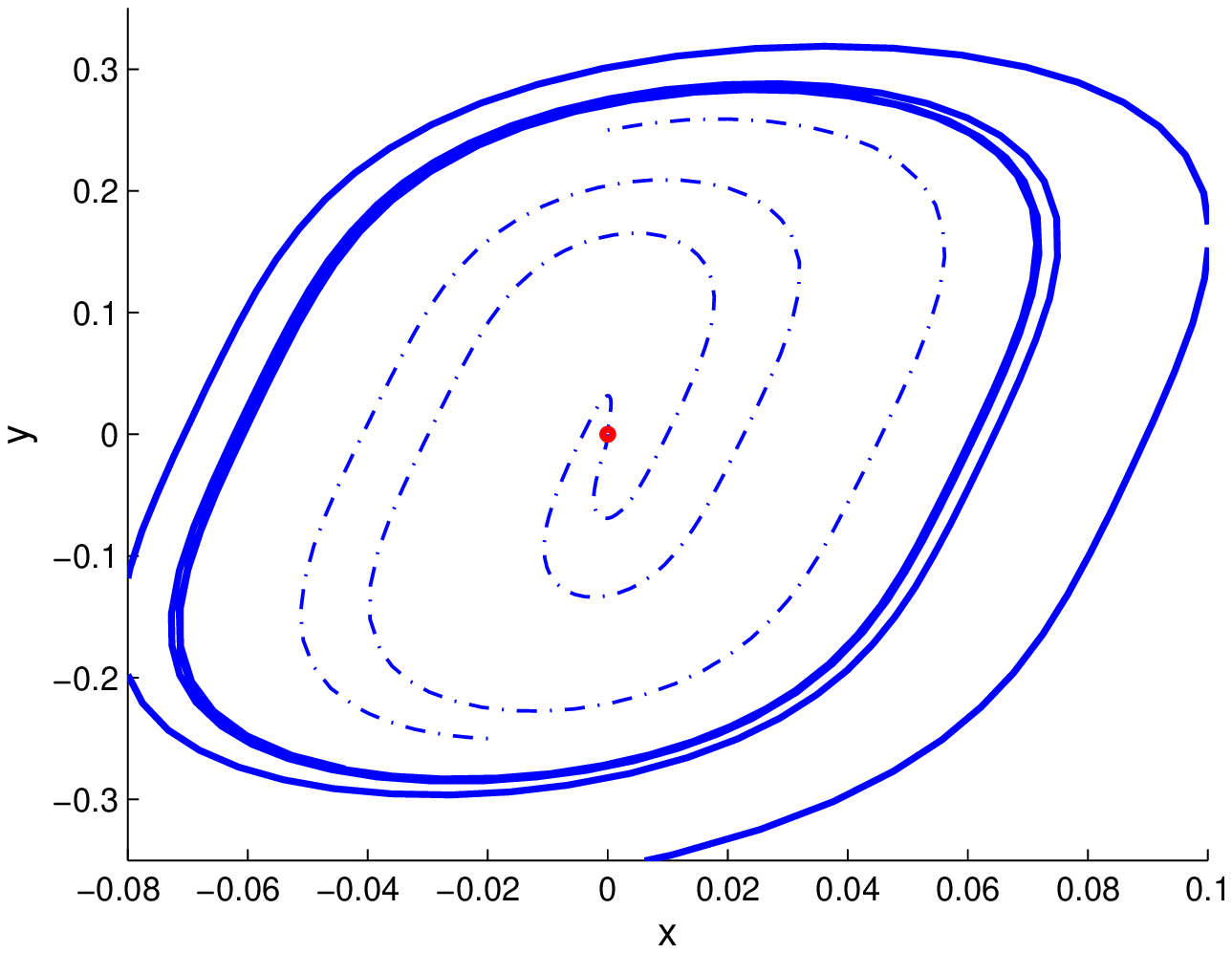}}
\subfigure[A saddle and two stable nodes exist inside an unstable limit cycle.]
{\includegraphics[width=.19\columnwidth,height=.17\columnwidth]{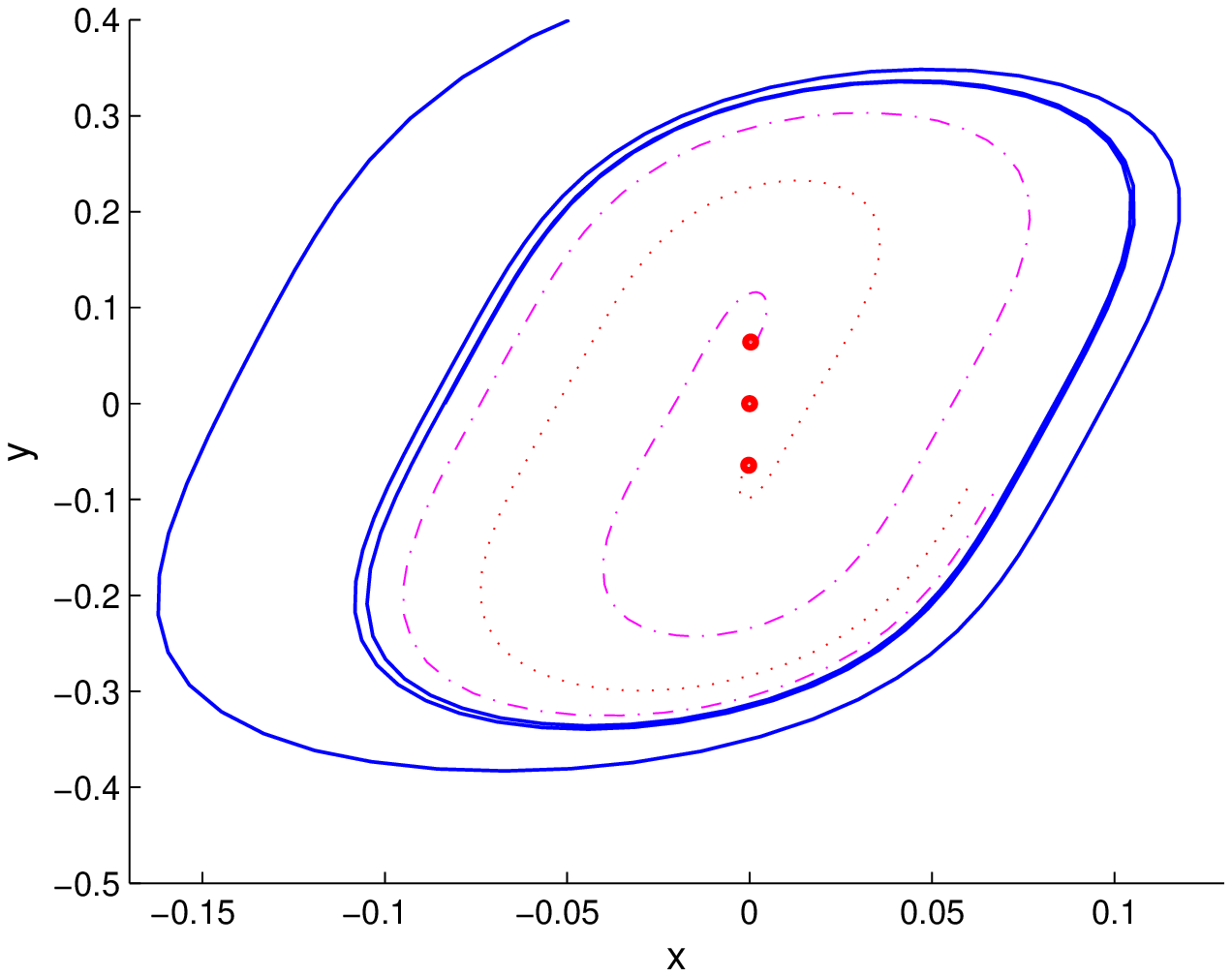}}
\subfigure[A saddle and two spiral sinks are inside an unstable limit cycle.]
{\includegraphics[width=.19\columnwidth,height=.17\columnwidth]{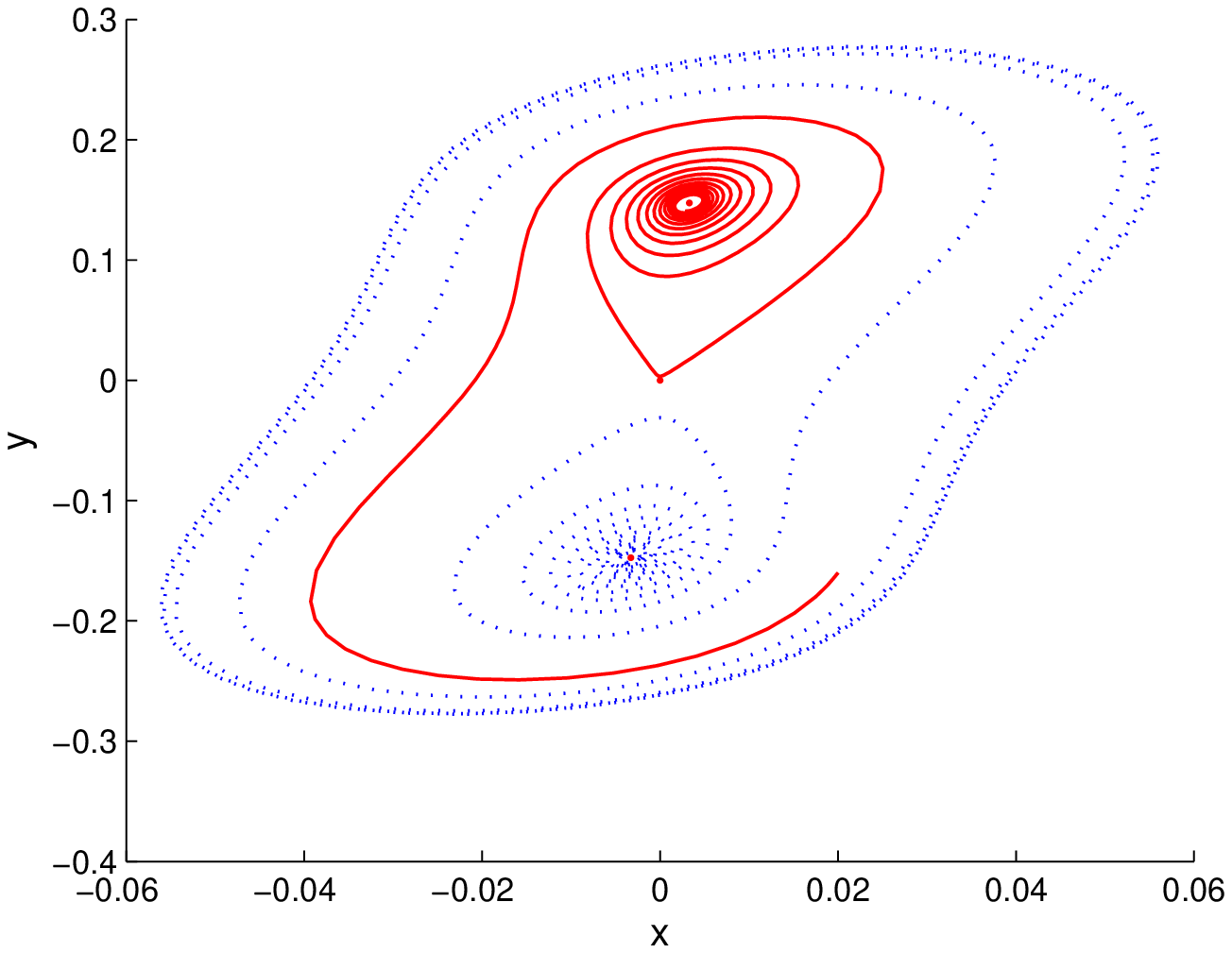}}
\subfigure[An unstable limit cycle surrounds two stable limit cycles.]
{\includegraphics[width=.26\columnwidth,height=.17\columnwidth]{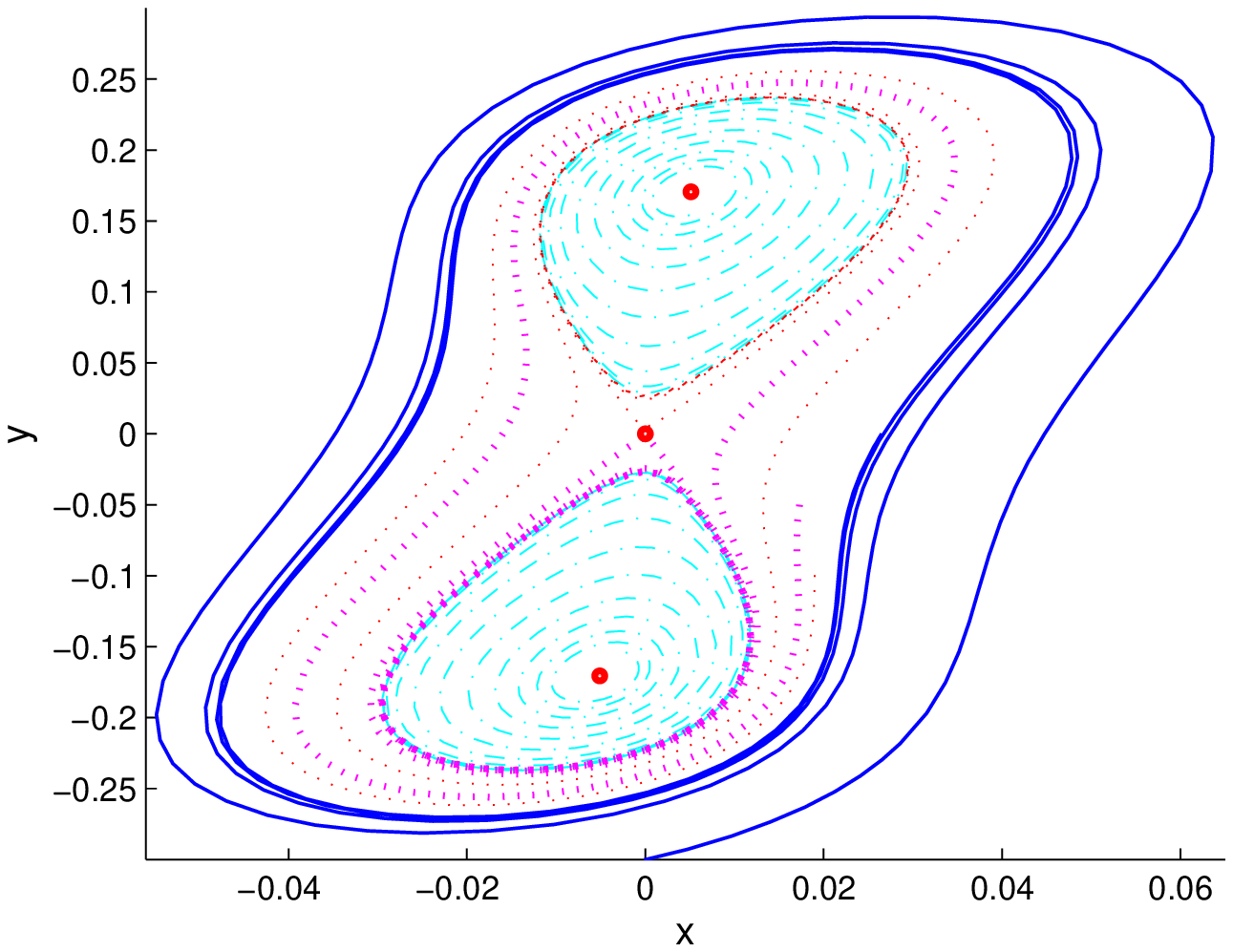}}
\subfigure[Two spiral sources and a saddle exist inside two limit cycles.]
{\includegraphics[width=.26\columnwidth,height=.17\columnwidth]{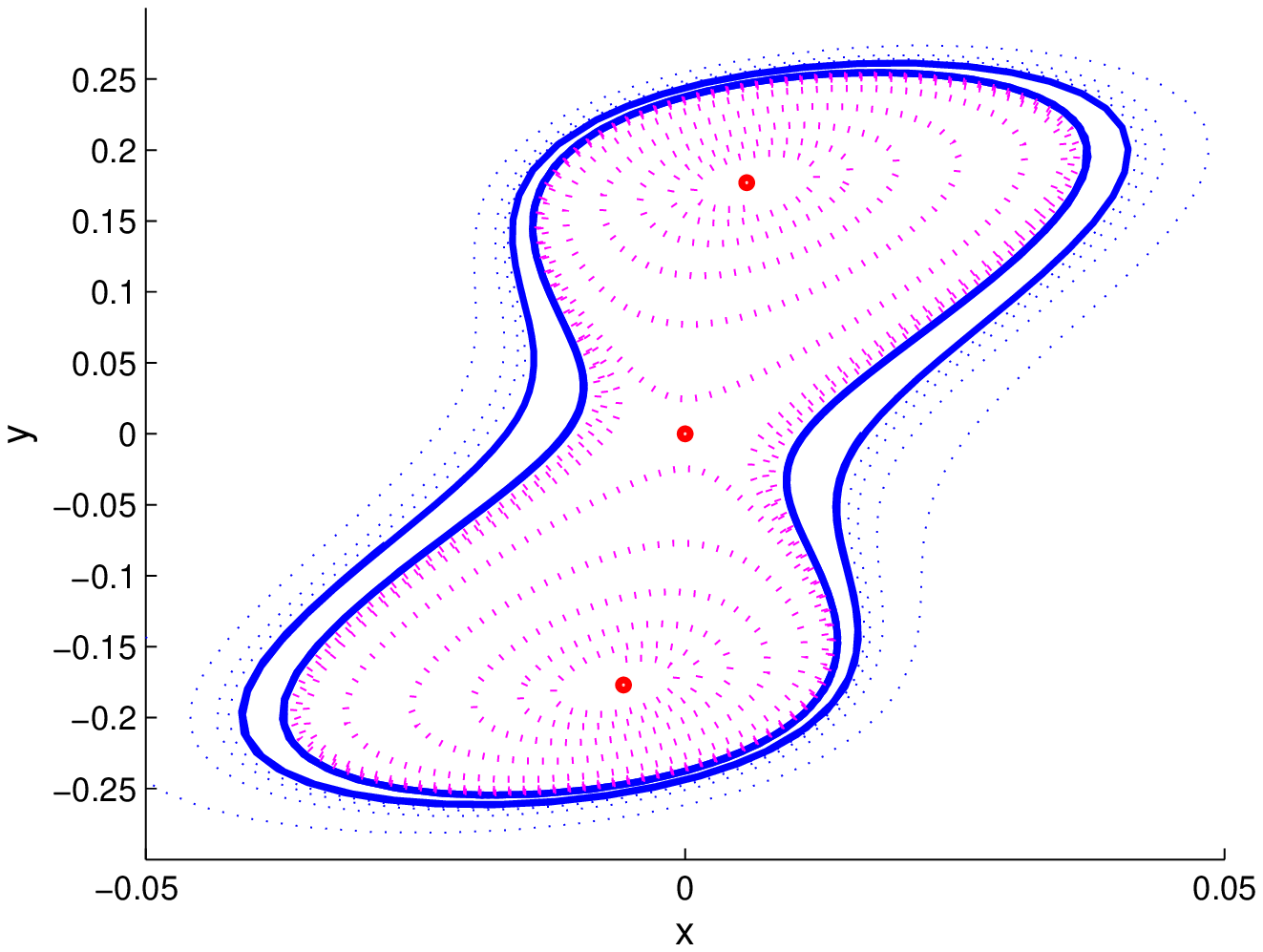}}
\subfigure[There are two spiral sources and a saddle.\label{Fig7(g)}]
{\includegraphics[width=.2\columnwidth,height=.17\columnwidth]{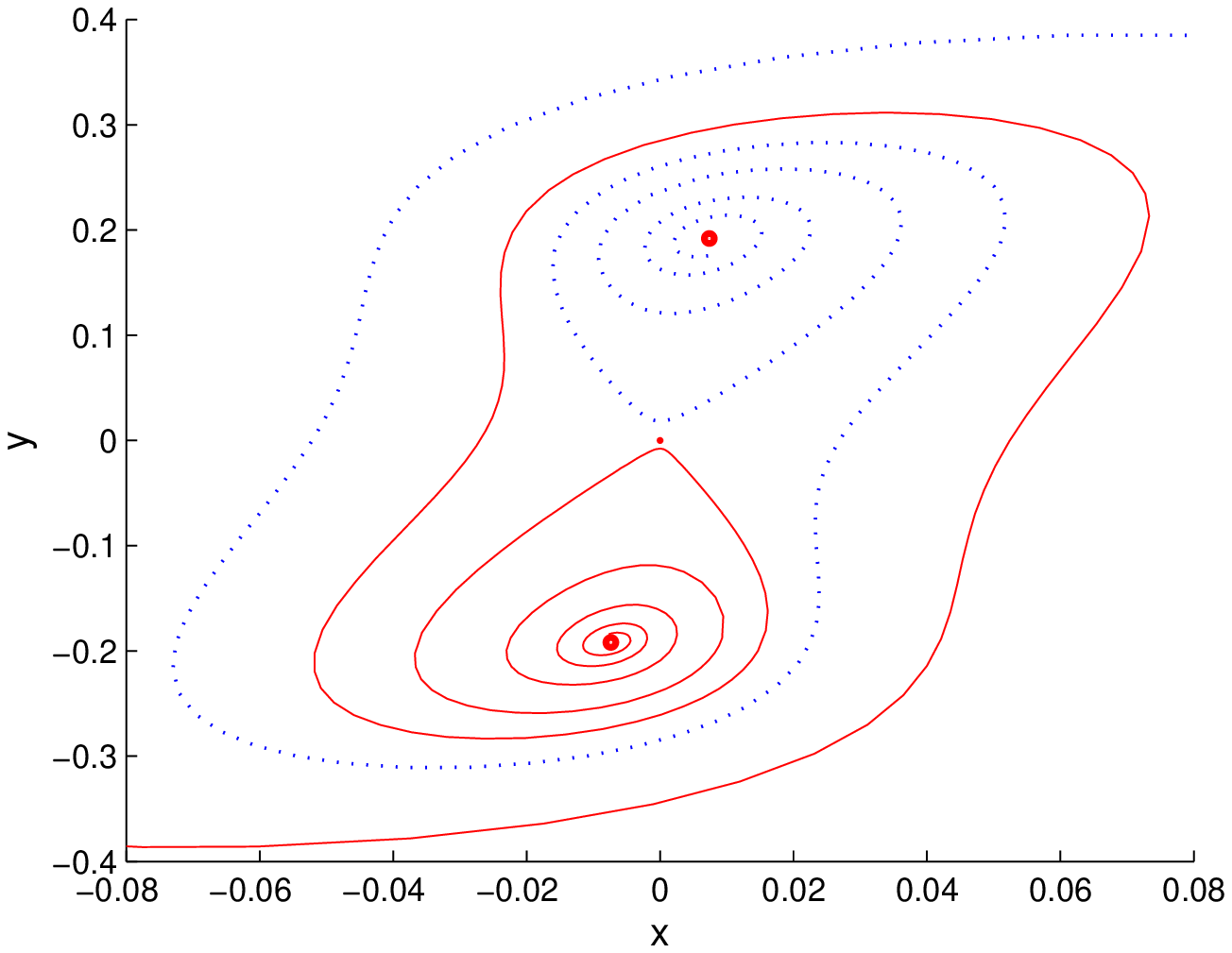}}
\subfigure[The equilibrium is a nodal source.\label{Fig7(i)}]
{\includegraphics[width=.2\columnwidth,height=.17\columnwidth]{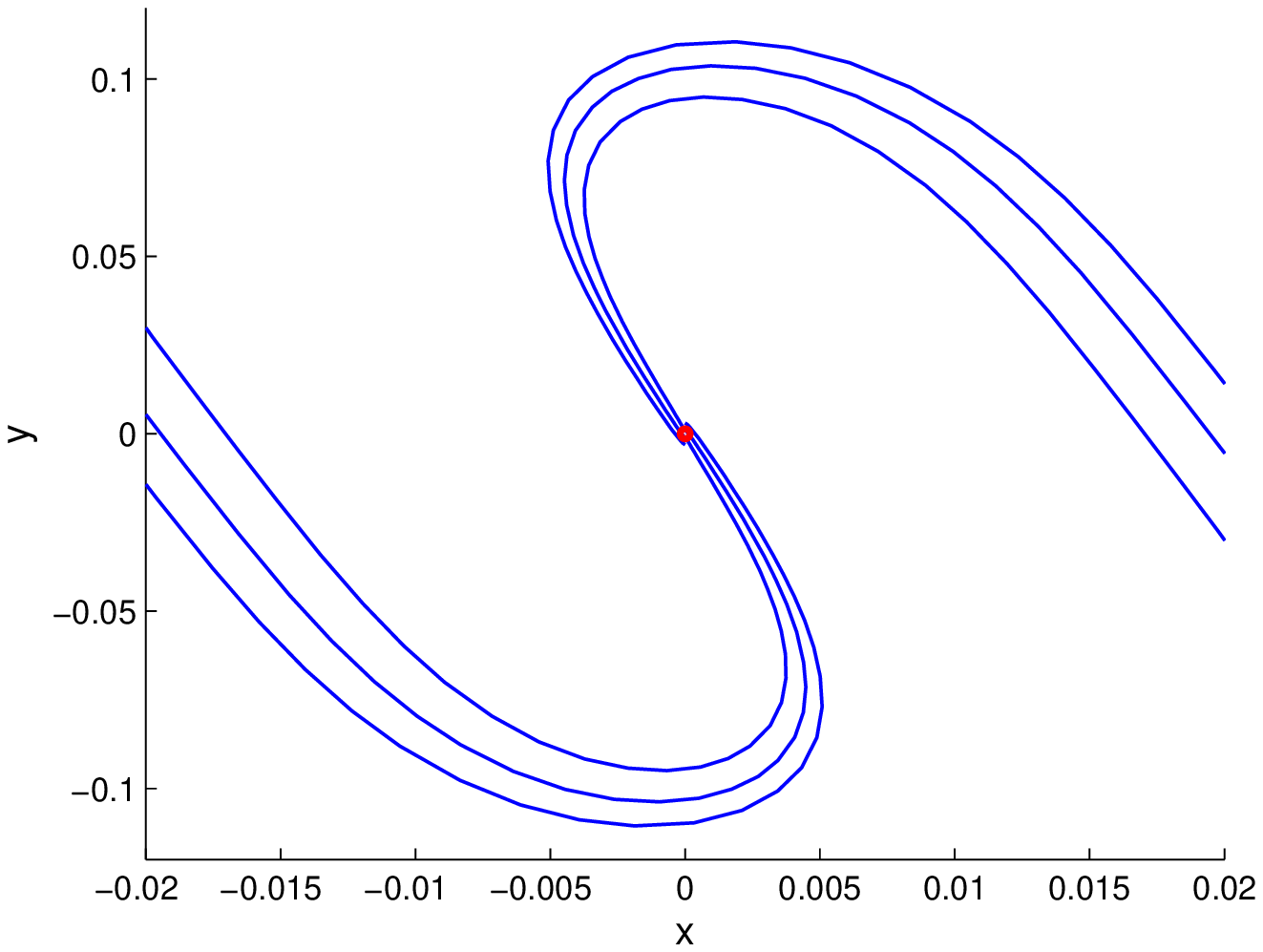}}
\caption{\(\mathbb{Z}_2\)-equivariant controlled numerical phase portraits for the plant \eqref{BfControlr2s2}. Here \(a_2= b_2= 1\), and Figures \ref{Fig7(a)}-\ref{Fig7(i)} are associated with regions (a)-(i) in Figure \ref{ABCD}(a), respectively.}
\end{center}
\end{figure}

\subsubsection{\(\mathbb{Z}_2\)-equivariant bifurcation controller design }\label{subsecZ2}

Let \(\mu_i:=0\) for \(i\geq 5.\) The cubic truncated simplest parametric normal form of the system \eqref{BfControlr2s2} is
\be\label{whynu40}
\dot{x}=\nu_2  y+\dfrac{1}{2} (\mu_1+\mu_4) x+d_2 y^3+\dfrac{1}{4}\left( d_3+3 d_6\right) x y^2, \quad \dot{y}= -x+\dfrac{1}{2}(\mu_1+\mu_4) y+\dfrac{1}{4}\left( d_3+3 d_6\right)y^3,
\ee where
\begin{eqnarray*}
\nu_2&:=&\mu_3-\dfrac{1}{{4}} {\mu_4}^2-\dfrac{1}{4}{\mu_1}^2+\dfrac{24 {d_2}^2 (3d_1+d_8)-(d_3+3 d_6)\big(\left(d_3-d_6\right)(d_3-9 d_6) +8 d_2 (d_4+d_7)\big)}{{12 {d_2}^2 \left( d_3+3d_6 \right) }}{\mu_3}^2+\mu_2\mu_3 \\
&&+\dfrac{1}{2}\mu_1\mu_4+\dfrac{ \left(3d_3+d_6\right)^2 +16(d_2 d_7-{d_6}^2)}{{16 d_2 \left( d_3
+3d_6 \right)}}\mu_1\mu_3+\dfrac{\left(d_6-d_3\right)\left(23d_3+81d_6\right)+16d_2\left(4d_4+5d_7\right)}{{16 d_2\left(d_3
+3d_6\right)}}\mu_3\mu_4.
\end{eqnarray*}

\begin{figure}
\begin{center}
\subfigure[ There are two saddle points and a spiral source at the origin.\label{Fig8(a)}]
{\includegraphics[width=.31\columnwidth,height=.2\columnwidth]{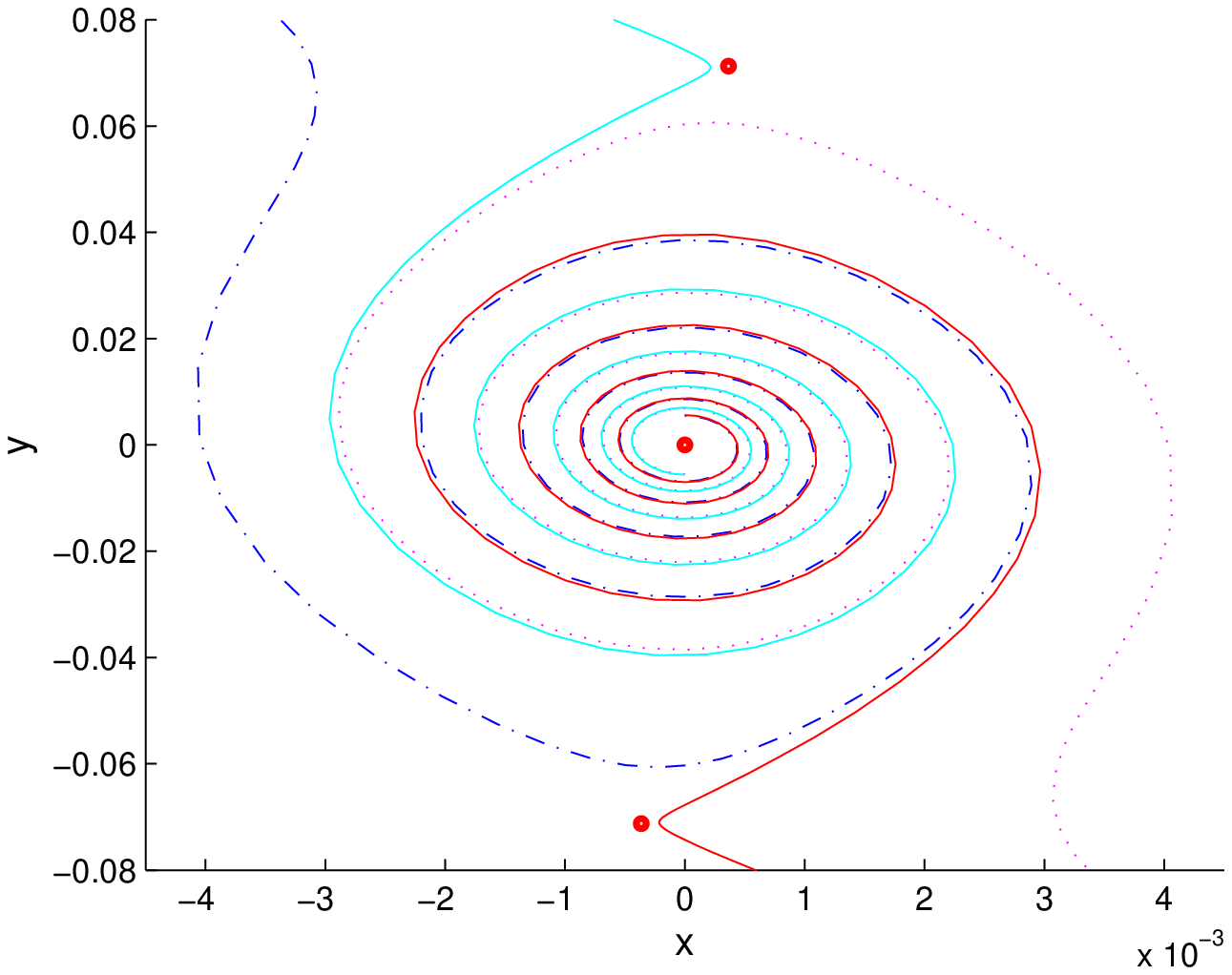}}
\subfigure[An unstable limit cycle is surrounded by two saddle points.\label{Fig8(b)}]
{\includegraphics[width=.34\columnwidth,height=.2\columnwidth]{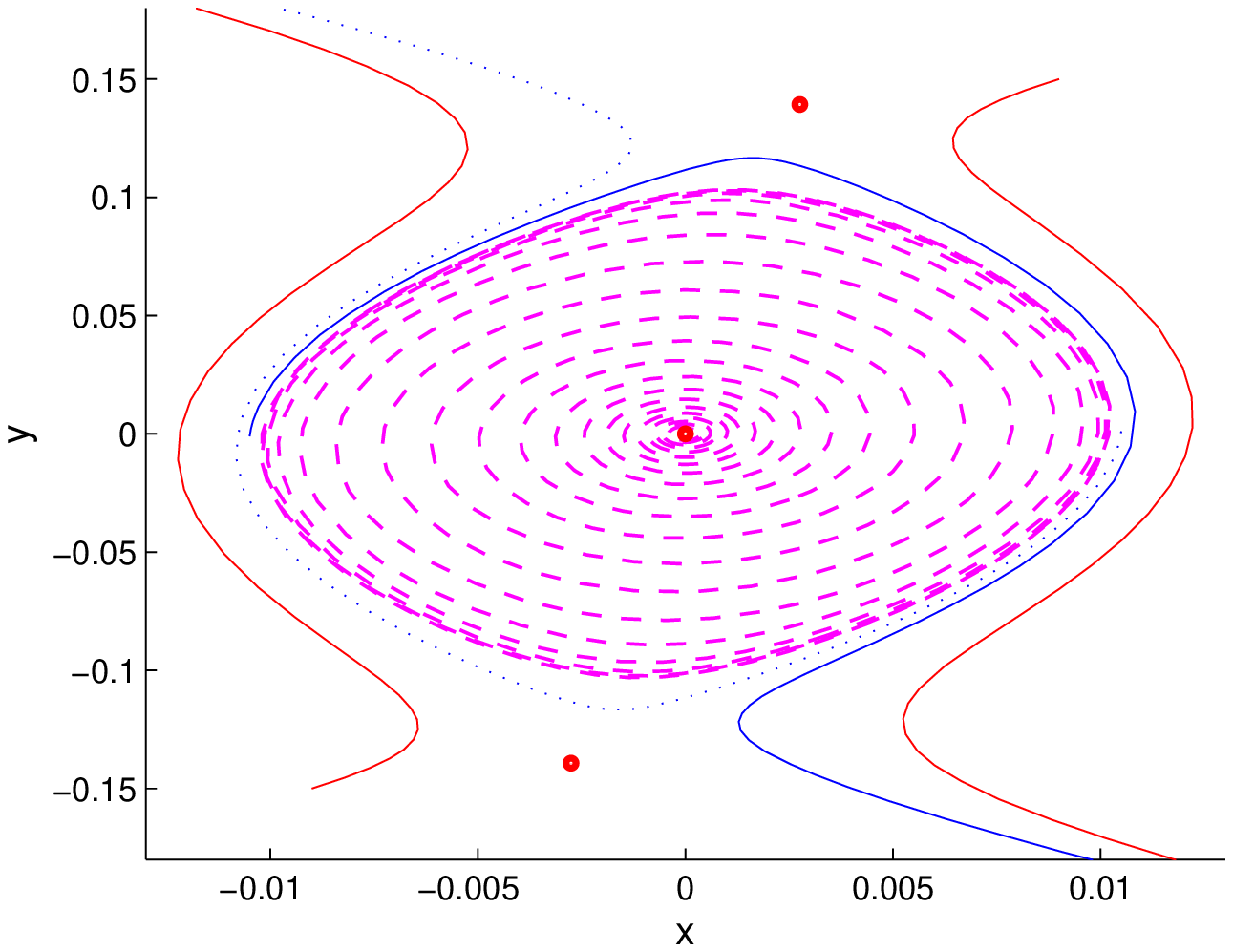}}
\subfigure[The limit cycle in Figure \ref{Fig8(b)} is broken via a heteroclinic bifurcation.]
{\includegraphics[width=.31\columnwidth,height=.2\columnwidth]{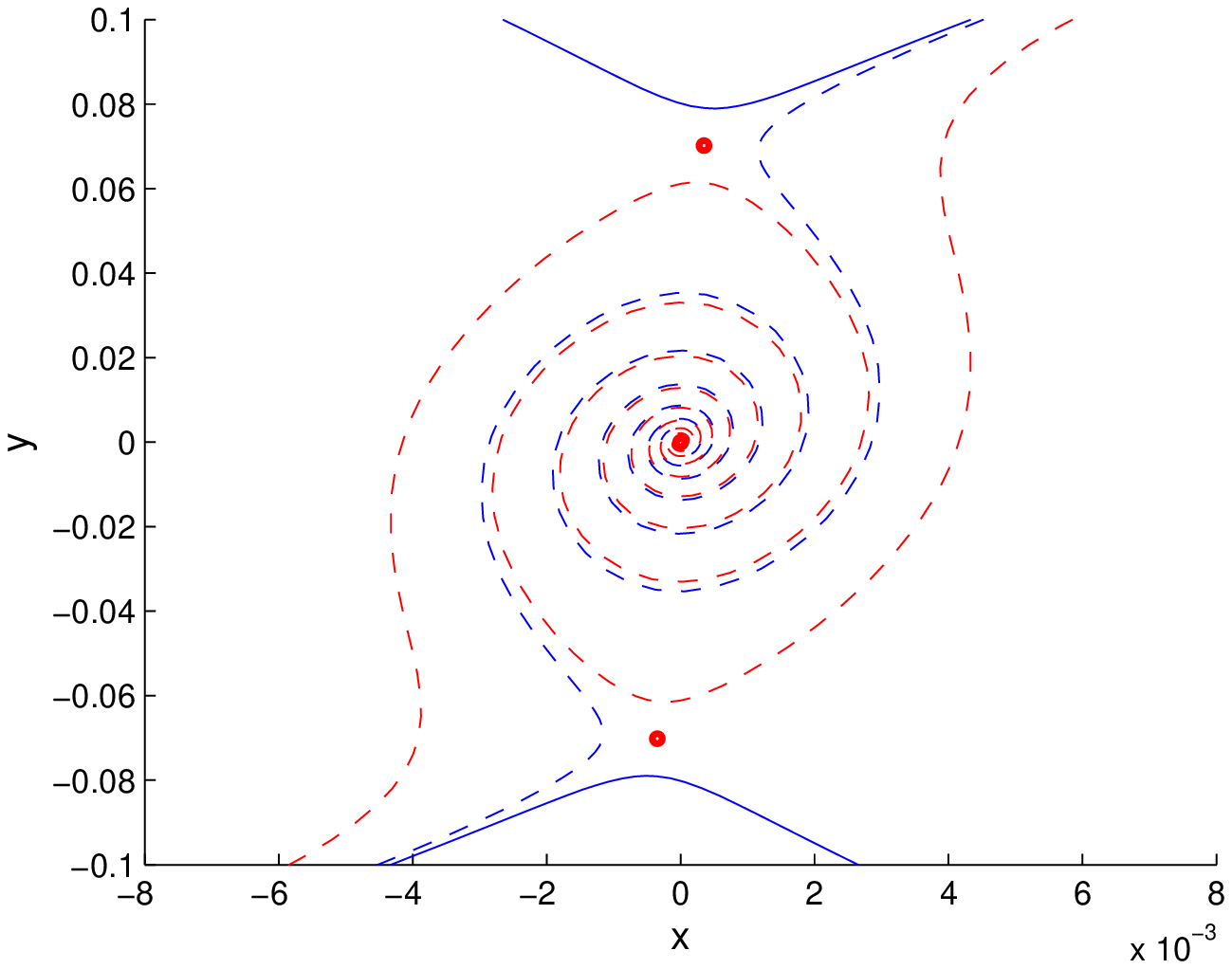}}
\subfigure[There are two saddles and a nodal sink at the origin.]
{\includegraphics[width=.32\columnwidth,height=.2\columnwidth]{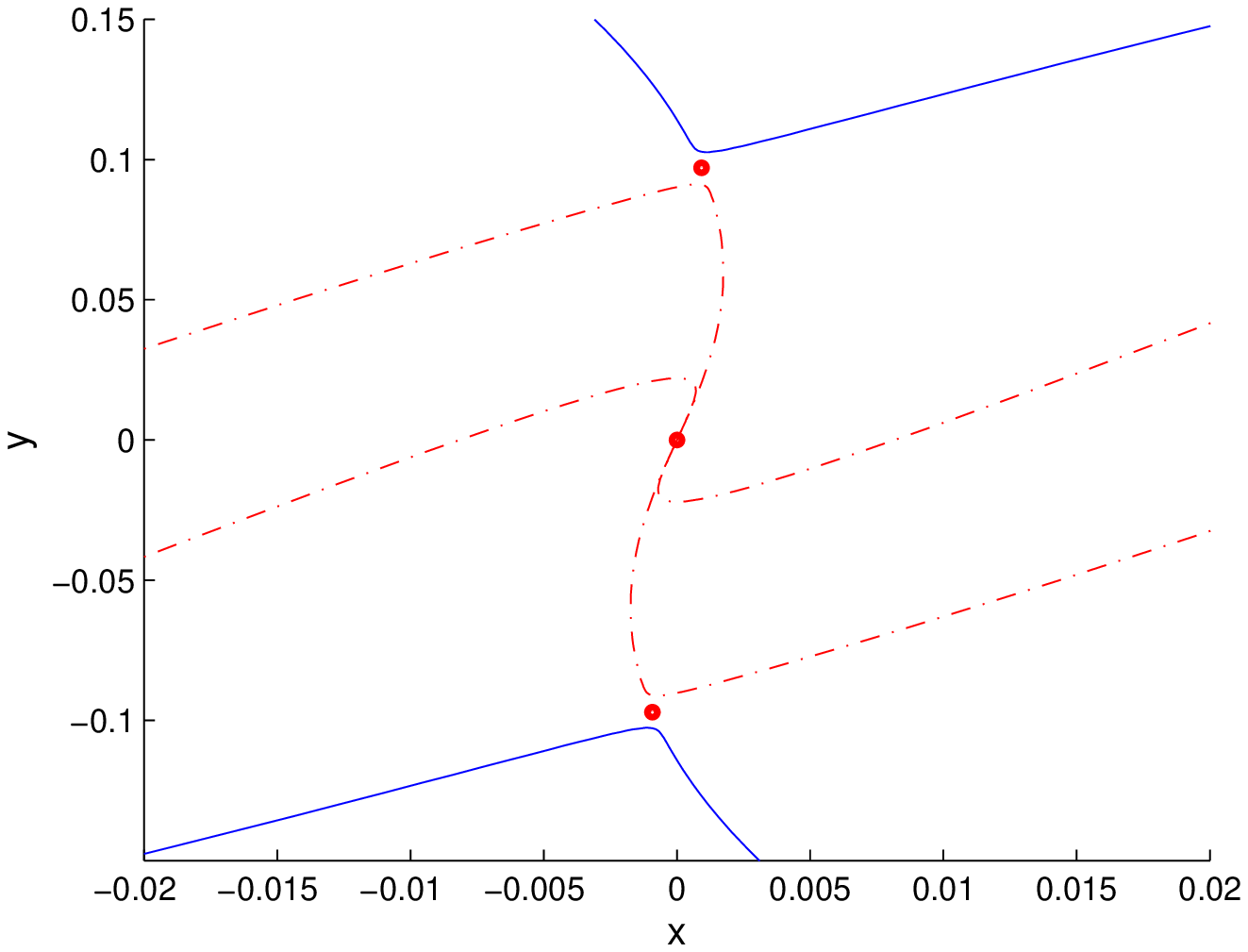}}
\subfigure[There is a saddle point at the origin.\label{Fig8(d)}]
{\includegraphics[width=.32\columnwidth,height=.2\columnwidth]{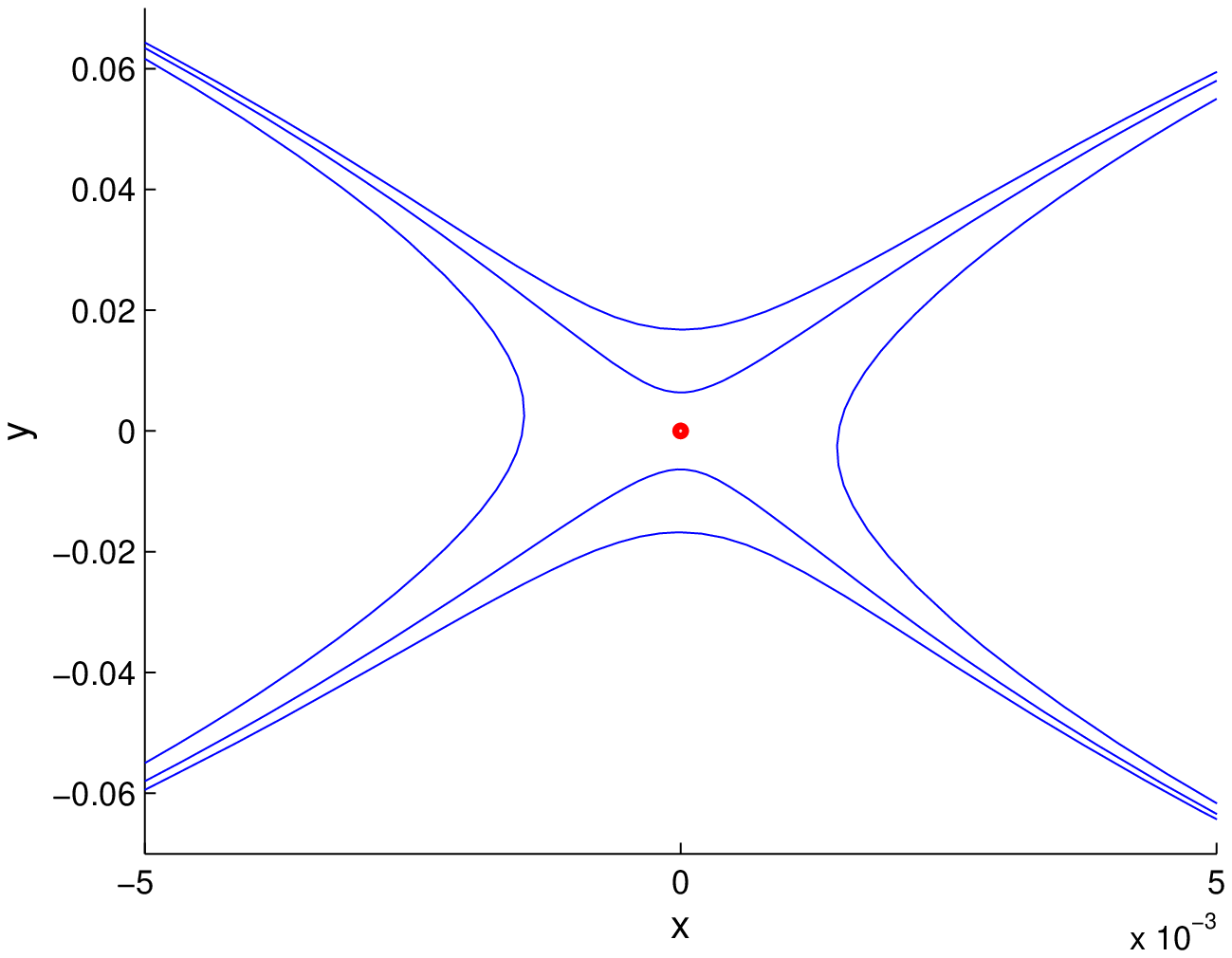}}
\subfigure[There are two saddle equilibria along with a nodal source at the origin.\label{Fig8(g)}]
{\includegraphics[width=.32\columnwidth,height=.2\columnwidth]{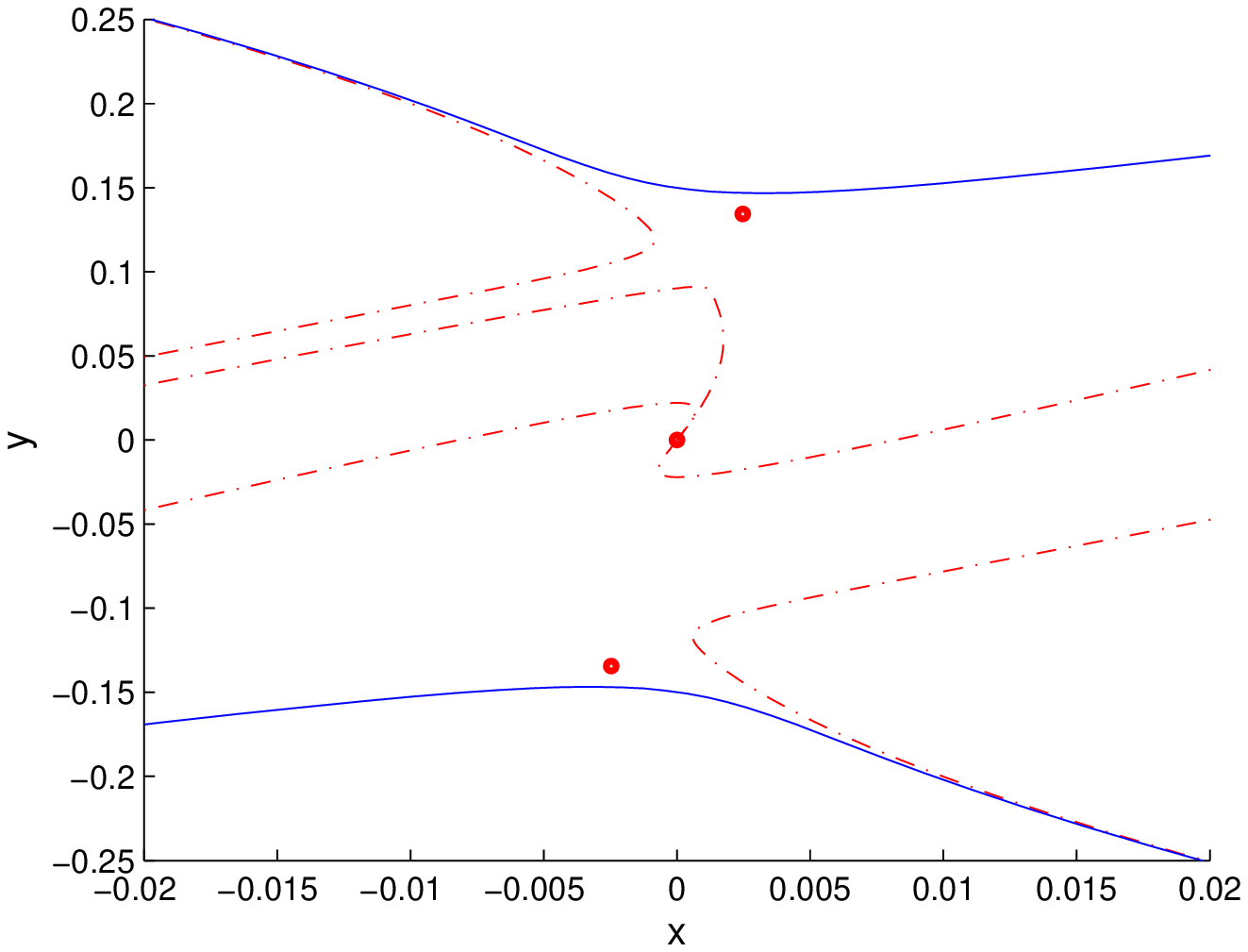}}
\caption{\(\mathbb{Z}_2\)-equivariant controlled numerical phase portraits for the plant \eqref{BfControlr2s2} when \(a_2=-1, b_2= 1\). Figures \ref{Fig8(a)}-\ref{Fig8(g)} are associated with the connected regions assigned by (a)-(f) in Figure \ref{ABCD}(c), respectively.}
\end{center}
\end{figure}

The condition \(r=s=2\) and a simple {\sc Maple} programming imply that the parameters \((\mu_1, \mu_3)\) and \((\mu_3, \mu_4)\) can play the role of the distinguished parameters (asymptotic unfolding parameters), \ie \(\nu_2\) and \(\nu_3\) are diffeomorphic polynomials in terms of \(\mu_1\) and \(\mu_3\); see \cite{GazorSadri}. We choose \(d_2:=1,\) \(d_3:=d_6:=b,\) and  set \(\mu_2:= \mu_4:= 0\). Then the estimated bifurcation control varieties with symbolic coefficients (\(b, d_1, d_4, d_7, d_8\)) of the controlled system \eqref{BfControlr2s2} are derived as
\begin{eqnarray}\nonumber
T_{P}&=&\{ (\mu_1, \mu_3)\,|\,\mu_3=0\},\qquad\qquad
T_{H}=\{ (\mu_1, \mu_3)\,|\,\mu_1=0\},\\\label{Transr2s2Z21}
T_{F}&=&\left\{ (\mu_1, \mu_3)\,|\, 64 \mu_3+24{\mu_1}^2+8 b{\mu_1}^3+b^2{\mu_1}^4=0\right\},\\\nonumber
T_{H_{\pm}}&=&\left\{ (\mu_1, \mu_3)\,|\, 12 b \mu_1-48 b^2\mu_3-8b\big(9d_1-4bd_4-4bd_7+3d_8\big){\mu_3}^2 +9b{\mu_1}^2-12bd_7\mu_1\mu_3=0\right\},\\\nonumber
{T_{F}}_{\pm}&=&\Big\{(\mu_1, \mu_3)\,| 12 b\mu_3+\left(18d_1+6d_8\mp 8d_4b\mp 8d_7b \right) {\mu_3}^2-3 b{\mu_1}^2+3d_7\mu_1\mu_3=0\Big\},
\end{eqnarray}
\begin{eqnarray}\nonumber
&T_{SC}= \Big\{(\mu_1, \mu_3)\,|&
-120b\mu_1+\big( \left( 136bd_7+40bd_{{4}}-90d_1-30d_8 \right)\mu_1
+384{b}^2 \big) \mu_3\\\nonumber
&& +\big( 192bd_8+576bd_1-256{b}^2 \left( d_4+d_7 \right)\big) {\mu_3}^2-3\left( 5d_7+32{b}^2 \right){\mu_1}^2
=0\Big\},\\\label{Transr2s2Z22}
&T_{SNLC}=\Big\{(\mu_1, \mu_3)\,|&
-3000 b \mu_1+ \big((1000b d_4+3256 b d_7-2250d_1-750 d_8) \mu_1+9024{b}
^2 \big) \mu_3\\\nonumber
&&+\big(13536 b d_1+4512 b d_8-6016 b^2\left(d_4+d_7\right)\big)
{\mu_3}^2- \left(375d_7+2256 b^2\right) {\mu_1}^2=0\Big\}.
\end{eqnarray}

Choosing \(b=\pm 1,\) we obtain highly accurate estimated numerical transition varieties in terms of \(\mu_1\) and \(\mu_3.\) These are depicted in Figures \ref{Fig6(a)}-\ref{Fig6(d)}. For numerical simulations, we choose a pair of parameters \((\mu_1, \mu_3)\) from each connected region labeled (a)-(i) in Figure \ref{Fig6(a)} as
\begin{eqnarray}\label{mu1mu3Values}
&&(0.05, 0.02), (-0.05, 0.02), (-0.1, 0.00143), (-0.15, -0.00353), (-0.1, -0.02)\\\nonumber
&& \qquad (-0.1, -0.027), (-0.1, -0.0292), (-0.1, -0.0345), (0.1, 0.00143),
\end{eqnarray} when \(a_2= b_2=1.\) The associated controlled numerical phase portraits of the plant \eqref{BfControlr2s2} are plotted in Figures \ref{Fig7(a)}-\ref{Fig7(i)}, respectively. The constants are chosen as \(d_2:=-1, b:=1\) for the numerical simulations of the case \(a_2=-1, b_2=1.\) The bifurcation varieties \(T_P\) and \(T_H\) follow those of the equations \eqref{Transr2s2Z21} while
\bas
&T_{HtC}= \Big\{(\mu_1, \mu_3)\,|&
120b\mu_1+\big( \left( 90d_1+30d_8-40d_4 b-16d_7b \right)\mu_1+96{b}^2 \big) \mu_3\\\nonumber
&&+\big(144d_1b+48 d_8b-64b^2 \left(d_{{4}}+d_7\right)\big){\mu_3}^2-3\left( 8{b}^2-5d_7 \right){\mu_1}^2=0\Big\}.
\eas The input controller parameters \((\mu_1, \mu_3)\) from regions (a)-(f) in \ref{Fig6(c)} are taken as
\bes
(-0.01, 0.005), (-0.01, 0.015), (-0.02, 0.0015), (-0.3, 0.0122), (-0.01, -0.005), (0.3, 0.0122).
\ees The numerical phase portraits of either of these controlled plants are given in Figures \ref{Fig8(a)}-\ref{Fig8(g)}, respectively.

\subsubsection{\(\mathbb{Z}_2\)-symmetry breaking bifurcation control }

\begin{figure}
\begin{center}
\subfigure[\(a_2=b_2=1, \mu_5:=0.3\)\label{Fig9(a)} ]
{\includegraphics[width=.28\columnwidth,height=.2\columnwidth]{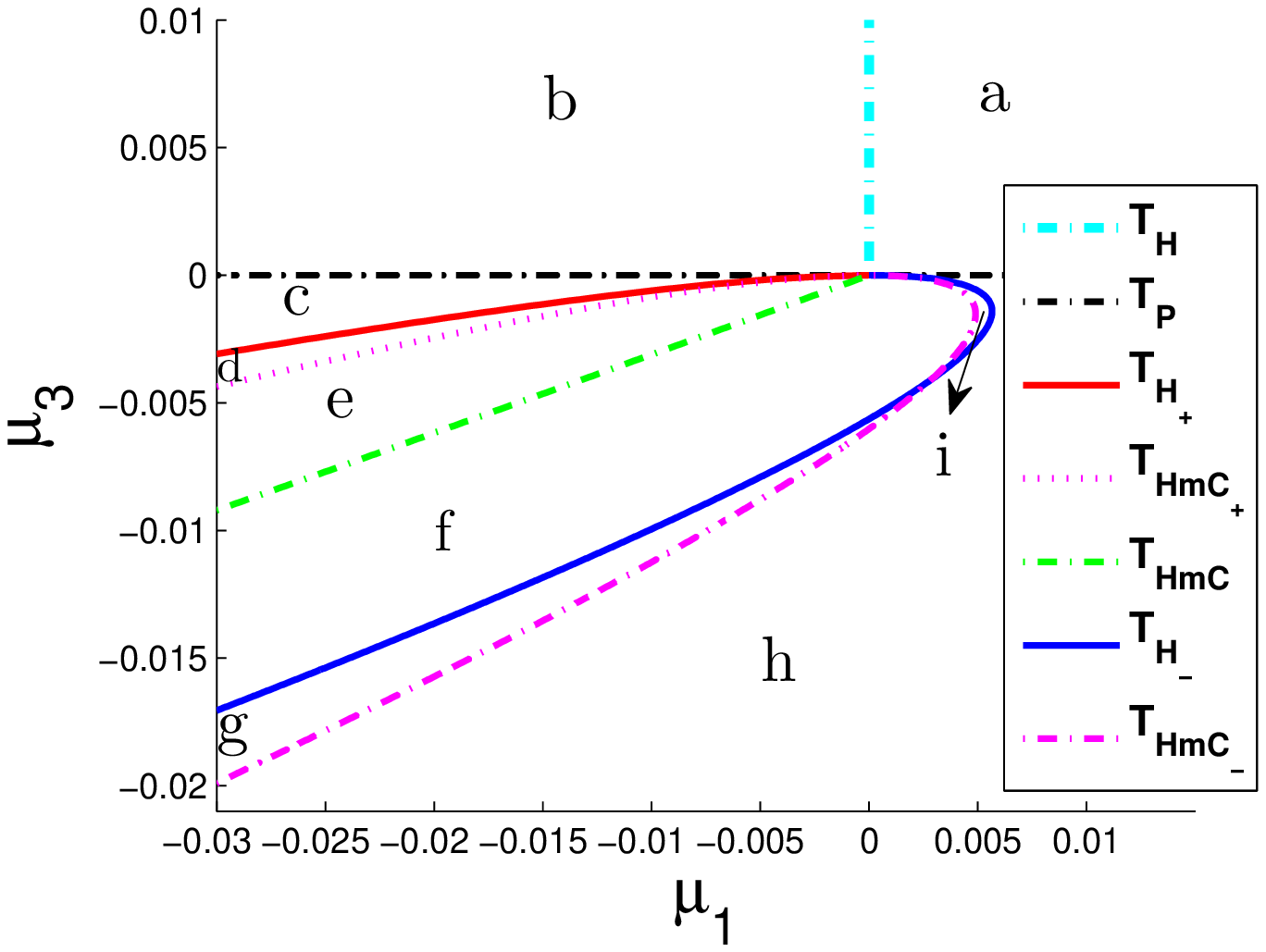}}
\subfigure[\(a_2= b_2=1, \mu_5:=-0.3\) \label{Fig9(c)}]
{\includegraphics[width=.30\columnwidth,height=.2\columnwidth]{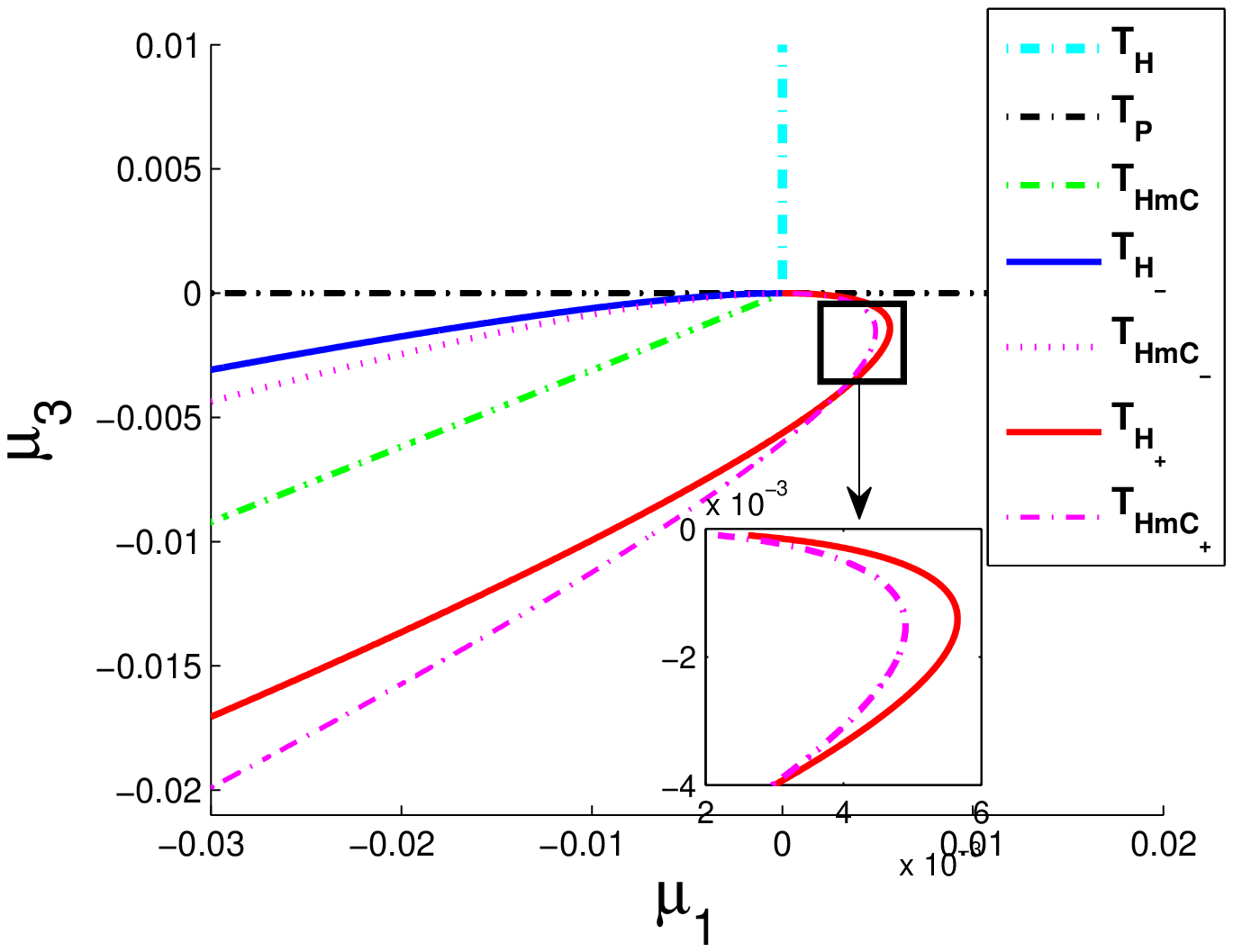}}
\subfigure[\(a_2= b_2=-1, \mu_5:=\pm 0.3\)\label{Fig9(b)}]
{\includegraphics[width=.19\columnwidth,height=.2\columnwidth]{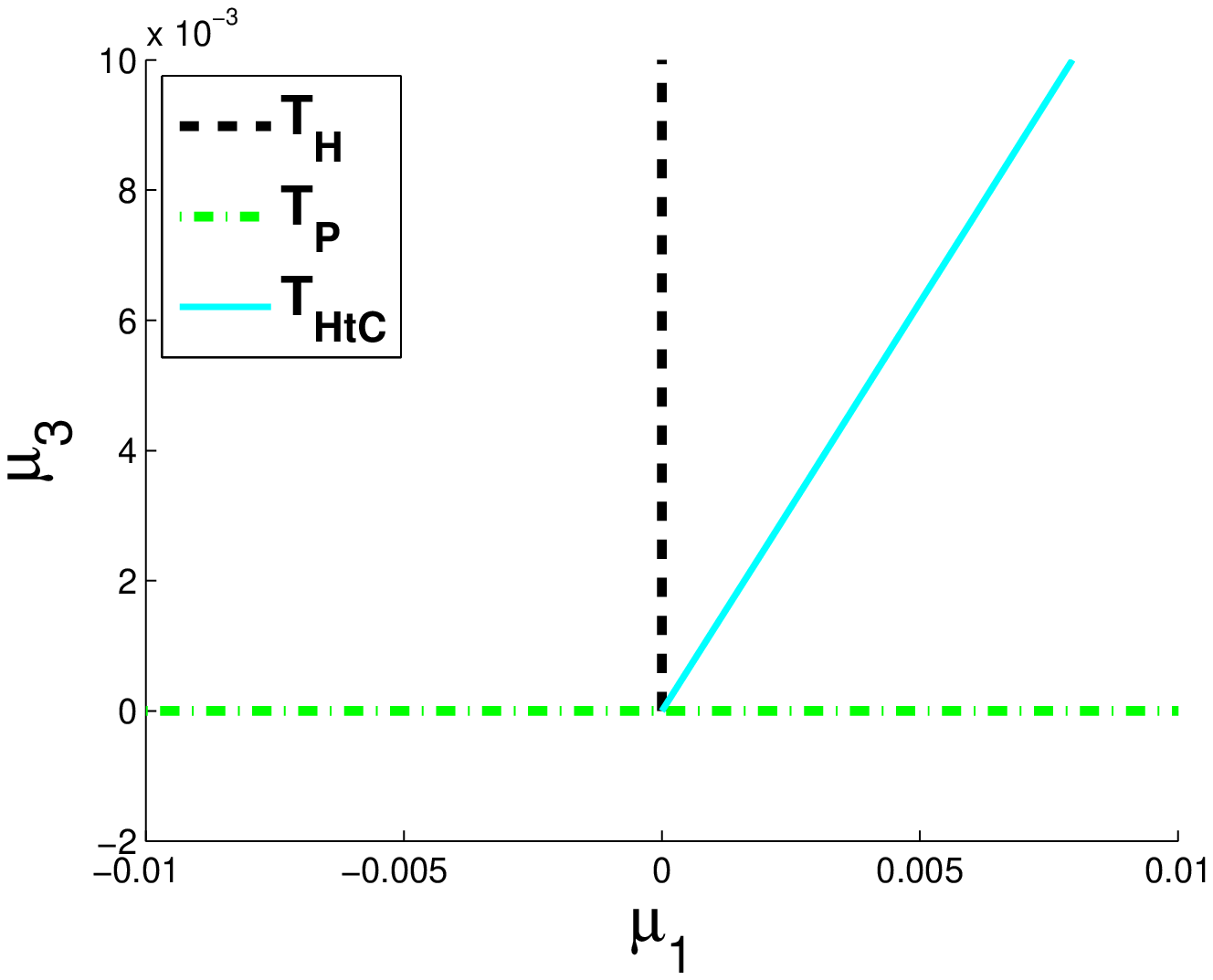}}
\subfigure[\(b_2=- a_2=1, \mu_5:=\pm 0.3\)\label{Fig9(d)}]
{\includegraphics[width=.19\columnwidth,height=.2\columnwidth]{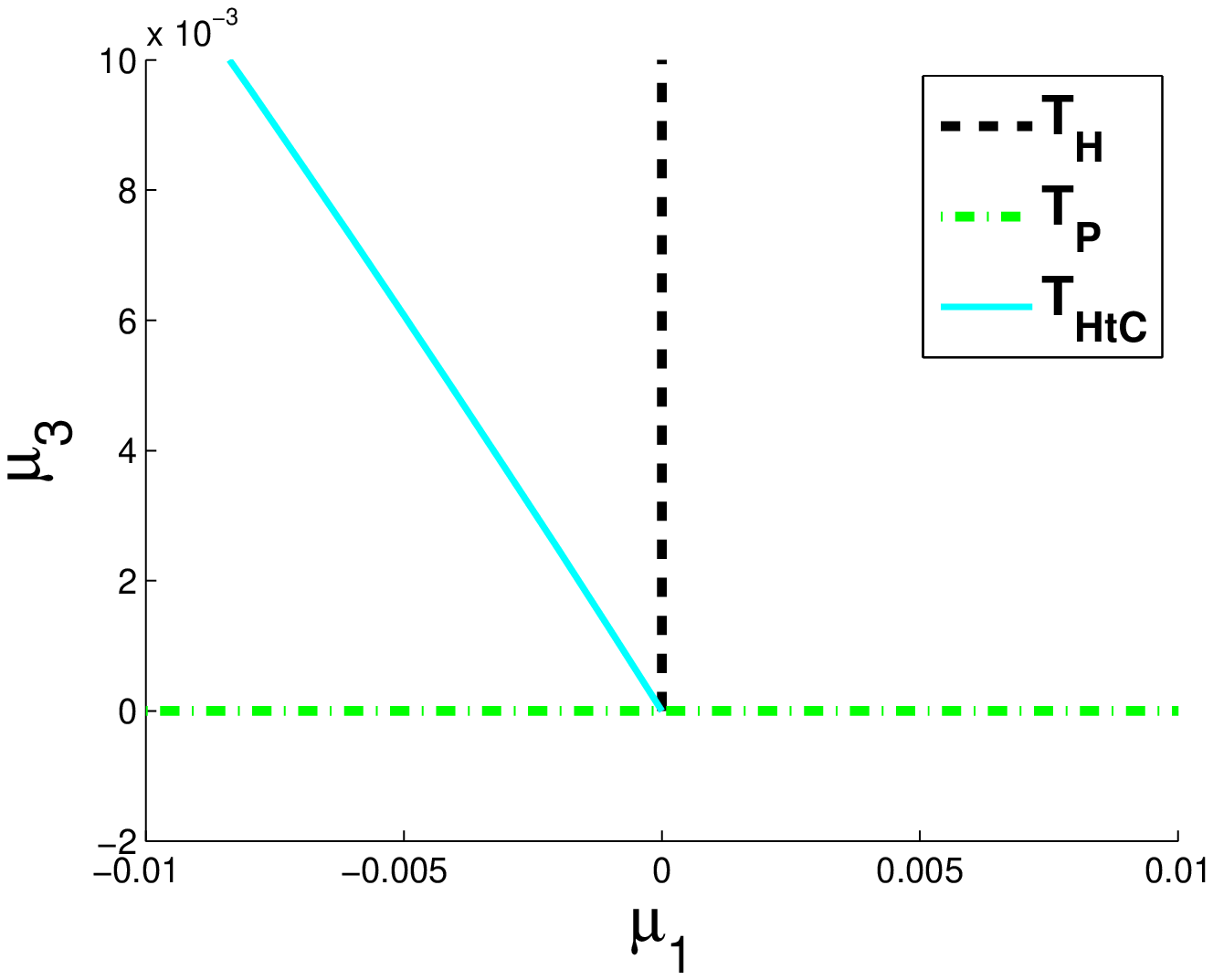}}
\caption{Numerical \(\mathbb{Z}_2\)-symmetry breaking bifurcation controller varieties for the plant \eqref{BfControlr2s2} when \(\mu_i:= 0\) for \(i\neq 1, 3, 5\) and \(r=s=2.\)} \label{Fig9}
\end{center}
\end{figure}

In this subsection we consider one parameter symmetry breaking bifurcation control. Either the parameters \((\mu_1, \mu_3, \mu_5)\) and \((\mu_3, \mu_4, \mu_5)\) play the roles of asymptotic unfolding (\ie \(\nu_2, \nu_3, \nu_4\)). Hence for consistency with subsection \ref{subsecZ2} and briefness in the derived formulas, we choose \(\mu_i:=0\) for any \(i\neq 1, 3, 5,\) \(d_1:=d_2:=d_3:= d_4:=d_5:=d_6:=d_7:=1.\)
Using our {\sc Maple} program, the three-jet truncated parametric normal form of the system \eqref{BfControlr2s2} is given by
\be\label{why}
\dot{x}= \nu_2  y+\nu_3 x+ y^3+\nu_4 x y+x y^2, \qquad \dot{y}= -x+\nu_3 y+\nu_4 y^2+y^3,
\ee where
\begin{eqnarray*}
&\nu_2=\mu_3+\frac{2{\mu_3}^2+6 d_8{\mu_3}^2-3{\mu_1}^2+3\mu_1\mu_3}{12},\quad
\nu_3=\frac{\mu_1}{2}+ \frac{6d_8 \mu_3+3\mu_1+2\mu_3}{48}\mu_1, \quad
\nu_4= \frac{\mu_5}{3}+ \frac{68 \mu_1+99\mu_3}{270}\mu_5.&
\end{eqnarray*}

\begin{figure}
\begin{center}
\subfigure[The equilibrium is a spiral source.\label{Fig10(a)}]
{\includegraphics[width=.18\columnwidth,height=.18\columnwidth]{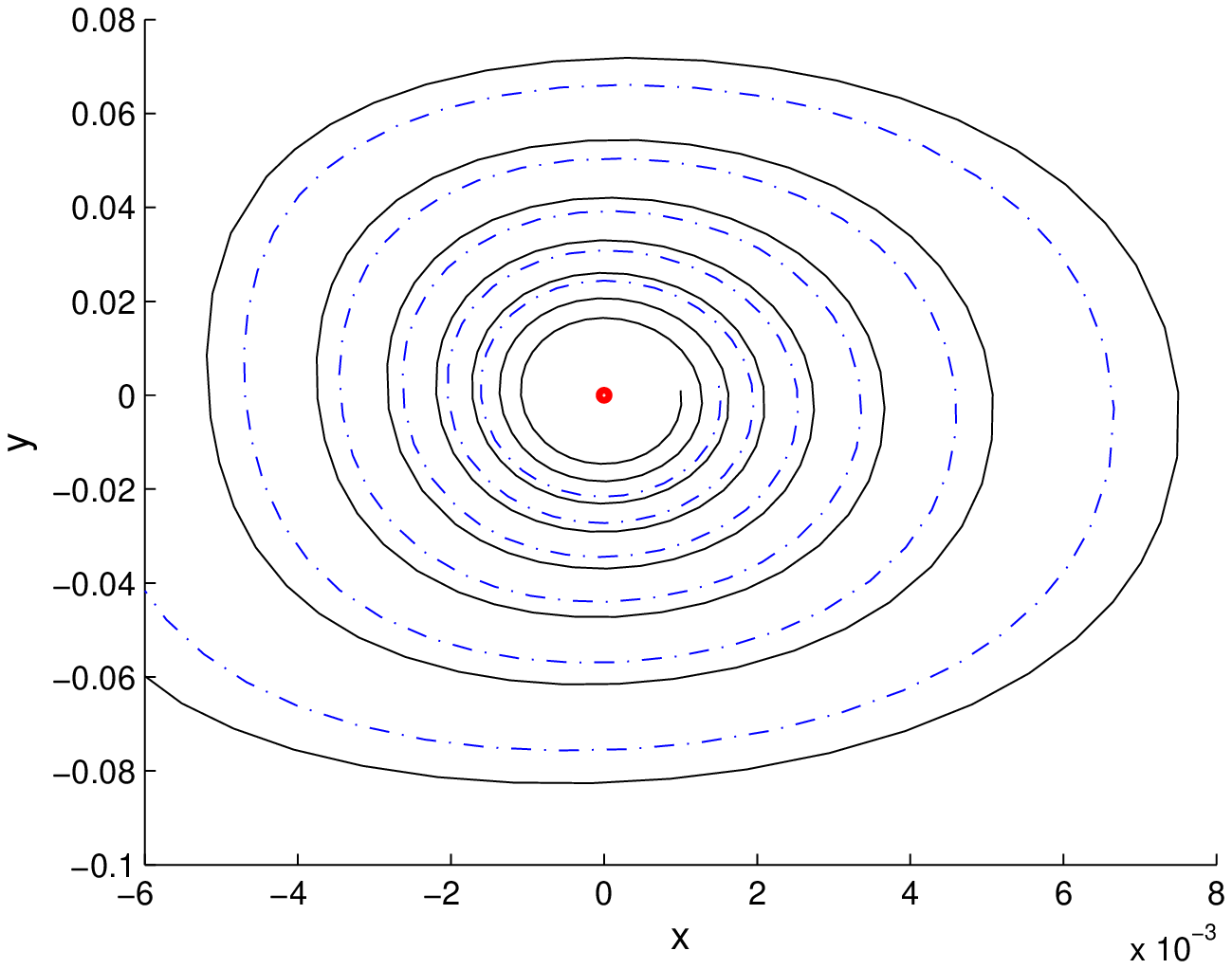}}
\subfigure[An unstable limit cycle encircles a spiral sink. \label{Fig10(b)}]
{\includegraphics[width=.2\columnwidth,height=.18\columnwidth]{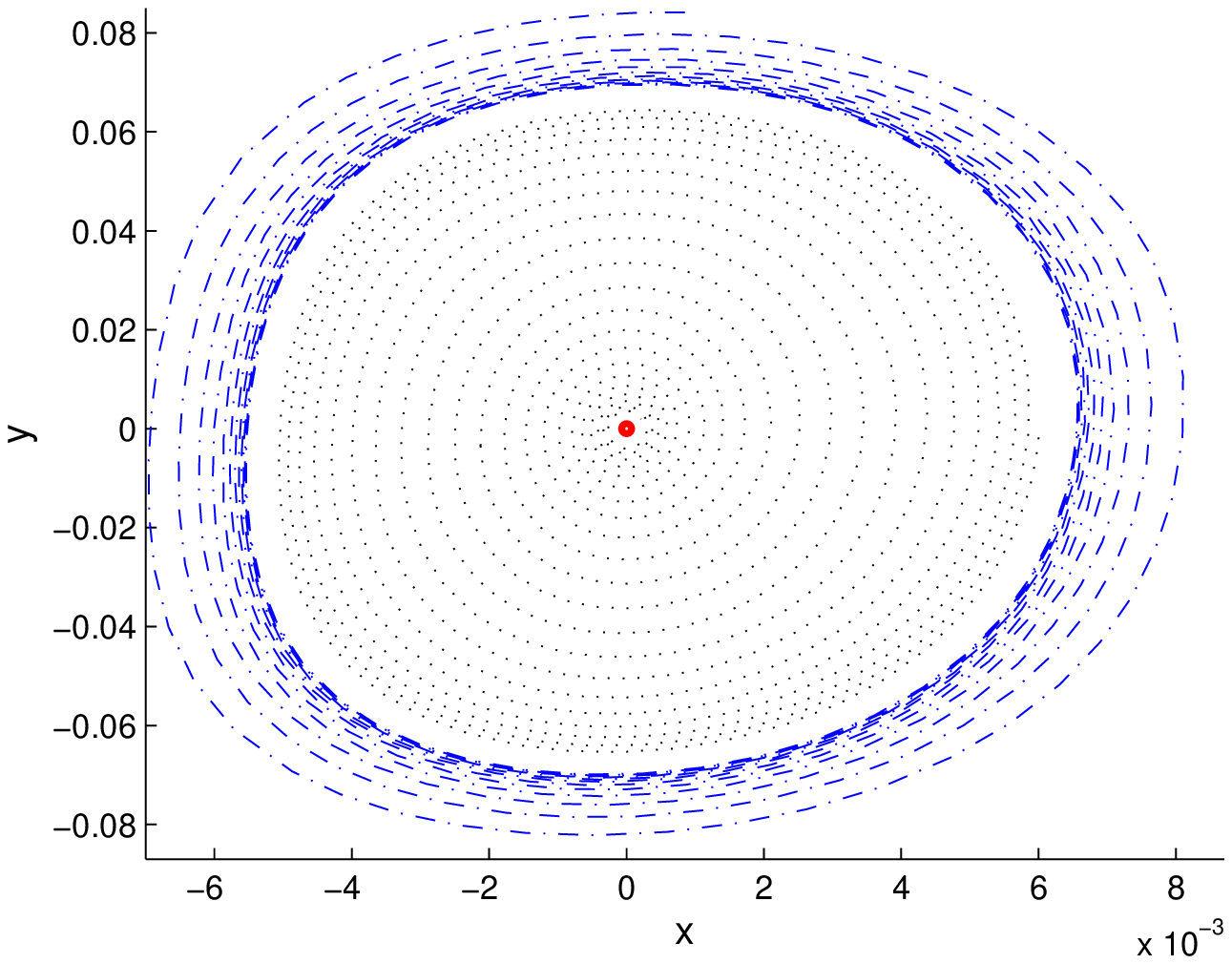}}
\subfigure[There are an unstable limit cycle, two spiral sinks, and a saddle.]
{\includegraphics[width=.31\columnwidth,height=.18\columnwidth]{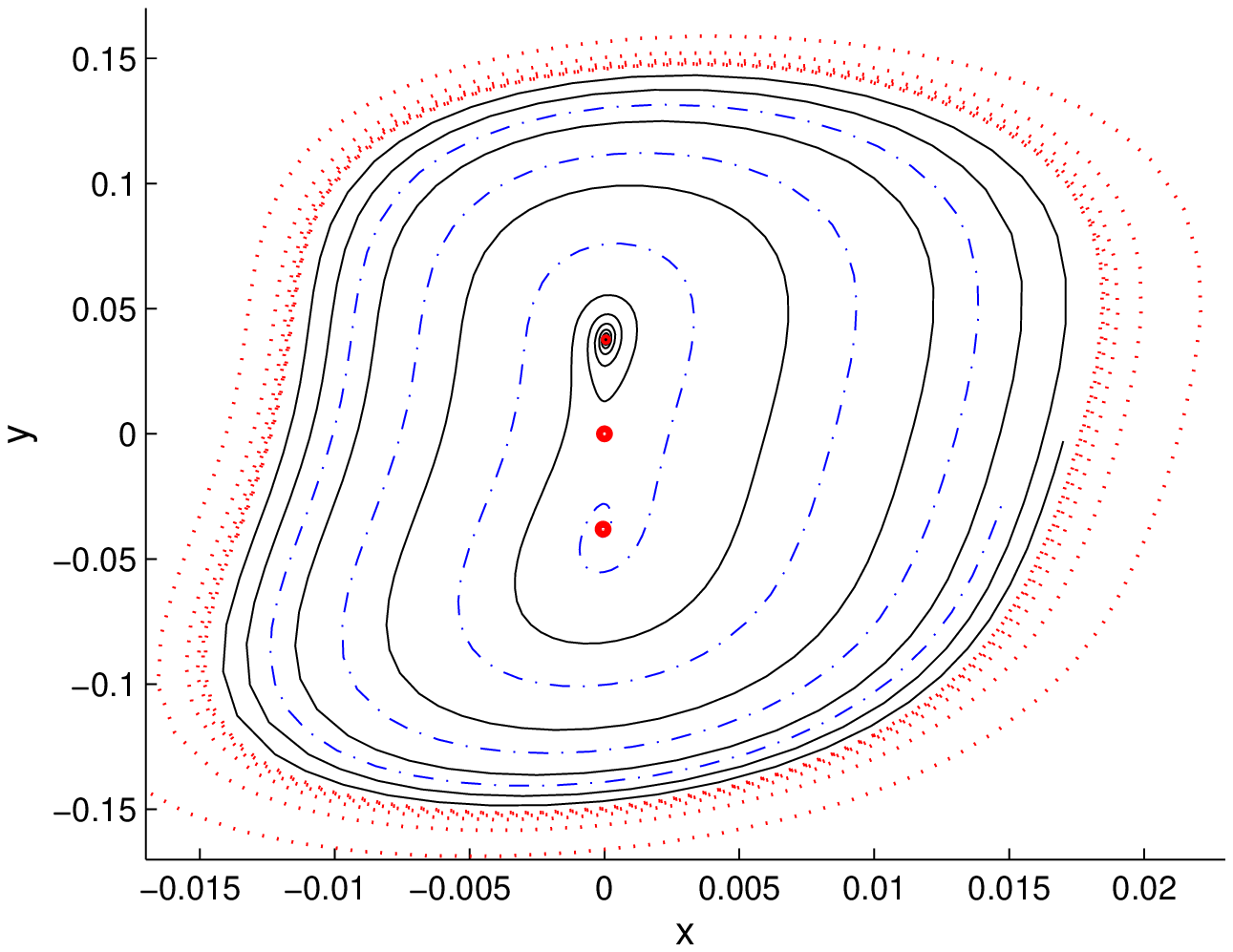}}
\subfigure[There are two spiral sinks, a saddle and two unstable limit cycles.]
{\includegraphics[width=.29\columnwidth,height=.18\columnwidth]{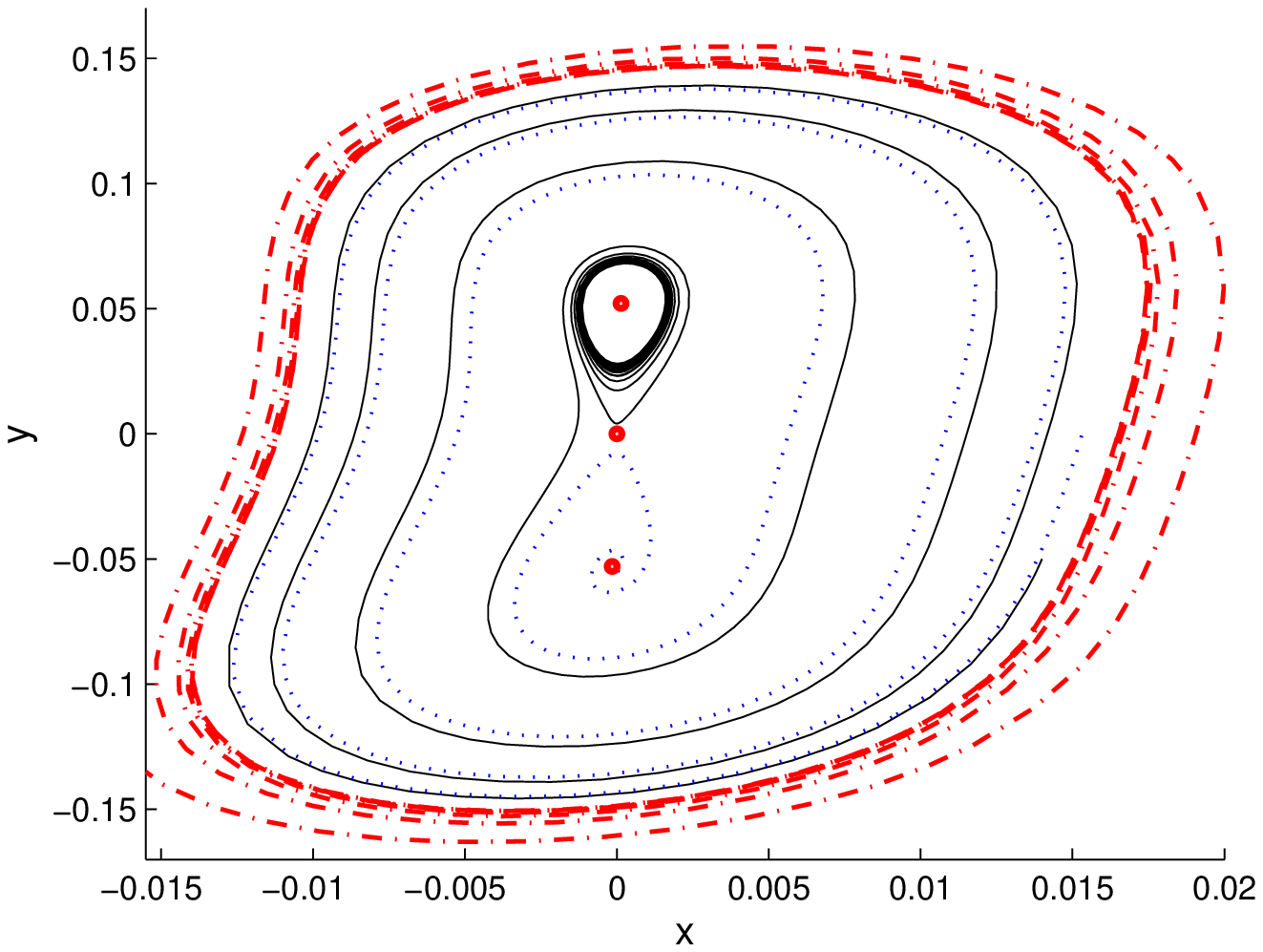}}
\subfigure[A saddle, a spiral source and a sink are inside an unstable limit cycle.]
{\includegraphics[width=.28\columnwidth,height=.18\columnwidth]{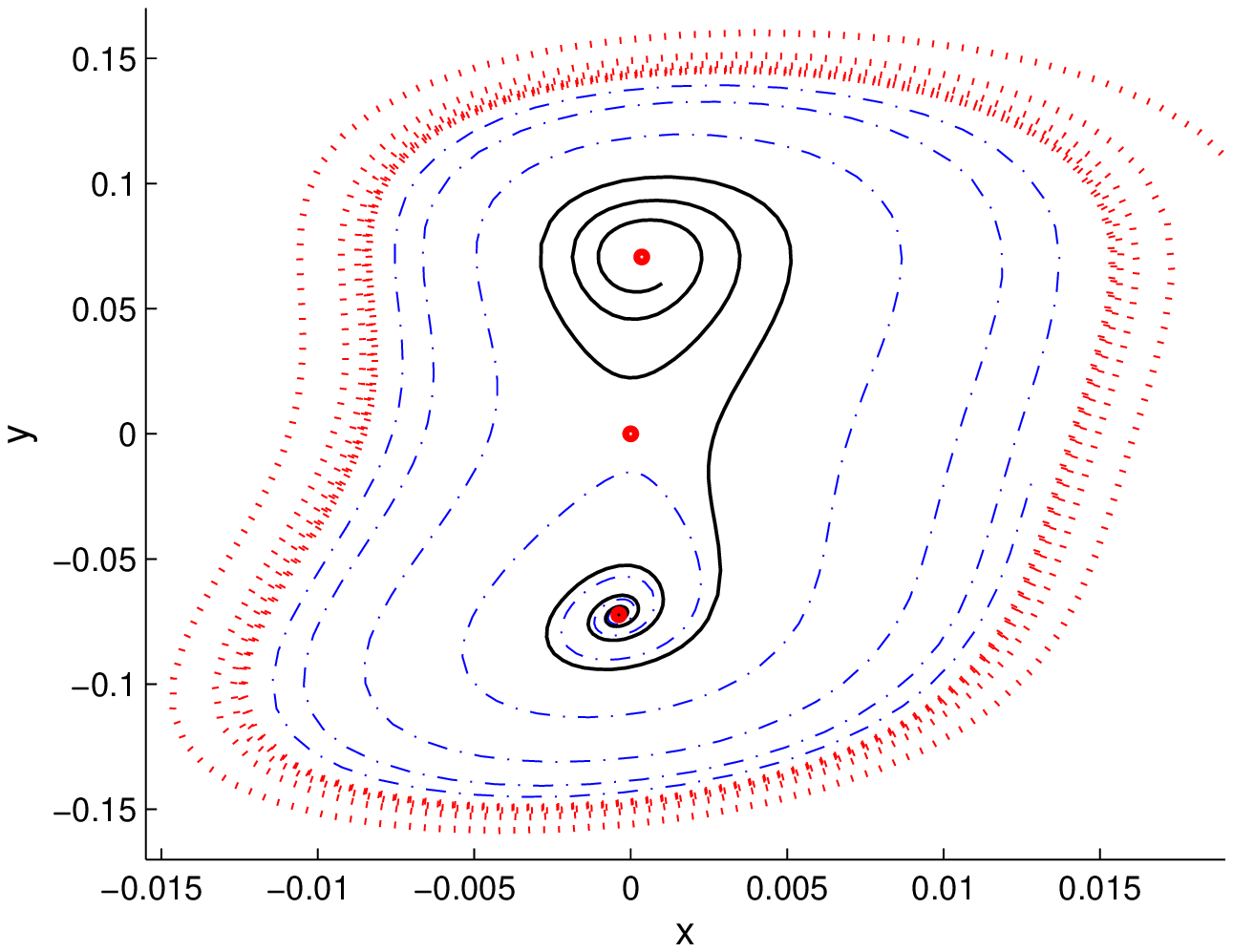}}
\subfigure[There are a saddle, a source and a sink.]
{\includegraphics[width=.18\columnwidth,height=.18\columnwidth]{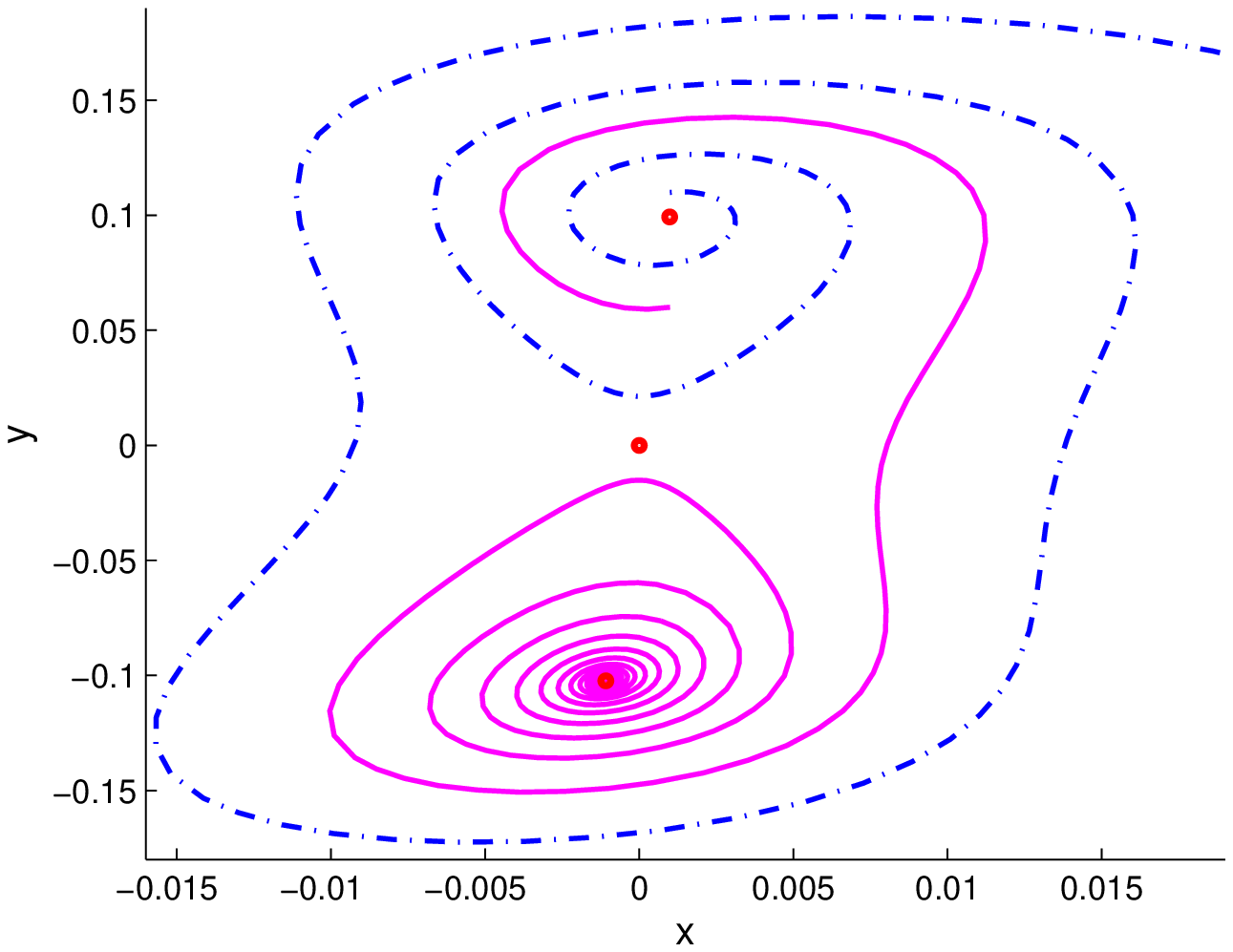}}
\subfigure[There are two spiral sources, a saddle and a stable limit cycle.]
{\includegraphics[width=.27\columnwidth,height=.18\columnwidth]{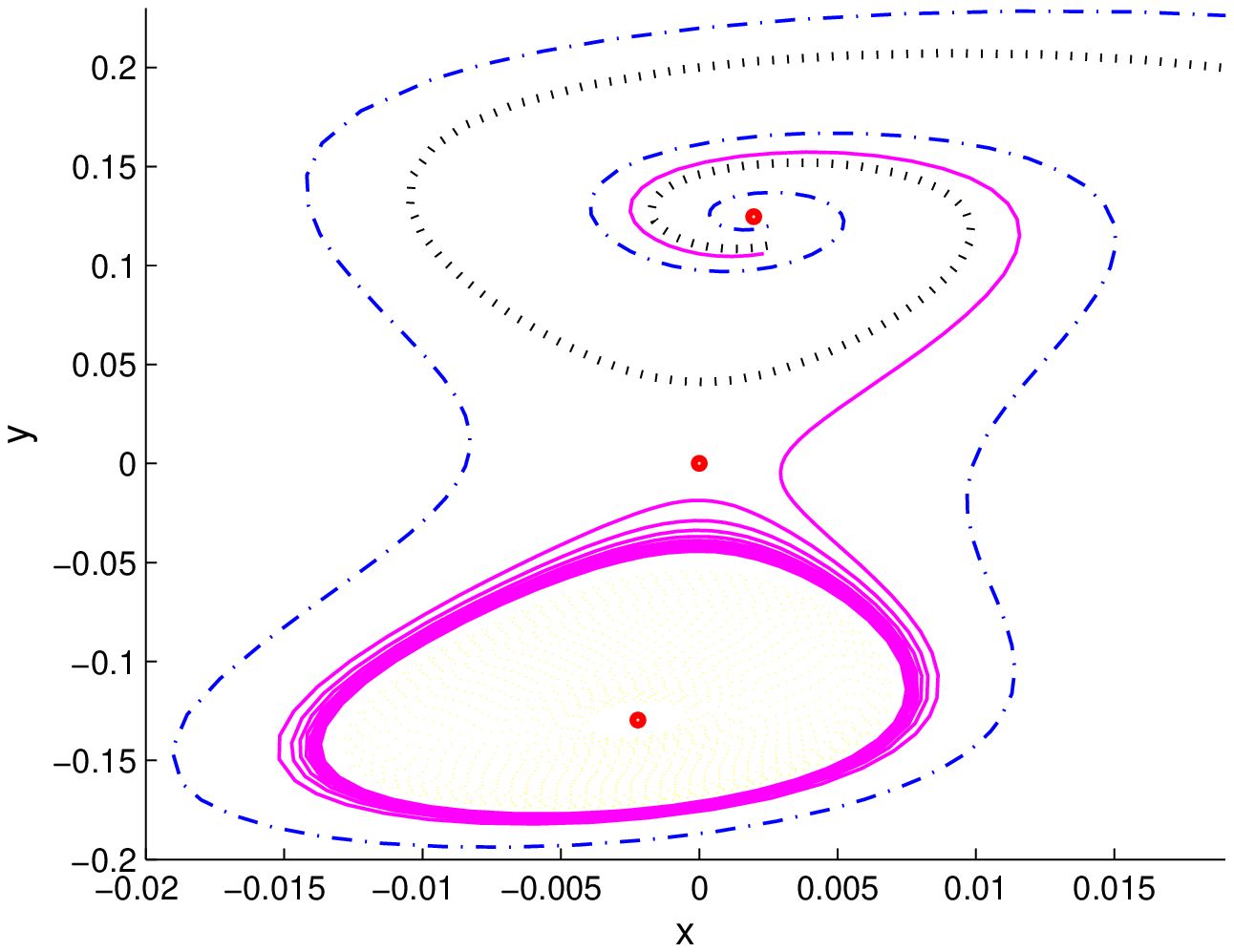}}
\subfigure[There are two spiral sources and a saddle.\label{Fig10(h)}]
{\includegraphics[width=.2\columnwidth,height=.18\columnwidth]{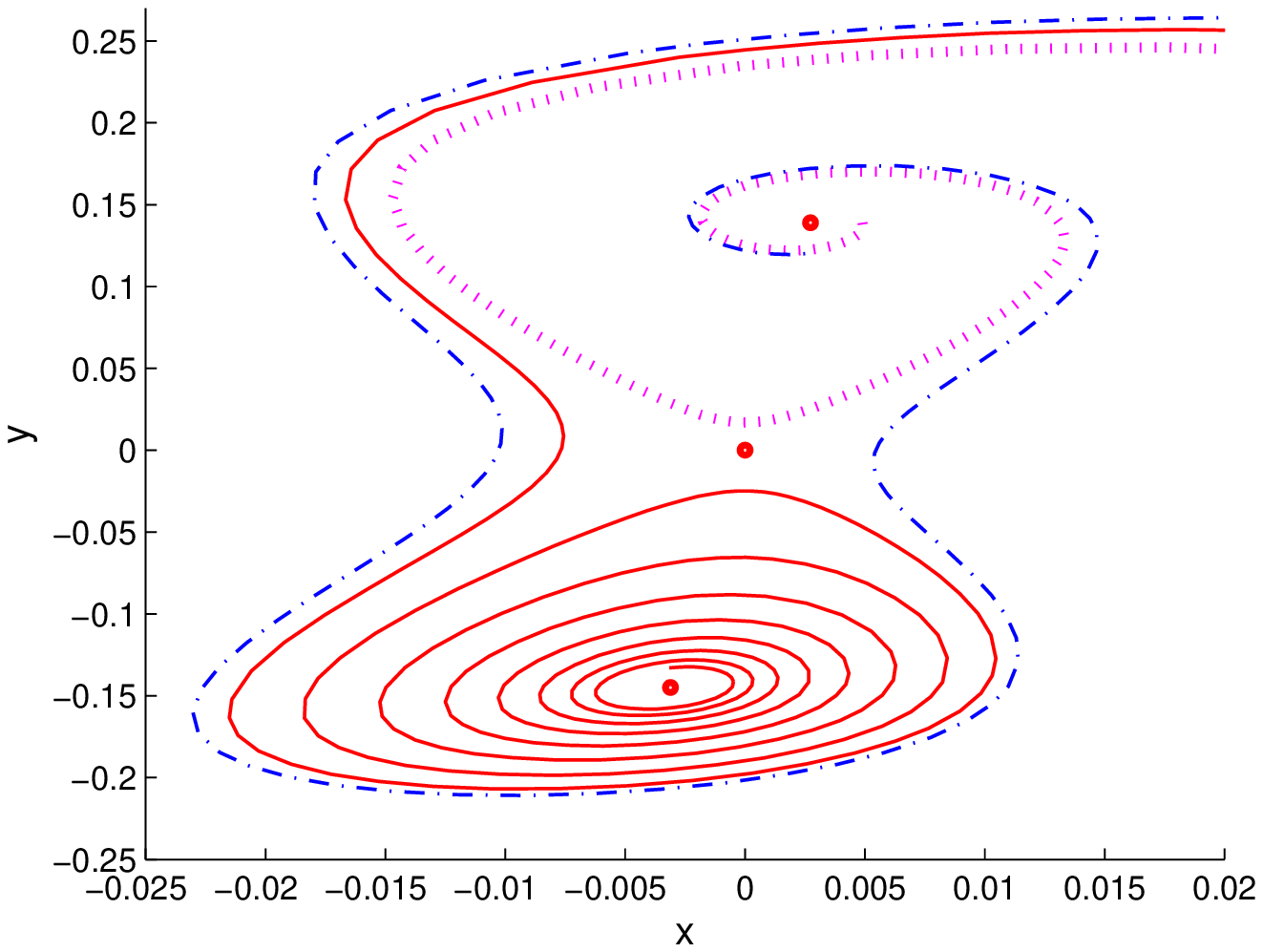}}
\caption{The numerical controlled phase portraits of the plant \eqref{BfControlr2s2}. Figures \ref{Fig10(a)}-\ref{Fig10(h)} are associated with input parameters chosen from regions labeled (a)-(h) in Figure \ref{Fig9(a)}. }
\end{center}
\end{figure}

\noindent Hence the bifurcation controller transition sets of the system \eqref{BfControlr2s2} are derived as
\bas
T_{H}&=& \Big\{(\mu_1, \mu_3, \mu_5)\,|\, \mu_1=0\Big\},\qquad\qquad T_{P}= \Big\{(\mu_1, \mu_3, \mu_5)\,|\, \mu_3=0\Big\},\\
T_{HmC}&=&\Big\{(\mu_1, \mu_3, \mu_5)\,|\, \frac{1}{2} \mu_1- \Big(\frac{8}{5}+{\frac {43}{120}} \mu_1-\frac{1}{8} \mu_1 d_8\Big) \mu_3-\Big( {\frac {4}{15}}+\frac{4}{5} d_8 \Big){\mu_3}^{2}+{\frac {37}{80}}{\mu_1}^{2}=0\Big\},\\
T_{HmC_{\pm}}&=& \Big\{(\mu_1, \mu_3, \mu_5)\,|\, \frac{1}{2}\mu_1\mp\bigg({\frac {17\sqrt {2}\pi}{800}} \mu_1+{\frac {9\sqrt {2}\pi}{320}}\bigg) \sqrt {-\mu_3}-\frac{8}{5}\mu_3\mp{\frac {99\sqrt {2}\pi}{3200}} \mu_3\sqrt {-\mu_3}=0\Big\},
\eas
while the controller curve \({T_{H_{\pm}}}\) in the input space \((\mu_1, \mu_3, \mu_5)\) follow
\bas
&\bigg(\big(\frac{173923}{129600} -\frac{751}{1500} d_8\big) {\mu_1}^2+\big( {\frac {197}{800}}d_8+{\frac {112303}{36000}} \big) \mu_1 -{\frac {99}{25}} \bigg) \mu_3-\Big( \big( {\frac {290969}{270000}}+{\frac {108791}{36000}}d_8 \big)
 \mu_1-{\frac {143}{250}}-{\frac {99}{50}}d_8
\Big) {\mu_3}^2&
\\&\mp\bigg( \Big( {\frac {262636021}{1152000000}}-{\frac {16287331}{76800000}}
d_8-{\frac {597}{256000}}{d_8}^{2} \Big) {\mu_1}^{2}+\Big( {\frac {13997}{40000}}+{\frac {597}{8000}}d_8\Big) \mu_1
-{\frac {9331477}{14400000}}-{\frac {219371}{960000}}d_8 \bigg)(-\mu_3)^{\frac{3}{2}}
&\\ &\pm\bigg({\frac {597}{2000}}-{\frac {69461}{240000}}\mu_1+\big({\frac {597}{32000}}d_8-{\frac {30172573}{57600000}}\big) {\mu_1}^2\bigg)\sqrt {-\mu_3}+{\frac {2411}{24000}}{\mu_1}^2+{\frac {197}{200}}\mu_1=0.\qquad \qquad\qquad\qquad &
\eas
With reassignments of \(d_1:= \pm1\) and \(d_3:= d_6:=-1,\) we obtain the cases \((a_2=\pm1, b_2=-1)\) and the estimated heteroclinic variety
\bas
T_{HtC}&=&\left\{ (\mu_1, \mu_3, \mu_5)\,\big|\, \frac{1}{2}\mu_1\pm\left(\frac{2}{5}+\frac{45\pm52}{120}\mu_1+\frac{1}{8}d_8 \mu_1\right) \mu_3\mp\frac{3}{80}{\mu_1}^2+\left(\frac{9\pm8}{15}+\frac{1}{5}d_8\right) {\mu_3}^2=0\right\}.\eas

In order to illustrate the numerical \(\mathbb{Z}_2\)-breaking controller bifurcation varieties we choose \(d_8:=1,\) the controller input \(\mu_5:=\pm0.3,\) and obtain Figures \ref{Fig9(a)}-\ref{Fig9(d)}. These numerical transition varieties are highly accurate over the plotted intervals. Next for each input pair (\(\mu_1, \mu_3\)) of values
\bas
&(0.005, 0.005), (-0.005, 0.005), (-0.025, -0.0014), (-0.025, -0.0027), &\\\nonumber
& (-0.025, -0.005), (-0.025, -0.01), (-0.025, -0.016), (-0.025, -0.02)&
\eas chosen from each region labeled (a)-(h) in Figure \ref{Fig9(a)}, we depict the associated numerical controlled phase portraits in Figures \ref{Fig10(a)}-\ref{Fig10(h)}, respectively. The parameters chosen from the area surrounding the region (i) in Figure \ref{Fig9(a)} violate the assumption \eqref{Restrct} and thus, we skip their phase portrait. The qualitative dynamics (except \(\mathbb{Z}_2\)-symmetry) associated with the regions in Figure \ref{Fig9(b)} are similar to those in Figures \ref{Fig8(a)}-\ref{Fig8(g)}.

\section{Applications of bifurcation control }\label{sec8}

This section is devoted to illustrate the applicability of our results in engineering control problems. We first demonstrate two important engineering applications of our bifurcation control analysis; they are tracking and regulating controller designs. Then in subsection \ref{ShipCourse}, we apply our results to two nonlinear ship course models for a controller design in a ship steering system.

\subsection{Regulating and tracking controller designs }\label{BifCont}

\begin{figure}
\begin{center}
\subfigure
{\includegraphics[width=.4\columnwidth,height=.20\columnwidth]{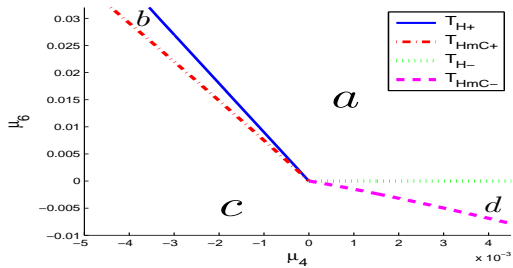}}
\caption{Numerical transition sets for the controller inputs \((\mu_4, \mu_6)\) from the controlled system \eqref{BfControlr1s1}, \eqref{Uncontrollable}. A regulating approach for linearly uncontrollable equilibria using bifurcation control analysis. }\label{section72}
\end{center}
\end{figure}

This section is devoted to explain bifurcation control's application in regularizing an equilibrium and in solving an engineering tracking problem. Our proposed approach is applicable to the cases uncontrollable by the classical input-state feedback linearization and back-stepping methods in nonlinear control theory; see \cite{Slotine}. Kang {\em et. al.,} \cite{KangIEEE} studied a nonlinear system with an uncontrollable linearization at the origin. They proved that a generic system of this type has a nearby controllable equilibrium. Then, they suggested a regularization approach to stabilize an uncontrollable equilibrium by moving it into a nearby linearly controllable equilibrium. The latter enables {\it gain scheduling} possibilities and works fine for many engineering applications; see \cite{KangIEEE}.
We recall that a nonlinear system in the vicinity of an equilibrium is called {\it linearly controllable} when its linearization satisfies Kalman's controllability condition. Note that any possible change in the linear controllability of an equilibrium is called a control bifurcation; see \cite{KangIEEE}. Our bifurcation control approach readily contributes into a systematic and symbolic regularization of this type, when the system falls within the case \(r=s=1.\) In order to illustrate this, consider a (linearly uncontrollable) plant that is given by equation \eqref{BfControlr1s1},
\be\label{Uncontrollable} u_1:=0\quad \hbox{ and } \quad  u_2:=v +\mu_4+\mu_6y,\ee
where the constants follow the equation \eqref{di_s}, except \(d_1:= d_5:=-1\). Then equation \eqref{r1s1NF} is a time-reversed truncated parametric normal form for this system, where
\bes
a_1=b_1=1, \quad b_3=\frac{-69}{350}, \quad \nu_1=- \frac{9}{4}{\mu_4}^2, \qquad \nu_2=- \frac{1}{4}(9\mu_4+2\mu_6).
\ees The transition varieties are given by
\bas
&T_{H\pm}=\left\{(\mu_4, \mu_6)\,|\, \mu_6=-\frac{9}{2}\mu_4\pm\frac{9}{2}\mu_4-{\frac {27}{32}}{\mu_4}^2\pm{\frac{222507}{8960}}{\mu_4}^3\right\} \qquad \hbox{ and } \qquad &\\
&T_{HmC\pm}= \left\{(\mu_4, \mu_6)\,|\, \mu_6=-\frac{9}{2}\mu_4\pm\frac{15\mu_4}{14} \sqrt{9- \frac{619\sqrt{3}}{28}\sqrt{|\mu_4|}\pm \frac{2041875}{50176}\mu_4}\right\}.&
\eas These are plotted in figure \ref{section72}. Since \(\nu_1<0\) for any \(\mu_4\neq0,\) each nonzero input choice for \((\mu_4, \mu_6),\) that is taken from either of the regions \((a),\) \((b),\) \((c),\) and \((d)\) in figure \ref{section72}, gives rise to two linearly controllable local equilibria of a saddle and one of either a source or a sink type.

\begin{figure}
\begin{center}
\subfigure[\label{phaseportrraitr2s2b}]
{\includegraphics[width=.3\columnwidth,height=.2\columnwidth]{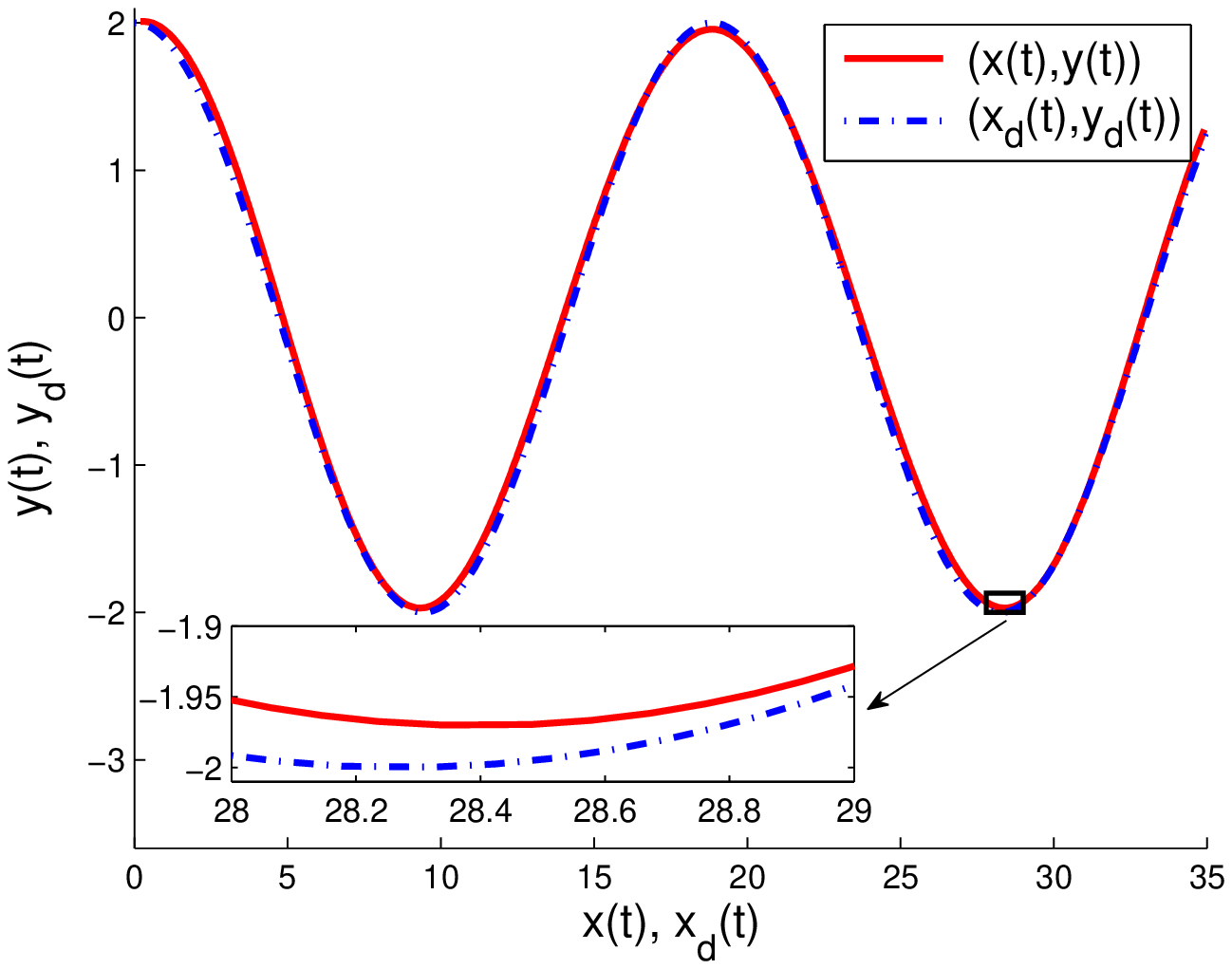}}
\subfigure[\label{r2s2Fig(b)}]
{\includegraphics[width=.3\columnwidth,height=.2\columnwidth]{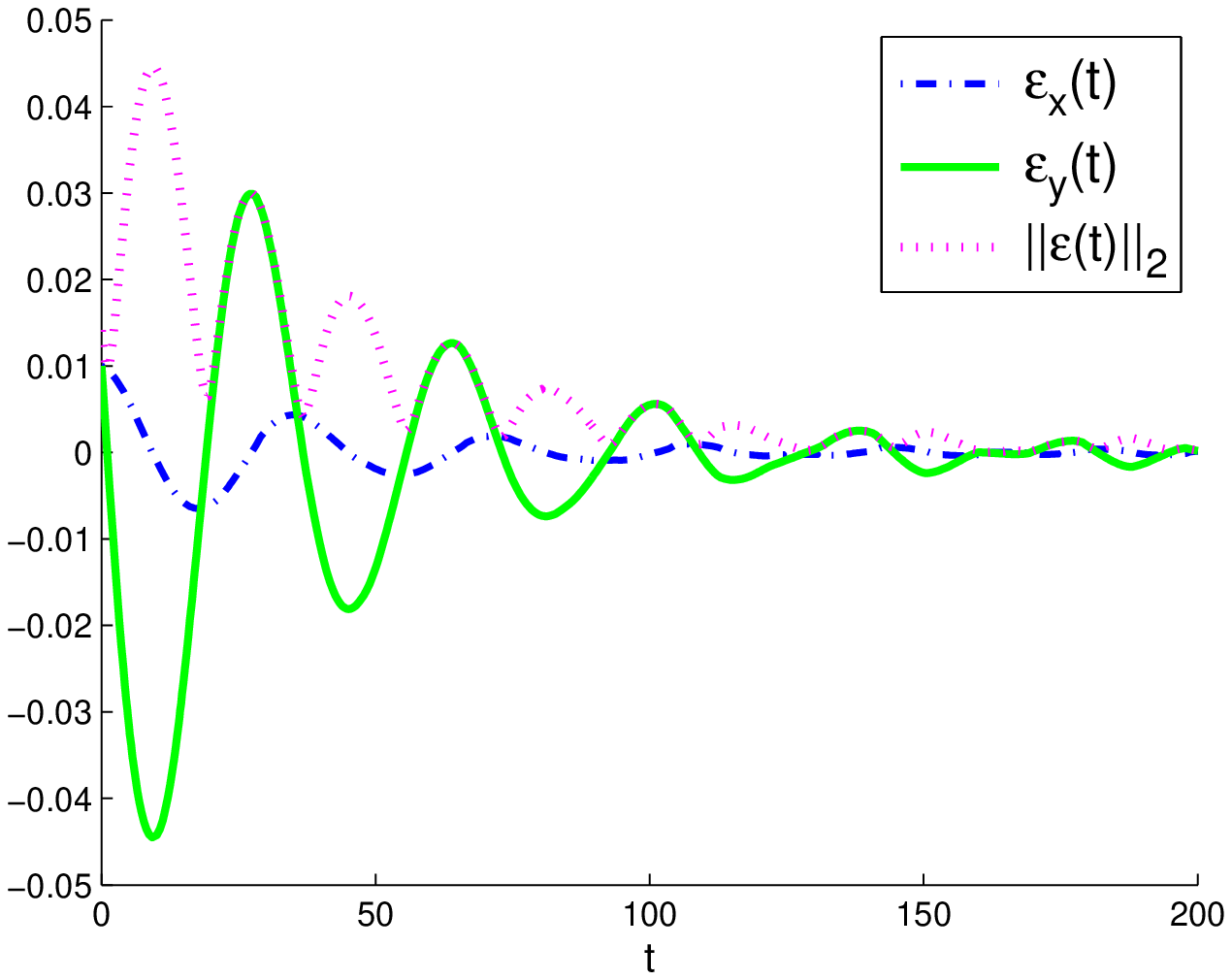}}
\subfigure[A time-inverse orbit. \label{LimitCycleB}]
{\includegraphics[width=.3\columnwidth,height=.2\columnwidth]{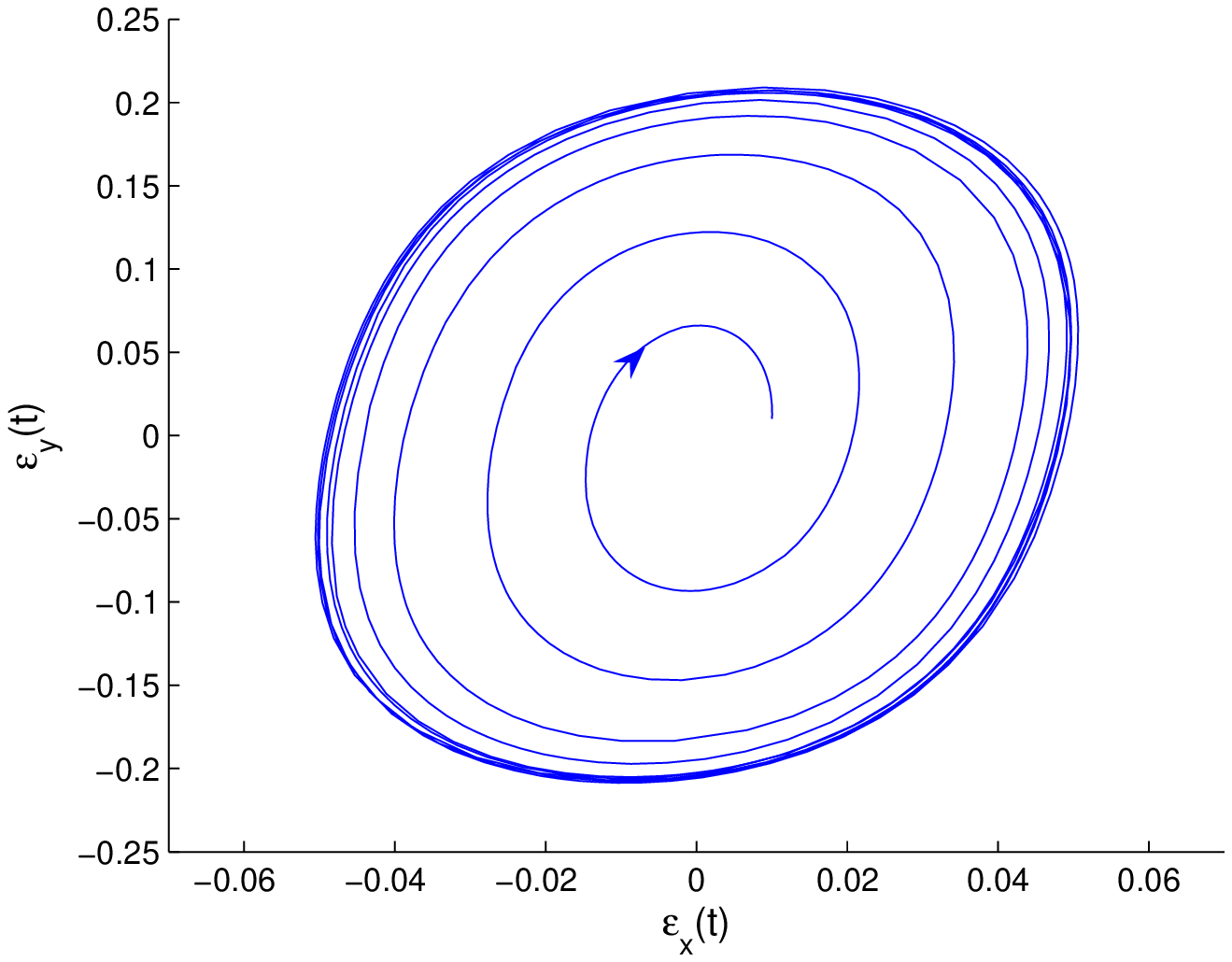}}
\subfigure[\label{phaseportrraitr2s2c}]
{\includegraphics[width=.3\columnwidth,height=.2\columnwidth]{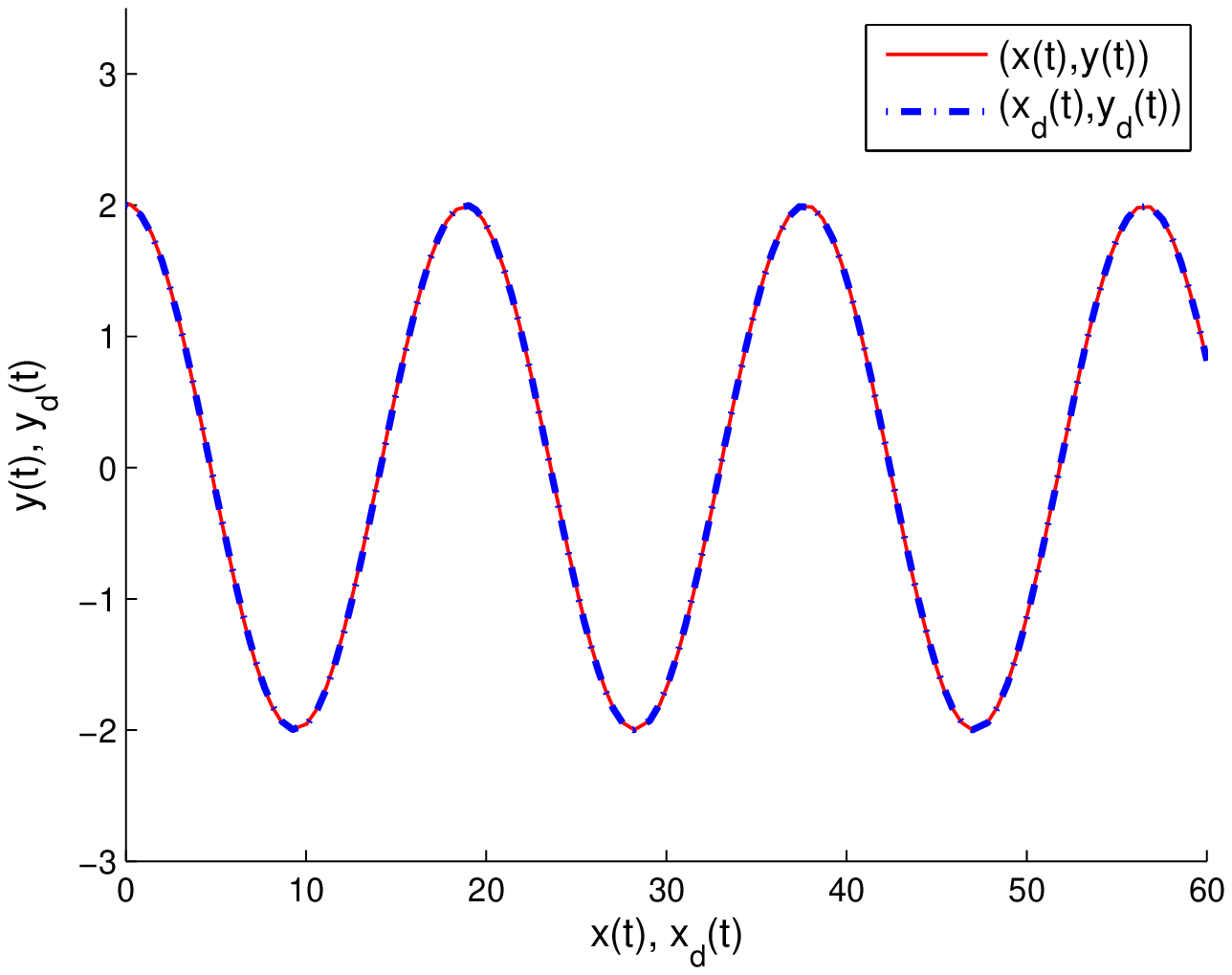}}
\subfigure[\label{r2s2Fig(c)}]
{\includegraphics[width=.3\columnwidth,height=.2\columnwidth]{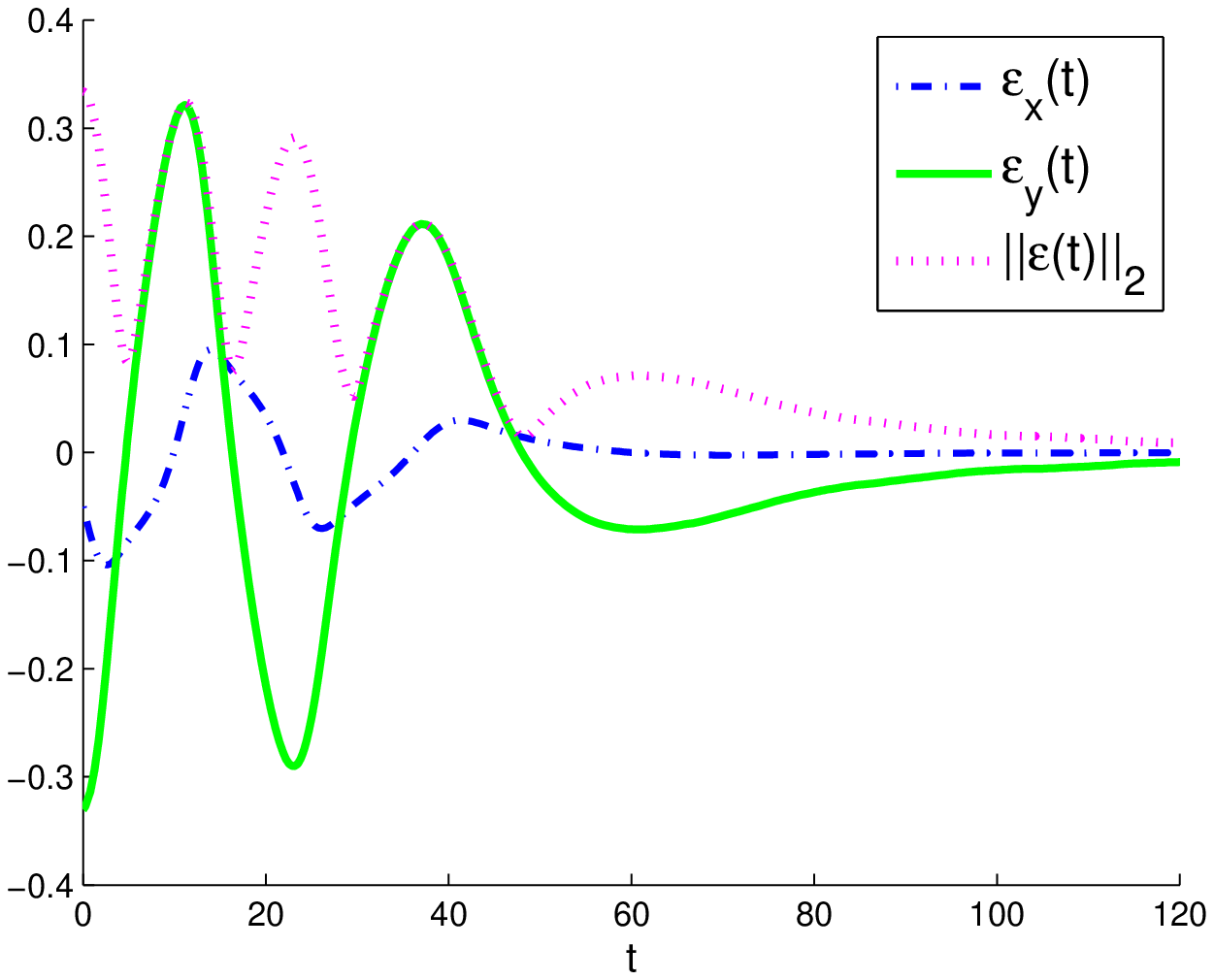}}
\subfigure[A time-inverse orbit.\label{LimitCycleC}]
{\includegraphics[width=.3\columnwidth,height=.2\columnwidth]{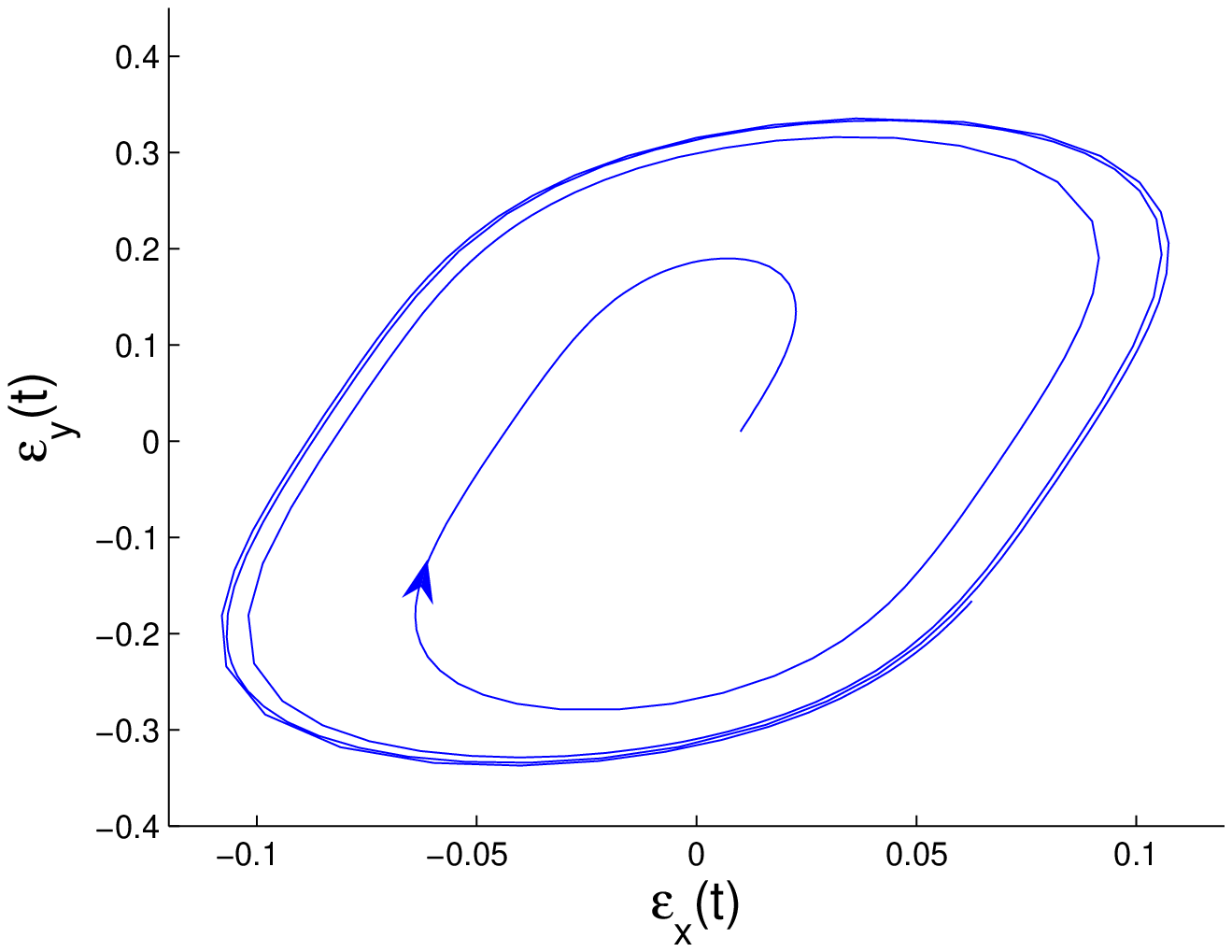}}
\caption{These figures illustrate the error trajectories associated with the tracking problem that is given by equations \eqref{BfControlr2s2},
\eqref{Trackr2s2}, and \eqref{DesiredSol}. Here the tracking problem is solved by a bifurcation control analysis. Figures \ref{phaseportrraitr2s2b}-\ref{r2s2Fig(b)}-\ref{LimitCycleB} and \ref{phaseportrraitr2s2c}-\ref{r2s2Fig(c)}-\ref{LimitCycleC} are associated with parameters taken from regions (b) and (c) in figure \ref{Fig6(a)}, respectively.}
\end{center}
\end{figure}

A pole placement method can be used for a singular system with a linearly controllable equilibrium in order to stabilize the system in the vicinity of the equilibrium. However, a crude use of pole placement method without controlling nonlinear terms is not suitable for many nonlinear systems; \eg see the nonlinear back stepping and the gain scheduling literatures, and \cite{Kang92Extnd}. Indeed, poles close to the imaginary axes induce undesired potential bifurcations while placing the poles far distanced from the imaginary axis substantially reduces the equilibrium's basin of attraction and thus, the system will be noise sensitive with amplified undershoots and overshoots. Furthermore, singularity of a system is not a {\it negative} characteristic for many engineering applications where (smooth) manoeuvrability and flexility are of the prime concerns. Hence, the singularity needs to be utilized rather than being destroyed by a crude use of the classical pole placement method. Our approach is useful and applicable for both linearly controllable and linearly uncontrollable systems. We may somehow say that bifurcation controllers for a regulating design make a careful choice for pole {\it placement} by nonlinear controllers. Indeed, poles of a stable bifurcated equilibrium in a Bogdanov-Takens singular system usually have small negative real parts and thus in our opinion, the equilibrium's basin of attraction is as large as practically possible. Hence, it is not much sensitive to noises. This is also because nonlinear bifurcation controllers (through universal asymptotic unfolding) control the effects of nonlinear terms, uncertainties and imperfections in the nonlinear modeling and measurements. Further, it has a sufficient attraction rate for many engineering applications with a {\it smooth manoeuvring} possibilities; \eg see subsection \ref{ShipCourse}.

The bifurcation control approach of the degenerate case provides various regularization choices and works well to solve the tracking engineering problems. These are possible through the list of qualitative dynamics obtained by the bifurcation control analysis. In order to illustrate this, we consider a tracking problem for a system of the case \(r=s=2\), that is to design a controller so that the solution of the controlled system follows a predefined desired solution, say \((x_d(t), y_d(t)).\) Consider the \(\mathbb{Z}_2\)-equivariant plant \eqref{BfControlr2s2} while the controllers \(u_1\) and \(u_{2}\) are defined below. Next, we define the error functions \(\varepsilon_x(t), \varepsilon_y(t)\) and \(\varepsilon(t)\) by
\bes \varepsilon(t):=(\varepsilon_x(t), \varepsilon_y(t)):= \big(x(t)- x_d(t), y(t)- y_d(t)\big)\ees and assume that \(||(\varepsilon_x(t), \varepsilon_y(t))||_2\) remains sufficiently small. Let
\be\label{Trackr2s2} u_{1}:= u_{11}+ u_{12} \quad \hbox{ and } \quad u_{12}:=\mu_1 x+\mu_3y.\ee
Now we introduce the controllers \(u_{11}\) and \(u_2\) so that the errors' dynamics follow a time-invariant differential system whose parametric normal form is given by the parametric system \eqref{why}. Indeed, by substituting \((\varepsilon_x+x_d, \varepsilon_y+y_d)\) for \((x, y)\) we derive
\ba\nonumber
u_{11}&=&{\dot{x_d}(t)}-d_{2}{y_d(t)}^{3}+(3d_{2}y+d_{{3}}x-d_{{3}}{x_d(t)}) {y_d(t)}^{2}-(2d_{4}xy+\mu_{1}+3d_{1}{x}^{2}+d_{3}{y}^{2}){x_d(t)}-d_{{1}}{x_d(t)}^{3}\\\nonumber&&
-\big(d_{{4}}{x_d(t)}^{2}- (2d_{{4}}x+2d_{{3}}y){x_d(t)}+d_{{4}}{x}^{2}+\mu_{{3}}+3d_{{2}}{y}^{2}+2d_{{3}}xy\big)y_d(t)+(3d_{{1}}x+d_{4}y) {x_d(t)}^{2},\\\nonumber
u_2&=&\dot{y_d}(t)-d_{{6}}{y_d(t)}^{3}+ \big(d_{{7}}x+3d_{{6}}y-d_{{7}}{x_d(t)}\big) {y_d(t)}^{2}+ (1-d_{{7}}{y}^{2}-2d_{8}xy-3d_{5}{x}^{2}) {x_d(t)}-d_{5}{x_d(t)}^{3}\\\label{DesiredController}&&
- \big(d_{{8}}{x_d(t)}^{2}-(2d_{{8}}x+2d_{{7}}y ) {x_d(t)}
+d_{{8}}{x}^{2}+2d_{{7}}xy+3d_{{6}}{y}^{2}\big)y_d(t)
+ (3d_{5}x+d_{{8}}y ) {x_d(t)}^{2}.
\ea Then the transition varieties \eqref{Transr2s2Z21}-\eqref{Transr2s2Z22} hold. For briefness, we take the same values for \(d_i\) as those of the case \(a_2=b_2=1\) in subsection \ref{subsecZ2}. Thereby, figure \ref{Fig6(a)} describes the transition sets for the errors' dynamics while their qualitative dynamics' list for different values of \((\mu_1, \mu_3),\) taken from \eqref{mu1mu3Values}, are depicted in figures \ref{Fig7(a)}-\ref{Fig7(i)}. Hence the parameter choices within the regions (b) and (c) in figure \ref{Fig6(a)} lead to a change of stability of the origin into an attracting node or a spiral sink; see figures \ref{Fig7(b)} and \ref{Fig7(c)}. In either of these cases, the limit cycle encircling the origin constitutes the origin's basin of attraction.

\begin{figure}
\begin{center}
\subfigure[Desired and tracking solutions. \label{Minus1r2s2PhaseA}]
{\includegraphics[width=.24\columnwidth,height=.18\columnwidth]{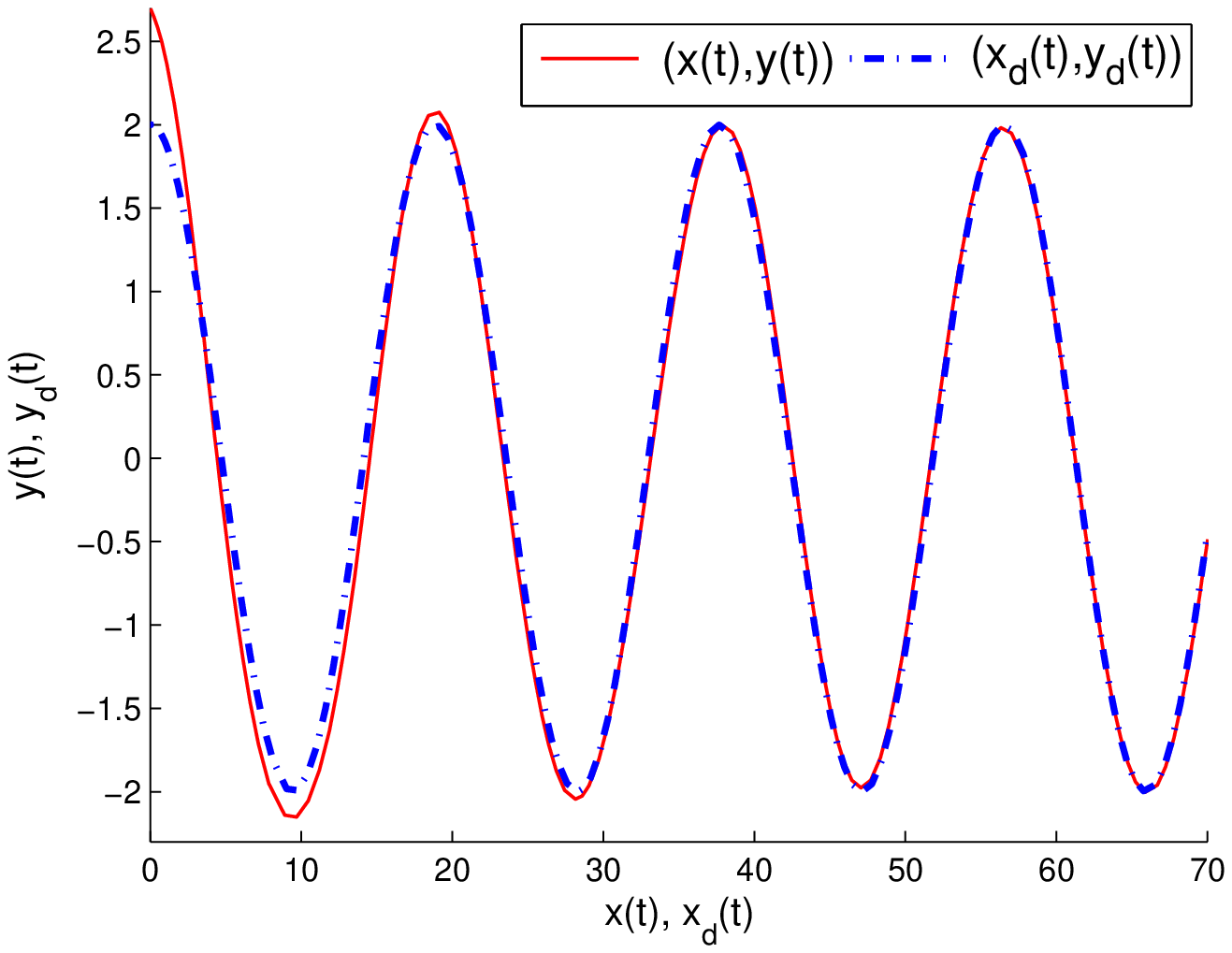}}
\subfigure[A stable limit cycle\label{Minus1LimitCycleB}]
{\includegraphics[width=.24\columnwidth,height=.18\columnwidth]{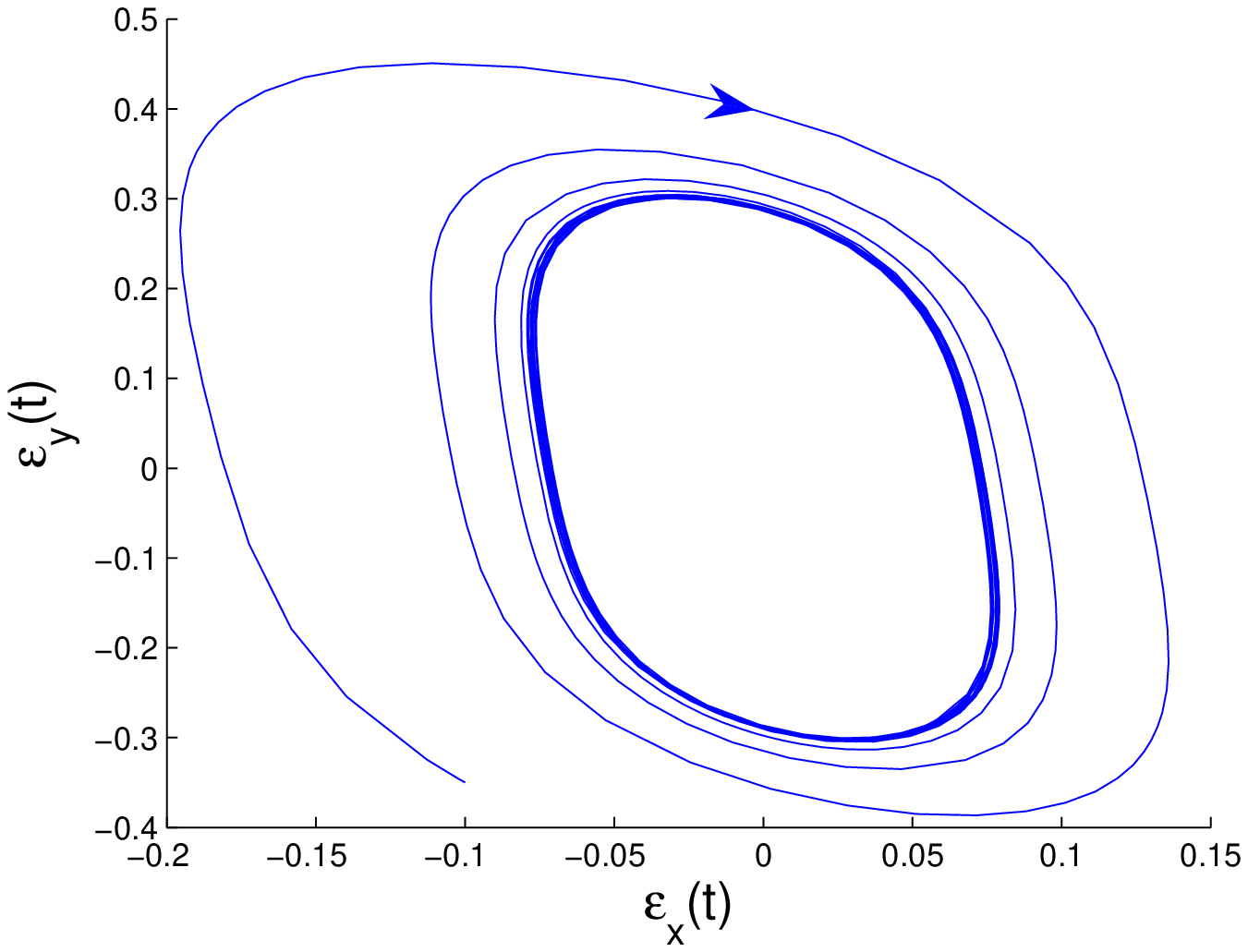}}
\subfigure[Tracking and desired orbits.\label{Minus1r2s2PhaseB}]
{\includegraphics[width=.24\columnwidth,height=.18\columnwidth]{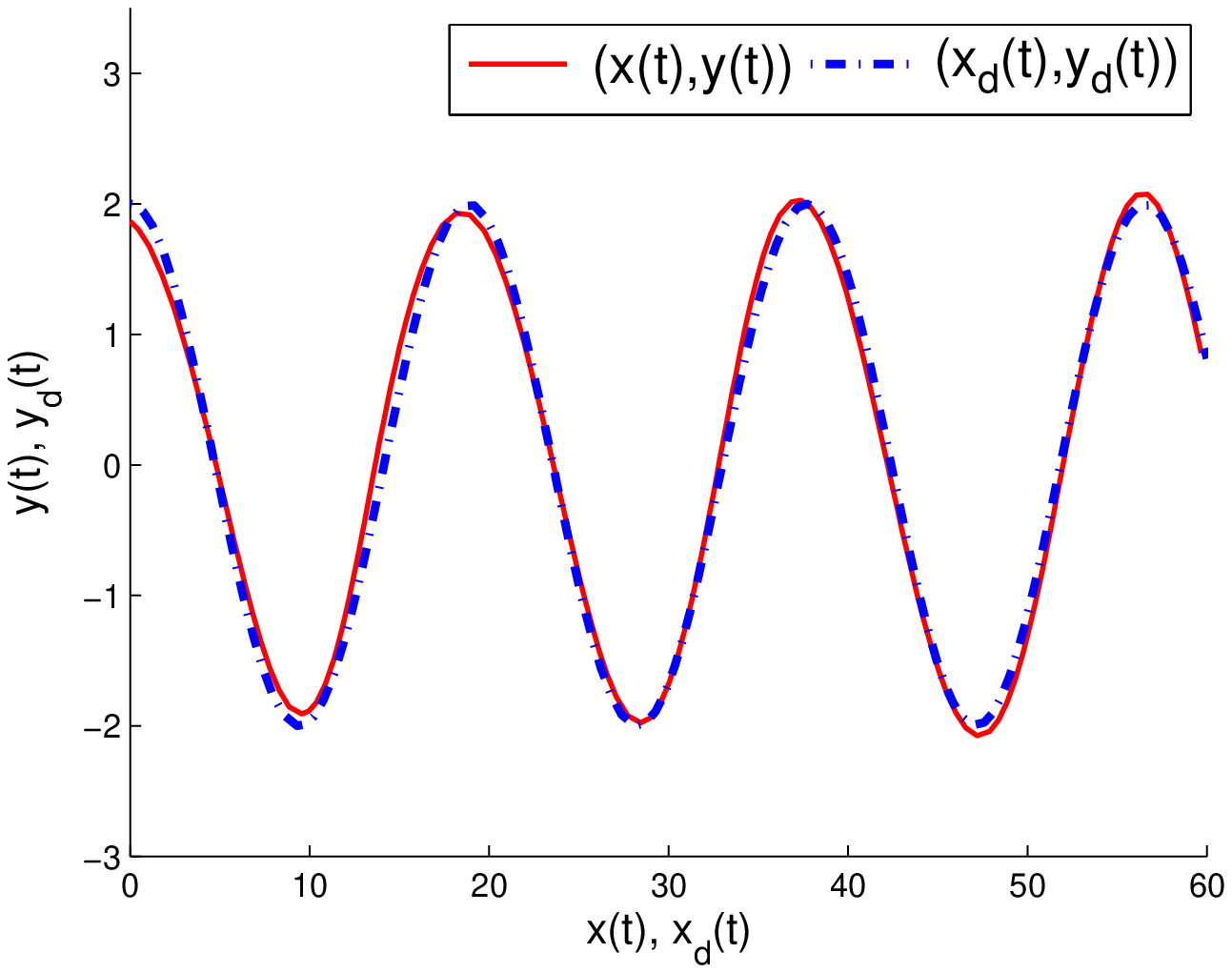}}
\subfigure[Oscillating error orbits. \label{Minus1r2s2TrajB}]
{\includegraphics[width=.24\columnwidth,height=.18\columnwidth]{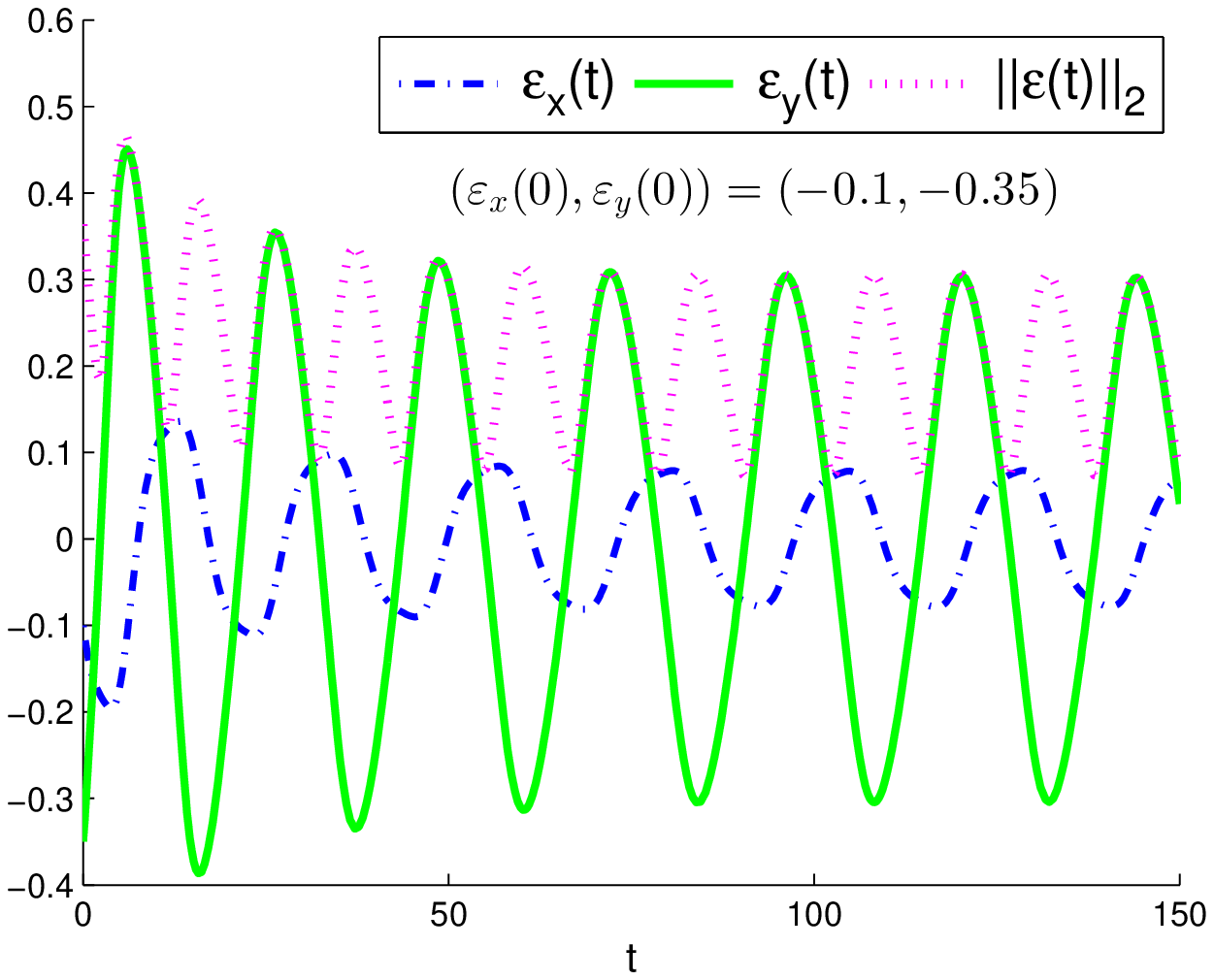}}
\subfigure[Error orbits. \label{Minus1r2s2TrajA}]
{\includegraphics[width=.24\columnwidth,height=.18\columnwidth]{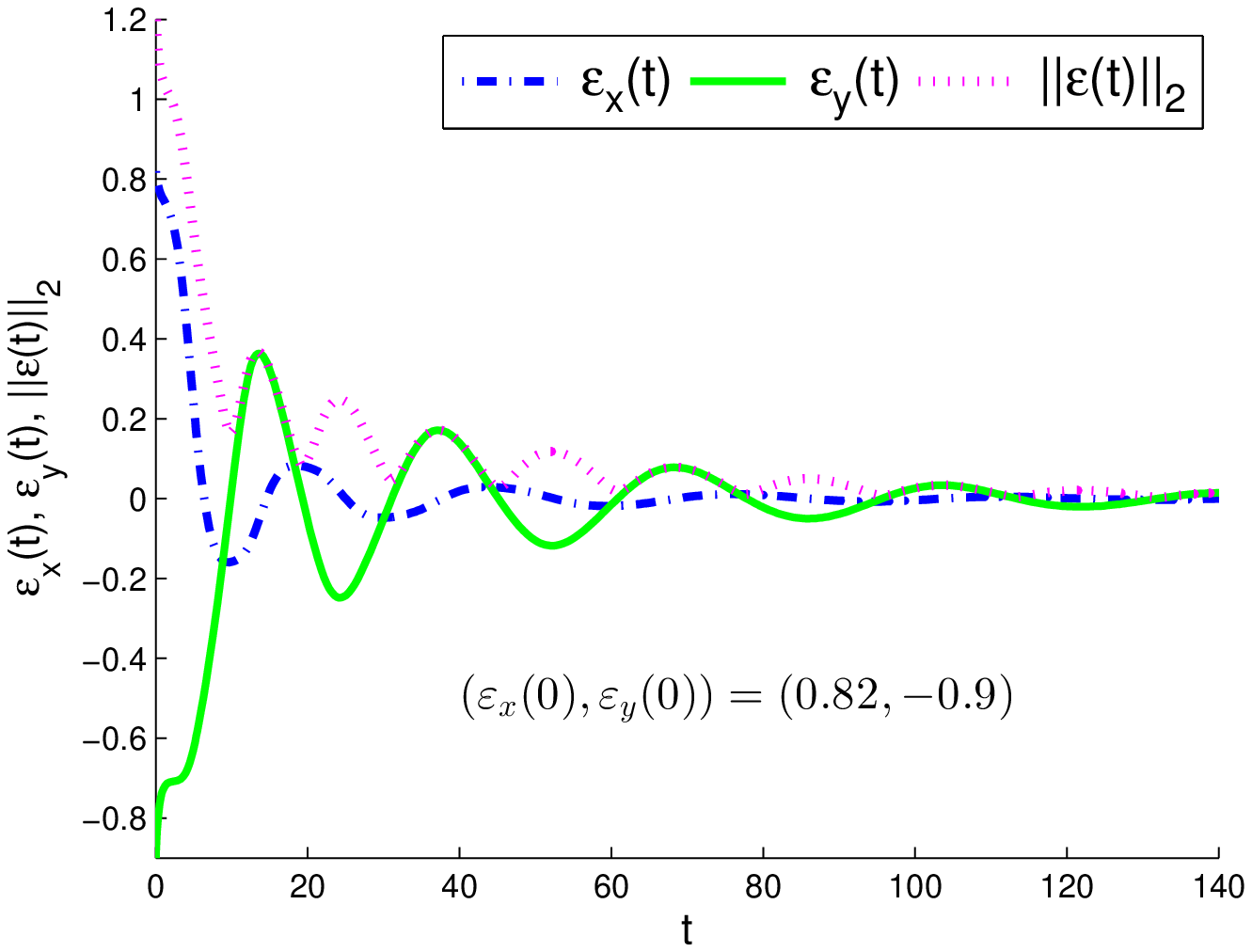}}
\subfigure[A stable limit cycle\label{Minus1LimitCycleC}]
{\includegraphics[width=.24\columnwidth,height=.18\columnwidth]{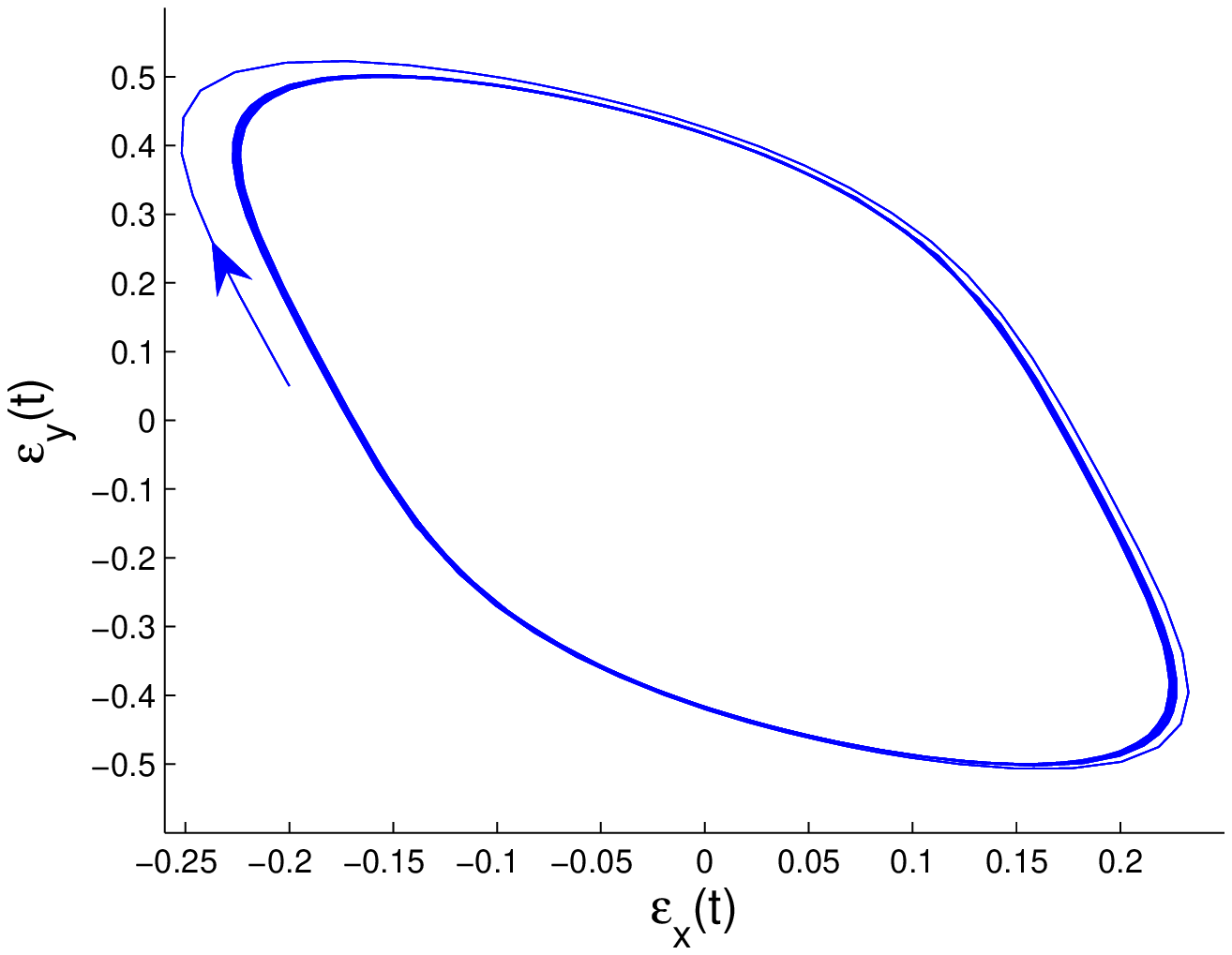}}
\subfigure[Desired and tracking trajectories. \label{Minus1r2s2PhaseC}]
{\includegraphics[width=.24\columnwidth,height=.18\columnwidth]{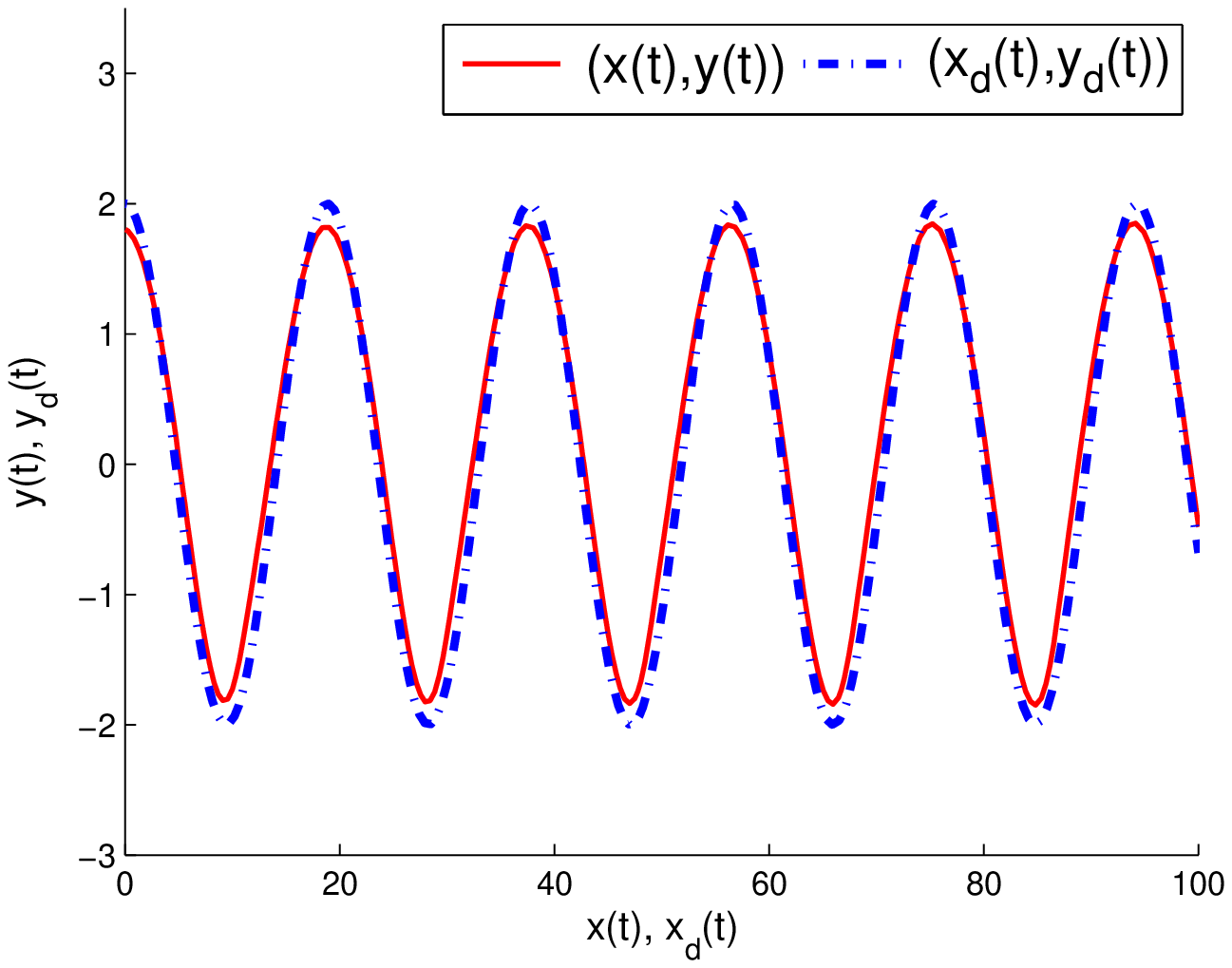}}
\subfigure[Oscillating error trajectories. \label{Minus1r2s2TrajC}]
{\includegraphics[width=.24\columnwidth,height=.18\columnwidth]{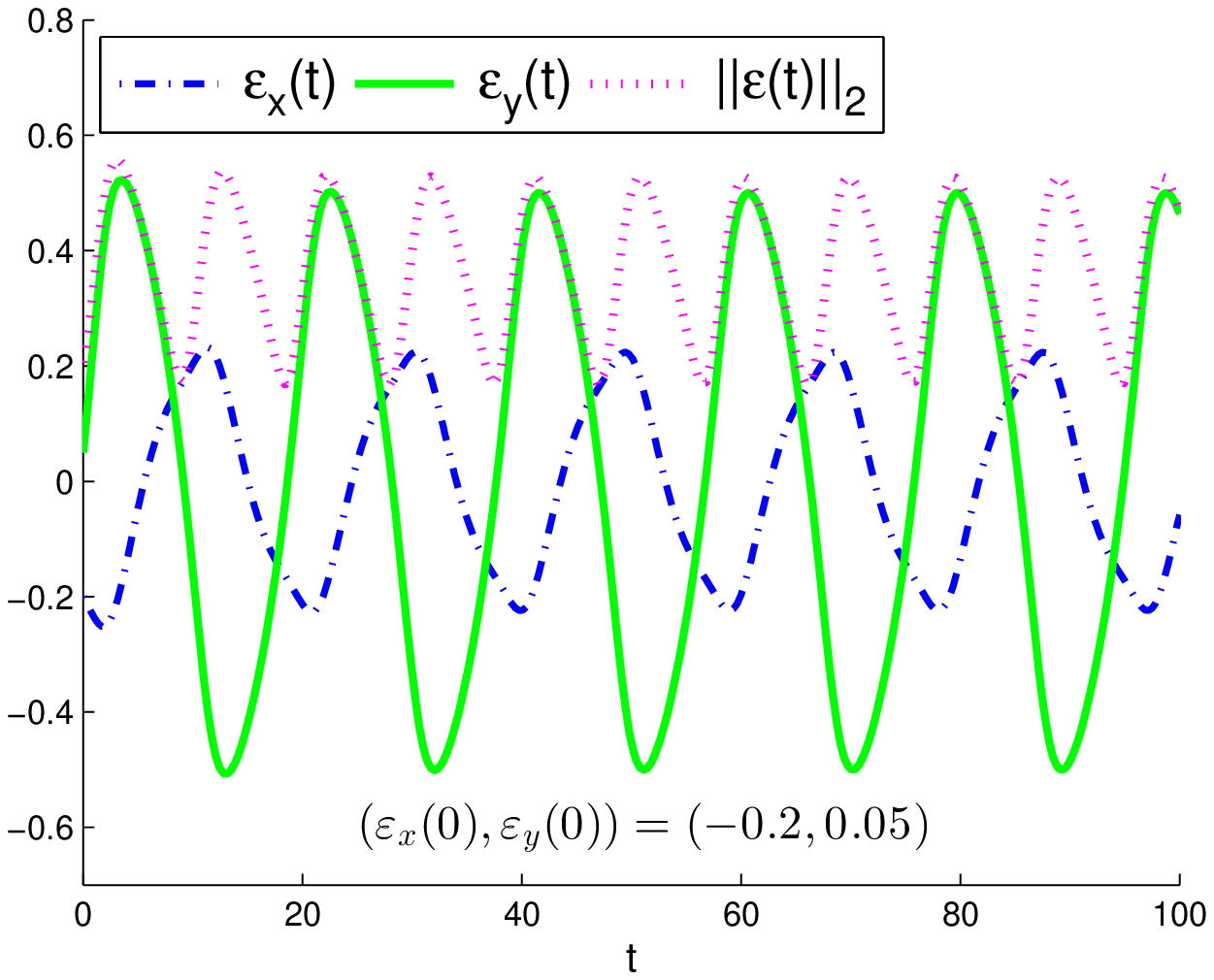}}
\caption{Bifurcation controllers are used to derive the numerical error and solution trajectories associated with the tracking problem given by \eqref{BfControlr2s2}, \eqref{Trackr2s2}, and \eqref{DesiredSol}. Figures (\ref{Minus1r2s2PhaseA}, \ref{Minus1r2s2TrajA}), (\ref{Minus1LimitCycleB}--\ref{Minus1r2s2TrajB}), and (\ref{Minus1LimitCycleC}--\ref{Minus1r2s2TrajC}) correspond to parameters taken from regions (\(a\)), (\(b\)) and (\(c\)) in figure \ref{Fig6(b)}.}
\end{center}
\end{figure}

For the numerical simulation, we further take
\be\label{DesiredSol} x_d(t):= 2\cos(\frac{t}{3}), \qquad y_d(t):= t, \ee
the parameters \((\mu_1, \mu_3):=(-0.05,0.03)\) and \((\mu_1, \mu_3):=(-0.15,0.003)\) from regions (b) and (c) in figure \ref{Fig6(a)} and the initial values \((x(0), y(0))= (2.09, 0.2)\) and \((x(0), y(0))= (1.95, -0.33),\) \ie
\bes(\varepsilon_x(0), \varepsilon_y(0))= (0.09, 0.2)\qquad \hbox{ and } \qquad (\varepsilon_x(0), \varepsilon_y(0))=(-0.05, -0.33).\ees

Figures \ref{phaseportrraitr2s2b} and \ref{phaseportrraitr2s2c} depict the phase portraits associated with \((x(t), y(t))\) versus the desired solution, while figures \ref{r2s2Fig(b)} and \ref{r2s2Fig(c)} illustrate the trajectories of the error functions \(\varepsilon_x(t)\) and \(\varepsilon_y(t)\) along with the error magnitude \(||(\varepsilon_x(t), \varepsilon_y(t))||_2\), respectively. The undershoot and overshoot associated with the error functions \(\varepsilon_x(t)\) and \(\varepsilon_y(t)\) are illustrated in figures \ref{LimitCycleB} and \ref{LimitCycleC}; these are almost \((-0.1, 0.32)\) and \((-0.05, -0.25)\), respectively. The overshoot (undershoot) here refers to the positive (negative) maximum value that a trajectory exceeds its final steady-state value.

The numerically computed basins of attraction associated with \(\varepsilon_x(t)\) and \(\varepsilon_y(t)\) in figures \ref{r2s2Fig(b)} and \ref{r2s2Fig(c)} are ellipse like shapes with the minor and major axes of at least \((0.1, 0.33)\) and \((-0.08, 0.33)\), respectively; see figures \ref{LimitCycleB} and \ref{LimitCycleC}. These also provide the basin of attraction for the desired trajectories, \ie \(x_d(t)\) and \(y_d(t)\). Since the undershoot and overshoot values remain within the basin of attraction for the origin, the normal form and its bifurcation analysis are valid. In other words, the initial values \(x(0)\) and \(y(0)\) should be sufficiently close to the initial desired values \(x_d(0)\) and \(y_d(0)\) so that the (first) overshoot and undershoot of the trajectories remain within the basin of attraction. This constructs a permissable set for the initial values for either of the cases. In our numerical example, the permissable sets include at least the rectangles \([-0.06, 0.09]\times [-0.1, 0.2]\) and \([-0.05, 0.05]\times[-0.21, 0.21],\) respectively. These restrictions for the initial values seem reasonable for many practical engineering problems.

The case \(a_2=-b_2=1\) provides a feasible approach to regularize the origin with a larger basin of attraction than the case \(a_2= b_2=1\). Consider a reassignment of \(d_3:=d_6:=-1\) and keep the remaining constants as stated above in the controlled plant \eqref{BfControlr2s2}, \eqref{Trackr2s2} and \eqref{DesiredController} to obtain the case \(a_2=- b_2=1\). Now figure \ref{Fig6(b)} describes the transition set. Next we observe that the local dynamics associated with the region (\(a\)) in figure \ref{Fig6(b)} only includes an attracting equilibrium at the origin. The numerical simulation in {\sc MATLAB}, using \((\mu_1, \mu_3):=(-0.05,0.03)\) and the initial errors \((\varepsilon_x(0), \varepsilon_y(0))= (0.82, -0.9),\) provides the figures \ref{Minus1r2s2PhaseA} and \ref{Minus1r2s2TrajA}. Parameter choices from the regions (\(b\)) and (\(c\)) in \ref{Fig6(b)} give rise to an oscillatory behavior for the asymptotic dynamics of the error functions. For instance the parameters
\bes (\mu_1, \mu_3):=(0.05,0.01)\qquad \hbox{ and } \qquad (\mu_1, \mu_3):=(0.15,0.003)\ees
from regions (\(b\)) and (\(c\)) in figure \ref{Fig6(b)} and assuming the initial errors
\bes ((\varepsilon_x(0), \varepsilon(0)):= (-0.1 ,-0.35)\qquad \hbox{ and } \qquad (\varepsilon_x(0), \varepsilon(0)):= (-0.2, 0.05)\ees give rise to the figures \ref{Minus1LimitCycleB}-\ref{Minus1r2s2PhaseB} and \ref{Minus1LimitCycleC}-\ref{Minus1r2s2PhaseC}, respectively.

\subsection{Controller design for ship course tracking problem} \label{ShipCourse}

In this section we apply our bifurcation controller design approach to a ship course tracking control problem. Ship course control problem is a typical problem in control engineering and problems of this type also occur in {\it aircraft navigation}, {\it vehicle steering control} and {\it motion planning} in robotics; also see \cite{ChenBifuControl,ChenBifControl2000} for a brief review on some other
engineering applications of bifurcation control.

The ship's helmsman determines the desired ship course (yaw angle) and it is translated into the ship's wheel. The ship's wheel is related to the rudder angle. However, the dynamics between the rudder angle and the ship's yaw angle is nonlinear. Therefore, there is a need for a feedback controller design so that the helmsman's command through the wheel would reflect to an appropriate time-varying rudder angle. Then, the controlled rudder angle enforces the ship's yaw angle to closely track the desired ship course. In this direction we consider a nonlinear dynamics model between the rudder and yaw angles, that is given by
\be\label{BechSmith}
T_1T_2 \,\ddot{r}(t)+ (T_1+T_2) \,\dot{r}(t)+ K H(r(t))= K\delta(t)+ KT_3 \,\dot{\delta}(t), \qquad r:= \dot{\psi}(t),
\ee where \(T_1,\) \(T_2,\) \(T_3,\) \(K\) are constants related to the mass, speed, and hydrodynamic coefficients, while \(\psi,\) \(r,\) \(\delta\) stand for the yaw angle, yawing rate, and rudder angle, respectively. This is the well-known model of Nomoto with a nonlinear {\it manoeuvreing characteristic} function \(H(r)\) due to Bech and Wagner-Smith; see \cite{ShipFossen,ShipBackStep,ShipTomera,BechWagnerSmith}. The function \(H\) is estimated by
\bes
H(r)= c_0+ c_1r+ c_2r^2+ c_3r^3,
\ees through Kempf's zigzag maneuver test; see \cite{ShipBackStep,ShipTomera,BechWagnerSmith}. 
The goal is to control the rudder angle so that the actual yaw angle closely follows the desired yaw angle, say \(\psi_d(t).\) This is achieved through our bifurcation control results and approach described in subsection \ref{BifCont}. Thus, let \(x_1:= \psi-\psi_d,\) \(x_2:= \dot{\psi}_d-r,\) \(x_3:= \ddot{\psi}_d-\dot{r},\) where \(\psi_d(t)\) represents the desired function for the yaw angle.
Further we take
\be\label{ShipCont1}
\delta:= c_0- c_1x_2+ v_1+\frac{v_2}{K T_3}+\int_0^t \left(\frac{T_1T_2}{KT_3}\psi^{(3)}_d(\tau)+ \frac{T_1+T_2}{KT_3}\ddot{\psi}_d(\tau)\right)\exp\Big(\frac{\tau-t}{T_3}\Big)d\tau,
\ee where
\bes
\dot{v}_2=-\frac{v_2}{T_3}\exp\big(\frac{-t}{ T_3}\big)+ K(c_1\dot{\psi}_d+c_2{\dot{\psi}_d}^2-2c_2{\dot{\psi}_d}x_2+c_3{\dot{\psi}_d}^3-3c_3x_2{\dot{\psi}_d}^2+3c_3{x_2}^2{\dot{\psi}_d}).
\ees Then, the model in \((x_1, x_2, x_3)\)-coordinates reads
\be\label{ShipEq1}
\dot{x}_1= -x_2, \qquad \dot{x}_2= x_3, \qquad \dot{x}_3= -\frac{Kv_1}{T_1T_2}- \frac{KT_3\dot{v}_1}{T_1T_2}+\left(\frac{KT_3}{T_1T_2}c_1-\frac{T_1+T_2}{T_1T_2}\right)x_3+ \frac{K}{T_1T_2}(c_2 {x_2}^2- c_3 {x_2}^3).
\ee
\begin{figure}
\begin{center}
\subfigure[Pitchfork and Hopf varieties.\label{Fig13(a)}]
{\includegraphics[width=.29\columnwidth,height=.22\columnwidth]{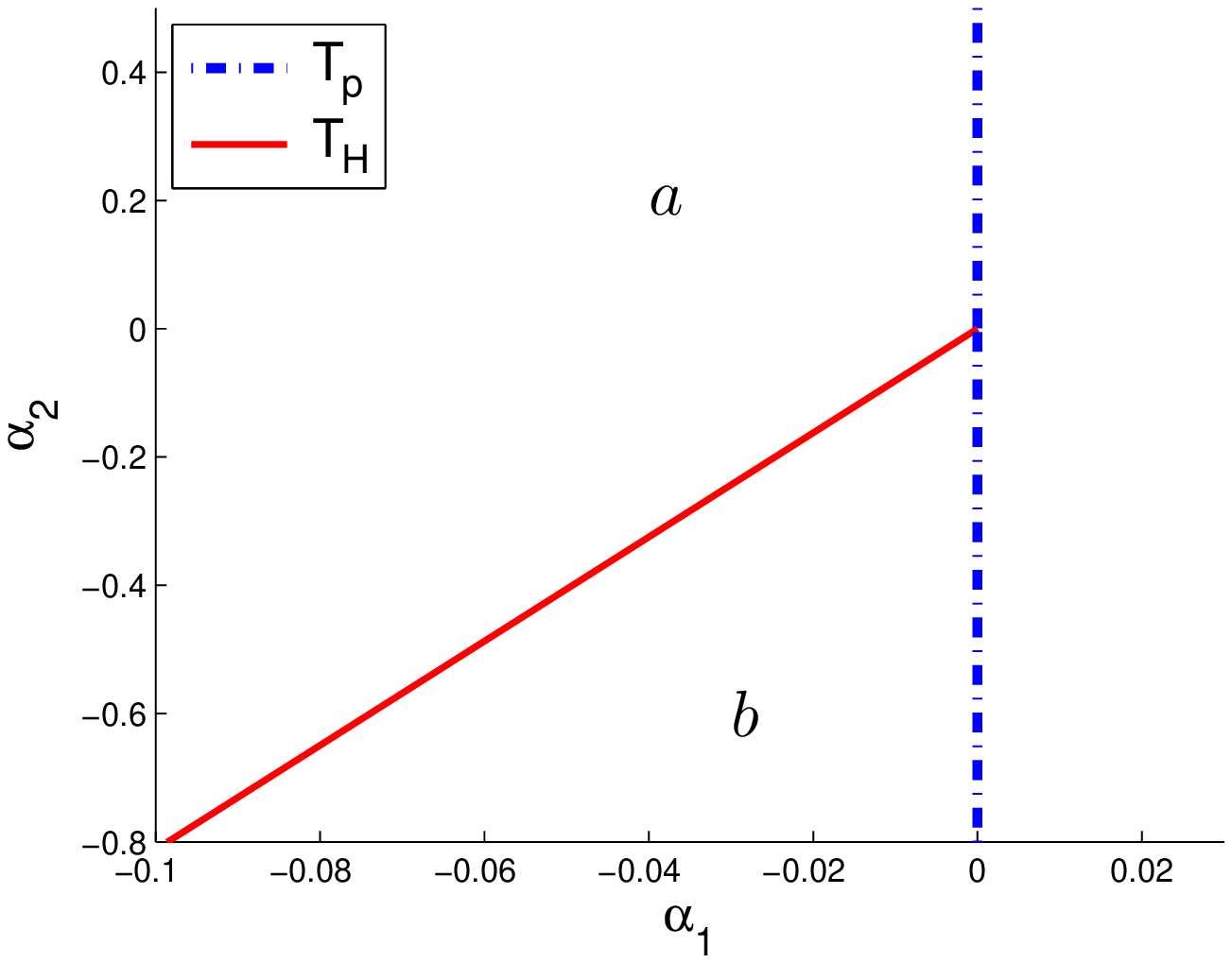}}
\subfigure[The desired and highly accurate tracking yaw solutions.\label{Fig13(b)}]
{\includegraphics[width=.35\columnwidth,height=.22\columnwidth]{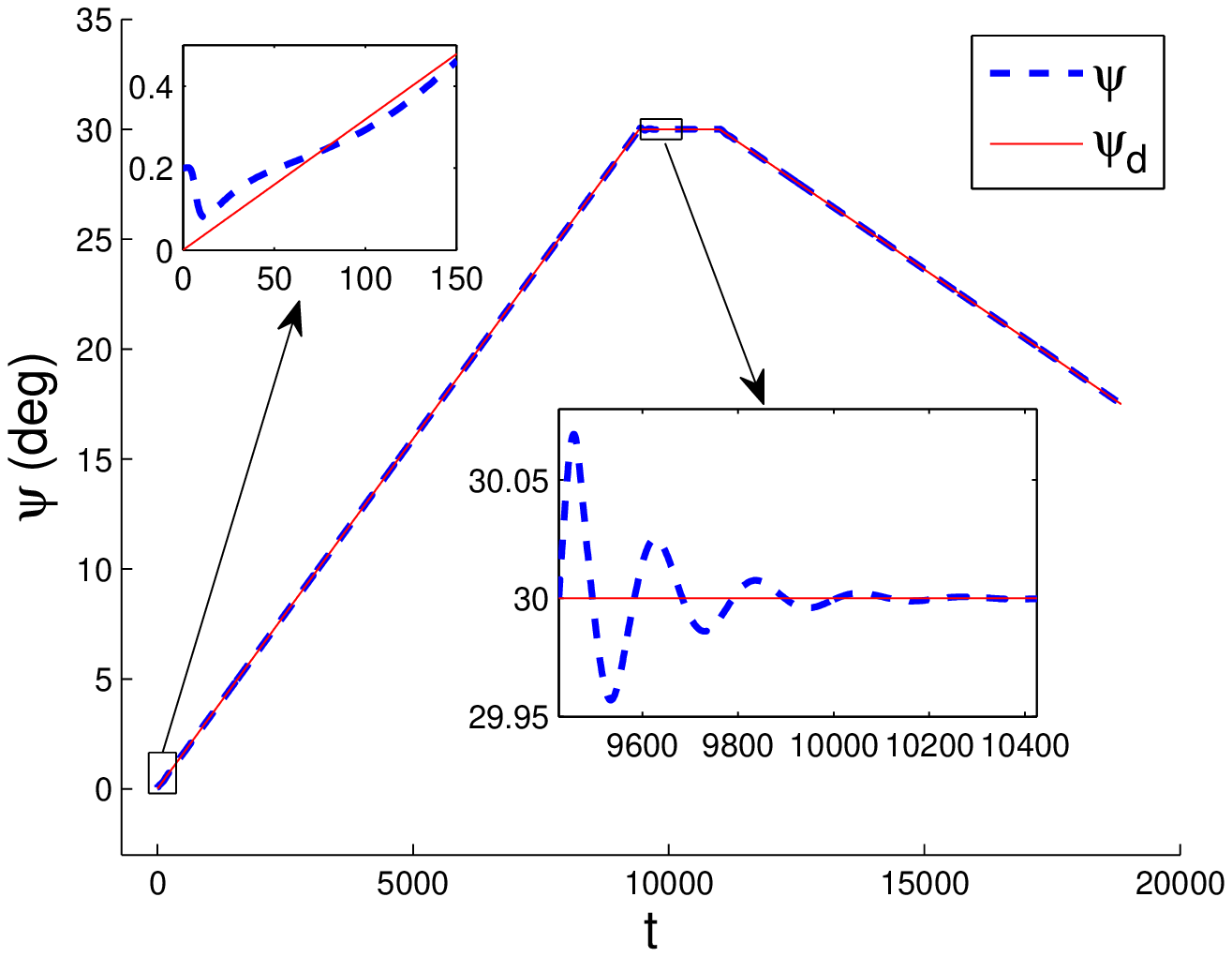}}
\subfigure[Tracking errors in yaw angle. \label{Fig13(c)}]
{\includegraphics[width=.29\columnwidth,height=.22\columnwidth]{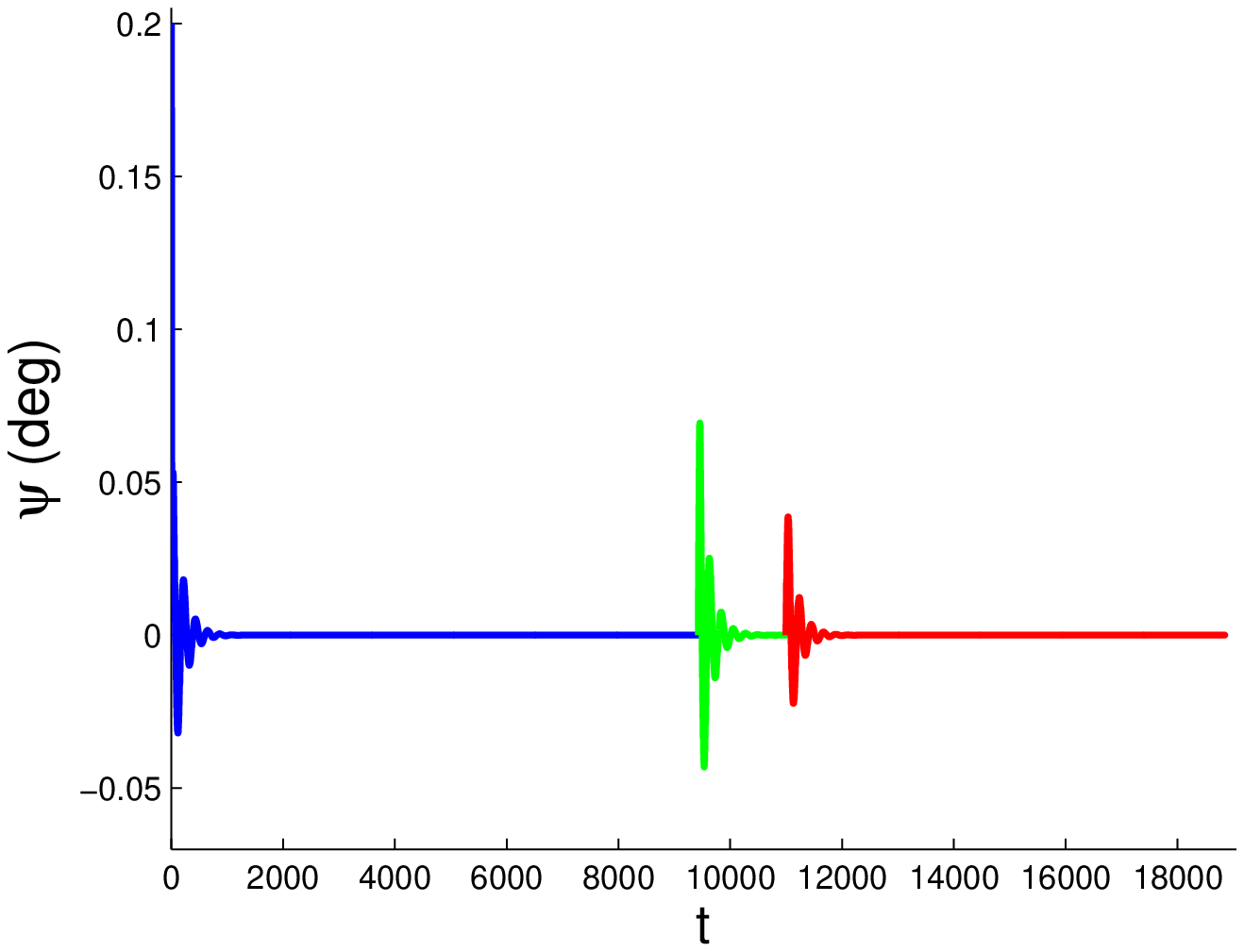}}
\caption{The ship course trajectory associated with controlled system \eqref{BechSmith}--\eqref{ShipCont1} tracks the desired time-dependent yaw angle \eqref{DesiredYaw}. }
\label{BifContr1s1}
\end{center}
\end{figure}
Denote \(c:=T_1+T_2-c_1KT_3\) and let
\bes
v_1:= \alpha_1x_1+\alpha_2x_2+ c_2{x_2}^2 -\frac{c^3}{K{(T_1T_2)}^2}{x_1}^3+{\frac {c^2(3T_1T_2-3{T_3}c-4c)}{K{(T_1T_2)}^2}}{x_1}^2{x_2}.
\ees The system \eqref{ShipEq1} has a generalized cusp case Bogdanov-Takens singularity at the origin. Hence, the cubic-estimation of its governing equations on the center manifold is given by the system \eqref{BfControlr2s2}-\eqref{u1u2}, where \(d_5= d_6= d_7= d_8=0, d_2= 1, d_3=-4 , u_2=0,\) and
\bas
&d_1= -\frac{K c_3 (T_1 T_2)^2}{c^3}+ \frac{T_1T_2T_3(-6T_3c+8c+6T_1 T_2)}{c^2}-4\frac{ {(T_1T_2)}^2}{ c^2 }-2{\frac {{(T_1 T_2)}^3}{ c ^3}}, d_4= -\frac{T_3(6{T_3}c-8c-6T_1T_2)  }{c}-8\frac{T_1T_2}{c }-3\frac{  {(T_1T_2)}^2 }{c^2}.&
\eas These give rise to the case \(r=s=2,\) equations \eqref{why}, \(a_2= - b_2= 1,\) \(\nu_3=\frac{K (-\alpha_2 c+ \alpha_1 cT_3 -\alpha_1 T_1 T_2)}{2 c^2},\) and
\bas
&\nu_2=-\frac{K \alpha_1}{c}+
\frac{3}{2} \frac{K^2 {(T_1 T_2)}^2 {\alpha_1}^2 (K c_3 +2 T_1 T_2)}{c^5}-\frac{1}{4}\frac{K^2 {(T_1 T_2)}^2 {\alpha_1}^2 (36T_3-27)}{c^4}
+\frac{K^2 T_1 T_2 \alpha_1 \left(6T_3 \alpha_1 (18 T_3-29) -18 \alpha_2 +37 \alpha_1\right)}{12 c^3}&\\&
+\frac{K^2 \left({\alpha_1}^2 (45 {T_3}^2 -16 -37 T_3) +9 \alpha_1 \alpha_2 (2T_3 -3)-3 {\alpha_2}^2 \right)}{12 c^2}.\qquad\qquad\qquad\qquad\qquad\qquad\qquad\qquad\qquad\quad&
\eas We skip a complete bifurcation control analysis for briefness. For the feedback tracking controller design, we use the bifurcation varieties associated with 
pitchfork and Hopf. 
In an experimental case study \cite[Table II]{ShipTomera}, the estimations for the constants in an unstable ship course are given as follows
\be\label{Cis} c_0:=-1.2947, \quad c_1:= -7.3227, \quad c_2:=2.8658, \quad \hbox{ and } \quad c_3:=9.7678.\ee
Given the permissible values in \cite[Table I]{ShipTomera}, we further take
\be\label{Tis}T_1:= T_2:= T_3:= 10,\quad \hbox{ and } \quad K:=0.5\ee for numerical simulations. Hence
\begin{eqnarray}
T_P&=& \{(\alpha_1, \alpha_2)\,|\, \alpha_2=-2.3943 \alpha_1\pm1.2502\sqrt{-12.8360 {\alpha_1}^2-1.4128 \alpha_1}\},
\end{eqnarray}
and \(T_{H}\) follows
\bas
&\alpha_2=6.8759+2.35045{ \alpha_1}-1.5571\sqrt {19.50-33.370 \alpha_1+6.9905 {\alpha_1}^2},&
\eas
 see figure \ref{Fig13(a)}. The dynamics associated with the region (a) in figure \ref{Fig13(a)} is similar to figure \ref{Fig10(a)}, except that the equilibrium is a stable spiral sink. We assume that a desired solution for the yaw angle is
given by \(\psi_d(t):= \frac{t}{100\pi}\) for \(0\leq t\leq 3000\pi,\)
\be\label{DesiredYaw}
\psi_d(t):= 30 \; \hbox{ for } \; 3000\pi<t\leq 3500\pi, \; \hbox{ and } \; \psi_d(t):= 30-\frac{t-3500\pi}{200\pi} \; \hbox{ for } \; 3500\pi<t\leq 5500\pi.
\ee We choose the parameters \((\alpha_1, \alpha_2):= (-0.1, 0.5)\) from region (a) and assume that the initial errors are given by
\be\label{InitError} (x_1(0), x_2(0), x_3(0)) := (0.2, \frac{1}{100\pi}, 0.001).\ee These give rise to a highly accurate ship's trajectory in tracking the (sea) route determined by \eqref{DesiredYaw} for the ship's course controlled plant \eqref{BechSmith}-\eqref{ShipCont1}; see figures \ref{Fig13(b)} and \ref{Fig13(c)}. These demonstrate that the controller \(\delta\) with parameters chosen from region (a) in figure \ref{Fig13(a)} provides a highly accurate controller for the ship course tracking problem \eqref{BechSmith}-\eqref{DesiredYaw}.

The basin of attraction for the desired solution includes the initial errors of at least up to the values in \eqref{InitError}. These initial errors can be interpreted as the {\it desired discontinuities} in the yaw, yaw rate and yaw acceleration. Possible desired discontinuities beyond the basin of attraction can be readily handled by interpolating a new smooth desired solution for the controller computation. In this direction, the electrical and mechanical filters are also the means to attenuate the possible values beyond the limitations.

\begin{rem}\label{Rem7.1} The feedback realizations of \(\delta\) and \(v_1\) in practice are feasible through a gyrocompass and a rate gyro. Given the different persistent qualitative dynamics list, small cognitive variation of controller parameters \((\alpha_1, \alpha_2)\) provides a flexible approach for a sudden and smooth change in the manoeuvring policy. An instance of this may include a small oscillating movement around a desired sea route; \eg a stable limit cycle around the origin causes such a tracking solution. A second example is to dodge an incoming object in order to scape from an immediate collision. Indeed, the parameter choices from the region \((c)\) in figure \ref{Fig14(a)} leads to a qualitative dynamics change and the origin becomes unstable. Hence the yaw angle is directed to a rapid divergence from the ship's route. The ship returns to its desired route by simple updates to the desired solution and controller inputs.
\end{rem}

\begin{figure}
\begin{center}
\subfigure[Transcritical, Hopf and estimated homoclinic bifurcation varieties.\label{Fig14(a)}]
{\includegraphics[width=.32\columnwidth,height=.2\columnwidth]{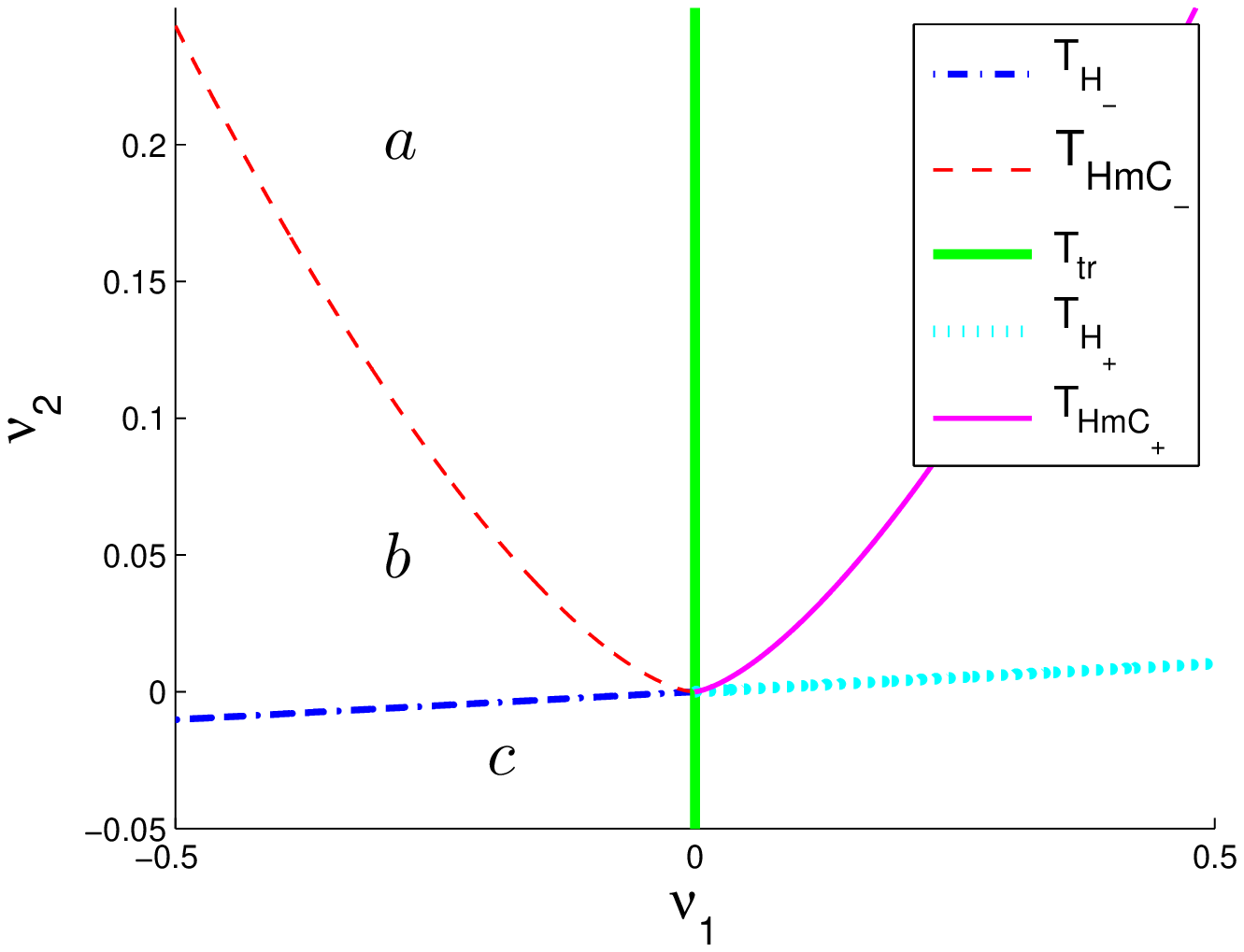}}
\subfigure[The desired and actual yaw angles for parameters from region \((a)\).\label{Fig14(b)}]
{\includegraphics[width=.32\columnwidth,height=.2\columnwidth]{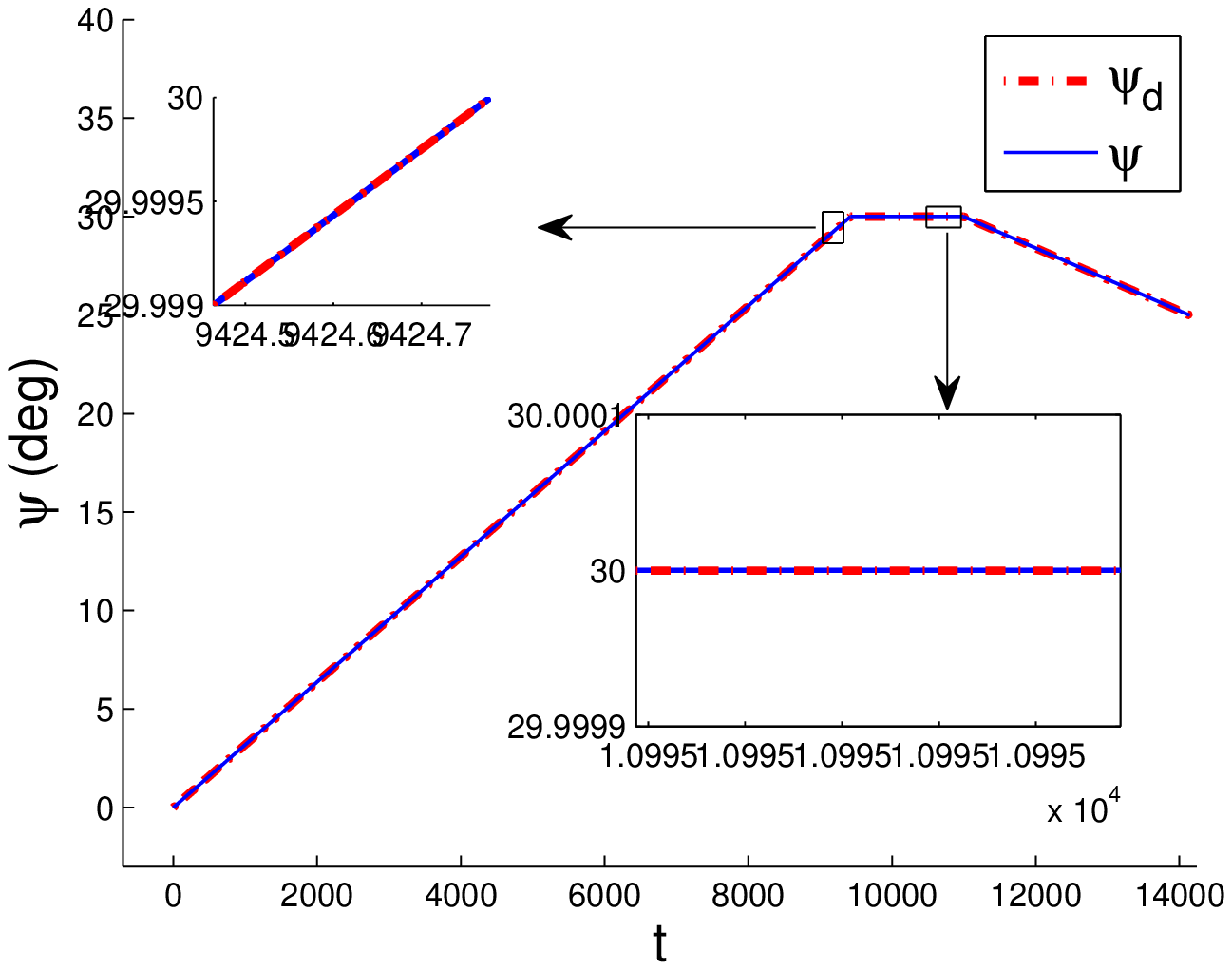}}
\subfigure[Tracking errors in yaw angle. \label{Fig14(c)}]
{\includegraphics[width=.32\columnwidth,height=.2\columnwidth]{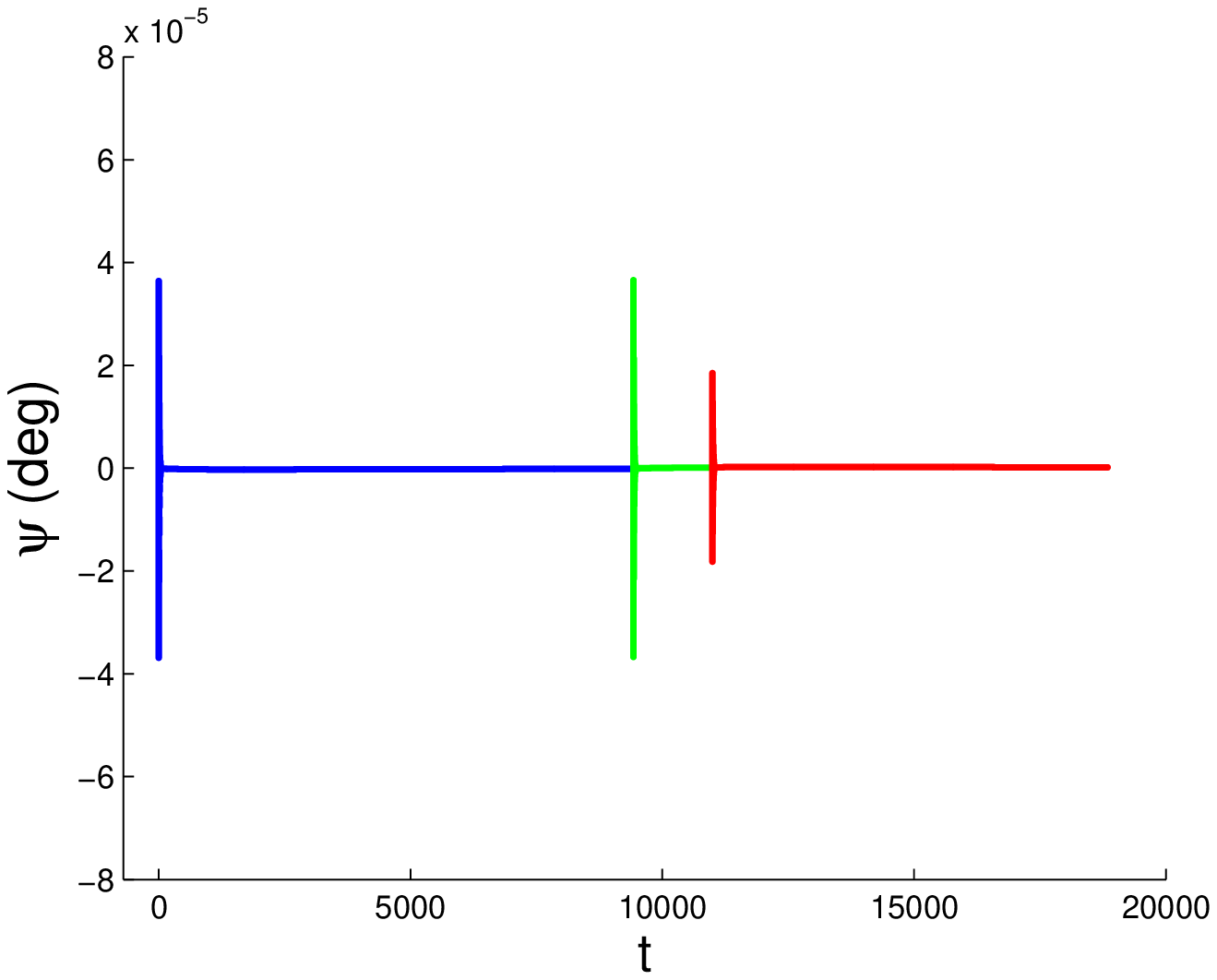}}
\subfigure[The desired and actual yaw angles with parameters taken from region \((b)\).\label{Fig14(d)}]
{\includegraphics[width=.38\columnwidth,height=.2\columnwidth]{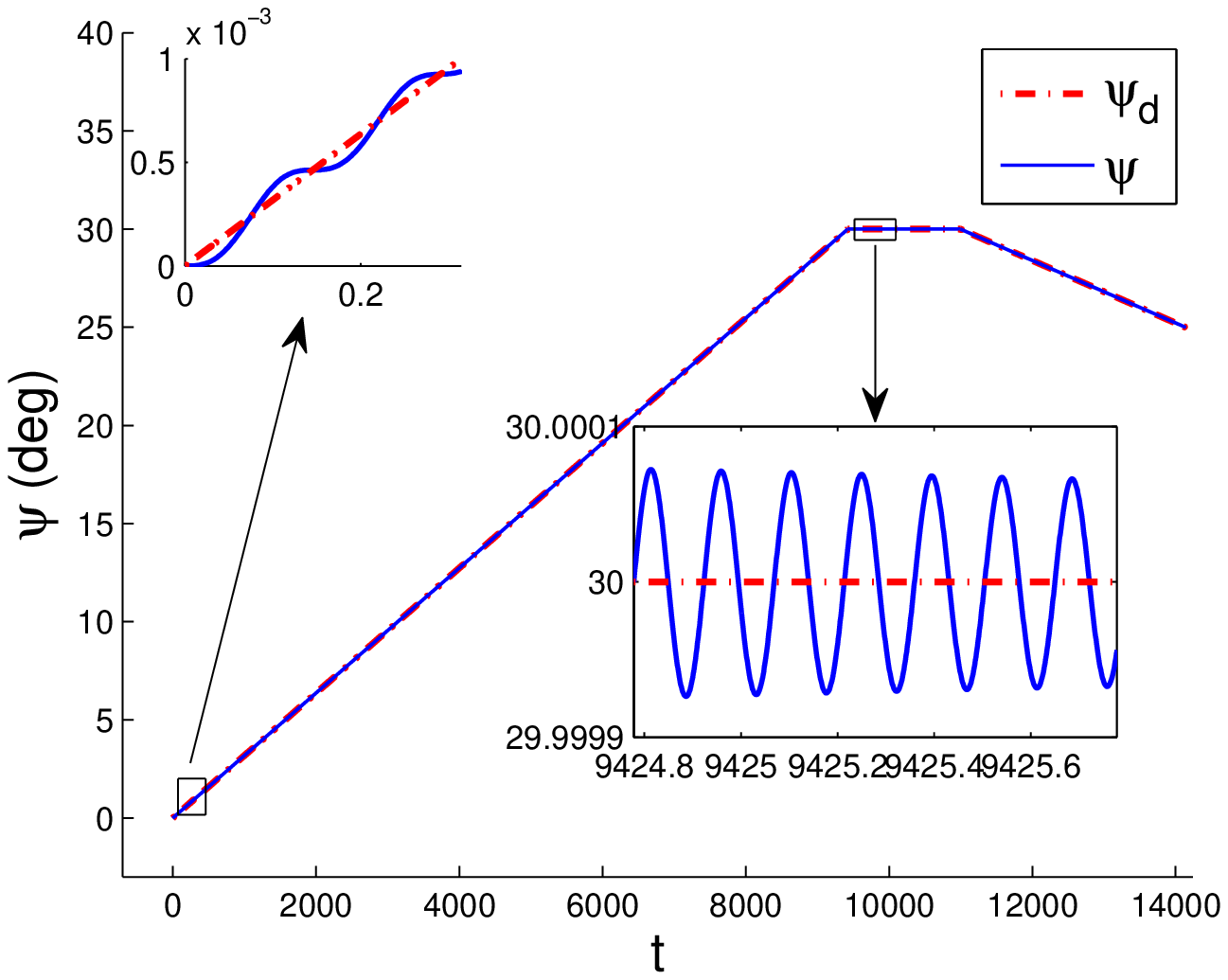}}
\subfigure[Tracking errors in yaw angle for parameters from region \((b)\). \label{Fig14(e)}]
{\includegraphics[width=.38\columnwidth,height=.2\columnwidth]{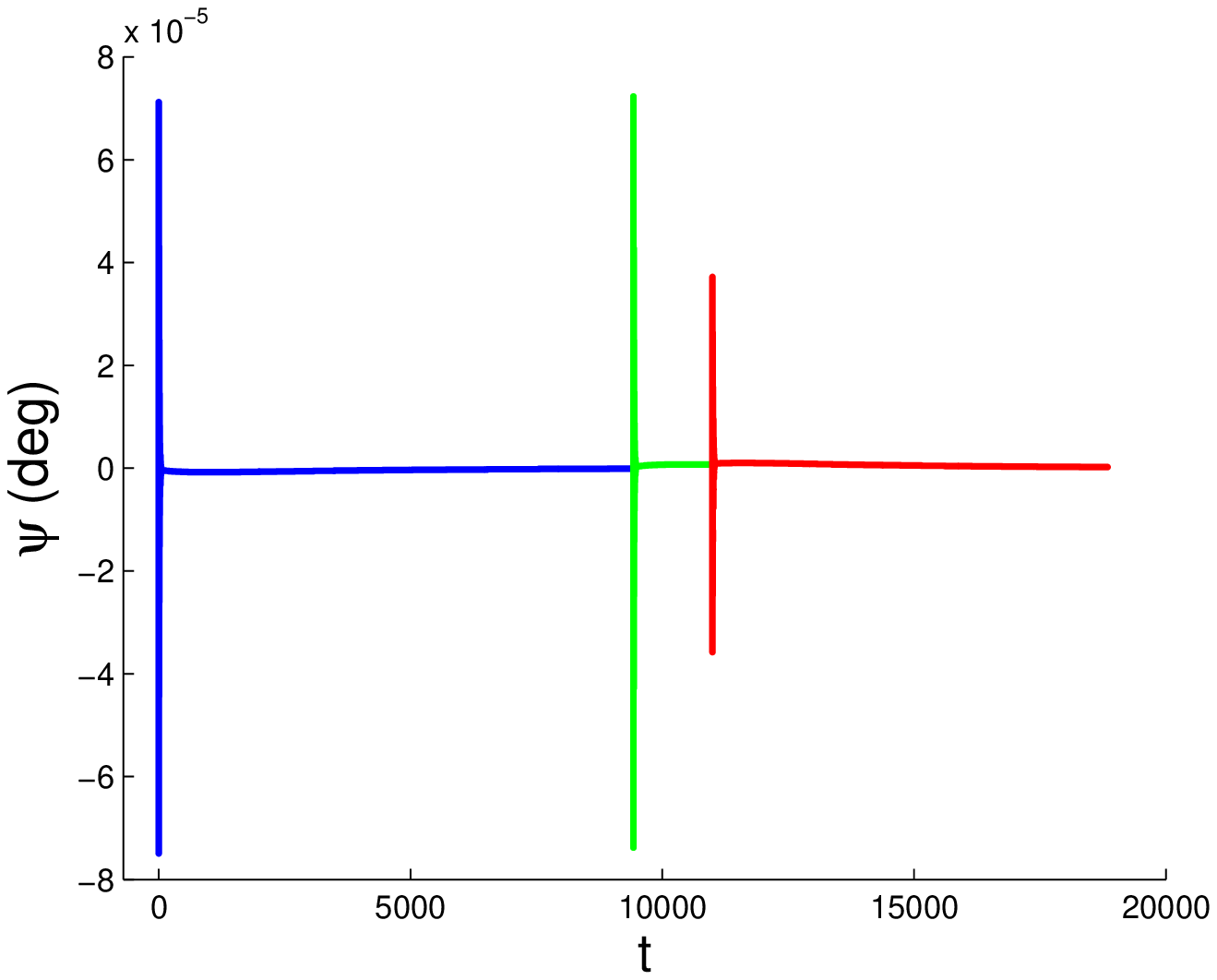}}
\caption{Transition sets, highly accurate tracking orbits and yaw error trajectories associated with the ship course controller in the ship steering system \eqref{BechSmith}--\eqref{ShipSteering}--\eqref{deltaiR1S1}.}
\label{BifContr1s1}
\end{center}
\end{figure}

A gearbox's dynamics adds an extra state dimension into the ship course problem. Indeed, we consider a steering gear dynamics that is modeled by
\be\label{ShipSteering}
\dot{\delta}(t)= -\frac{\delta(t)}{T_R}+ \frac{K_R}{T_R}\delta_i(t),
\ee
where \(\delta_i(t)\) stands for the feedback controller angle, \(K_R:=96\) and \(T_R:= 156(s)\); see \cite[page 278]{ShipBackStep}. Here the input controller signal \(\delta_i\) is fed into the gearbox and the gearbox's output angle determines the ship's rudder angle. Let the rudder angle \(\delta(t):= x_4-\frac{\varphi(t)}{KT_3}\) and \(x_4\) denote a new state variable.
Similar to the above approach, we further take
\be\label{deltaiR1S1}
\delta_i:= \frac{v_0+ v_1+T_Rv_2}{K_R}, \qquad v_0:=c_0+K_R \alpha_1x_1+({T_R}^2 K_R\alpha_2-c_1)x_2,
\ee where
\bas
&v_1:=\frac{C^2\,{x_1}^2+C\big(3C-2 \left({T_R}-T_3\right)\left(T_1+T_2-c_1K T_3\right)-2T_1T_2\big)\,x_1x_2}{K\left(T_1+T_2\right)}, \qquad\hbox{ for } \quad C:=T_1+T_2+c_1K(T_R-T_3),\; &\\
&v_2:=\frac{T_1T_2{\psi_d}^{3}+(T_1+T_2){\psi_d}^{2}}{T_3K} + \frac{\left( c_1+3 c_3 { x_2}^2-2 c_2 x_2 \right) \psi_d +c_2 { \dot{\psi_d}} ^2-3c_3 x_2{\dot{\psi_d}}^2+c_3{\dot{\psi_d}}^3}{T_3}-\frac{\varphi(t)\left(T_3-T_R\right)}{K{T_3}^2T_R}, \;\;&
\\
&\dot{\varphi}:=\frac{T_1 T_2}{K}{{\psi}_d}^{(3)} +c_3{\dot{\psi_d}}^3+\frac{(T_1+T_2)}{K}{{\psi}_d}^{(2)} +c_2{\dot{\psi_d}}^{2}-3c_3x_2\dot{\psi_d}^2-2c_2 x_2{\dot{\psi}_d}+c_1\dot{\psi}_d+3c_3{x_2}^2{\dot{\psi}_d}-\frac{\varphi(t)}{T_3}\exp({\frac{-t}{T_3}}).&
\eas
Thus,
\ba\label{ShipEq2}
&\dot{x}_1= -x_2, \qquad\qquad \dot{x}_2= x_3, \qquad\qquad \dot{x}_4= -\frac{x_4}{T_R}+\frac{1}{T_R}(v_0+ K_Rv_1), &\\\nonumber&
\dot{x}_3= \frac{K}{T_1T_2}\left(\dfrac{T_3}{T_R}-1\right)x_4- \frac{KT_3}{T_1T_2T_R}(v_0+ K_Rv_1)-\frac{T_1+T_2}{T_1T_2} x_3+\frac{K}{T_1T_2}(c_0-c_1 {x_2}+ c_2 {x_2}^2-c_3 {x_2}^3).&
\ea Then, a center manifold reduction around \((x_1, x_2, x_3, x_4)= (0, 0, 0, c_0)\) reduces the system into equations \eqref{BfControlr1s1}, where
\ba\label{CenterManfCoef}
&d_1=1, \quad d_2= 3-\frac{2T_1 T_2}{T_1+T_2}+2 (T_3-T_R),\quad d_4=T_R-T_3+\frac{T_1 T_2}{T_1+T_2},\quad d_5= \frac{T_1 T_2\left(T_R-T_3-3+\frac{T_1 T_2}{T1+T2}\right)}{K (T_1+T_2)},&
\ea while \(\mu_1= \mu_4= \mu_5=0,\)
\begin{eqnarray}\label{CenterManfPars}
&\mu_2=\frac {c_1(T_R-T_3)(T_R(T_1+T_2)+T_1T_2)}{C^2K^{-2}{K_R}^{-1}(T_1+T_2)}\alpha_1-\frac{K{K_R}{T_R}^2}{C}\alpha_2, \quad \mu_3=-\frac{KK_R}{C}\alpha_1,\hbox{ and } \mu_6=\frac{KK_R \left(T_3-T_R-\frac{T_1 T_2}{T_1+T_2}\right)}{C}\alpha_1.\qquad &
\end{eqnarray} Some negative powers appear in the denominators for briefness.
The system falls within \(r=s=1,\) and reads the normal form equation \eqref{r1s1NF} by \(a_1=b_1=1,\) \(\nu_1=-\frac{K^2{K_R}^2}{4{C^2}}{\alpha_1}^2,\)
\bas
&\nu_2=\left(\frac{3 K K_R}{4C}+\frac{(T_3-T_R)(T_1+T_2)-T_1T_2}{2(T_1+T_2)^2C{K_R}^{-1}{T_1}^{-1}{T_2}^{-1}} -\frac{c_1 (T_3-T_R) \left(T_1 T_2+T_R( T_1+T_2)\right)}{2 {K_R}^{-1} K^{-2} (T_1+T_2) C^2}\right) \alpha_1-\frac{T_1 T_2{K_R}}{2C{T_R}^{-2}}\alpha_2.&
\eas Since \(\nu_1<0\) for all nonzero values for \(\alpha_1,\) there are always two local equilibria bifurcating from the origin via a transcritical bifurcation at \(T_{tr}:=\{(\alpha_1, \alpha_2)\,|\, \alpha_1=0\}\).

An advantage of our controller parameter choice \((\alpha_1, \alpha_2),\) \ie \(\alpha_1 x_1+\alpha_2 x_2\) via \(v_1,\) at equation \eqref{ShipSteering} is that the origin is one of the two bifurcated local equilibria. When the origin is an asymptotically stable equilibrium, it provides an ideal choice for the controller parameters. Otherwise, we would be faced with a stable equilibrium that was in the vicinity of (but different from) the origin. As a consequence, we had to provide an adaptive controller design for a sufficiently accurate tracking solution in large time intervals.

Now our goal is to choose the controller parameters from a region in the parameter space so that they associate an asymptotically stable local dynamics for the origin. For numerical simulations and briefness, we take the numerical values in \eqref{Cis}-\eqref{Tis}, except \(T_3:= 500\).
Further,
the Hopf bifurcation varieties are given by
\bas
&T_{H+}=\big\{(\alpha_1, \alpha_2) |\,\alpha_1=\frac{25948349232}{533642261}\alpha_2\; \hbox{ for } \alpha_2>0\big\}\;\hbox{ and }\;
T_{H-}=\big\{(\alpha_1, \alpha_2) |\,\alpha_1=\frac{2162362436}{44203625}\alpha_2\; \hbox{ for } \alpha_2<0\big\}, \;&
\eas while the estimated \(T_{HmC_\pm}\) in equation \eqref{HomocV} follows
\bas
&100\alpha_2=2.050392014 \alpha_1\pm {\alpha_1}\sqrt{.00001938331498-0.6319781864{|\alpha_1|}^{\frac{1}{2}}+5151.291564|\alpha_1|}.&
\eas

The transition sets of Hopf, transcritical and homoclinic bifurcations are plotted in figure \ref{Fig14(a)}.
The unstable limit cycle corresponding to the region \((a)\) in figure \ref{Fig14(a)} constructs a restriction on the basin of attraction for the origin; that is, the errors corresponding to the initial values must be taken small enough to fall within the limit cycle. The limit cycle disappears through a homoclinic bifurcation, when we take our choices of parameters from region \((b).\) Then, the only remaining restriction for the basin of attraction is the normal form neighborhood validity.

We choose the initial values
\bes\big(\psi(0), \dot{\psi}(0), \ddot{\psi}(0), \delta(0), \varphi(0)\big):= (0, 0, 0, c_0, 0),\ees the parameters \((\alpha_1, \alpha_2):= (-0.3, 0.2)\) and \((\alpha_1, \alpha_2):=(-0.3, 0.05)\) from regions (a) and (b) in figure \ref{Fig14(a)}. Then, the actual controlled yaw angle versus the desired yaw are depicted in figure \ref{Fig14(b)} and \ref{Fig14(d)}, respectively. Figures \ref{Fig14(c)} and \ref{Fig14(e)} depict the tracking error. These confirm that the numerical tracking trajectories are highly accurate in tracking the desired solution.

%

\section{Conclusion}

In this paper we provide a truncated orbital and parametric normal form classification for the family of the generalized cusp case of Bogdanov-Takens singularity. Then, we consider two most generic cases of this family for their local bifurcation analysis. These are the most generic cases of Bogdanov-Takens singularity with and without a \(\mathbb{Z}_2\)-symmetry. These systems demonstrate varieties of bifurcations from primary to quinary bifurcations of various types. These include bifurcations of equilibria via saddle-node, transcritical and pitchfork. Bifurcations of multiple limit cycles are observed through Hopf, homoclinic, heteroclinic, saddle-node of limit cycles, and saddle-connection (double homoclinic) for the \(\mathbb{Z}_2\)-equivariant system. We further study one-parameter \(\mathbb{Z}_2\)-symmetry breaking bifurcations. The formulas for transition sets of the truncated parametric normal forms are derived in sufficiently high orders in order to enlarge their neighborhood validity for their applications in bifurcation control. These are successfully applied on a generic quadratic measurable plant with a possible multi-input linear controller and on a \(\mathbb{Z}_2\)-equivariant general measurable plant with possible multi-input linear (\(\mathbb{Z}_2\)-symmetry preserving) and quadratic (symmetry-breaking) controllers. We show that by feedback controller designs, we can precisely locate and accurately control all types of predicted bifurcations in our bifurcation analysis of the truncated universal asymptotic unfolding normal forms. We further use our parametric normal form analysis to demonstrate the applicability of our results for solving two most common control engineering problems: regulating and tracking problems. Given the smooth dependence on the controller parameters, our approach provides an efficient tool for smooth manoeuvering possibilities: this is an important claimed contribution of our controller design approach. Our bifurcation control approach are applied on two nonlinear ship course models to illustrate the applicability of our results. Our computer simulations confirm our theoretical results and accurate predictions.

\end{document}